\documentclass[10pt,reqno]{amsart}
\usepackage{mathtools}
\setlength{\textwidth}{5.9 in}
\setlength{\textheight}{8.3 in}

\setlength{\oddsidemargin}{1 cm}
\setlength{\evensidemargin}{1 cm}
\setlength{\topmargin}{0 cm}

\usepackage[all,cmtip]{xy}
\usepackage{amssymb,amsmath}
\usepackage{wrapfig}
\usepackage{amscd}
\usepackage{graphicx}
\usepackage[pdftex,colorlinks,pagebackref]{hyperref}
\usepackage{tikz-cd}

\newcommand\FF{\mathbf{F}}
\newcommand\Z{{\mathbb Z}}
\newcommand\R{{\mathbb R}}
\newcommand\N{{\mathbb N}}

\newcommand\C{{\mathbb C}}
\newcommand\Q{{\mathbb Q}}
\newcommand\HH{{\mathbb H}}
\newcommand\p{\partial}
\newcommand\Aut{{\rm Aut\:}}
\newcommand\Cay{{\rm Cay}}
\newcommand\Sch{{\rm Sch}}
\newcommand\Sub{{\rm Sub}}
\newcommand\Stab{{\rm Stab}\,}
\newcommand\SL{{\rm SL}}
\newcommand\GL{{\rm GL}}
\newcommand\bs{\backslash}
\newcommand\MCG{\mathcal{MCG}}

\newcommand\gr{\rm gr}

\newcommand{\vol}{\mathrm{vol}}
\newcommand{\tors}{\mathrm{tors}}
\newcommand{\Hirsch}{\mathfrak h}

\newtheorem{intro}{Theorem}
\newtheorem{introth}[intro]{Theorem}

\newtheorem{theorem}{Theorem}[section]
\newtheorem{lemma}[theorem]{Lemma}
\newtheorem{corollary}[theorem]{Corollary}
\newtheorem{proposition}[theorem]{Proposition}

\newtheorem{question}[theorem]{Question}
\newtheorem{example}[theorem]{Example}

\theoremstyle{definition}
\newtheorem{definition}[theorem]{Definition}
\newtheorem{definitionintro}{Definition}

\theoremstyle{remark}
\newtheorem*{remarkintro}{Remark}
\newtheorem{remark}[theorem]{Remark}

\newcommand{\acting}{\curvearrowright}
\newcommand\findex{{fi}}

\newcommand\Basedown{\Phi} 
\newcommand\Baseup{\Omega} 
\newcommand\cellup{\omega}
\newcommand\Totaldown{\Sigma} 

\newcommand{\Gcell}{\mathbf g}
\newcommand{\Ghom}{g}

\newcommand{\Hcell}{\mathbf h}
\newcommand{\Hhom}{h}

\newcommand{\Homcell}{\mathbf P}
\newcommand{\Homhom}{\rho}

\newcommand{\Gcellfibre}{\mathbf k}
\newcommand{\Ghomfibre}{k}

\newcommand{\Hcellfibre}{\mathbf l}
\newcommand{\Hhomfibre}{l}

\newcommand{\Homcellfibre}{\mathbf \Sigma}
\newcommand{\Homhomfibre}{\sigma}
\newcommand{\indice}{\alpha}

\newcommand\Glemma{G}
\newcommand\Alemma{A}
\newcommand\Glemone{G_1}
\newcommand \Alemone{\Alemma_1}

\newcommand\basezero{B}
\newcommand\baseone{B_1}

\newcommand\stackzero{D}
\newcommand\stackone{D_1}

\newcommand\Gammaprime{\Gamma_1}
\newcommand\Basedownprime{\Basedown_1}

\newcommand\defin[1]{\textbf{#1}\index{#1}}

\newcommand\card[1]{\mathbf{\#}^{(#1)}}

\makeindex

\date{\today}
\theoremstyle{definition}
\numberwithin{equation}{section}

\title{On homology torsion growth}

\author{Miklos Abert} 
\address{M.~A.: Renyi Institute of Mathematics, Realtanoda utca 13-15, 1053 Budapest, Hungary}
\email{abert@renyi.hu}

\author{Nicolas Bergeron} 
\address{N.~B.: ENS / PSL University, D\'epartement de Math\'ematiques et Applications, F-75005, Paris, France}
\email{nicolas.bergeron@ens.fr}
\urladdr{https://sites.google.com/view/nicolasbergeron/accueil}

\author{Mikolaj Fraczyk} 
\address{M.~F.: University of Chicago, Dep. of Mathematics, 
5734 S University Ave, Chicago, IL 60637, USA}
\email{mfraczyk@math.uchicago.edu}

\author{Damien Gaboriau} 
\address{D.~G.: ENS-Lyon, UMR CNRS 5669 / Universit\'e de Lyon, UMPA, 69007 Lyon, France}
\email{damien.gaboriau@ens-lyon.fr}
\urladdr{https://perso.ens-lyon.fr/gaboriau/}

\subjclass{57M07; 22E40; 11F75; 20J05; 20J06; 20E26; 20F69; 20F36; 20G25}

\begin{document}

\begin{abstract}
We prove new vanishing results on the growth of higher torsion homologies for suitable arithmetic lattices, Artin groups  and mapping class groups. The growth is understood along Farber sequences, in particular, along residual chains. For principal congruence subgroups, we also obtain strong asymptotic bounds for the torsion growth.

As a central tool, we introduce a quantitative homotopical method called effective rebuilding. This constructs small classifying spaces of finite index subgroups, at the same time controlling the complexity of the homotopy. The method easily applies to free abelian groups and then extends recursively to a wide class of residually finite groups. 

\end{abstract}

\maketitle

\setcounter{tocdepth}{1} 
\tableofcontents

\section{Introduction}

Let $M$ be a finite volume Riemannian manifold or a finite CW-complex. One of the most powerful and well studied invariants of $M$ are its homology groups. These abelian groups decompose to a free and a torsion part. The ranks of the free parts give the rational Betti numbers of $M$. One can also compute the mod $p$ Betti numbers from the full homology group and the Euler characteristic, as the alternating sum of Betti numbers. For a finite sheeted cover of $M$, the Euler characteristic is multiplicative in the index of the cover, but already the rational Betti numbers can behave quite erratically in this respect. To smooth this behavior out, one can consider the $j$-th $\ell^2$ homology, and measure the dimension of the corresponding Hilbert space using von Neumann dimension (Atiyah \cite{Atiyah}). Alternatively, one can consider the \defin{growth} of the $j$-th rational Betti numbers over a suitable sequence of finite sheeted covers of $M$. As shown by the L\"uck approximation theorem \cite{Luck-1994-approx}, these two attempts give the same result, called the $j$-th $\ell^2$ Betti number of $M$. 
This result naturally led to studying the growth of other homological invariants as well, like the growth of the mod $p$ Betti numbers and the growth of the torsion. 

Over the years, the interest of the community has shifted from the study of the homology of spaces $M$ to the homology of their fundamental groups. Our results are also expressed in terms of group homology.

Since the torsion grows at most exponentially in the index of the cover, the right definition of the \defin{$j$-th torsion growth of $M$} is to take the logarithm of the cardinality of the $j$-th torsion group of the covering space, normalized by the index of the cover and consider its limit, for a suitable sequence of covers.

As of now, control of the torsion (and also the mod $p$ Betti numbers) is much weaker than for the rational Betti numbers. In particular, we do not have an analogue of the L{\"u}ck approximation theorem, even in the most natural settings (see paragraph \ref{S:speculations} below where we speculate about a natural analytic invariant that should describe Betti numbers growth in positive characteristics). More than that, strikingly, we do not know a single example of a \textbf{finitely presented} group with positive \textbf{first} homology torsion growth over a decreasing  sequence of  finite index normal subgroups with trivial intersection, while, at the same time, it is a well-accepted conjecture that all sequences of congruence subgroups of arithmetic hyperbolic $3$-manifold group have this property, see \cite{BSV,BrockDunfield,EMS}. 

In this paper, we prove vanishing results on the torsion growth of higher homology groups of a natural class of residually finite groups, using a new homotopical method called effective rebuilding. We  apply this method to non-uniform higher rank lattices, mapping class groups and various Artin groups, among others.
For the lattice case, we also obtain explicit estimates of the convergence for covering maps with respect to principal congruence subgroups.

\subsection{Main results}

We start by stating two theorems in their simplest forms that are good showcases for the more general and technical results of the paper. 
For a group $\Gamma$, its homology groups decompose as $H_j (\Gamma , \Z)=\Z^{b_j(\Gamma)}\oplus H_j (\Gamma , \Z)_{\tors}$ 
where $b_j(\Gamma)$ is the $j$-th Betti number and where $H_j (\Gamma , \Z)_{\tors}$ is the torsion subgroup. Let $|H_j (\Gamma , \Z)_{\tors} |$ denote its cardinality. 

Our first result deals with an arbitrary sequence of finite index subgroups. 

\begin{introth} \label{T00}
Let $\Gamma = \SL_d (\Z )$ ($d \geq 3$) and let $(\Gamma_n)_{n \in \mathbb N}$ be a sequence of pairwise different finite index subgroups of $\Gamma$. Then for every degree $j \leq d-2$, we have:  
\begin{equation} \label{E:11}
\frac{\log |H_j (\Gamma_n , \Z)_{\tors} |}{[\Gamma : \Gamma_n]} \longrightarrow 0, \quad \mbox{as } n \to \infty.
\end{equation}
\end{introth}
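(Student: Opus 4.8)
The plan is to produce, for each finite-index subgroup $\Gamma_n\leq\Gamma=\SL_d(\Z)$, an explicit CW model $X_n$ of a $(d-1)$-dimensional approximation of the classifying space $B\Gamma_n$ in which the number of $i$-cells, for every $i\leq d-1$, is sublinear in the index $m_n:=[\Gamma:\Gamma_n]$, while the cellular boundary operators have entries of size at most polynomial in $m_n$. Write $c_i(X_n)$ for the number of $i$-cells and $\|\partial_i\|$ for the largest absolute value of an entry of the $i$-th boundary matrix. For a chain complex of finitely generated free abelian groups the order of the torsion subgroup of $H_j$ is the product of the elementary divisors of $\partial_{j+1}$ that exceed $1$, so by Hadamard's inequality $\log|H_j(\Gamma_n,\Z)_{\tors}|$ is bounded above by $\min\bigl(c_j(X_n),c_{j+1}(X_n)\bigr)\cdot\bigl(\tfrac12\log c_j(X_n)+\log\|\partial_{j+1}\|\bigr)$. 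Once $X_n$ has the stated properties this is $o(m_n)$, which is \eqref{E:11}; note that the degree restriction $j\leq d-2$ enters here simply as the requirement that $j+1\leq d-1$, the range in which the cell counts can be kept sublinear.

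The construction of $X_n$ is an instance of effective rebuilding. The engine is the free abelian base case: every finite-index subgroup of $\Z^k$ is again free abelian of rank $k$ and so admits the $k$-torus as classifying space, with $\binom{k}{i}$ cells in dimension $i$ --- a count \emph{independent} of the index --- and the associated homotopies have polynomially bounded complexity. Through the rebuilding formalism this index-independence propagates first to finitely generated nilpotent groups, and then to the arithmetic parabolic subgroups $P\cap\Gamma$ of $\Gamma$, which are virtually of the form $U_P\rtimes\Gamma_{L_P}$ with $U_P$ a lattice in the nilpotent radical of $P$ and $\Gamma_{L_P}$ an arithmetic group of strictly smaller rank --- so the recursion closes by induction on $d$.

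Finally $\Gamma$ acts cocompactly on the Borel--Serre bordification $\overline X$ of its symmetric space, with $\Gamma\backslash\overline X$ a finite complex and the stabilizer of each cell an arithmetic parabolic subgroup (up to a finite extension). For $\Gamma_n$ the cover $\Gamma_n\backslash\overline X$ has $m_n$ times as many cells, but one rebuilds it one $\Gamma$-orbit of cells at a time: the preimage of a cell with stabilizer $S$ is controlled by the double cosets $S\backslash\Gamma/\Gamma_n$ together with the corresponding finite-index subgroups of $S$, and the index-independence of the base case, pushed through the recursion, keeps the total number of cells in dimensions $\leq d-1$ sublinear in $m_n$. The reason the rebuilding stays efficient exactly up to dimension $d-1$ is geometric: the rational Tits building of $\SL_d$ has dimension $d-2$, so below that the relevant boundary strata have parabolic stabilizers with at least two commuting unipotent $\Z$-directions, and it is this rank being $\geq 2$ that lets the torus rebuilding beat the growth of the index.

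I expect the principal obstacle to be the quantitative bookkeeping of the recursion: one has to carry the bounds on cell numbers \emph{and} on homotopy complexity --- sizes of attaching maps, hence entries of boundary operators --- simultaneously through every stage $\Z\rightsquigarrow$ nilpotent $\rightsquigarrow$ parabolic $\rightsquigarrow\SL_d(\Z)$, and check that the accumulated complexity stays polynomial in $m_n$. A second delicate point is that $\Gamma_n$ is an arbitrary finite-index subgroup, not a congruence or normal one, so the rebuilding must be uniform over the double-coset geometry $S\backslash\Gamma/\Gamma_n$ and cannot invoke the congruence subgroup property. Torsion in the cell stabilizers --- the finite subgroups of $\SL_d(\Z)$ --- is comparatively harmless: it can be absorbed into the cell count of an orbifold model of $X_n$, or else one first intersects $\Gamma_n$ with a fixed torsion-free finite-index subgroup and absorbs the discrepancy into an $O(\log m_n)$ additive error via a transfer argument.
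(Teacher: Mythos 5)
Your high-level plan --- start from the torus rebuilding for $\Z^k$, propagate it through nilpotent groups to parabolic stabilizers, and then use a Gabber/Hadamard inequality to convert cell-count and boundary-norm bounds into a torsion estimate --- is the same engine that drives the paper. The nilpotent base case (Theorem~\ref{thm-UnipRewiring}) and the torsion inequality (Proposition~\ref{Prop: Gabber}) are essentially as you describe them, and your observation that it suffices to bound the \emph{minimum} of $c_j$ and $c_{j+1}$ correctly anticipates that the top-dimensional cell count need not be controlled.

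However, there is a genuine gap in your choice of base space. The Borel--Serre bordification $\overline X$ is \emph{contractible} and $\Gamma$ acts on it with compact quotient, but the action is free on the interior $X$ (modulo torsion), so the cells lying over the interior of $\Gamma\backslash\overline X$ have trivial or finite stabilizers. For those cells the orbit count $|S\backslash\Gamma/\Gamma_n|$ grows \emph{linearly} in $[\Gamma:\Gamma_n]$, and there is nothing to compress; since the virtual cohomological dimension of $\SL_d(\Z)$ is $\binom{d}{2}\gg d-2$, those interior cells cannot be pushed above the degree range you care about. The paper avoids this by discarding the symmetric space entirely and using the rational Tits building $\Delta_\Q(\SL_d)$ as the $\Gamma$-complex $\Baseup$. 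Every cell of $\Baseup$ has a proper rational parabolic stabilizer, and every such stabilizer has an infinite unipotent normal subgroup; the Borel construction then produces a stack $\Totaldown\to\Gamma\backslash\Baseup$ whose total space has $(d-3)$-connected universal cover, which is exactly what is needed for homology in degrees $\le d-2$. This is where the degree bound $j\le d-2$ really comes from: $\Delta_\Q(\SL_d)$ has dimension $d-2$ and is $(d-3)$-connected (Solomon--Tits), not from a sublinearity threshold in dimension $d-1$.

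Second, you flag that $\Gamma_n$ is an arbitrary finite-index subgroup as a "delicate point" but do not say how to handle it. This is the other essential ingredient. The paper sets up the cheap rebuilding property as a statement about Farber sequences (Definition~\ref{def-CheapReb}), and then invokes the Stuck--Zimmer theorem, via~\cite{7samurai}, to show that for lattices in higher-rank simple Lie groups \emph{any} injective sequence of finite-index subgroups is automatically a Farber sequence. That is what permits the jump from the Farber statement (Theorem~\ref{prop-CheapRebTorsion}) to the unrestricted statement of Theorem~\ref{T00}; Lemma~\ref{lem-FarberIntersection} then does the double-coset bookkeeping you anticipate. Without this, the argument only gives the weaker statement for Farber sequences, which for principal congruence subgroups is captured more precisely by the quantitative Theorem~\ref{Tmain}/Theorem~\ref{TorsionSL}.
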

There are deep number-theoretical motivations to study torsion in the homology of arithmetic groups. It has long been known to be related to algebraic K-theory and the Vandiver conjecture, see e.g. \cite{Soule-99,Emery} for an account on these relations. 
More recently, it has attracted further attention since, thanks to the deep work of Scholze \cite{Scholze}, one can roughly say that mod $p$ torsion classes in the homology of congruence subgroups of $\SL_d(\Z)$
parametrize field extensions $K/\mathbb{Q}$ whose Galois groups are subgroups of $\mathrm{PGL}_d (\overline{\mathbb{F}}_p)$. 
Theorem~\ref{T00}  confirms a part of a general related conjecture of Bergeron and Venkatesh that postulates that the limit in \eqref{E:11} is zero for all $d \geq 3$ and all $j$ {\bf except} when $(d,j) = (3,2)$ or $(d,j)= (4,4)$, see \cite{BV,EMS,AGMY}. 

When restricting our attention to \defin{principal congruence subgroups}
$$\Gamma (N) = \ker \left( \SL_d (\Z ) \to \SL_d (\Z / N \Z ) \right)$$
our second result gives quantitative upper bounds on the torsion growth. 

\begin{introth} \label{T0}
Let $\Gamma = \SL_d (\Z )$ ($d \geq 3$). Then for all $N \geq 1$ and $j \leq d-2$ we have
\begin{equation} \label{E:12}
\frac{\log |H_j (\Gamma (N) , \Z)_{\tors} |}{[\Gamma:\Gamma(N)]} = O \left( \frac{ \log N}{N^{d-1}} \right), \quad \mbox{as } N \to \infty.
\end{equation}
\end{introth}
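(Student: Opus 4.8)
The plan is to deduce the quantitative bound \eqref{E:12} from the effective rebuilding machinery announced in the abstract, applied to the chain of congruence subgroups $\Gamma(N) \leq \SL_d(\Z)$. First I would recall that $\SL_d(\Z)$ acts properly and cocompactly off a codimension-$(d-1)$ subset — more precisely, there is a classifying space whose $(d-2)$-skeleton is essentially the relevant object, so that torsion in $H_j$ for $j \leq d-2$ is controlled by the combinatorics of a finite CW-model. The key point is that effective rebuilding produces, for each finite-index subgroup $\Gamma' \leq \Gamma$, a classifying space $B\Gamma'$ together with a quantitative bound on the number of cells in each dimension and on the \emph{complexity of the attaching maps} (word lengths of the boundary words), both measured relative to the index $[\Gamma:\Gamma']$ and to geometric data of the ambient group. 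Given such a model, a standard estimate bounds $\log|H_j(\Gamma',\Z)_{\tors}|$ by (number of $j$-cells) $\times$ (number of $(j{+}1)$-cells) $\times \log$(maximal entry of the boundary matrix), and the last factor is linear in the attaching-map complexity.

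The second step is to make these three quantities explicit for $\Gamma' = \Gamma(N)$. The index is $[\Gamma:\Gamma(N)] = |\SL_d(\Z/N\Z)| \asymp N^{d^2-1}$ up to bounded multiplicative constants. The crucial input — and this is where the arithmetic enters — is that $\Gamma(N)$ has a \emph{fundamental domain for its action on the symmetric space (or on the relevant Borel--Serre type complex) that can be built out of only $O(N^{d^2-d})$ translates of a fixed compact piece}, i.e.\ one gains a factor $N^{d-1}$ over the naive count $[\Gamma:\Gamma(N)] \asymp N^{d^2-1}$. This reflects the fact that the cusp cross-sections of $\SL_d$ are nilmanifolds of dimension $\binom{d}{2}$ whose covers corresponding to $\Gamma(N)$ unwrap only in the directions transverse to a codimension-$(d-1)$ locus; equivalently, the "rebuilding" of the nilpotent cusp groups $N(\Z) \to N(\Z)$ along the congruence filtration is cheap because free abelian (and nilpotent) groups rebuild with sub-linear complexity — exactly the base case of the recursion mentioned in the abstract. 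So both the cell counts in the rebuilt $B\Gamma(N)$ and the attaching complexities grow like $N^{d^2-d}$ times polynomial-in-$\log N$ corrections, while the index grows like $N^{d^2-1}$.

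Assembling: the torsion bound gives
\begin{equation*}
\log|H_j(\Gamma(N),\Z)_{\tors}| = O\!\left( N^{d^2-d} \cdot \log N \right),
\end{equation*}
and dividing by $[\Gamma:\Gamma(N)] \asymp N^{d^2-1}$ yields precisely $O(N^{-(d-1)}\log N)$, which is \eqref{E:12}. The restriction $j \leq d-2$ is used exactly so that the $(j{+}1)$-cells still lie in the "rebuildable" skeleton, below the codimension-$(d-1)$ threshold where the cohomological dimension of the cusps would force genuinely exponential behaviour (this is the borderline degree where the Bergeron--Venkatesh conjecture predicts nonvanishing).

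The main obstacle, as I see it, is the bookkeeping in the second step: one must track not just the \emph{number} of cells produced by the recursive rebuilding but the \emph{word-length blow-up} of the attaching maps through each stage of the recursion (abelian cusp groups $\to$ parabolic subgroups $\to$ the full lattice), and show this blow-up stays polynomial — ideally $O(\log N)$ — rather than accumulating an extra power of $N$. Getting the clean exponent $d-1$ (rather than, say, $d-2$) requires the rebuilding of the abelian/nilpotent pieces to be genuinely optimal: one needs the free-abelian base case to rebuild $\Z^r \supseteq (N\Z)^r$ with complexity $O(r \log N)$ and cell count $O(N^{r-1})$ — saving a full factor of $N$ — and then to propagate this saving through the semidirect-product structure of the parabolics without loss. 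I would expect the technical heart of the paper's general rebuilding theorem to be precisely this optimality statement, and the proof of Theorem B to be a relatively direct specialization once that is in hand.
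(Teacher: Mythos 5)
Your overall strategy coincides with the paper's: rebuild the Borel stack over the rational Tits building (which is $(d-3)$-connected) using the cheap rebuilding of the nilpotent stabilizers, then apply a determinantal estimate of Gabber type and divide by the index. The arithmetic ($N^{d^2-d}\log N$ divided by $N^{d^2-1}\asymp[\Gamma:\Gamma(N)]$) is exactly the paper's computation, and you correctly identify that the restriction $j\leq d-2$ comes from the connectivity of the building. However, three points should be flagged.

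First, the torsion inequality is misstated. You write that $\log|H_j|_{\tors}$ is bounded by $(\#\,j\text{-cells})\times(\#\,(j{+}1)\text{-cells})\times\log(\text{max entry})$. Taken literally this is quadratic in the cell counts and would give $N^{2(d^2-d)}\log N$, which after dividing by $[\Gamma:\Gamma(N)]\asymp N^{d^2-1}$ \emph{diverges} (the exponent is $(d-1)^2>0$). Gabber's proposition, which is what the paper uses, is \emph{linear}:
\[
\log|H_j(C_\bullet)_{\tors}|\;\le\;(\mathrm{rank}\,C_j)\cdot\sup\bigl(\log\|\partial_{j+1}\|,0\bigr),
\]
with the dimensions and entry sizes absorbed into the single logarithm of the operator norm. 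Your final assembly uses the correct linear bound, so this is an internal inconsistency rather than a fatal mistake, but as stated the intermediate inequality does not give the theorem.

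Second, the geometric picture is slightly off in two respects. The critical factor $N^{d-1}$ comes from the \emph{minimal} unipotent radical, namely that of a maximal parabolic (stabilizer of a line or a hyperplane), of dimension $d-1$; the $\binom{d}{2}$-dimensional nilmanifold you cite corresponds to the Borel subgroup and gives a \emph{larger} saving $N^{\binom{d}{2}}$, which is not the bottleneck. More precisely, the cell count of the rebuilt complex is $\sum_e[\Gamma:\Gamma(N)]/[Z_e:\Gamma(N)\cap Z_e]$, and the dominant term is the one where $\dim Z_e$ is smallest, i.e.\ $d-1$. Third, the rebuilding of the torus $\Z^r\supseteq(N\Z)^r$ produces $O(1)$ cells (the cell count of the rebuilt $K(\Lambda_1,1)$ is $\kappa|Y_0^{(j)}|$, independent of $N$), not $O(N^{r-1})$; the $N^{d^2-d}$ cells live in the \emph{base}, i.e.\ the $\Gamma(N)$-quotient of the building, with $O(1)$-sized fibers. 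These are adjustments of intuition, not of the final exponent, which you get right.
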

We refer to Section \ref{S10} for the more general Theorem~\ref{TorsionSL} about non-uniform lattices in semi-simple Lie groups. 

The first homology case of Theorem \ref{T00}, that is, the case $j=1$, is known. In particular, it follows from \cite[Theorem 4]{AGN}.
Using the less elementary, but more classical machinery of \defin{congruence subgroup property} (CSP) on these groups, they also show the stronger estimate that there exists a constant $c$ depending only on $d$ such that for all finite index subgroups $\Gamma_0$ of $\mathrm{SL}_d (\mathbb{Z})$ we have 
\begin{equation}\label{eq: str estimate degree 1}
|H_1 (\Gamma_0 , \mathbb{Z} )_{\tors}| = |H_1 (\Gamma_0 , \mathbb{Z} )| = 
|\Gamma_0^{\rm ab}| \leq [\mathrm{SL}_d (\mathbb{Z}) : \Gamma_0 ]^c,
\end{equation}
 see \cite[\S 5.1]{AGN}. 

For $d = 3$, we expect that the degree bound $j \leq d-2$ is sharp for both theorems above. 
The general conjecture of \cite{BV} indeed postulates an exponential growth of torsion for $\SL_3 (\Z )$ in degree $2$. As an indication, big torsion groups do show up in the recent computational work of Ash, Gunnells, McConnell and Yasaki \cite{AGMY}. The degree bound $j \leq d-2$ is probably not optimal for higher degree. We shall see that it is a natural stopping point for our approach, though. 

The non-effective version of our methods already gives applications for the growth of mod $p$ Betti numbers for any prime $p$. 
Since we obtain good control over the number of cells of finite covers, it follows that the growth of the $j$-th mod $p$ Betti number is zero
under the same conditions as in Theorem \ref{T00}.
The supplementary control offered by principal congruence subgroups leads, as in Theorem \ref{T0}, to explicit estimates on the growth of the mod $p$ Betti numbers (see Theorem~\ref{TorsionSL}):

\begin{introth} \label{T000}
Let $\Gamma = \SL_d (\Z )$ ($d \geq 3$) and let $(\Gamma_n)_{n \in \mathbb N}$ be a sequence of pairwise distinct finite index subgroups of $\Gamma$. Then, for every field $K$ and for every degree $j \leq d-2$, we have  
\begin{equation}
\label{eq: torsion growth SL d}
\frac{\dim_K H_j (\Gamma_n, K)}{[\Gamma : \Gamma_n]} \longrightarrow 0, \quad \mbox{as } n \to \infty.
\end{equation}
For the principal congruence subgroups $\Gamma (N)$, we have
\begin{equation}
\frac{\dim_K H_j (\Gamma (N), K) }{[\Gamma:\Gamma(N)]} =   O \left( \frac{1}{N^{d - 1}} \right), \quad \mbox{as } N \to \infty. 
\end{equation} 
\end{introth}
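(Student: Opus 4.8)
The plan is to deduce both assertions from a single principle --- that for a finite index subgroup $\Gamma_0\le\Gamma$ and any field $K$, $\dim_K H_j(\Gamma_0,K)$ is bounded by the number of cells in degrees $\le j$ of a classifying space for $\Gamma_0$ --- and then to feed in the cell counts furnished by effective rebuilding applied to $\SL_d(\Z)$. Three soft reductions come first. If $\mathrm{char}\,K=0$ then $\dim_K H_j(\Gamma_0,K)=b_j(\Gamma_0)\le\dim_{\mathbb{F}_p}H_j(\Gamma_0,\mathbb{F}_p)$ for any prime $p$ by universal coefficients, and if $\mathrm{char}\,K=p$ then $\dim_K H_j(\Gamma_0,K)=\dim_{\mathbb{F}_p}H_j(\Gamma_0,\mathbb{F}_p)$ by base change along $K/\mathbb{F}_p$; so it is enough to bound $\dim_{\mathbb{F}_p}H_j(\Gamma_0,\mathbb{F}_p)$, and in fact the bound I will obtain will not see the coefficient field at all. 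Next, $\SL_d(\Z)$ has torsion, so there is no finite $K(\Gamma_0,1)$ in general; but $\Gamma(3)$ is torsion free of some index $M=M(d)$ in $\Gamma$, whence $\Gamma_1:=\Gamma_0\cap\Gamma(3)$ is torsion free, normal in $\Gamma_0$, of index $\le M$, and the Lyndon--Hochschild--Serre spectral sequence of $\Gamma_1\trianglelefteq\Gamma_0$ (quotient of order $\le M$) bounds $\dim_K H_j(\Gamma_0,K)$ by a constant $C(M,j)$ times $\sum_{q\le j}\dim_K H_q(\Gamma_1,K)$. Finally, for the torsion free group $\Gamma_1$ one has $\dim_K H_q(\Gamma_1,K)\le c_q(Y_{\Gamma_1})$ for any finite $K(\Gamma_1,1)$ complex $Y_{\Gamma_1}$, $c_q$ denoting the number of $q$-cells. (For $\Gamma_0=\Gamma(N)$, $N\ge3$, the group is already torsion free, so one may take $\Gamma_1=\Gamma_0$ and $C(M,j)=1$.)

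The substantive input is the existence, for every finite index $\Gamma_1\le\SL_d(\Z)$, of a finite $K(\Gamma_1,1)$ whose number of $q$-cells is, for $q\le d-2$, sublinear in $[\Gamma:\Gamma_1]$, and, for $\Gamma_1=\Gamma(N)$, of size $O\!\left([\Gamma:\Gamma(N)]/N^{d-1}\right)$. This is precisely the output of effective rebuilding for $\SL_d(\Z)$: one starts from free abelian groups, which rebuild with a number of cells bounded in each degree independently of the index, and runs the recursion through the Borel--Serre bordification, whose cusp faces are nilmanifold bundles over locally symmetric spaces of smaller $\Q$-rank, so that the inductive hypothesis covers the base while the nilmanifold fibres are rebuilt cheaply. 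The threshold $d-2$ is $\Q\text{-rank}(\SL_d)-1$: the recursion traverses the $d-1$ simple parabolic directions and loses one homological degree at each stage --- the ``natural stopping point'' mentioned in the introduction --- and the rate $N^{-(d-1)}$ records that passing to congruence level $N$ contracts the $\Q$-rank directions by a factor $\asymp N^{d-1}$ once the nilmanifold fibres have been normalised. The precise version, for arbitrary non-uniform lattices in semisimple Lie groups, will be Theorem~\ref{TorsionSL} of Section~\ref{S10}.

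Assembling the two displays completes the argument. For a sequence $(\Gamma_n)$ of pairwise distinct finite index subgroups, $[\Gamma:\Gamma_n]\to\infty$, hence $[\Gamma:\Gamma_n\cap\Gamma(3)]\to\infty$, so for every $j\le d-2$,
\[
\frac{\dim_K H_j(\Gamma_n,K)}{[\Gamma:\Gamma_n]}\le C(M,j)\sum_{q\le j}\frac{c_q(Y_{\Gamma_n\cap\Gamma(3)})}{[\Gamma:\Gamma_n]}\longrightarrow 0,
\]
uniformly in $K$; and for principal congruence subgroups $\dim_K H_j(\Gamma(N),K)/[\Gamma:\Gamma(N)]\le c_j(Y_{\Gamma(N)})/[\Gamma:\Gamma(N)]=O(N^{-(d-1)})$. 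The hard part will be the middle step: carrying out the effective rebuilding of $\SL_d(\Z)$ so that each degree $\le d-2$ acquires only sublinearly many cells --- and polynomially few at congruence level $N$ --- while the complex stays a genuine $K(\pi,1)$; by comparison the first and last steps are routine, and in particular uniformity over all fields $K$ is automatic once the bound is phrased in terms of cell counts.
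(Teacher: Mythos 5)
Your reductions to field-independent cell-count bounds, and to the torsion-free subgroup $\Gamma_n\cap\Gamma(3)$ via Lyndon--Hochschild--Serre, are legitimate, and the principal congruence case tracks Theorem~\ref{TorsionSL} faithfully. The gap is in the first assertion, at the passage from ``$(\Gamma_n)$ pairwise distinct, hence $[\Gamma:\Gamma_n]\to\infty$'' to $c_q(Y_{\Gamma_n\cap\Gamma(3)})/[\Gamma:\Gamma_n]\to 0$. The ``substantive input'' you invoke --- that for every finite-index $\Gamma_1\le\SL_d(\Z)$ there is a finite $K(\Gamma_1,1)$ with sublinearly many cells in degrees $\le d-2$ --- is not what effective rebuilding supplies, nor is it a coherent asymptotic statement about a single subgroup. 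Theorem~\ref{Tmain} produces a cell count of order
\[
\kappa\sum_{e\in \Basedown^{(\indice)}}\frac{[\Gamma:\Gamma_1]}{[Z_e:\Gamma_1\cap Z_e]},
\]
where $Z_e$ is the unipotent radical of the parabolic cell stabilizer, so the savings factor is the depth of $\Gamma_1$ in the unipotent directions, not the overall index. For $\Gamma_1=\Gamma(N)$ one has $[Z_e:\Gamma(N)\cap Z_e]\ge N$ for every $e$, which is exactly why the second assertion follows. But the non-principal congruence subgroup $\Gamma_0(N)$ contains the full upper-triangular unipotent group, so $[Z_e:\Gamma_0(N)\cap Z_e]=1$ for some $e$, and the cell bound for those directions is simply proportional to $[\Gamma:\Gamma_0(N)]$, with no gain at all. (Also note Theorem~\ref{Tmain} requires $\Gamma_1$ to be normal, which $\Gamma_0(N)$ is not; non-normal subgroups are handled by a finer averaging.) Thus $[\Gamma:\Gamma_n]\to\infty$ does not by itself force the cell ratio to vanish, and your final display is asserted rather than proved.

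What closes the gap in the paper is the Farber-sequence formalism of Section~\ref{S:Farber} together with an external theorem: the Stuck--Zimmer theorem (via~\cite{7samurai}) shows that \emph{any} injective sequence of finite-index subgroups of a higher-rank lattice is automatically a Farber sequence. Lemma~\ref{lem-FarberIntersection} then converts the Farber property of $(\Gamma_n)$ in $\Gamma$ into the statement that, for most conjugates $\gamma$ and all large $n$, the intersections $\gamma\Gamma_n\gamma^{-1}\cap\Gamma_\cellup$ fall into a prescribed Farber neighborhood of the stabilizer $\Gamma_\cellup$, hence cut its normal unipotent subgroup in large index; the few ``bad'' conjugates are a vanishing proportion. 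This averaging over conjugates is Step~2 in the proof of Theorem~\ref{thm-AdjRebStack}, and it is precisely what makes the rebuilt cell count eventually $\le T^{-1}$ times the unrebuilt one for arbitrary $T$. The first assertion then follows from Theorem~\ref{thm-LatticesRebuilding} and Theorem~\ref{prop-CheapRebTorsion}. Without invoking the Farber property and the Stuck--Zimmer theorem, the quantitative rebuilding bound alone does not yield the first assertion of Theorem~\ref{T000}.
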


Note that  the $j=1$ case of \eqref{eq: torsion growth SL d} again follows from known results. The first mod $p$ Betti number is a lower bound for the minimal number of generators, hence the rank gradient dominates the growth of the first mod $p$ Betti number.
 It is shown for instance in
 \cite[Theorem 4.14(3)]{Carderi-Gaboriau-delaSalle} that for $\SL_d (\Z )$ ($d \geq 3$), the rank gradient vanishes for arbitrary injective sequences
    of finite index subgroups, using the work of Gaboriau on cost \cite{Gab00}. 
 In a somewhat different direction, Fraczyk proved that, for the finite field $\mathbb F_2$, the estimate $\frac{\dim_{ \mathbb F_2} H_1(\Gamma,K)}{{\rm vol}(\Gamma\bs G)}=o(1)$ holds uniformly for all lattices in any simple higher rank real Lie group $G$ \cite{F18}. For non-uniform lattices in higher rank, Lubotzky and Slutsky \cite{LS} use the congruence subgroup property to give explicit and near optimal estimates on the rank and hence on the mod $p$ first Betti numbers, as well, but for uniform lattices the best known result follows from Fraczyk's work.
We wonder whether the methods implemented in our paper can be adapted to deal similarly with arbitrary sequences of lattices in a simple higher rank real Lie group $G$.
For the rank $1$ case, see the nice papers \cite{Sauer-Vol-Hom-growth-2016, Bader-Gelander-Sauer-2020}.

The results of Theorem~\ref{T000}  are new for higher homology groups.
The closest result we are aware of is due to  Calegari and Emerton and concerns  $p$-adic chains like $p^n$-congruence subgroups 
$\Gamma(p^n)$ in $\SL_d(\Z)$, see \cite{CalegariEmerton09,CalegariEmerton11,GGD14}
where the existence of a limit is established (not its value)  and the error term is estimated.
In the small degrees that our methods address, 
our error term is better, but note that their paper also deals with the difficult case around the middle dimension.
Recent works \cite{CE,C}  and conjectures (personal communication) of Calegari and Emerton suggest that in reality the bound on $|H_j (\Gamma (N), \Z)_{\rm tors}|$ (rather than its logarithm) should be polynomial in $N$ in degrees $\leq d-2$.
Let us also mention the papers of Sauer and of Kar, Kropholler and Nikolov \cite{Sauer-Vol-Hom-growth-2016, KKN-2017}, in which they prove the vanishing of the $j$-th torsion growth for a wide class of amenable groups.

\medskip
Although the results above are homological, we build homotopical machinery to obtain them. This approach traces back to \cite{Gab00,KarNikolov,BK}. Since we use a general topological approach, our work also applies to a wide class of groups such as torsion-free nilpotent groups, infinite polycyclic groups, Baumslag-Solitar groups $\mathrm{BS}(1,n)$ and $\mathrm{BS}(n,n)$ for any non-zero integer $n$, residually finite Artin groups, and does not need or assume underlying deep results like the CSP.

In particular, our method applies to mapping class groups of higher genus surfaces. For $g,b\in \mathbb N$ let 
$\indice (g,b)=2g-2$ if $g>0$ and $b=0$, $\indice(g,b)=2g-3+b$ if $g>0$ and $b>0$, and $\indice(g,b)=b-4$ if $g=0$.

\begin{introth}\label{T0000} Let ${S}$ be an orientable surface of genus $g>0$ with $b$ boundary components and let $\Gamma=\mathcal{MCG}({S})$ be its mapping class group. Let $(\Gamma_n)_{n\in\mathbb N}$ be a Farber sequence of finite index subgroups of $\Gamma$. Then for every field coefficient $K$ and every degree $j\leq \indice (g,b)$,  we have 
\begin{equation}
\frac{\dim_K H_j (\Gamma_n, K)}{[\Gamma : \Gamma_n]}  \longrightarrow 0 \quad \mbox{and} \quad \frac{\log |H_j (\Gamma_n , \Z)_{\tors} |}{[\Gamma : \Gamma_n]} \longrightarrow 0, \quad \mbox{as } n \to \infty.
\end{equation} 
\end{introth}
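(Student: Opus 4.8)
The plan is to feed mapping class groups into the effective rebuilding method introduced in this paper, running an induction on the complexity $\xi(S)=3g-3+b$ of the surface. Two structural facts make the induction possible. First, the Birman exact sequences realize $\mathcal{MCG}(S_{g,b})$ as an iterated extension by groups the method handles directly: capping a boundary component is a central extension by $\Z$ (the boundary Dehn twist), and the point-pushing sequence $1\to\pi_1(S')\to\mathcal{MCG}(S)\to\mathcal{MCG}(S'')\to1$ presents $\mathcal{MCG}(S)$ as an extension of the mapping class group of a surface with one fewer marked point by a surface group $\pi_1(S')$ --- free whenever $S'$ still has a puncture or boundary, and a closed surface group only at the last step. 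Second, to reduce the genus --- which the point-forgetting maps cannot do, and which is unavoidable for the closed surfaces $b=0$ --- one uses the action of $\mathcal{MCG}(S)$ on the curve complex of $S$ (or, when $b>0$, the arc complex): this complex is highly connected, its $\mathcal{MCG}(S)$-quotient is finite, and the stabilizer of a simplex is, up to a finite group and an abelian subgroup of Dehn twists, the mapping class group of the cut subsurface, of strictly smaller complexity.

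The induction then runs as follows. The base cases are the small surfaces --- the torus, $S_{1,1}$, $S_{0,4}$, and so on --- whose mapping class groups are virtually free, hence among the elementary inputs of the rebuilding method and controlled in degree $0$. For the inductive step one starts with a Farber sequence $(\Gamma_n)$ in $\mathcal{MCG}(S)$: its images in the Birman quotients are again Farber sequences, and so are the traces $\Gamma_n\cap\Stab(\sigma)$ in the simplex stabilizers, so the inductive hypothesis provides effectively rebuilt small classifying spaces for all the relevant finite index subgroups of lower-complexity mapping class groups. Each extension by $\Z$, by a free group, or by a surface group is one of the admissible rebuilding moves; and the connectivity of the curve/arc complex lets one glue the rebuilt classifying spaces of the groups $\Gamma_n\cap\Stab(\sigma)$, over a set of orbit representatives, into a classifying space for $\Gamma_n$ that is a correct model in the range of degrees at issue. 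Propagating the bounds on the number of cells and on the complexity of attaching maps through these steps yields, in degrees $\le\indice(g,b)+1$, a classifying space for $\Gamma_n$ with cell count and attaching complexity controlled sublinearly in $[\Gamma:\Gamma_n]$; the two conclusions then follow from the standard estimates that bound $\dim_K H_j(\Gamma_n,K)$ by the number of $j$-cells and $\log|H_j(\Gamma_n,\Z)_{\tors}|$ by the number of $(j+1)$-cells times the logarithm of the attaching complexity.

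The degree bound itself comes out of the bookkeeping: each $\Z$ or free layer costs one homological degree, the single closed-surface layer costs two, and the curve/arc-complex reduction costs exactly the connectivity defect of the relevant complex. Feeding in the sharp connectivity of the arc and curve complexes of $S_{g,b}$ --- which is, up to a bounded correction, one half of the virtual cohomological dimension --- this collapses precisely to $j\le\indice(g,b)$ in the three regimes of the statement, the corrections $b-4$ for $g=0$ and $2g-2$ for $b=0$ reflecting the respective defects. The bound is sharp in general: for $\mathcal{MCG}(S_{1,0})=\SL_2(\Z)$ and for $\mathcal{MCG}(S_{0,4})$ the first Betti number of finite index subgroups already grows linearly in the index, matching $\indice(1,0)=\indice(0,4)=0$.

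The hard part is the effective --- quantitative --- bookkeeping at the curve/arc-complex step. For the Birman layers the kernels are normal and free or abelian, so transferring a Farber sequence to kernel and quotient while keeping the combinatorics jointly controlled is routine; but at the curve-complex step the simplex stabilizers are not normal, and one must assemble the recursively rebuilt classifying spaces of the many subgroups $\Gamma_n\cap\Stab(\sigma)$ along the quotient complex and verify that the total cell count and attaching complexity stay sublinear in the index. This is where the connectivity estimate is genuinely used and where the exact value of $\indice(g,b)$ --- rather than a weaker "one degree per layer" bound --- gets pinned down. A secondary technical point is that the closed case is not literally an instance of the point-forgetting tower; it is handled either by the curve-complex reduction above or, alternatively, by a Leray--Serre comparison through $1\to\pi_1(S_g)\to\mathcal{MCG}(S_{g,1})\to\mathcal{MCG}(S_g)\to1$, using that the fibre $S_g$ contributes to integral homology only in degrees $\le 2$.
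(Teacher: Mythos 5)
Your proposal correctly identifies the curve complex action of $\MCG(S)$ as the key geometric input, but the surrounding inductive scaffolding both contains a gap and is unnecessary.

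The gap is in the Birman-sequence layers. You list ``extension by $\Z$, by a free group, or by a surface group'' as admissible rebuilding moves, but the machinery of this paper does \emph{not} accommodate extensions by non-abelian free groups: Corollary~\ref{cor-CheapRebGroups}~\eqref{it: cheap k-reb. normal subgroup and F k+1 quotient} promotes cheap rebuilding from a normal subgroup $N$ to $\Gamma$ only when $N$ \emph{itself} has the cheap rebuilding property, and non-abelian finitely generated free groups fail cheap $1$-rebuilding because $b_1^{(2)}(\FF_r;\Q)=r-1\neq 0$ (as observed right after Theorem~\ref{Tintrotors}). Since the point-pushing kernel $\pi_1(S')$ is precisely such a free group once $S'$ has positive genus or enough punctures, the point-forgetting tower cannot be traversed with this tool. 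This is not a technicality you can absorb into bookkeeping; it is a structural obstruction.

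The good news is that the induction on $\xi(S)$ is not needed at all, and the paper's actual argument sidesteps the Birman sequences entirely. One applies the bootstrapping criterion (Theorem~\ref{thm-AdjRebStack}) \emph{once}, to the $\MCG(S)$-action on the (barycentrically subdivided) curve complex $\mathcal{C}(S)$. The stabilizer of a multicurve $\mathcal F$ fits into
\[
1\to\Z^m\to \Stab\mathcal F\to \MCG'(S\setminus\mathcal F)\to 1,
\]
and the only facts needed are: (i) the free abelian Dehn twist subgroup $\Z^m$ is normal and has the cheap $\indice$-rebuilding property for every $\indice$; and (ii) the quotient $\MCG'(S\setminus\mathcal F)$ is of type $F_\infty$ by Ivanov's theorem. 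Corollary~\ref{cor-CheapRebGroups}~\eqref{it: cheap k-reb. normal subgroup and F k+1 quotient} then gives cheap $\indice$-rebuilding for every cell stabilizer without any recursion on the cut surface. Combined with Harer's $(\indice(g,b)-1)$-connectivity of $\mathcal{C}(S)$, Theorem~\ref{thm-AdjRebStack} yields the cheap $\indice(g,b)$-rebuilding property for $\MCG(S)$, and Theorem~\ref{prop-CheapRebTorsion} finishes the proof. There are no base cases to set up and no Farber-sequence transfer through Birman quotients to control; the ``hard part'' you flag at the curve-complex step is precisely what the quantitative Theorem~\ref{thm-AdjRebStack} (via Lemma~\ref{lem-FarberIntersection}) is engineered to handle in a single application.
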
See Definition~\ref{def: Farber sequence} for the definition of a Farber sequence. Examples include decreasing sequences of finite index normal subgroups with trivial intersection. 
Observe that for lattices in higher rank simple Lie groups, it follows from the Stuck--Zimmer Theorem \cite{StuckZimmer} via \cite{7samurai} that any injective sequence is indeed a Farber sequence.
Both $\SL_d(\mathbb Z)$ and $\mathcal{MCG}({S})$ are then handled using the same method. 

\subsection{Structure, arguments and further results}

Throughout this article, any group action on a CW-complex is required to respect the CW-complex structure.
Our starting point is the obvious observation that, while the number of cells of a finite index cover  $\overline{\Totaldown}$ of a CW-complex $\Totaldown$ is proportional to the index, it turns out that, for tori, the space $\overline{\Totaldown}$ can be \defin{rebuilt} with a much simpler cell structure.

At the origin  of the theory of ``(co)homology of groups'',
the standard homological invariants of a group $\Gamma$ are obtained as the homological invariants of a (and hence of any) compact space $\Totaldown$ with fundamental group $\Gamma$ and contractible universal cover $\widetilde{\Totaldown}$, whenever such a $\Totaldown$ exists.
In fact, instead of contractibility, the $\indice$-connectedness of $\widetilde{\Totaldown}$ is enough to compute the homological invariants up to degree $\indice$ from the cellular chain complex of $\Totaldown$.

Our strategy is to exploit the existence, for certain groups $\Gamma$ (like those appearing in Theorems \ref{T00} and \ref{T0000}), of such a complex $\Totaldown$ with nicely embedded tori-like sub-complexes so as to capitalize on the ``obvious observation'' above.

\medskip
Let $\Gamma$ be a countable group acting on a CW-complex $\Baseup$ and let $\indice$ be a positive integer. Recall that a topological space $X$ is $\indice$-connected (resp. $\indice$-aspherical) if the homotopy groups $\pi_i(X,x)$ are trivial for $i=0,\ldots, \indice$ (resp. $i=2,\ldots,\indice$).
Suppose that the CW-complex $\Baseup$ is $\indice$-connected and that for every cell $\cellup\subseteq \Baseup$ the stabilizer $\Gamma_\cellup$ acts trivially on the cell $\cellup$. We may be led to consider a finite index subgroup first or to consider a barycentric subdivision so as to ensure this last condition.
A general construction, called the \defin{Borel construction} then associates to the action $\Gamma \acting \Baseup$ an $\indice$-aspherical CW-complex $\Totaldown$ whose fundamental group is $\Gamma$.
The construction in fact naturally leads to a \defin{stack of CW-complexes} $\Pi: \Totaldown\to \Gamma \bs \Baseup$, and, roughly speaking, the fiber over a cell $\Gamma \cellup$ of $\Gamma \bs \Baseup$ is a classifying space for the stabilizer $\Gamma_{\cellup}$ of $\cellup$ under the action $\Gamma\acting \Baseup$. 
\begin{center}
\begin{tikzcd}[column sep=1.2em]
\Gamma_{\cellup}\acting \widetilde{\Pi^{-1}(\Gamma \cellup)}
\arrow[d]&\Gamma\acting \widetilde{\Totaldown} \arrow[r]  \arrow[d] 
&\Gamma\acting\Baseup \arrow[d]
\\
B(\Gamma_\cellup,1)=\Pi^{-1}(\Gamma \cellup)  \arrow[r]& \Totaldown
 \arrow[r, "\Pi"]&    \Gamma\bs\Baseup
\end{tikzcd}
\end{center}
We briefly recall  all these notions in Section~\ref{Sect: Borel construction}  (in particular Proposition~\ref{prop: existence of a stack}) as it will be useful for us. We refer to the excellent book of Geoghegan \cite[\S 6.1]{Geoghegan} for more details. 

At the risk of a spoiler, we indicate right away that we shall use the complex $\Baseup$ to be the {\em rational Tits building} $\Baseup$ for $\Gamma=\SL_d(\Z)$. 
Note that this complex $\Baseup$ is $(d-3)$-connected (see Section~\ref{sect: princ. cong. in semi-simple}), which is the sole reason why we can only treat the homology groups in the range $j \leq d-2$ in Theorems~\ref{T0} and \ref{T00}. Similarly in Theorem~\ref{T0000} for the mapping class group $\MCG(S)$, for which we shall use the {\em curve complex} as $\Baseup$ (see Section~\ref{Sect: Application to mapping class groups}). 
The key aspect of our proof is that the stabilizers $\Gamma_\cellup$ in these actions $\Gamma\acting \Baseup$ contain non-trivial free abelian normal subgroups, so that each fiber is itself a torus bundle. 
However, in its original form, the Borel construction is not yet suitable for us. 
More precisely, in order to build  ``nicer classifying spaces'' for the finite index subgroups that allow 
exploiting the structure of the stabilizers
 out of the total space $\Totaldown$ of the stack $\Pi: \Totaldown\to \Gamma \backslash \Baseup$, we make use of Geoghegan's \defin{Rebuilding Lemma} \cite[Prop. 6.1.4]{Geoghegan} (see Proposition \ref{prop: rebuilding} below). This is not yet enough. In order to get a grip on the torsion part of the homology, 
we make use of a proposition attributed to Gabber (see \cite[Prop. 3, p. 214]{Soule-99}):
\begin{equation}\log |H_j (\Totaldown , \Z)_{\rm tors} | \leq 
(\# \text{ of $j$-cells})
 \times \sup ( \log ||\p_{j+1}|| , 0).
\label{eq: Gabber ineq intro}
\end{equation}
See Section~\ref{sect: proof of Gabber's prop} where, for the convenience of the reader, we give a proof of this inequality following the point of view of \cite{BV}.
This reduces the problem of estimating the torsion to bounding the number of cells and the norms of the boundary maps. Thus, to be able to really make use of Gabber's proposition, we  have to turn the Rebuilding Lemma into an effective statement. This is the content of our Proposition \ref{P2}. In the end, this \defin{Effective Rebuilding Lemma} is a machine that provides an explicit rebuilding of the total space $\Totaldown$ of a stack of complexes $\Totaldown \to  \Gamma \backslash \Baseup$ given a rebuilding of its fibers, and that moreover provides bounds 
on the number of cells and norms of the chain boundary maps.

In various interesting situations, each fiber of $\Pi: \Totaldown\to \Gamma \backslash \Baseup$ is itself a torus bundle. 
The standard CW structure on the torus and an inductive procedure using the effective rebuilding lemma yield ``classifying spaces'' for the finite index subgroups, which will be sufficiently ``small'' to prove our asymptotic theorems.

 One needs to be able to rebuild efficiently not only free abelian groups but also finitely generated torsion-free nilpotent groups. 
 This is the content of the general Theorem~\ref{thm-UnipRewiring} stated below, which we believe to be of independent interest.

First, let us give a precise definition of a rebuilding (of good quality). This is a central notion of this paper.

\medskip
 For a CW-complex $X$ we denote by $X^{(\indice )}$ its $\indice$-skeleton and $\vert X^{(\indice )}\vert$ its number of $\indice $-cells. By convention $X^{(-1)}$ is the empty set.
\begin{definitionintro}[Rebuilding]
\label{def: Rebuilding}
Let $\indice \in \mathbb N$ and let $X$ be a CW-complex with finite $\indice$-skeleton. An $\indice$-\defin{rebuilding} of $X$ consists of the following data $(X,X',\Gcell ,\Hcell ,  \Homcell )$: \
\begin{enumerate}
\item a CW-complex $X'$ with finite $\indice$-skeleton;
\item two cellular maps 
$\Gcell \colon X^{(\indice )}\to X'{}^{(\indice )} \quad \mbox{and} \quad \Hcell \colon X'{}^{(\indice )}\to X^{(\indice )}$
that are homotopy inverse to each other up to dimension $\indice-1$, i.e., $\Hcell \circ \Gcell_{\restriction X^{(\indice-1)}}\sim \mathrm{id}_{\restriction X^{(\indice-1)}}$ within $X^{(\indice)}$ and 
$\Gcell \circ \Hcell_{\restriction X'{}^{(\indice-1)}}\sim \mathrm{id}_{\restriction X'{}^{(\indice-1)}}$ within $X'{}^{(\indice)}$;
\item a cellular homotopy $\Homcell \colon [0,1] \times X^{(\indice-1)}\to X^{(\indice)}$  between the identity and $\Hcell \circ \Gcell$,
i.e., $\Homcell (0, .)=\mathrm{id}_{\restriction X^{(\indice-1)}}$ and $\Homcell (1,.)=\Hcell \circ \Gcell_{\restriction X^{(\indice-1)}}$
\end{enumerate}
\end{definitionintro}

\begin{definitionintro}[Quality of a rebuilding]
\label{def: Rebuilding and quality}
Given real numbers  ${T} , {\kappa}\geq 1$, we say that $(X,X',\Gcell ,\Hcell , \Homcell )$ is an $\indice$-rebuilding of \defin{quality $({T}, {\kappa})$} if 
we have 
\begin{align}
\forall j \leq \indice, \quad |X'{}^{(j)}|  & \leq  {\kappa}{T}^{-1}|X^{(j)}| \tag{cells bound}\\
\forall j \leq \indice , \quad \log \|\Ghom_j \|,\log \| \Hhom_j \|,\log \| \Homhom_{j-1} \|,\log \|\p'_{j} \|  & \leq   {\kappa}(1+\log {T} ) \tag{norms bound}
\end{align}
where $\p'$ is the boundary map on $X'$, we denote by $\Ghom$ and $\Hhom$ the chain maps respectively associated to $\Gcell$ and $\Hcell$, and $\Homhom \colon C_\bullet(X)\to C_{\bullet+1}(X)$ is the chain homotopy induced by $\Homcell$ in the cellular chain complexes: 
\begin{equation}
\label{eq: cellular chain homotopy}
\begin{tikzcd}[column sep=1.2em]
C_{\indice}(X) \arrow[r, "\p_{\indice}"] \arrow[d, "\Ghom_{\indice}" left]
& \cdots \arrow[l, bend left, "\Homhom_{\indice-1}" ] & 
\arrow[r] \cdots &  
C_{1}(X) \arrow[r] \arrow[r, "\p_1"] \arrow[d, "\Ghom_1" left] \arrow[l, bend left, "\Homhom_1" ] &
C_{0}(X) \arrow[d, "\Ghom_0" left]\arrow[l, bend left, "\Homhom_0" ]  \\
C_{\indice}(X') \arrow[r, "\p'_{\indice}"] \arrow[u, xshift=0.35em, "\Hhom_{\indice} " right]
&
\cdots 
& 
\arrow[r] \cdots &  
C_{1}(X') \arrow[r, "\p'_1 "] \arrow[u, xshift=0.35em, "\Hhom_1" right]&
C_{0}(X'), \arrow[u, xshift=0.35em, "\Hhom_0" right]
\end{tikzcd}
\end{equation}
and the norms $\| \cdot \|$ are derived from the canonical $\ell^2$-norms on the cellular chain complexes.
\end{definitionintro}

We will simplify the notation and write $(X,X')$ instead of $(X,X',\Gcell ,\Hcell ,  \Homcell )$ when the explicit cellular maps are not relevant.

The definition above captures an intrinsic tension between ``having few cells'' and ``maintaining tame norms''. 

Given a finite cover $X_1\to X$ (of large degree), our main task is to construct a rebuilding $(X_1,X'_1)$ of sufficiently good quality $(T_1 , \kappa_1)$. In some cases, it is possible to take $\kappa_1 = \kappa (X)$ independently of the cover and $T_1$ linear in the degree, see  Sections \ref{S:unip} and \ref{S:unipext}. In particular, for finitely generated torsion-free nilpotent groups (called \defin{unipotent lattices} in the text), the precise efficient rebuilding we obtain can be stated as follows. 
\begin{introth}\label{thm-UnipRewiring}
Let $\Lambda$ be a finitely generated torsion-free nilpotent group. If $Y_0$ is a compact $K(\Lambda,1)$ space, 
 there exists a constant ${\kappa}_{Y_0}\geq 1$ such that for every finite index subgroup $\Lambda_1\leq \Lambda$, the cover $Y_1=\Lambda_1\bs \tilde Y_0$ admits an $\indice$-rebuilding $(Y_1,Y_1',\Gcell , \Hcell , \Homcell )$ 
of quality $([\Lambda : \Lambda_1], {\kappa}_{Y_0})$ for every $\indice$.
\end{introth}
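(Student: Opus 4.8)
The plan is to argue by induction on the Hirsch length $\Hirsch(\Lambda)$, using the standard CW torus as the elementary building block and the Effective Rebuilding Lemma (Proposition~\ref{P2}) to carry out the inductive step. First I would reduce to a convenient model for $Y_0$: any two compact $K(\Lambda,1)$ complexes are homotopy equivalent, and a cellular homotopy equivalence between two such models — together with a chosen cellular inverse and the connecting homotopies, fixed once and for all — lifts $\Lambda$-equivariantly, hence descends to a homotopy equivalence between the corresponding finite covers whose cellular chain maps have $\ell^2$-operator norms and whose cell counts are bounded by constants depending only on the two models. Consequently a rebuilding of one cover transports to a rebuilding of the other, with the parameter $T$ unchanged and $\kappa$ multiplied by a fixed constant; so it suffices to establish the statement for one well-chosen model of $Y_0$ in each case. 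Since $Y_0$ is finite-dimensional I would also assume $\indice\geq\dim Y_0$, the remaining cases following by truncation.

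Next I would treat the free abelian case $\Lambda=\Z^n$ — the one already alluded to in Section~\ref{S:unip}. Take $Y_0=(S^1)^n$ with its standard product CW structure ($\binom nj$ cells in degree $j$) and identify $\tilde Y_0=\R^n$ so that the vertices form the lattice $\Z^n$. A finite index subgroup $\Lambda_1\leq\Z^n$ is a sublattice of index $T=[\Z^n:\Lambda_1]$, and by the Hermite normal form it has a basis $v_1,\dots,v_n$ all of whose coordinates are bounded by $T$. Let $Y_1'$ be the torus $\R^n/\Lambda_1$ equipped with the standard CW structure adapted to this basis, so that $|Y_1'{}^{(j)}|=\binom nj$ and the cells bound holds with a constant depending only on $n$. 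As $Y_1$ and $Y_1'$ are the same flat torus carrying two CW structures, I take $\Gcell$ and $\Hcell$ to be cellular approximations of the identity map between the two structures and $\Homcell$ a cellular approximation of the straight-line homotopy; the Hermite bound on the $v_i$, hence on the transition matrix and its inverse (the latter being $\pm T^{-1}$ times an integer matrix), makes $\Ghom_j$, $\Hhom_j$, $\Homhom_{j-1}$ and $\p'_j$ all of $\ell^2$-operator norm polynomial in $T$, i.e.\ with logarithm $O_n(1+\log T)$. This produces a rebuilding of $Y_1$ of quality $(T,\kappa_n)$.

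For the inductive step, suppose $\Hirsch(\Lambda)=h\geq1$ and choose a central subgroup $Z\cong\Z^k$, $k\geq1$, with $Q:=\Lambda/Z$ torsion-free nilpotent (for instance the first nontrivial term of a central series of $\Lambda$ with free abelian factors), so that $\Hirsch(Q)=h-k<h$. For the model of $Y_0$ I would take the total space of a stack of complexes $Y_0\to B$ over a compact $K(Q,1)$ complex $B$ all of whose fibers are copies of the standard torus $(S^1)^k=K(Z,1)$; such a model exists — it is the instance of the Borel construction of Section~\ref{Sect: Borel construction} attached to the central extension $1\to Z\to\Lambda\to Q\to1$, centrality of $Z$ being precisely what forces every fiber to be an honest torus, even though the stack itself may be twisted. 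Given a finite index subgroup $\Lambda_1\leq\Lambda$, set $Z_1=Z\cap\Lambda_1$ and $Q_1=\Lambda_1Z/Z\leq Q$, so that $T:=[\Lambda:\Lambda_1]=[Q:Q_1]\cdot[Z:Z_1]=:T_QT_Z$; then $Y_1=\Lambda_1\bs\tilde Y_0$ is the total space of a stack over the degree-$T_Q$ cover $\bar B=Q_1\bs\tilde B$ of $B$, with fibers the degree-$T_Z$ cover $\R^k/Z_1$ of $(S^1)^k$. Applying the induction hypothesis to $Q$ gives an $\indice$-rebuilding of $\bar B$ of quality $(T_Q,\kappa_B)$ with $\kappa_B$ independent of $\Lambda_1$, and the free abelian case gives an $\indice$-rebuilding of the fiber $\R^k/Z_1$ of quality $(T_Z,\kappa_k)$; feeding both into the Effective Rebuilding Lemma (Proposition~\ref{P2}) returns an $\indice$-rebuilding of the total space $Y_1$ of quality $(T_QT_Z,\kappa_{Y_0})=(T,\kappa_{Y_0})$, where $\kappa_{Y_0}$ is the quantity output by Proposition~\ref{P2}: it depends only on $\kappa_B$, $\kappa_k$, and the fixed combinatorial and metric data of the model stack $Y_0\to B$ (its cell counts in each degree, its dimension, and the norms of its structure maps), and in particular not on $\Lambda_1$. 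This closes the induction.

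The hard part will be the \emph{uniform} control of the quality constant across the recursion, and the reason it works is structural. First, centrality of $Z$ guarantees that every fiber of the stack is the same torus, covered to the \emph{single} degree $[Z:Z_1]$, so there is no need to average or maximize over fibers of differing covering degree — which is exactly what would otherwise ruin the bound. Second, the multiplicativity $[\Lambda:\Lambda_1]=[Q:Q_1]\cdot[Z:Z_1]$ makes the base parameter $T_Q$ and the fiber parameter $T_Z$ multiply to the full index, matching the output format of Proposition~\ref{P2}; thus $\kappa$ only acquires a fixed overhead, depending on $Y_0$ but not on the cover, at each of the at most $\Hirsch(\Lambda)$ levels of the recursion, and therefore stays bounded. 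The possible twisting of the stack is harmless, precisely because Proposition~\ref{P2} is stated for stacks. The one genuinely arithmetic input, confined to the free abelian base case, is the existence of a basis of an arbitrary sublattice of $\Z^k$ with entries polynomially bounded in its index — which the Hermite normal form supplies — and this is what keeps the $\ell^2$-norms of the explicit rebuilding maps polynomial in $T$, hence their logarithms $O(\log T)$.
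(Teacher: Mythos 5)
Your decomposition is dual to the paper's. You peel off a central $\Z^k$ from the \emph{bottom} (extension $1 \to Z \to \Lambda \to Q \to 1$), so the fibers of your stack are small tori and the base is the complicated nilmanifold $\bar B = K(Q_1,1)$; the paper instead peels off a $\Z$ from the \emph{top} (normal $N \to \Lambda \to \Z$), so the fiber is the complicated nilmanifold and the base is a circle. The difference matters, and it is where your argument has a genuine gap: Proposition~\ref{P2} rebuilds \emph{only the fibers} of a stack $\Pi\colon \Sigma\to B$, producing a new stack over the \emph{same} base $B$. It does not accept a rebuilding of $B$ as input and does not change $B$. Your inductive step requires changing both: after rebuilding fibers only, $Y_1'$ is a stack over $\bar B$ with $O(1)$-cell fibers, so $|Y_1'^{(j)}|$ is still on the order of $|\bar B^{(j)}| = T_Q\,|B^{(j)}|$, not $O_{Y_0}(1)$ as the statement demands (and recall the quality $([\Lambda:\Lambda_1],\kappa_{Y_0})$ forces the rebuilt complex to have cell counts bounded independently of the cover, which is far stronger than the asymptotic ``cheap'' property of Theorem~\ref{thm-AdjRebStack}). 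There is no change-of-base lemma for stacks in the paper, and the sentence ``feeding both into Proposition~\ref{P2}'' ascribes to it a capability it does not have.

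The paper sidesteps this precisely by the opposite choice of decomposition. With quotient $\Z$, the base of the stack is a degree-$\ell$ cover of the circle, and the ``first rebuilding'' in the proof is an explicit, hand-built change of base from the $\ell$-cell circle to the one-cell circle — $\Gcell$, $\Hcell$, $\Homcell$ are written out in closed form, with $\|\theta^\ell\|$ controlled by Lemma~\ref{lem-NilpotentAutmorphism}. The ``second rebuilding'' then attacks the (big) fiber via the induction hypothesis, glued using Proposition~\ref{lem-QGluing} over the one-cell base. In other words: a base change is performed, but only for the circle, where it is tractable. Your plan could be salvaged by proving a quantitative base-change lemma for stacks with constant fiber (pull back the $F$-bundle $Y_1'\to\bar B$ along the homotopy inverse $\bar B'\to\bar B$, with norm control analogous to Proposition~\ref{lem-QGluing}), but that is a new lemma, not something Proposition~\ref{P2} gives you. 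Your free-abelian base case via Hermite normal form is a correct and pleasant alternative to the paper's treatment, though the estimate $\log\|\Hhom_j\|, \log\|\Homhom_{j-1}\| = O_n(1+\log T)$ for the cellular approximations deserves a more careful count than a reference to ``the Hermite bound''.
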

Recall that a $K(\Lambda,1)$ space is a CW complex with fundamental group isomorphic with $\Lambda$ and whose universal cover is contractible.

\begin{remarkintro} 
Since the number of $j$-cells of $Y_1$ is $\vert Y_1^{(j)}\vert = [\Lambda:\Lambda_1] \vert Y_0^{(j)}\vert$, the number of $j$-cells of the $K(\Lambda_1,1)$ space $Y_1'$ satisfies the bound $\vert (Y_1')^{(j)}\vert\leq {\kappa}_{Y_0} [\Lambda:\Lambda_1]^{-1}\vert Y_1^{(j)}\vert ={\kappa}_{Y_0}\vert Y_0^{(j)}\vert$, independent of $\Lambda_1$.
\end{remarkintro}

\begin{remarkintro} The proof of Theorem \ref{thm-UnipRewiring} (in Section~\ref{S:unip}) yields for various $\Lambda_1$ a rebuilding $Y_1 '$ with only $O(2^{\Hirsch})$ cells, where $\Hirsch$ is the Hirsch length of $\Lambda$. This is, in fact, ``the'' minimal number of cells that a $K(\Lambda , 1)$ CW-complex can have, since $\sum_j \dim_\Q H_j (\Lambda,\mathbb Q)=2^\Hirsch.$ 
\end{remarkintro}

Theorem \ref{thm-UnipRewiring} is used for the proof of Theorem~\ref{T0}; the latter is in fact deduced from a general result: Theorem~\ref{Tmain} proved in Section~\ref{sect: proof of main th}. One key feature of the \defin{principal} congruence subgroup $\Gamma(N)$ of Theorem~\ref{T0} is that it intersects every infinite unipotent subgroup of $\SL_d (\Z)$ along a subgroup of index at least $N$. Note that general sequences of finite index subgroups of $\SL_d (\Z)$ do not have this property, e.g., for each positive integer $N$ the \defin{non-principal} congruence subgroup
$$\Gamma_0 (N) = \left\{ \left( \begin{array}{cc} a & b \\ c^\top & D \end{array} \right) \in \SL_d (\Z ) \; : \; a \in \Z, \ b,c \in \Z^{d-1}, \ D \in \GL_{d-1} (\Z) \mbox{, and } N | c \right\} \subseteq \SL_d (\Z )$$
contains the whole group of upper-triangular unipotent matrices with integer coefficients. To prove Theorem \ref{T00} we need to show that if $(\Gamma_n )$ is an injective sequence of finite index subgroups of $\SL_d (\Z)$, then $\Gamma_n$ intersects most of the infinite unipotent subgroups of $\SL_d (\Z)$ along subgroups of large index. 

In general, to deal with arbitrary Farber sequences of a given residually finite group $\Gamma$ we introduce a property that we believe to be of independent interest. 
We say that $\Gamma$ has the \defin{cheap $\indice$-rebuilding property} (Definition \ref{def-CheapReb}) if it admits a 
$K(\Gamma,1)$ space $X$ with finite $\indice$-skeleton and a constant ${\kappa}_X\geq 1$ that
satisfy the following property. For every Farber sequence $(\Gamma_n)_{n\in\mathbb N}$ for $\Gamma$ and $T\geq 1$, there exists $n_0$ such that for $n\geq n_0$ the cover $X_n:=\Gamma_n\bs \tilde X$ admits an $\indice$-rebuilding $(X_n,Y_n)$ of quality $(T,{\kappa}_X)$. We note that cheap $0$-rebuilding simply means that $\Gamma$ is infinite.

We establish a robust bootstrapping criterion for a group to have the cheap $\indice$-rebuilding property:
\begin{introth}[Theorem~\ref{thm-AdjRebStack}]\label{thm-AdjRebStack-intro}
Let $\Gamma$ be a residually finite group acting on a CW-complex $\Baseup$ in such a way that any element stabilizing a cell fixes it pointwise. Let $\indice \in \mathbb{N}$ and assume that the following conditions hold:
\begin{enumerate}
\item $\Gamma\bs \Baseup$ has finite $\indice$-skeleton; 
\item $\Baseup$ is $(\indice-1)$-connected;
\item The stabilizer of each cell of dimension $j \leq \indice$  has the cheap $(\indice -j)$-rebuilding property.
\end{enumerate}
Then, $\Gamma$ itself has the cheap $\indice$-rebuilding property. 
\end{introth}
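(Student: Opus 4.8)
The plan is to build a $K(\Gamma,1)$ space as the total space $\Totaldown$ of a stack of CW-complexes $\Pi \colon \Totaldown \to \Gamma\bs\Baseup$ via the Borel construction, and then to rebuild the finite covers $X_n$ of $\Totaldown$ by rebuilding them fiber-by-fiber over the cells of $\Gamma\bs\Baseup$, combining these fiberwise rebuildings through the Effective Rebuilding Lemma (Proposition~\ref{P2}). The input is a Farber sequence $(\Gamma_n)$ and a target efficiency $T\geq 1$; the output must be, for all large $n$, a rebuilding of $X_n := \Gamma_n\bs\widetilde{\Totaldown}$ of quality $(T,\kappa_\Baseup)$ for a constant $\kappa_\Baseup$ independent of $n$.

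\emph{Step 1: the model space.} Since $\Baseup$ is $(\indice-1)$-connected and the action satisfies the no-inversion hypothesis, the Borel construction (Proposition~\ref{prop: existence of a stack}) produces an $\indice$-aspherical CW-complex $\Totaldown$ with $\pi_1 = \Gamma$ and a stack $\Pi \colon \Totaldown \to \Gamma\bs\Baseup$ whose fiber over a cell $\Gamma\cellup$ of dimension $j$ is a $K(\Gamma_\cellup,1)$. Because $\Gamma\bs\Baseup$ has finite $\indice$-skeleton and each stabilizer $\Gamma_\cellup$ of a cell of dimension $j\leq\indice$ has the cheap $(\indice-j)$-rebuilding property, each such stabilizer admits a $K(\Gamma_\cellup,1)$ with finite $(\indice-j)$-skeleton; feeding these into the Borel construction makes $\Totaldown^{(\indice)}$ finite. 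Since $\Totaldown$ is $\indice$-aspherical, its first $\indice$ homology and homotopy invariants agree with those of $\Gamma$, so it suffices to rebuild $\Totaldown$ (and its covers) up to dimension $\indice$.

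\emph{Step 2: the covers are again stacks.} A finite-index subgroup $\Gamma_n\leq\Gamma$ gives a cover $X_n\to\Totaldown$, and this cover inherits a stack structure $X_n\to\Gamma_n\bs\Baseup$. The cells of $\Gamma_n\bs\Baseup$ lying over a fixed cell $\Gamma\cellup$ correspond to the $\Gamma_n$-orbits in the $\Gamma$-orbit of $\cellup$; the fiber over such a cell $\Gamma_n\cellup'$ is a $K(\Lambda,1)$ for $\Lambda = \Gamma_n \cap \Gamma_{\cellup'}$, i.e. a finite-index subgroup of the conjugate stabilizer $\Gamma_{\cellup'}$. The Farber condition on $(\Gamma_n)$ is exactly what guarantees that, for each cell of $\Gamma\bs\Baseup$ and each $\epsilon$, the proportion of cells $\Gamma_n\cellup'$ above it whose fiber-group $\Lambda$ has index $\geq T'$ (for a suitable auxiliary threshold $T'$) tends to $1$; on the exceptional cells we fall back on an a priori, non-efficient rebuilding with bounded $\kappa$ but $T=1$. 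Since the number of cell-types in $\Gamma\bs\Baseup$ is finite, choosing $n$ large handles all types simultaneously.

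\emph{Step 3: rebuild fiberwise, then assemble.} For each cell $\Gamma_n\cellup'$ of dimension $j\leq\indice$, the cheap $(\indice-j)$-rebuilding property of $\Gamma_{\cellup'}$ gives, for $n$ large, a rebuilding of the fiber of quality $(T',\kappa_{\cellup'})$ — here I must choose the auxiliary threshold $T'$ as a function of $T$ and of the combinatorics of $\Baseup$ so that, after assembly, the global efficiency is $\geq T$. The Effective Rebuilding Lemma (Proposition~\ref{P2}) then takes these fiberwise rebuildings and produces a rebuilding $(X_n,Y_n)$ of the total space, with explicit bounds: the number of $j$-cells of $Y_n$ is controlled by summing, over cells of $\Gamma_n\bs\Baseup$, the cell-counts of the rebuilt fibers, and the boundary and homotopy norms of $Y_n$ are bounded polynomially in the fiberwise norms and in the attaching-map data of $\Totaldown$ (which is fixed, depending only on $\Baseup$ and the chosen model, not on $n$). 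Because the efficient fibers dominate in proportion and the finitely many inefficient fibers contribute only a bounded multiplicative error, the resulting quality is $(T,\kappa_\Baseup)$ with $\kappa_\Baseup$ depending only on $\Baseup$, the model $\Totaldown$, and the constants $\kappa_{\cellup}$ of the cell stabilizers.

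\emph{Main obstacle.} The delicate point is the bookkeeping in Step~3: one must verify that the Effective Rebuilding Lemma's norm bounds, when iterated over the (fixed but possibly intricate) cell structure of $\Gamma\bs\Baseup$, still produce a single constant $\kappa_\Baseup$ and the advertised efficiency $T$ — in other words, that the logarithmic norm bounds of the form $\kappa(1+\log T)$ are stable under the stack assembly, and that the loss incurred by the $\ell^2$-norm estimates for the gluing maps (which involve the incidence numbers between cells of $\Baseup$ and the sizes of the attaching maps in the fibers) does not degrade $\kappa$ in a way that depends on $n$. This is where the precise form of Proposition~\ref{P2} is essential, and where the choice of $T'$ in terms of $T$ must be made carefully; the Farber-density argument in Step~2 is comparatively soft once the quantitative assembly is in place.
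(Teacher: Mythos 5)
Your proposal follows essentially the same approach as the paper: the Borel construction turns the cover into a stack over $\Gamma_n\backslash\Baseup$, the cheap rebuilding property of stabilizers is applied fiberwise (with a trivial fallback on the "bad" cells), and the Effective Rebuilding Lemma (Proposition~\ref{P2}) assembles these into a global rebuilding whose quality is controlled by the Farber-density of good fibers. The two points you gloss over but that the paper must handle explicitly are: (i) for $\indice=1$ the complex $\Baseup$ is not simply connected a priori, requiring the $2$-cell-addition device of Lemma~\ref{lem-SimplyConn}; and (ii) the Farber-density claim in your Step~2 is precisely the nontrivial combinatorial content of Lemma~\ref{lem-FarberIntersection}, which transfers a $\Gamma$-Farber neighborhood to $\Lambda$-Farber neighborhoods for most conjugate intersections with a given stabilizer $\Lambda$.
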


As a corollary, we get that the cheap $\indice$-rebuilding property is an invariant of commensurability. Also, we get that it holds for  
all infinite polycyclic groups for all $\indice$. It also passes from a normal subgroup $N\triangleleft \Gamma$ to $\Gamma$ if $N\bs \Gamma$ is of type $F_{\indice}$. See Corollary~\ref{cor-CheapRebGroups}. 
More significantly, we show 
\begin{introth}\label{TCheapReb}
Let $\indice \geq 0$. The following groups have the cheap $\indice$-rebuilding property:
\begin{enumerate}
\item \label{item Arithmetic lattices-intro} Arithmetic lattices of $\mathbb Q$-rank at least $\indice+1$ (Theorem~\ref{thm-LatticesRebuilding}).

\item \label{item Artin groups with flag complex}
Finitely generated residually finite Artin groups satisfying the $K(\pi,1)$ conjecture and whose nerve is $(\indice-1)$-connected (Theorem~\ref{Ex:Artin groups}).

\item \label{item mapping class group-intro} Mapping class groups $\mathcal{MGC}(S)$ where $S$ is a surface of genus $g>0$ with $b>0$ boundary components and $2g-3+b\geq \indice$ or $g>0$, $b=0$ and $2g-2\geq \indice$ (Theorem~\ref{th: MCG has CRP k(g,b)}).

\end{enumerate}
\end{introth}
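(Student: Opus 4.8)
The three items fit a common template, so the plan is to establish them in parallel by invoking the bootstrapping criterion of Theorem~\ref{thm-AdjRebStack-intro}. For a group $\Gamma$ of one of the listed types I would exhibit a $\Gamma$-CW-complex $\Baseup$ and check the three hypotheses of that theorem: that $\Gamma\bs\Baseup$ has finite $\indice$-skeleton, that $\Baseup$ is $(\indice-1)$-connected, and that the stabilizer of every cell of dimension $j\le\indice$ has the cheap $(\indice-j)$-rebuilding property. The requirement that a cell be fixed pointwise by its stabilizer I would arrange by barycentrically subdividing $\Baseup$ (and, if convenient, passing to a torsion-free finite-index subgroup), which is harmless because, by Corollary~\ref{cor-CheapRebGroups}, the cheap $\indice$-rebuilding property is a commensurability invariant. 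The feature that makes the third hypothesis easy to verify in all three cases is a uniform structure of the relevant cell stabilizers $\Gamma_\sigma$: each fits into an extension $1\to N\to\Gamma_\sigma\to Q\to 1$ in which $N$ is finitely generated nilpotent --- in particular polycyclic, hence cheaply rebuildable in every degree by Corollary~\ref{cor-CheapRebGroups} (ultimately Theorem~\ref{thm-UnipRewiring}) --- while $Q$ is of type $F_\infty$; the extension clause of Corollary~\ref{cor-CheapRebGroups} then endows $\Gamma_\sigma$ with the cheap $(\indice-j)$-rebuilding property. So the case-specific work reduces to writing down $\Baseup$ and controlling its connectivity and the finiteness of its quotient.

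For arithmetic lattices (item (1)) I would take $\Gamma$ arithmetic in a semisimple $\Q$-group $\mathbf G$ with $\Q$-rank at least $\indice+1$, and let $\Baseup$ be the (barycentrically subdivided) rational Tits building of $\mathbf G$. By the Solomon--Tits theorem $\Baseup$ has the homotopy type of a wedge of spheres of dimension $\mathrm{rk}_\Q(\mathbf G)-1$, hence is $(\indice-1)$-connected; this is the sole role of the rank hypothesis. Reduction theory furnishes finitely many $\Gamma$-conjugacy classes of parabolic $\Q$-subgroups, so $\Gamma\bs\Baseup$ is finite. A cell stabilizer is $\Gamma\cap\mathbf P$ for a parabolic $\mathbf P$; it contains the polycyclic normal subgroup $\Gamma\cap R(\mathbf P)$ (intersection with the solvable radical of $\mathbf P$), with quotient an arithmetic subgroup of the semisimple group $\mathbf P/R(\mathbf P)$, which is of type $F_\infty$ by Borel--Serre. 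Hypothesis (iii) then follows from the structural remark above, and Theorem~\ref{thm-AdjRebStack-intro} yields the cheap $\indice$-rebuilding property for $\Gamma$.

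For mapping class groups (item (3)) I would let $\Baseup$ be the (barycentrically subdivided) curve complex $\mathcal C(S)$. Harer's connectivity theorem identifies $\mathcal C(S)$ up to homotopy with a wedge of spheres of dimension $2g-2$ when $b=0$ and of dimension $2g+b-3$ when $b>0$, so $\mathcal C(S)$ is $(2g-3)$-connected, respectively $(2g+b-4)$-connected; under the hypothesis $2g-2\ge\indice$ (resp.\ $2g-3+b\ge\indice$) this is precisely $(\indice-1)$-connectedness, and this is where the genus/boundary condition is consumed. Since there are only finitely many topological types of multicurves on $S$, the quotient $\Gamma\bs\mathcal C(S)$ is finite. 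The stabilizer of a $j$-simplex --- a multicurve with components $\gamma_0,\dots,\gamma_j$ --- contains the free abelian normal subgroup generated by the Dehn twists about the $\gamma_i$, with quotient commensurable to the mapping class group of the cut surface $S\setminus(\gamma_0\cup\dots\cup\gamma_j)$, which is of type $F_\infty$, being a finite extension of a product of mapping class groups of connected subsurfaces. So (iii) again holds, and Theorem~\ref{thm-AdjRebStack-intro} applies.

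For Artin groups (item (2)) I would take $A=A_\Gamma$ finitely generated, residually finite, satisfying the $K(\pi,1)$ conjecture, with nerve $(\indice-1)$-connected, and let $\Baseup$ be the development of the natural complex of groups over the (subdivided) nerve whose local groups are the standard spherical-type parabolic subgroups $A_T$ --- a Deligne-type complex. Finite generation of $A$ makes $\Gamma\bs\Baseup$ finite, and the cell stabilizers are conjugates of the $A_T$; each $A_T$ is a spherical-type Artin group, torsion-free with a finite $K(\pi,1)$, and by the Brieskorn--Saito--Deligne computation its center is free abelian (one copy of $\Z$ per irreducible factor of the diagram of $T$) with type-$F_\infty$ quotient, so $A_T$ is cheaply rebuildable in every degree by the extension clause of Corollary~\ref{cor-CheapRebGroups}. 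The genuinely delicate point --- and the step I expect to be the main obstacle --- is hypothesis (ii): one must prove that this development inherits $(\indice-1)$-connectedness from the nerve, and it is here that the $K(\pi,1)$ conjecture must be invoked, through the homotopy-colimit presentation of $\Baseup$ over the nerve together with the asphericity of the Salvetti complexes of the spherical pieces. Granting that connectivity statement, Theorem~\ref{thm-AdjRebStack-intro} completes item (2) and hence Theorem~\ref{TCheapReb}.
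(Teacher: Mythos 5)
Your template is exactly the paper's: invoke Theorem~\ref{thm-AdjRebStack-intro} with the rational Tits building, a Deligne-type complex, and the curve complex, checking finiteness of the quotient, $(\indice-1)$-connectedness, and that cell stabilizers are extensions of a polycyclic (nilpotent or free abelian) normal subgroup by an $F_\infty$ quotient. Items (1) and (3) are essentially identical to the paper's proofs (Theorem~\ref{thm-LatticesRebuilding} and Theorem~\ref{th: MCG has CRP k(g,b)}); the minor variant of taking the solvable radical $R(\mathbf P)$ instead of the unipotent radical $\mathbf U$ in (1) is harmless. For the stabilizers in item (2), using the Brieskorn--Saito--Deligne center of the spherical parabolics $A_T$ is a slightly more direct route than the paper's invocation of Bestvina's commensurability with $G\times\Z$, but both land in the extension clause of Corollary~\ref{cor-CheapRebGroups}.

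The one genuine gap is exactly the one you flag and then waive: the $(\indice-1)$-connectedness of $\Baseup$ in the Artin case. You propose to deduce it from ``the homotopy-colimit presentation of $\Baseup$ over the nerve together with the asphericity of the Salvetti complexes,'' but you don't carry that out, and that is not how the paper argues. The paper's argument is elementary once the $K(\pi,1)$ conjecture is granted: the full modified Deligne complex $X=|A\mathcal P'|$ is \emph{contractible} (that is the $K(\pi,1)$ conjecture in Charney--Davis form); the subcomplex $\Baseup\subseteq X$ indexed by non-empty spherical $J$ is obtained from $X$ by removing the disjoint open stars of the cone-point vertices $gA_\emptyset$; each such star is a cone on its link, and each link is a copy of the nerve $L$, hence $(\indice-1)$-connected by hypothesis. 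A Mayer--Vietoris/Hurewicz argument (plus van Kampen in degree $1$) then shows $\Baseup$ is $(\indice-1)$-connected. You should supply this step rather than assert it; the homotopy-colimit route you gesture at would need its own verification (in particular, homotopy colimits do not in general preserve connectivity of the index category without an argument about the local pieces).
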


The ``cells bound'' condition of Definition~\ref{def: Rebuilding and quality} in the cheap $\indice$-rebuilding property is very much related to the notion of slowness introduced in \cite{BK}. This condition already implies the vanishing of the growth of homology over arbitrary fields for a wide class of groups. In particular, a group with the cheap $2$-rebuilding property is finitely presented and economical, in the sense of \cite{KarNikolov}, with respect to any Farber chain of finite index subgroups. For controlling the torsion, however, we need the full force of the cheap $\indice$-rebuilding property:

\begin{introth}[Theorem~\ref{prop-CheapRebTorsion}] \label{Tintrotors}
Let $\Gamma$ be a countable group of type $F_{\indice+1}$ that has the cheap $\indice$-rebuilding property for some non-negative integer $\indice$. Then, for every Farber sequence $(\Gamma_n)_{n \in \mathbb N}$, for every coefficient field $K$ and $0\leq j\leq \indice$ we have 
\[\lim_{n\to \infty} \frac{\dim_K H_j (\Gamma_n , K)}{[\Gamma : \Gamma_n]} = 0 \quad \mbox{and} \quad \lim_{n\to \infty} \frac{\log | H_j(\Gamma_n,\mathbb Z)_{\rm tors}|}{[\Gamma:\Gamma_n]}=0.\]
\end{introth}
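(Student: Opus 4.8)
The plan is to unwind the definition of the cheap $\indice$-rebuilding property and combine it with Gabber's inequality \eqref{eq: Gabber ineq intro}. Fix a $K(\Gamma,1)$ space $X$ with finite $\indice$-skeleton and a constant ${\kappa}_X$ witnessing the cheap $\indice$-rebuilding property; since $\Gamma$ is of type $F_{\indice+1}$ we may moreover arrange $X$ to have a finite $(\indice+1)$-skeleton. For a Farber sequence $(\Gamma_n)$ set $X_n = \Gamma_n\bs\tilde X$, so that $|X_n^{(j)}| = [\Gamma:\Gamma_n]\,|X^{(j)}|$ for all $j$. The idea is that for each fixed threshold $T\geq 1$, for $n$ large enough we get an $\indice$-rebuilding $(X_n, Y_n)$ of quality $(T,{\kappa}_X)$, and the homology of $Y_n$ in degrees $\leq \indice$ agrees with that of $X_n$ (hence of $\Gamma_n$), while $Y_n$ has far fewer cells and controlled boundary norms.

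First I would record the homological consequences of a rebuilding. Since $\Gcell$ and $\Hcell$ are homotopy inverse up to dimension $\indice-1$ and the homotopy $\Homcell$ is supported on the $\indice$-skeleton, the induced chain maps $\Ghom,\Hhom$ are chain homotopy equivalences in degrees $\leq \indice-1$ and induce isomorphisms $H_j(X_n;A)\cong H_j(Y_n;A)$ for $j\leq \indice-1$ and any coefficients, plus a surjection in degree $\indice$ (a standard consequence of the rebuilding data, which I would either cite from the discussion around Proposition~\ref{prop: rebuilding} or verify directly by a diagram chase). For the field case this already gives $\dim_K H_j(\Gamma_n,K) = \dim_K H_j(Y_n,K) \leq |Y_n^{(j)}| \leq {\kappa}_X T^{-1}|X_n^{(j)}| = {\kappa}_X|X^{(j)}|\,T^{-1}\,[\Gamma:\Gamma_n]$ for $j\leq \indice-1$; a small extra argument using the surjection in degree $\indice$ together with $\dim_K H_\indice \leq |Y_n^{(\indice)}|$ handles $j=\indice$ as well. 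Dividing by $[\Gamma:\Gamma_n]$ and then letting $T\to\infty$ gives the first limit.

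Next I would handle the torsion. Apply Gabber's inequality \eqref{eq: Gabber ineq intro} to the complex $Y_n$ in degree $j\leq \indice$: $\log|H_j(Y_n,\Z)_{\rm tors}| \leq |Y_n^{(j)}|\cdot \sup(\log\|\p'_{j+1}\|,0)$. The cells bound gives $|Y_n^{(j)}|\leq {\kappa}_X|X^{(j)}|\,T^{-1}[\Gamma:\Gamma_n]$, and — this is the point where I need the full strength of the rebuilding, including one degree above — the norms bound gives $\log\|\p'_{j+1}\|\leq {\kappa}_X(1+\log T)$. Hence $\log|H_j(Y_n,\Z)_{\rm tors}| \leq {\kappa}_X^2|X^{(j)}|\,(1+\log T)\,T^{-1}\,[\Gamma:\Gamma_n]$. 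Since for $j\leq \indice-1$ we have $H_j(\Gamma_n,\Z)\cong H_j(Y_n,\Z)$, dividing by $[\Gamma:\Gamma_n]$ yields $\limsup_n \log|H_j(\Gamma_n,\Z)_{\rm tors}|/[\Gamma:\Gamma_n] \leq {\kappa}_X^2|X^{(j)}|\,(1+\log T)/T$, and letting $T\to\infty$ gives the result. For the top degree $j=\indice$ one must be slightly more careful, since the rebuilding only identifies $H_\indice$ up to a surjection $H_\indice(X_n)\twoheadrightarrow H_\indice(Y_n)$, which does not obviously control torsion in the wrong direction; I would instead use the chain map $\Hhom$ together with the norm bounds on $\Hhom_\indice$ and $\Ghom_\indice$ to transfer a torsion bound, or alternatively apply Gabber's inequality after first rebuilding so that one extra degree is available — in any case this needs the $(\indice+1)$-skeleton finiteness, which is exactly why the hypothesis is type $F_{\indice+1}$ rather than $F_\indice$.

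The main obstacle, and the place requiring the most care, is precisely this top-degree issue: a rebuilding of quality $(T,\kappa)$ is only a homotopy equivalence through dimension $\indice-1$, so degree-$\indice$ homology is only controlled one-sidedly, and Gabber's inequality for $H_j(Y_n)$ needs control of $\p'_{j+1}$, i.e., one degree beyond the nominal range of the rebuilding. Resolving it cleanly means either (a) noting that the definition of quality already bounds $\|\p'_j\|$ for $j\leq\indice$ and the cells for $j\leq\indice$, and checking that the Effective Rebuilding/Gabber combination, applied with the finite $(\indice+1)$-skeleton of $X$ in hand, does deliver the degree-$\indice$ bound; or (b) deducing the $j=\indice$ statement from the $j\leq\indice-1$ statement for the cheap $(\indice+1)$-rebuilding property when available, and treating the general case by an approximation of $X$ by its $(\indice+1)$-skeleton. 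Everything else — the passage from rebuilding data to (co)homology isomorphisms, the dimension counts, the two limits in $T$ — is routine bookkeeping once these identifications are in place.
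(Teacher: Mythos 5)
Your proposal follows the same overall plan as the paper — rebuild, count cells, control boundary norms, apply the rank bound \eqref{E:bettiK} and Gabber's inequality, then send $T\to\infty$ — and correctly handles the Betti-number estimate and the torsion estimate for $j\leq\indice-1$. You also correctly flag the top-degree torsion case as the crux and locate the role of the $F_{\indice+1}$ hypothesis. But the two resolutions you sketch for $j=\indice$ do not close the gap, and the first one is not an argument at all.

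Having bounded norms for $\Hhom_\indice$ and $\Ghom_\indice$ does not ``transfer a torsion bound'': the rebuilt $\indice$-skeleton has $H_\indice(Y_n^{(\indice)},\Z)=Z_\indice(Y_n)$ a subgroup of a free abelian group, hence torsion-free, while a surjection of abelian groups can send a torsion-free group onto one with arbitrarily large torsion, so no norm bound on the chain maps controls $|H_\indice(\Gamma_n,\Z)_{\rm tors}|$. The correct route is your second hint made precise, which is what the paper does: since $\Gamma$ is of type $F_{\indice+1}$, attach finitely many $(\indice+1)$-cells $\Basedown=\sqcup_I\mathbb B^{\indice+1}$ to $X$ to make its universal cover $\indice$-connected (\cite[Theorem 8.2.1]{Geoghegan}); lift the attaching map $f$ to $f_1\colon\partial\Basedown_1\to X_1^{(\indice)}$; and re-attach these cells to the rebuilt skeleton $X_1'$ along $\varphi=\Gcell\circ f_1$, invoking the explicit gluing Lemma (Proposition~\ref{lem-QGluing}) to see that $X_1'\sqcup_\varphi\Basedown_1$ is homotopy equivalent to $X_1^+=X_1\sqcup_{f_1}\Basedown_1$ and therefore computes $H_\indice(\Gamma_n,\Z)$. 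The point is that Gabber's inequality needs only the norm of $\partial_{\indice+1}$, not the number of $(\indice+1)$-cells, and one gets $\|\varphi\|\leq\|f_1\|\,\|\Ghom\|=O(T+1)$ because $f_1$, being a covering lift of a fixed attaching map, has norm bounded by $\|f\|=O(1)$. Without this gluing step — which is what actually produces an $(\indice+1)$-boundary with controlled norm on the rebuilt complex — the $j=\indice$ torsion bound is not established.
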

In analogy with the L\"uck approximation theorem \cite{Luck-1994-approx} which identifies the first limit with the usual $\ell^2$-Betti numbers $b_j^{(2)} (\Gamma ; K)$ when $K=\Q$, we will loosely write $$b_j^{(2)} (\Gamma ; K) = \limsup_{n\to \infty} \frac{\dim_K H_j (\Gamma_n , K)}{[\Gamma : \Gamma_n]}$$ 
once the sequence $(\Gamma_n )$ is fixed. 
Note that non-abelian finitely generated free groups do not have the cheap $1$-rebuilding property (since $b_1^{(2)} (\FF_r; \Q) =r-1\not= 0$).

According to Theorem \ref{TCheapReb}~\eqref{item Arithmetic lattices-intro} $\SL_d (\Z)$ has the cheap $(d-2)$-rebuilding property. Theorem \ref{Tintrotors}, therefore, implies Theorem \ref{T00}. Similarly, Theorem \ref{TCheapReb}~\eqref{item mapping class group-intro} and Theorem \ref{Tintrotors} imply Theorem \ref{T0000}.
 
Theorem \ref{Tintrotors} is related to \cite[Theorems 7 and 9]{AGN} where Abert, Gelander and Nikolov investigate the first homology torsion growth for
\defin{chain-commuting}\footnote{Note that in that paper the authors named these groups ``right-angled'', which raised concerns in the community 
since it is too close to the well-established name ``right-angled Artin'' which may generate confusion between these two different notions. So, we suggest the new name ``chain-commuting''.}, 
i.e. groups that admit a finite generating list $\{\gamma_1 , \ldots , \gamma_m \}$ of elements of infinite order such that $[\gamma_i , \gamma_{i+1}] = 1$ for $i=1 , \ldots , m-1$.
This is a class of groups featured in \cite{Gab00} where $\SL_d (\Z )$ ($d \geq 3$) being a chain-commuting group is exploited to compute its cost. These groups have the cheap $1$-rebuilding property, see Proposition \ref{P:rag}. We therefore recover that finitely presented chain-commuting groups have vanishing first homology torsion growth along any Farber sequence.  Note that already in degree $j=1$, Theorem~\ref{Tintrotors} also establishes the vanishing of the homology torsion growth for a natural class of groups where \cite{AGN} does not apply (see Example~\ref{ex: non rag semidirect F x Z2}). Moreover, thanks to Theorem \ref{TCheapReb}~\eqref{item Artin groups with flag complex}, it applies to quite general Artin groups  in higher degrees and yields:

\begin{introth} \label{TintroArtin}
Let $\Gamma$ be a finitely generated residually finite Artin group satisfying the $K(\pi,1)$ conjecture and whose nerve is $(\indice-1)$-connected . Then, for every Farber sequence $(\Gamma_n)_{n \in \mathbb N}$, coefficient field $K$ and $0\leq j\leq \indice$ we have 
\[b_j^{(2)} (\Gamma ; K ) =0 \quad \mbox{and} \quad \lim_{n\to \infty} \frac{\log | H_j(\Gamma_n,\mathbb Z)_{\rm tors}|}{[\Gamma:\Gamma_n]}=0.\]
\end{introth}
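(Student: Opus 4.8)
\emph{Strategy.} The plan is to verify the two standing hypotheses of Theorem~\ref{Tintrotors} for $\Gamma$ and then invoke it. The first hypothesis is that $\Gamma$ be of type $F_{\indice+1}$; this I would get for free from the assumptions, since a finitely generated Artin group has a finite Salvetti complex (its nerve, equivalently its defining graph, is finite), and the $K(\pi,1)$ conjecture is precisely the assertion that this finite complex is aspherical, i.e.\ a finite $K(\Gamma,1)$, so $\Gamma$ is of type $F$, a fortiori of type $F_{\indice+1}$. The second hypothesis is that $\Gamma$ have the cheap $\indice$-rebuilding property, which is exactly Theorem~\ref{TCheapReb}\eqref{item Artin groups with flag complex} (equivalently Theorem~\ref{Ex:Artin groups}), whose hypotheses are verbatim those of the present statement. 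With both in hand, Theorem~\ref{Tintrotors} yields, for every Farber sequence $(\Gamma_n)$, every field $K$ and every $0\le j\le\indice$, that $\dim_K H_j(\Gamma_n,K)/[\Gamma:\Gamma_n]\to 0$ and $\log|H_j(\Gamma_n,\Z)_{\tors}|/[\Gamma:\Gamma_n]\to 0$; unravelling the convention $b_j^{(2)}(\Gamma;K)=\limsup_n \dim_K H_j(\Gamma_n,K)/[\Gamma:\Gamma_n]$ gives the statement.

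\emph{Where the real work is.} The substance is hidden in Theorem~\ref{Ex:Artin groups}, which I would establish through the bootstrapping criterion of Theorem~\ref{thm-AdjRebStack-intro}. The base case concerns Artin groups $A_T$ of spherical (finite) type and positive rank: such a group satisfies the $K(\pi,1)$ conjecture (Deligne), is of type $F$, and by the structure theory of spherical Artin groups (Brieskorn--Saito, Deligne) has nontrivial center containing a copy of $\Z$. As $A_T$ and $\Z$ are of type $F_\infty$, so is $A_T/\Z$; since moreover $\Z$ has the cheap $m$-rebuilding property for every $m$ (being infinite polycyclic, Corollary~\ref{cor-CheapRebGroups}), the transfer statement of Corollary~\ref{cor-CheapRebGroups} shows $A_T$ has the cheap $m$-rebuilding property for every $m$. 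For a general $\Gamma=A_S$, I would take $\Baseup$ to be the order complex of the poset of cosets $wA_T$ of \emph{nonempty} spherical standard parabolics, a variant of the Deligne complex. Then $\Gamma$ stabilizes simplices of $\Baseup$ pointwise; the quotient $\Gamma\bs\Baseup$ is the order complex of the poset of nonempty spherical subsets of $S$, i.e.\ the barycentric subdivision of the finite nerve $N(\Gamma)$; and the stabilizer of a $j$-simplex is a conjugate of a spherical-type standard parabolic of positive rank, which by the base case has the cheap $(\indice-j)$-rebuilding property. By Theorem~\ref{thm-AdjRebStack-intro} it then remains only to show that $\Baseup$ is $(\indice-1)$-connected.

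\emph{The main obstacle.} I expect this last point to be the crux. The idea is to compare $\Baseup$ with the \emph{full} Deligne complex $\widehat\Baseup$ obtained by allowing also the empty parabolic: the $K(\pi,1)$ conjecture for $\Gamma$ is equivalent (Charney--Davis, using Deligne for the spherical parabolics) to the contractibility of $\widehat\Baseup$, and $\widehat\Baseup$ is recovered from $\Baseup$ by coning off, once for each $w\in\Gamma$, a subcomplex of $\Baseup$ isomorphic to the nerve $N(\Gamma)$. Since attaching a cone on a subcomplex $N$ is homotopically the same as collapsing $N$, it leaves $\pi_i$ unchanged for $i\le\indice-1$ whenever $N$ is $(\indice-1)$-connected; a van Kampen and (relative) Hurewicz argument, performed with care about the overlaps of the various coned-off copies and about the countably many attachments, should transfer the contractibility of $\widehat\Baseup$ back to $(\indice-1)$-connectedness of $\Baseup$. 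A further routine point is the case where $S$ is itself spherical (or $\indice=0$): there one should conclude for $\Gamma$ directly from the base case rather than from $\Baseup$, lest the relevant cell stabilizers be trivial and the criterion empty. Once Theorem~\ref{Ex:Artin groups} is secured, the proof of Theorem~\ref{TintroArtin} is the short deduction of the first paragraph.
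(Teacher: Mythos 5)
Your proposal is correct and follows essentially the same route as the paper: reduce Theorem~\ref{TintroArtin} to Theorem~\ref{Tintrotors} plus the cheap $\indice$-rebuilding property of Theorem~\ref{Ex:Artin groups}, prove the latter by applying the bootstrapping Theorem~\ref{thm-AdjRebStack} to the $A$-action on the sub-complex of the modified Deligne complex spanned by cosets of \emph{non-empty} spherical parabolics, and deduce its $(\indice-1)$-connectedness from the $K(\pi,1)$ conjecture together with the $(\indice-1)$-connectedness of the coned-off copies of the nerve. The one place you diverge slightly is the spherical base case, where you argue directly that $A_T/\Z$ is of type $F_\infty$ from $A_T,\Z$ being of type $F$ and apply Corollary~\ref{cor-CheapRebGroups}(2) to $A_T$ itself — this works but silently relies on a Bieri-type extension theorem for the finiteness of the quotient, whereas the paper sidesteps this by invoking Bestvina's commensurability of $A_T$ with $G\times\Z$ and the explicit cocompact $G$-action on a contractible complex with finite stabilizers.
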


When $\Gamma$ is a right-angled Artin group (\defin{RAAG}) the first part of the theorem is not new: 
In fact Avramidi, Okun and Schreve \cite{Avramidi} have even computed all the $\ell^2$-Betti numbers $b_j^{(2)} (\Gamma ; K)$. Their result  is that $b_j^{(2)} (\Gamma ; K) = \overline{b}_{j-1} (L ; K)$, the reduced Betti number of the nerve $L$ of $\Gamma$. Thus $b_j^{(2)} (\Gamma ; K )$ is indeed equal to $0$ when $L$ is $(j-1)$-connected. This shows that Theorem \ref{TCheapReb}~\eqref{item Artin groups with flag complex} is optimal for RAAGs. 
Building on their computation of the $\ell^2$-Betti numbers $b_j^{(2)} (\Gamma ; \mathbb{F}_2)$,  Avramidi, Okun and Schreve also prove that a RAAG $\Gamma$ whose nerve is a flag triangulation of $\mathbb{R}P^2$ has exponential homology torsion growth in degree $j=2$, see \cite[Corollary 3]{Avramidi}. Since $\Gamma$ has the cheap $1$-rebuilding property (it is in fact chain-commuting), this shows that Theorem \ref{Tintrotors} is in a certain sense optimal: for $\indice=1$, there are groups that have the cheap $\indice$-rebuilding property and have exponential torsion growth in degree $j={\indice+1}$.

\subsection{Speculations and questions} \label{S:speculations}

\newcommand\Image{{\mathrm {Im} }}
\newcommand{\toph}[3]{h_{\mathrm{top}}^{#1}(#2 \acting #3)} 
\newcommand{\tophsimple}[2]{h_{\mathrm{top}}^{#1}(#2)} 

One thing that makes rational Betti numbers powerful invariants is that their growth is connected with $\ell^2$-cohomology.
In particular, by the L\"uck approximation theorem, the rational homology growth can be expressed as the corresponding $\ell^2$-Betti number. Thus it is natural to ask, even at a conceptual level, what is the analytic invariant behind the homology growth in positive characteristics.  
In \cite{Gab-Seward-2019}, the last-named author with Brandon Seward drew a connection between finite-field homology problems and sofic entropy problems, such as, for example, whether entropy depends upon the choice of sofic approximation $\Sigma$. 
Let $\Gamma$ act freely, cocompactly on a contractible (or at least $j$-connected) simplicial complex $L$. Consider the coboundary maps with coefficients in a finite field ${K}$:
\begin{equation*}
\begin{matrix}
C^{j-1}(L,{K}) & \overset{\delta^{j}}{\longrightarrow} & C^{j}(L,{K})  & \overset{\delta^{j+1}}{\longrightarrow} & C^{j+1}(L,{K}) .
\end{matrix}
\end{equation*}
In particular, 
$\Image(\delta^{j})=\ker(\delta^{j+1})$ (equivalently $H^{j}(L,{K})=0$).
\\
In analogy with the work of Elek \cite{Elek-2002-entropy-Betti-numbers}, the authors were led to introduce the notion of the \defin{$j$-th sofic entropy Betti number} $\beta^{j,\Sigma}_{{K}} ( \Gamma)$ of the group $\Gamma$ over the field ${K}$ for a sofic approximation $\Sigma$ of $\Gamma$ as a measurement of the violation of a Yuzvinsky addition formula (see \cite[p. 6]{Gab-Seward-2019}):
\begin{equation*}
\beta^{j,\Sigma}_{{K}} ( \Gamma) \cdot \log |{K}|+\tophsimple{\Sigma}{C^{j-1}(L, {K})}  = \tophsimple{\Sigma}{\ker(\delta^j)} + \tophsimple{\Sigma}{\Image(\delta^{j})},
\end{equation*}
where $\tophsimple{\Sigma}{V}$ is the topological sofic entropy of the action of $\Gamma$ on the subshift $V$.
They proved in particular \cite[Cor. 5.4]{Gab-Seward-2019} that:
{\em If $\Gamma$ is residually finite, $\Sigma$ comes from a Farber chain $(\Gamma_n)$,
and the entropy $\tophsimple{\Sigma}{\ker(\delta^j)}$ is achieved as a limit rather than a limit-supremum then
$$\limsup_{n \rightarrow \infty} \frac{\dim_{\mathbb K} H^j(\Gamma_n \backslash L, {\mathbb K})}{|\Gamma : \Gamma_n|} =\beta^{j,\Sigma}_{{\mathbb K}} ( \Gamma).$$}

To conclude this introduction, we finally raise the following:
\begin{question} \label{QIntro}
Let $\Gamma$ be the fundamental group of a finite volume hyperbolic $3$-manifold. Does $\Gamma$ have the cheap $1$-rebuilding property?
\end{question}

It is tempting to believe that the answer is no. 
These groups have zero first $\ell^2$-Betti number, so the Betti number criterion does not apply. 
But it is conjectured that for arithmetic hyperbolic $3$-manifold groups the torsion in degree $1$ grows exponentially along Farber sequences of congruence subgroups, see \cite{BSV,BrockDunfield,EMS}.

\subsection*{Acknowledgement}We would like to thank Ian Biringer, Ross Geoghegan, Gr{\'e}gory Ginot, Athanase Papadopoulos, and Shmuel Weinberger
for helpful conversations.
We especially thank Boris Okun and Kevin Schreve for reading a preliminary version of this paper and making valuable comments.
We also thank Emma Bergeron for drawing our pictures. 
We thank Clara L\"oh and Matthias Uschold for pointing us a gap in the proof of one of our applications, namely Proposition~\ref{P:rag}.
A complete proof of its statement can be found in Uschold's paper \cite{Usc22}.
M.~A. and M.~F. 
are partially supported by ERC Consolidator Grant 648017. M.~A. is further supported by KKP 139502 project.
D.~G. was partially supported by the ANR project GAMME (ANR-14-CE25-0004) and by the LABEX MILYON (ANR-10-LABX-0070) of Universit{\'e} de Lyon, within the program ``Investissements d'Avenir" (ANR-11-IDEX-0007) operated by the French National Research Agency (ANR).
 M.~F. would like to thank IAS Princeton for an excellent working environment during a crucial stage of this project.

\section{The Borel construction and Geoghegan rebuilding} \label{Sect: Borel construction}

Let $\Gamma$ be a countable group acting on a CW-complex $\Baseup$. 

Technically speaking,  the \defin{Borel construction} is a trick that converts an
action of a group $\Gamma$ on a space $\Baseup$ into a \textbf{free action} of $\Gamma$ on a homotopy equivalent
space $\Baseup'$. See for instance \cite[3.G.2]{Hatcher-book-Algeb-topo}.
Namely, take $\Baseup' = \Baseup \times E\Gamma$ with the diagonal action of $\Gamma$, $\gamma(y, z) = (\gamma y, \gamma z)$ where $\Gamma$ acts on $E\Gamma$ (the universal cover of some classifying space $B\Gamma$) as deck transformations. The diagonal action is free, in fact a
covering space action, since this is true for the action in the second coordinate. 
The orbit space of this diagonal action is usually denoted $\Baseup\times_{\Gamma} E\Gamma$. 
We now briefly explain the ``stack" interpretation of Geoghegan \cite[\S 6.1]{Geoghegan} alluded to in the introduction.

\medskip

A cellular map $\Pi : \Totaldown \to \Basedown$ between two CW-complexes is a \defin{stack of CW-complexes} with \defin{base space} $\Basedown$, \defin{total space} $\Totaldown$ and CW-complexes called \defin{fibers} $F_e$ over $e$, if for each $n \geq 1$ (denoting by $E_n$ the set of $n$-cells of $\Basedown$) there is a cellular map 
\begin{equation}f_n : \bigsqcup_{e \in E_n} F_e \times \mathbb{S}^{n-1} \to \Pi^{-1} (\Basedown^{(n-1)})\end{equation}
and a homeomorphism
\begin{equation} \label{k}
k_n : \Pi^{-1} (\Basedown^{(n-1)}) \cup_{f_n} \left( \bigsqcup_{e \in E_n} F_e \times \mathbb{B}^{n} \right) \to \Pi^{-1} (\Basedown^{(n)})\end{equation}
satisfying:
\begin{enumerate}
\item $k_n$ agrees with the inclusion on $\Pi^{-1} (\Basedown^{(n-1)})$,
\item $k_n$ maps each cell onto a cell, and
\item the following diagram commutes up to homotopy relative to $\Pi^{-1} (\Basedown^{(n-1)})$:
\begin{equation} \label{E:diag23}
\xymatrix{\Pi^{-1} (\Basedown^{(n-1)}) \sqcup \left( \bigsqcup_{e \in E_n} F_e \times \mathbb{B}^{n} \right) \ar[d]^{\mathrm{quotient}} \ar[rd]^{(\Pi_{|} , \chi_e \circ \mathrm{pr}_2)} & 
\\
\Pi^{-1} (\Basedown^{(n-1)}) \cup_{f_n} \left( \bigsqcup_{e \in E_n} F_e \times \mathbb{B}^{n} \right) \ar[d]^{k_n} & \Basedown^{(n)} 
\\
\Pi^{-1} (\Basedown^{(n)}) \ar[ru]^{\Pi_{|}} & }\end{equation}
\end{enumerate}
where $(\Pi_{|} , \chi_e \circ \mathrm{pr}_2):\Pi^{-1} (\Basedown^{(n-1)}) \sqcup \left( \bigsqcup_{e \in E_n} F_e \times \mathbb{B}^{n} \right) \to \Basedown^{(n)}$ is the restriction of $\Pi$ on $\Pi^{-1} (\Basedown^{(n-1)})$, while 
on each $F_e \times \mathbb{B}^{n}$, it is the composition of the projection onto $\mathbb{B}^{n}$ followed by the characteristic map $\chi_e$ of the cell $e\in E_n$ in the
CW-complex $\Basedown$. Thus there is a homotopy 
$$\mathfrak{H} : \left( \Pi^{-1} (\Basedown^{(n-1)}) \sqcup \left( \bigsqcup_{e \in E_n} F_e \times \mathbb{B}^{n} \right) \right) \times [0,1] \to \Basedown^{(n)}$$
such that 
$$\mathfrak{H}_{|  \Pi^{-1} (\Basedown^{(n-1)}) \times [0,1]} = \Pi_{|}, \quad \mathfrak{H} (\cdot , 0) = (\Pi_{|} , \chi_e \circ \mathrm{pr}_2), \quad \mbox{and} \quad 
\mathfrak{H} (\cdot , 1) = \Pi_{|} \circ k_n \circ \mathrm{quotient}.$$ 

Note that the strong commutation of diagram \eqref{E:diag23} required in the definition of stacks in  \cite[Section 6.1, p. 147 ]{Geoghegan} (2008 version of the book) is in fact too strong and that \cite[Proposition 6.1.4]{Geoghegan} needs the commutation to occur up to homotopy. Both the flaw and the way to fix it have been pointed out to us by Boris Okun and Kevin Schreve. We are extremely grateful to them for that.
The relevant corrections of Ross Geoghegan's book appear on his webpage \url{math.binghamton.edu/ross/tmgt}.

Following \cite[\S 6.1]{Geoghegan},
the Borel construction for $\Gamma\acting \Baseup$  eventually takes the form of a stack $\Pi : \Totaldown \to \Gamma \backslash \Baseup$ below:

\begin{proposition} \label{prop: existence of a stack}
Let $\Gamma$ be a countable group acting on a simply connected CW-complex $\Baseup$ so that for every cell $\cellup\subseteq \Baseup$ the stabilizer $\Gamma_\cellup$ acts trivially on $\cellup$. Write $\Basedown=\Gamma\backslash \Baseup$. Then there exists a stack of CW-complexes $\Pi : \Totaldown \to \Basedown$ with fiber $F_e$ over $e$ such that
\begin{enumerate}
\item The fundamental group $\pi_1 (\Totaldown)$ is isomorphic to $\Gamma$.
\item For each cell $e$ of $\Basedown$ the fiber $F_e$ is aspherical and $\pi_1( F_e)\cong \Gamma_\cellup$, where $\cellup$ is any cell in $\Baseup$ above $e$.
\end{enumerate}
Moreover, if $\Baseup$ is $n$-connected then the universal cover $\widetilde{\Totaldown}$ of the total space $\Totaldown$ is $n$-connected. 
\end{proposition}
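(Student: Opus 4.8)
\textbf{Plan of proof for Proposition~\ref{prop: existence of a stack}.}

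The plan is to build the stack $\Pi : \Totaldown \to \Basedown = \Gamma\backslash\Baseup$ by induction on the skeleta of $\Basedown$, gluing in, over each $n$-cell $e$, a copy of $F_e \times \mathbb{B}^n$ where $F_e$ is a chosen classifying space $B(\Gamma_\cellup,1)$ for the stabilizer $\Gamma_\cellup$ of a lift $\cellup$ of $e$. First I would fix, for each orbit of cells, one representative cell $\cellup \subseteq \Baseup$, note that by hypothesis $\Gamma_\cellup$ fixes $\cellup$ pointwise, and pick a classifying space $F_e$ together with a chosen basepoint; the attaching data of the cell $e$ in $\Basedown$ lifts to an attaching sphere in $\Baseup$, whose image under $\Gamma$ gives gluing instructions, and the point-set construction of \cite[\S 6.1]{Geoghegan} produces the maps $f_n$ and homeomorphisms $k_n$ of \eqref{k} making diagram \eqref{E:diag23} commute up to homotopy. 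The output $\Totaldown$ is, up to homotopy, the Borel construction $\Baseup \times_\Gamma E\Gamma$ reorganized as a stack over $\Basedown$; this is essentially Geoghegan's construction and I would invoke it rather than reprove it from scratch.

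Next I would verify the two listed properties. For (1), $\pi_1(\Totaldown) \cong \Gamma$: since $\Totaldown$ is homotopy equivalent to $\Baseup\times_\Gamma E\Gamma$ and $\Baseup$ is simply connected, the fibration-up-to-homotopy $\Baseup \to \Baseup\times_\Gamma E\Gamma \to B\Gamma$ gives a long exact sequence of homotopy groups yielding $\pi_1 = \pi_1(B\Gamma) = \Gamma$ (and $\pi_0$ trivial since $\Baseup$ is connected); alternatively one reads this off the stack structure via van Kampen, assembling $\Gamma$ as the fundamental group of the complex of groups with vertex groups $\Gamma_\cellup$ over the simply connected $\Baseup$. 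For (2), each fiber $F_e = \Pi^{-1}(e)$ is by construction the chosen $B(\Gamma_\cellup,1)$, hence aspherical with $\pi_1(F_e) \cong \Gamma_\cellup$; independence of the choice of lift $\cellup$ holds because any two lifts are related by an element of $\Gamma$, which conjugates their stabilizers isomorphically.

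The main point — and the step I expect to require the most care — is the final assertion: if $\Baseup$ is $n$-connected then $\widetilde{\Totaldown}$ is $n$-connected. Here I would pass to universal covers: $\widetilde{\Totaldown}$ fibers (up to homotopy) over $\widetilde{B\Gamma} = E\Gamma$, which is contractible, with fiber $\Baseup$ (the universal cover contribution in the $\Baseup$-direction is $\Baseup$ itself since it is already simply connected). More concretely, $\widetilde{\Totaldown}$ is homotopy equivalent to $\Baseup \times E\Gamma \simeq \Baseup$, so $\pi_i(\widetilde{\Totaldown}) \cong \pi_i(\Baseup) = 0$ for $i \leq n$. To make this rigorous from the stack description rather than the Borel model, I would use the diagram in the introduction: $\widetilde{\Totaldown} \to \Baseup$ is a map whose homotopy fiber over a cell $\Gamma\cellup$ is the universal cover $\widetilde{\Pi^{-1}(\Gamma\cellup)}$ of the fiber classifying space, which is contractible since $F_e$ is aspherical; then a Leray–Serre or Mayer–Vietoris/blow-up argument over the skeleta of $\Baseup$ propagates $n$-connectivity from $\Baseup$ upward. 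The delicate part is organizing the spectral-sequence or gluing argument so that the "up to homotopy" nature of the stack (the weakened commutation of \eqref{E:diag23}) does not obstruct the conclusion; I would handle this by working skeleton-by-skeleton and repeatedly applying the gluing lemma for homotopy pushouts, so that only homotopy-invariant input is ever used.
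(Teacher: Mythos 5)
The paper itself does not prove this proposition: it states it and points to Geoghegan \cite[\S 6.1]{Geoghegan} for the construction. Your proposal reconstructs exactly that argument — build $\Totaldown$ as the stack version of the Borel construction $\Baseup \times_\Gamma E\Gamma$, read off (1) and (2) from the fibration up to homotopy, and get $n$-connectivity of $\widetilde{\Totaldown}$ from $\widetilde{\Totaldown} \simeq \Baseup \times E\Gamma \simeq \Baseup$ — so it is the same route the paper relies on, and it is correct.

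One remark on the part you flag as delicate. You do not need the Leray--Serre or skeleton-by-skeleton Mayer--Vietoris argument at the end, and trying to run it directly against the stack data (with the weakened commutation of \eqref{E:diag23}) would be doing work that Geoghegan's Proposition 6.1.4 already packages for you. The cleaner logic is: Geoghegan's construction produces $\Totaldown$ together with a homotopy equivalence to $\Baseup \times_\Gamma E\Gamma$; since $\Gamma$ acts freely on the simply connected space $\Baseup \times E\Gamma$, the quotient map $\Baseup \times E\Gamma \to \Baseup \times_\Gamma E\Gamma$ is the universal cover, so $\pi_1(\Totaldown) \cong \Gamma$ and $\widetilde{\Totaldown} \simeq \Baseup \times E\Gamma \simeq \Baseup$; hence $\pi_i(\widetilde{\Totaldown}) \cong \pi_i(\Baseup)$ for all $i$, and $n$-connectivity of $\Baseup$ transfers immediately. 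The "up to homotopy" nature of the stack never obstructs this because only the homotopy type of $\Totaldown$ is used, not the specific gluing maps.
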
 
Recall that a CW-complex space $Y$ is said to be \defin{$n$-connected} if it is connected and all its homotopy groups $\pi_j Y$ are trivial for $0\leq j\leq n$. By convention, a $(-1)$-connected space will just be an arbitrary topological space.  

Note that if $\cellup$ and $\cellup'$ are two cells of $\Baseup$ above a cell $e$ of $\Basedown$, they are  in the same $\Gamma$-orbit and thus the stabilizers $\Gamma_\cellup$ and $\Gamma_{\cellup '}$ are conjugate in $\Gamma$. In particular these stabilizers are isomorphic, so there is no harm in writing $\Gamma_e:=\Gamma_\cellup$.

In practice it is often desirable to replace the fibers $F_e$ by different CW-complexes of the same homotopy types. 
Geoghegan's stack decomposition and his \defin{rebuilding Lemma} \cite[Proposition 6.1.4]{Geoghegan} makes this possible:

\begin{proposition}[Rebuilding Lemma, Geoghegan {\cite[Proposition 6.1.4]{Geoghegan}}] \label{prop: rebuilding}
Let $\Pi:\Totaldown\to \Basedown$ be a stack of CW-complexes.
If for each cell $e$ of $\Basedown$ we are given a CW-complex $F_e '$ of the same homotopy type as $F_e$, then there is a stack of CW-complexes 
$\Pi' : \Totaldown' \to \Basedown$ with fiber $F_e '$ over $e$, and a homotopy equivalence $\Gcell$ making the following diagram commute up to homotopy over each cell:
\begin{equation*}
\xymatrix{ \Totaldown \ar[rd]_{\Pi}  \ar[rr]^\Gcell &   & \Totaldown ' \ar[ld]^{\Pi'} 
\\
& \Basedown  & }
\end{equation*}
\end{proposition}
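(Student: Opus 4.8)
The plan is to construct the new total space $\Totaldown'$ together with the comparison map $\Gcell$ by induction on the skeleta of the base $\Basedown$, essentially repeating the construction that builds a stack from its attaching data but feeding in the spaces $F_e'$ in place of the $F_e$. Fix once and for all, for every cell $e$ of $\Basedown$, a cellular homotopy equivalence $\phi_e\colon F_e\to F_e'$, a cellular homotopy inverse $\psi_e\colon F_e'\to F_e$, and homotopies $\psi_e\phi_e\simeq\mathrm{id}$, $\phi_e\psi_e\simeq\mathrm{id}$ (cellular approximation is used to make $\phi_e,\psi_e$ cellular). For the $0$-skeleton we have $\Pi^{-1}(\Basedown^{(0)})=\bigsqcup_v F_v$; set $\Pi'{}^{-1}(\Basedown^{(0)})=\bigsqcup_v F_v'$, with projection the constant map to the $0$-cells and with $\Gcell_0=\bigsqcup_v\phi_v$, which is visibly a homotopy equivalence compatible with the two projections.

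For the inductive step, assume we have a stack $\Pi'_{n-1}\colon\Totaldown'_{n-1}\to\Basedown^{(n-1)}$ with fiber $F_e'$ over each cell $e$ of dimension $<n$, a cellular homotopy equivalence $\Gcell_{n-1}\colon\Pi^{-1}(\Basedown^{(n-1)})\to\Totaldown'_{n-1}$ commuting with the projections up to homotopy over each cell, together with a chosen homotopy inverse $\Hcell_{n-1}$. For each $n$-cell $e$ the stack structure on $\Totaldown$ provides an attaching map $f_{n,e}\colon F_e\times\mathbb S^{n-1}\to\Pi^{-1}(\Basedown^{(n-1)})$; define the new attaching map $f'_{n,e}\colon F_e'\times\mathbb S^{n-1}\to\Totaldown'_{n-1}$ to be a cellular approximation of $\Gcell_{n-1}\circ f_{n,e}\circ(\psi_e\times\mathrm{id})$, and put $\Totaldown'_n:=\Totaldown'_{n-1}\cup_{f'_n}\bigsqcup_e(F_e'\times\mathbb B^n)$, with $\Pi'_n$ equal to $\Pi'_{n-1}$ on $\Totaldown'_{n-1}$ and to $\chi_e\circ\mathrm{pr}_2$ on each $F_e'\times\mathbb B^n$. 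Axioms (i) and (ii) for a stack hold by construction, and the homotopy required in (iii) is produced from the homotopy identifying $f'_{n,e}$ with $\Gcell_{n-1}\circ f_{n,e}\circ(\psi_e\times\mathrm{id})$.

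Next I extend $\Gcell_{n-1}$ over the new cells. On $F_e\times\mathbb S^{n-1}$ the maps $f'_{n,e}\circ(\phi_e\times\mathrm{id})$ and $\Gcell_{n-1}\circ f_{n,e}$ into $\Totaldown'_{n-1}$ are homotopic: combine the cellular-approximation homotopy for $f'_{n,e}$ with the homotopy $\psi_e\phi_e\simeq\mathrm{id}$. Using a collar of the boundary sphere inside $F_e\times\mathbb B^n$, I extend $\phi_e\times\mathrm{id}_{\mathbb B^n}$, corrected near the boundary by this homotopy, to a map $F_e\times\mathbb B^n\to\Totaldown'_n$ that restricts on $F_e\times\mathbb S^{n-1}$ to $\Gcell_{n-1}\circ f_{n,e}$; gluing these over $\Totaldown'_{n-1}$ via $\Gcell_{n-1}$ gives $\Gcell_n\colon\Pi^{-1}(\Basedown^{(n)})\to\Totaldown'_n$, which we make cellular relative to the $(n-1)$-skeleton. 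That $\Gcell_n$ is a homotopy equivalence follows from the \emph{gluing lemma} for homotopy equivalences: both $\Pi^{-1}(\Basedown^{(n)})$ and $\Totaldown'_n$ are pushouts of $\bigsqcup_e F_e\times\mathbb B^n\leftarrow\bigsqcup_e F_e\times\mathbb S^{n-1}\to\Pi^{-1}(\Basedown^{(n-1)})$ and its primed analogue; the comparison maps $\bigsqcup_e\phi_e\times\mathrm{id}$, $\bigsqcup_e\phi_e\times\mathrm{id}$ and $\Gcell_{n-1}$ are homotopy equivalences, the two squares commute up to homotopy, the left-hand legs are cofibrations, and $\Gcell_n$ is (a model for) the induced map on pushouts. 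Finally one passes to the colimit: $\Totaldown'=\bigcup_n\Totaldown'_n$, $\Pi'=\bigcup_n\Pi'_n$, $\Gcell=\bigcup_n\Gcell_n$; the stack axioms hold levelwise, $\Gcell$ induces isomorphisms on all homotopy groups (spheres and disks land in finite skeleta, where $\Gcell$ restricts to $\Gcell_n$) and is therefore a homotopy equivalence by Whitehead's theorem, and the claimed homotopy-commutativity over any given cell of $\Basedown$ already holds at the finite stage at which that cell appears.

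The main obstacle will be the homotopy-theoretic bookkeeping inside the induction: one must carry along, at every skeleton, not just the space $\Totaldown'_n$ and the map $\Gcell_n$ but the homotopies witnessing commutativity over cells, and then use them to build the attaching maps at the next stage; moreover the cellular approximations must be performed relative to the subcomplexes on which everything is already cellular, so as not to disturb the compatibilities arranged so far, and the proof that $\Gcell_n$ is a homotopy equivalence requires the version of the gluing lemma valid for squares commuting only up to homotopy. This is exactly the place where the "strong commutation" in Geoghegan's original formulation of the stack axioms is too rigid and must be relaxed to commutation up to homotopy, as flagged after the definition of a stack above.
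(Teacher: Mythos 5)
The paper does not actually prove this proposition: it cites it to Geoghegan's book (Proposition~6.1.4) and only later, in the ``Effective rebuilding'' section, explicates the construction in order to extract chain-level formulas. That said, the construction the paper makes explicit there is exactly the one you give: build $\Totaldown'$ and $\Gcell$ by induction on the skeleta of $\Basedown$, using the new attaching maps $f'_n = \Gcell_{n-1} \circ f_n \circ \Hcellfibre_n$ (your $\Gcell_{n-1}\circ f_{n,e}\circ(\psi_e\times\mathrm{id})$, cellularly approximated), and extend $\Gcell_{n-1}$ over each $F_e\times\mathbb{B}^n$ via the collar trick. So your proof is correct and takes the same route as Geoghegan and as the paper's effective elaboration; the only real difference is that where the paper proves its own Explicit Gluing Lemma (Proposition~\ref{lem-QGluing}) to obtain explicit chain maps $\Ghom,\Hhom,\Homhom$ and control their norms, you invoke the homotopy-coherent gluing lemma as a black box, which is entirely adequate for the non-effective statement being proved. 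You also correctly flag the subtlety (which the paper points out) that the stack axioms must be understood with commutativity up to homotopy for the induction to go through, and your collar construction is precisely how one upgrades the homotopy-commutative squares to a situation where the strict gluing lemma applies.
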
 

\medskip

Proposition \ref{prop: rebuilding} allows one, up to homotopy equivalence, to replace each fiber $F_e$ in Proposition~\ref{prop: existence of a stack} by a prescribed classifying space for $\Gamma_\cellup$. 

Recall that a group $\Gamma$ is of type $F_{n}$ if it admits a classifying space whose $n$-skeleton is finite; equivalently if it admits a
$(n-1)$-aspherical CW-complex $X$ with finite $n$-skeleton and $\pi_1(X) \simeq \Gamma$ (since turning $X$ into an aspherical complex can be made by adding only cells of dimension $\geq n+1$). 

Let $\indice \in \mathbb{N}$. Suppose that each group $\Gamma_e$ is of type $F_{\indice}$. Thanks to the rebuilding lemma we may then assume that:
\begin{center}
Each fiber $F_e$ of the stack $\Pi : \Totaldown \to \Basedown$ has a finite $\indice$-skeleton.
\end{center}
If we suppose furthermore that $\Baseup$ is $(\indice-1)$-connected then the universal cover of the total space $\widetilde{\Totaldown}$ is also $(\indice-1)$-connected. The cellular chain complex $C_q (\widetilde{\Totaldown})$ therefore gives a partial resolution 
$$C_{\indice} (\widetilde{\Totaldown}) \to \cdots \to C_0 (\widetilde{\Totaldown}) \to \Z \to 0$$
of $\Z$ by free $\Z[\Gamma]$-modules of finite rank. 

\begin{remark} 
If $\overline{\Gamma}\leq \Gamma$ is a subgroup of finite index, then in the commuting diagram
\begin{equation} 
\xymatrix{\overline{\Totaldown}=\overline{\Gamma}\backslash\widetilde{\Totaldown} \ar[r]^{\overline{\Pi}} \ar[d]^{\zeta} \ar@{}[rd]|{\circlearrowleft}& \overline\Basedown=\overline{\Gamma}\backslash\Baseup \ar[d]
\\
\Totaldown=\Gamma\backslash\widetilde{\Totaldown}\ar[r]^{\Pi} & \Basedown=\Gamma\backslash\Baseup
}
\end{equation}
the map $\overline{\Pi}$ 
is naturally a stack satisfying (1) and (2) of Proposition \ref{prop: existence of a stack}, the map $\zeta$ is a finite cover. For any $n \in \mathbb{N}$ we thus have
$$\vert \overline{\Totaldown}^{(n)} \vert = [\Gamma:\overline{\Gamma}]  \vert \Totaldown^{(n)} \vert|.$$ 
\end{remark}

Proposition \ref{prop: rebuilding} gives a way to improve the covering space $\overline{\Totaldown}$ by simplifying the fibers of the stack $\overline{\Pi}$. Our next goal is to make this procedure explicit enough to control the boundary maps in (cellular) homology. We first recall some basic facts about the latter.

\section{Cellular homology}
The material in this section covers the basic properties of cellular homology in the context of stacks of CW-complexes. We review it to set up the notation for the following sections.
Let $\Pi : \Totaldown \to \Basedown$ be a stack of CW-complexes with fiber over each cell $e$ of $\Basedown$ denoted by $F_e$. 

Recall that by definition, the $\Z$-module of the degree $a$ cellular chains is  
$$C_a (\Totaldown ) = H_a ( \Totaldown^{(a)} , \Totaldown^{(a-1)}).$$
Now each homeomorphism \eqref{k} induces a map of pairs
$$(F_e^{(k)} \times \mathbb{B}^n  , F_e^{(k)} \times \mathbb{S}^{n-1} \cup F_e^{(k-1)} \times \mathbb{B}^n) \to (\Totaldown^{(n+k)} , \Totaldown^{(n+k-1)} )$$ 
which induces an injective map
\begin{equation} \label{inj1}
H_{n+k} (F_e^{(k)} \times \mathbb{B}^n  , F_e^{(k)} \times \mathbb{S}^{n-1} \cup F_e^{(k-1)} \times \mathbb{B}^n) \to C_{n+k} (\Totaldown).
\end{equation}
Considering the long exact sequence for the pair 
$$(F_e^{(k)} \times \mathbb{B}^n  , F_e^{(k)} \times \mathbb{S}^{n-1} \cup F_e^{(k-1)} \times \mathbb{B}^n),$$
K\"unneth theorem yields a natural isomorphism of $\Z$-modules: 
\begin{equation} \label{isom2}
H_k (F_e^{(k)} , F_e^{(k-1)}) \otimes H_n (\mathbb{B}^n , \mathbb{S}^{n-1} ) \stackrel{\simeq}{\longrightarrow} H_{n+k} (F_e^{(k)} \times \mathbb{B}^n  , F_e^{(k)} \times \mathbb{S}^{n-1} \cup F_e^{(k-1)} \times \mathbb{B}^n).
\end{equation}
By definition, we have 
$$H_k (F_e^{(k)} , F_e^{(k-1)}) = C_k (F_e).$$
The $\Z$-module $H_n (\mathbb{B}^n , \mathbb{S}^{n-1} )$ is free of rank one generated by the relative fundamental class $[\mathbb{B}^n , \mathbb{S}^{n-1}]$. From now on we will identify   
\begin{equation} \label{subspace}
C_k (F_e) \otimes H_n (\mathbb{B}^n , \mathbb{S}^{n-1} )
\end{equation}
with the subspace of $C_{n+k} (\Totaldown )$ spanned by the image of the composition of the maps \eqref{isom2} and \eqref{inj1}. 

The ascending filtration of $\Basedown$ by its $n$-skeleta $\Basedown^{(n)}$ induced via $\Pi : \Totaldown \to \Basedown$ then yields an ascending filtration 
\begin{equation}F^n C_\bullet (\Totaldown) = \bigoplus_{e \in \Basedown^{(n)}} C_{\bullet-\dim(e)} (F_e )  \otimes H_{\dim e} (\mathbb{B}^{\dim e} , \mathbb{S}^{\dim e -1} )
\label{eq: Fn C(A) complex}\end{equation}
on the cellular chain complex $C_\bullet (\Totaldown)$. Recall that the boundary operator $\partial : C_a (\Totaldown) \to C_{a -1} (\Totaldown)$ 
is defined using the homology long exact sequences of the pairs $(\Totaldown^{(a)} , \Totaldown^{(a-1)} )$ and $(\Totaldown^{(a-1)} , \Totaldown^{(a-2)} )$: 
\begin{equation*}
\xymatrix{
C_a (\Totaldown) \ar[d]^{\cong} \ar[rr]^{\partial} & & C_{a-1} (\Totaldown) \ar[d]^{\cong} \\
H_a (\Totaldown^{(a)} , \Totaldown^{(a-1)} ) \ar[r]^{\delta} & H_{a-1} (\Totaldown^{(a-1)} ) \ar[r] & H_{a-1} (\Totaldown^{(a-1)} , \Totaldown^{(a-2)} ).
}
\end{equation*}
In restriction to the image of \eqref{inj1} the composition of maps in the diagram above is the composition of the map 
$$H_{n+k} (F_e^{(k)} \times \mathbb{B}^n  , F_e^{(k)} \times \mathbb{S}^{n-1} \cup F_e^{(k-1)} \times \mathbb{B}^n) \stackrel{\delta = \delta_1 \oplus \delta_2}{\longrightarrow} H_{n+k-1} (F_e^{(k)} \times \mathbb{S}^{n-1} ) \oplus H_{n+k-1} (F_e^{(k-1)} \times \mathbb{B}^n) )$$
by the direct sum of the two natural maps
$$H_{n+k-1} (F_e^{(k)} \times \mathbb{S}^{n-1} ) \longrightarrow H_{n+k-1} (F_e^{(k)} \times \mathbb{S}^{n-1} , F_e^{(k-1)} \times \mathbb{S}^{n-1} )$$
$$H_{n+k-1} (F_e^{(k-1)} \times \mathbb{B}^n ) \longrightarrow H_{n+k-1} (F_e^{(k-1)} \times \mathbb{B}^n , F_e^{(k-1)} \times \mathbb{S}^{n-1}).$$

\begin{definition}[In the fiber, $\partial^{\rm vert}$]
Let
$$\partial^{\rm vert} : C_k (F_e) \otimes H_n (\mathbb{B}^n , \mathbb{S}^{n-1} ) \to C_{k-1} (F_e) \otimes H_n (\mathbb{B}^n , \mathbb{S}^{n-1} )$$
be the map that makes the following diagram commute: 
\begin{footnotesize}
\begin{equation*}
\xymatrix{
C_k (F_e) \otimes H_n (\mathbb{B}^n , \mathbb{S}^{n-1} ) \ar[d]^{\cong} \ar[rr]^{\partial^{\rm vert}} & & C_{k-1} (F_e) \otimes H_n (\mathbb{B}^n , \mathbb{S}^{n-1} )  \\
H_{n+k} (F_e^{(k)} \times \mathbb{B}^n  , F_e^{(k)} \times \mathbb{S}^{n-1} \cup F_e^{(k-1)} \times \mathbb{B}^n) \ar[r]^{\quad \quad \quad \quad \delta_2} & H_{n+k-1} (F_e^{(k-1)} \times \mathbb{B}^n)  \ar[r] &  H_{n+k-1} (F_e^{(k-1)} \times \mathbb{B}^n , F_e^{(k-1)} \times \mathbb{S}^{n-1}) \ar[u]^{\cong}.
}
\end{equation*}
\end{footnotesize}
\end{definition}

Given $e \in E_n$, we denote by 
$$f_e : F_e \times \mathbb{S}^{n-1} \to \Totaldown$$
the map obtained by restricting $f_n$. 
By construction it induces a map of pairs 
$$(F_e^{(k)}  \times \mathbb{S}^{n-1} , F_e^{(k-1)}  \times \mathbb{S}^{n-1} ) \longrightarrow (\Totaldown^{(n+k-1)} , \Totaldown^{(n+k-2)} ).$$

\begin{definition}[In the base, $\partial^{\rm hor}$] \label{def:dhor}
Let
$$\partial^{\rm hor} : C_k (F_e) \otimes H_n (\mathbb{B}^n , \mathbb{S}^{n-1} ) \to  C_{n+k-1} (\Totaldown )$$
be the map that makes the following diagram commute: 
\begin{footnotesize}
\begin{equation*}
\xymatrix{
C_k (F_e) \otimes H_n (\mathbb{B}^n , \mathbb{S}^{n-1} ) \ar[d]^{\cong} \ar[rr]^{\partial^{\rm hor}} & & C_{n+k-1} (\Totaldown )   \\
H_{n+k} (F_e^{(k)} \times \mathbb{B}^n  , F_e^{(k)} \times \mathbb{S}^{n-1} \cup F_e^{(k-1)} \times \mathbb{B}^n) \ar[r]^{\quad \quad \quad \quad \delta_1} & H_{n+k-1} (F_e^{(k)} \times \mathbb{S}^{n-1}) ) \ar[r] &  H_{n+k-1} (F_e^{(k)} \times \mathbb{S}^{n-1} , F_e^{(k-1)} \times \mathbb{S}^{n-1}) \ar[u]^{(f_e)_*}.
}
\end{equation*}
\end{footnotesize}
\end{definition}

By naturality we get the following:

\begin{lemma} \label{L23}
The boundary map $\partial : C_\bullet (\Totaldown) \to C_{\bullet -1} (\Totaldown)$
decomposes as 
\begin{equation}
\partial = \partial^{\rm vert} + \partial^{\rm hor} \label{eq: def partial vert}
\end{equation}
where $\partial^{\rm vert}$ preserves each summand $C_\bullet (F_e) \otimes H_{\dim e} (\mathbb{B}^{\dim e} , \mathbb{S}^{\dim e -1} )$ and acts on it by the boundary operator of the cellular chain complex $C_\bullet (F_e)$, and where 
$$\partial^{\rm hor} : F^n C_\bullet (\Totaldown) \to F^{n-1} C_{\bullet-1} (\Totaldown)$$
maps $c \otimes [\mathbb{B}^n , \mathbb{S}^{n-1}] \in C_\bullet (F_e) \otimes H_{n} (\mathbb{B}^{n} , \mathbb{S}^{n -1} )$ (with $e \in E_n$) to 
$$(-1)^{\dim c} (f_e)_* (c \otimes [\mathbb{S}^{n-1}]).$$
\end{lemma}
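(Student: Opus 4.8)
The plan is to compute the cellular boundary operator on $C_\bullet(\Totaldown)$ summand by summand, unwinding the identifications furnished by the stack structure. Fix an $n$-cell $e$ of $\Basedown$. Via the homeomorphism $k_n$ of \eqref{k}, the subspace $C_k(F_e)\otimes H_n(\mathbb{B}^n,\mathbb{S}^{n-1})$ of $C_{n+k}(\Totaldown)$ is spanned by the images of the product cells $\sigma\times\mathbb{B}^n$, for $\sigma$ a $k$-cell of $F_e$, and the cellular boundary of such a product cell obeys the Leibniz rule: its ``vertical'' term is $(\partial^{F_e}\sigma)\times\mathbb{B}^n$, a chain in $F_e^{(k-1)}\times\mathbb{B}^n$, and its ``horizontal'' term is $(-1)^{\dim\sigma}\,\sigma\times(\partial\mathbb{B}^n)$, a chain in $F_e^{(k)}\times\mathbb{S}^{n-1}$, where $\partial\mathbb{B}^n$ denotes the fundamental class of $\mathbb{S}^{n-1}$. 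I would phrase this homologically through the commuting square defining $\partial$ via the long exact sequences of $(\Totaldown^{(a)},\Totaldown^{(a-1)})$ and $(\Totaldown^{(a-1)},\Totaldown^{(a-2)})$: since the subspace $F_e^{(k)}\times\mathbb{S}^{n-1}\cup F_e^{(k-1)}\times\mathbb{B}^n$ along which the relative class is supported is a union of two subcomplexes meeting in $F_e^{(k-1)}\times\mathbb{S}^{n-1}$, excision splits the connecting homomorphism on this K\"unneth summand as $\delta_1\oplus\delta_2$, exactly the two terms of the Leibniz rule.

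The second step identifies the two pieces with $\partial^{\rm vert}$ and $\partial^{\rm hor}$. For the vertical piece (the $\delta_2$-part): by naturality of the K\"unneth isomorphism \eqref{isom2} with respect to the boundary maps of the two factors, the term $(\partial^{F_e}\sigma)\times\mathbb{B}^n$, read back through \eqref{isom2}, is precisely $(\partial^{F_e}\sigma)\otimes[\mathbb{B}^n,\mathbb{S}^{n-1}]$; thus $\partial^{\rm vert}$ preserves the summand attached to $e$ and acts on it as the cellular boundary of $C_\bullet(F_e)$ tensored with the identity, as required. For the horizontal piece (the $\delta_1$-part): the chain $\sigma\times(\partial\mathbb{B}^n)$ lies in $F_e^{(k)}\times\mathbb{S}^{n-1}$, on which $f_n$ restricts to the map $f_e$ of Definition~\ref{def:dhor}; pushing forward by $(f_e)_*$ lands it in $C_{n+k-1}(\Totaldown)$, and since $f_e$ maps $F_e\times\mathbb{S}^{n-1}$ into $\Pi^{-1}(\Basedown^{(n-1)})$, the result lies in $F^{n-1}C_{n+k-1}(\Totaldown)$, i.e.\ the base filtration drops by one. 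The sign $(-1)^{\dim c}$ is the standard cross-product sign: the connecting map of the pair $(\mathbb{B}^n,\mathbb{S}^{n-1})$ carries $[\mathbb{B}^n,\mathbb{S}^{n-1}]$ to $[\mathbb{S}^{n-1}]$, and the Koszul sign $(-1)^{\dim c}$ appears when moving $\partial$ past $c$ in the product.

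Assembling, $\partial = \partial^{\rm vert}+\partial^{\rm hor}$ amounts to the statement that, restricted to each K\"unneth summand, $\delta$ followed by the relativization maps $H_{n+k-1}(\Totaldown^{(n+k-1)})\to H_{n+k-1}(\Totaldown^{(n+k-1)},\Totaldown^{(n+k-2)})$ recovers the full boundary, which is immediate from a diagram chase once one knows the images land only in these two directions. I do not expect a serious obstacle here: the only point requiring care is that diagram \eqref{E:diag23} commutes merely up to homotopy, but that homotopy concerns the projection $\Pi$ to the base, not the CW structure of $\Totaldown$ itself --- the maps $f_n$ and $k_n$ are genuine cellular maps and a genuine homeomorphism --- so the chain complex $C_\bullet(\Totaldown)$, its filtration \eqref{eq: Fn C(A) complex}, and $\partial$ are all strictly as described, and what remains is the sign- and excision-bookkeeping sketched above.
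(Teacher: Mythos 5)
Your argument is correct and follows the same route as the paper's very brief proof, which asserts the decomposition by naturality and attributes the sign to the vertical K\"unneth isomorphism in Definition~\ref{def:dhor}. Your more detailed write-up also helpfully anticipates the worry that diagram \eqref{E:diag23} commutes only up to homotopy, correctly observing that this concerns the projection $\Pi$ alone and does not affect the honest cellular chain complex, its filtration \eqref{eq: Fn C(A) complex}, or $\partial$, all of which are rigidly determined by the genuine cellular maps $f_n$ and the homeomorphisms $k_n$.
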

\begin{proof} The sign here comes from the isomorphism in the first vertical arrow of the commutative diagram of Definition~\ref{def:dhor}.
\footnote{Analogously, the cellular chain complex of a product $X \times Y$ of two CW-complexes is the tensor product $C_\bullet (X) \otimes C_\bullet (Y)$ equipped with the boundary operator $\partial \otimes \mathrm{id} + (-1)^j \mathrm{id} \otimes \partial$ on $C_i (X) \otimes C_j (Y)$.}
\end{proof}

\section{Effective rebuilding}
Let $\Basedown$ be a finite CW-complex and let $\Totaldown\to \Basedown$ be a stack of CW-complexes with fibers $F_e$.
Now, if for each cell $e$ of $\Basedown$ we are given a CW-complex $F_e '$ of the same homotopy type as $F_e$, Proposition \ref{prop: rebuilding} implies that there exists a stack of CW-complexes $\Pi' : \Totaldown' \to \Basedown$ with fiber $F_e '$ over $e$, and a homotopy equivalence $\Gcell$ making the following diagram commute up to homotopy over each cell:
\begin{equation}\xymatrix{\Totaldown \ar[rd]^{\Pi} \ar[rr]^\Gcell & &  \Totaldown' \ar[ld]^{\Pi '} \\
& \Basedown & }\end{equation}
For each cell $e$ of $\Basedown$ let $\Gcellfibre_e : F_e  \to  F_e '$ be a cellular map that induces a homotopy equivalence. In particular there exists a cellular map $\Hcellfibre_e : F_e ' \to F_e$ and a homotopy 
\begin{equation}
\Homcellfibre_e : [0,1] \times F_e  \to F_e
\end{equation} 
between $\Homcellfibre_e (0,  \cdot ) = \Hcellfibre_e \circ \Gcellfibre_e$ and $\Homcellfibre_e (1 , \cdot )=\mathrm{id}_{F_e}$.  

The maps $\Gcellfibre_e$ and $\Hcellfibre_e$ induce chain maps
$$\Ghomfibre_e : C_\bullet (F_e ) \to C_\bullet (F_e ' ) \quad \mbox{and} \quad \Hhomfibre_e : C_\bullet (F_e ' ) \to C_\bullet (F_e ).$$
The homotopy $\Homcellfibre_e$ induces a map $\Homhomfibre_e \colon C_\bullet (F_e ) \to C_{\bullet+1} (F_e ),$ given by 
$$\Homhomfibre_e (c)=(\Homcellfibre_e)_{*}([I, \p I ]\otimes c), \textrm{ for any cell } c\subseteq F_e .$$
The map $\Homhomfibre_e$ is then a chain homotopy between $1$ and $\Hhomfibre_e\circ \Ghomfibre_e$, i.e.  
$$\Hhomfibre_e\circ \Ghomfibre_e-1= \p \Homhomfibre_e + \Homhomfibre_e \p.$$ 
We denote by 
$$\Ghomfibre : C_\bullet (\Totaldown ) \to C_\bullet (\Totaldown '), \quad  \Hhomfibre : C_\bullet (\Totaldown ' ) \to C_\bullet (\Totaldown )$$
and 
$$\Homhomfibre : C_\bullet (\Totaldown ) \to C_{\bullet+1} (\Totaldown )$$
the (``vertical'') maps induced by $\Ghomfibre_e$, $\Hhomfibre_e$ and $\Homhomfibre_e$ on each subspace \eqref{subspace}.

The three goals of this section are the following.
\begin{enumerate}
\item To give explicit formulas for the chain maps $\Ghom : C_\bullet (\Totaldown) \to C_\bullet (\Totaldown')$ and $\Hhom : C_\bullet (\Totaldown') \to C_\bullet (\Totaldown)$ respectively associated to $\Gcell$ and a homotopy inverse $\Hcell$, in terms of $\Ghomfibre$, $\Hhomfibre$ and $\Homhomfibre$.
\item To describe an explicit chain homotopy $\Homhom :  C_\bullet (\Totaldown) \to C_{\bullet+1} (\Totaldown)$ between $1$ and $\Hhom\circ \Ghom$, in terms of $\Ghomfibre$, $\Hhomfibre$ and $\Homhomfibre$.
\item To give an explicit formula for the boundary operator $\partial'$ on the cellular chain complex $C_\bullet (\Totaldown ')$ in terms of $\Ghomfibre$, $\Hhomfibre$, $\Homhomfibre$, the boundary operator $\partial$ on the cellular chain complex $C_\bullet (\Totaldown )$, and the vertical boundary operator $(\partial ')^{\rm vert}$. 
\end{enumerate}
The precise result is Proposition \ref{P2} below; it is a (homological) effective version of Proposition \ref{prop: rebuilding}. Note that we do not give effective formulas for the homotopy between $\Gcell \circ \Hcell$ and the identity as we will not use it. For the same reason we do not even name the homotopy between $\Ghom \circ \Hhom$ and the identity. 

\medskip

The proof consists in explicating the construction of the homotopy equivalence between $\Totaldown$ and $\Totaldown'$: both the stack of CW-complexes $\Pi' : \Totaldown' \to \Basedown$ and the map $\Gcell :\Totaldown \to \Totaldown'$ are constructed by induction on the dimension of the cells. Suppose that 
$$\Gcell_{n-1} : \Pi^{-1} (\Basedown^{(n-1)}) \to (\Pi')^{-1} (\Basedown^{(n-1)})$$
has been constructed. Then over $n$-cells the stack $\Totaldown'$ is built from the composition of the maps
\begin{equation}f_n ' : \bigsqcup_{e \in E_n } F_e ' \times \mathbb{S}^{n-1} \stackrel{\Hcellfibre_n}{\longrightarrow} \bigsqcup_{e \in E_n } F_e  \times \mathbb{S}^{n-1}  \stackrel{f_n}{\longrightarrow} \Pi^{-1} (\Basedown^{(n-1)}) \stackrel{\Gcell^{(n-1)}}{\longrightarrow}  (\Pi')^{-1} (\Basedown_{n-1}) \end{equation}
where 
$$\Hcellfibre_n = \bigsqcup_{e \in E_n} \Hcellfibre_e \times \mathrm{id}_{\mathbb{S}^{n-1}}.$$

For each $n$ we denote 
$$X_n = \bigsqcup_{e \in E_n } F_e  \times \mathbb{B}^{n}, \quad A_n = \bigsqcup_{e \in E_n } F_e  \times \mathbb{S}^{n-1} \quad \mbox{and} \quad Y_n = \Pi^{-1} (\Basedown^{(n)})$$
and 
$$X_n ' = \bigsqcup_{e \in E_n } F_e '  \times \mathbb{B}^{n}, \quad A_n ' = \bigsqcup_{e \in E_n } F_e '  \times \mathbb{S}^{n-1} \quad \mbox{and} \quad Y_n' = (\Pi')^{-1} (\Basedown^{(n)}).$$
By construction we have:
$$Y_n = Y_{n-1} \cup_{f_n} X_n \quad \mbox{and} \quad Y_n ' = Y_{n-1} ' \cup_{f_n '} X_n '.$$

Now the map $\Gcell$, its homotopy inverse $\Hcell$ and an explicit homotopy
$$\Homcell : [0,1] \times \Totaldown \to \Totaldown, \quad \Homcell (0 , \cdot ) = \mathrm{Id}_{\Totaldown}, \ \Homcell (1 , \cdot ) = \Hcell \circ \Gcell,$$ 
are obtained as the direct limits of maps 
$$\Gcell_{n} : Y_n \to Y_n', \quad \Hcell_n : Y_n \to Y_n ' \quad \mbox{and} \quad \Homcell_n : [0,1] \times Y_n \to Y_n$$
inductively constructed by considering the diagram
\begin{equation} \label{diagn}
\xymatrix{
X_n \ar@<-.5ex>[d]^{\ \Hcellfibre_n} &A_n \ar[l]^{\iota} \ar@<-.5ex>[d]^{\ \Hcellfibre_n} \ar[r]^{f_n} & Y_{n-1} \ar@<-.5ex>[d]^{\ \Hcell_{n-1}} \\
X'_n\ar@<-.5ex>[u]^{\Gcellfibre_n \ } &A'_n \ar[l]^{\iota} \ar@<-.5ex>[u]^{\Gcellfibre_n \ } \ar[r]^{f_n '} & Y_{n-1} \ar@<-.5ex>[u]^{\Gcell_{n-1} \ }.
}
\end{equation}
Here $\iota$ denotes the inclusion map, we have $f_n ' = \Gcell_{n-1}  \circ f_n \circ \Hcellfibre_n$ and the maps $\Hcellfibre_n \circ \Gcellfibre_n$ and $\Hcell_{n-1} \circ \Gcell_{n-1}$ are homotopic to the identity. From these data the proposition proved in the next paragraph gives explicit formulas for the chain maps induced by $\Gcell_n$ and $\Hcell_n$. This provides a (homological) effective version of \cite[Theorem 4.1.8]{Geoghegan}. 

\subsection{Explicit gluing}

Let $X,X',A,A',Y,Y'$ be CW-complexes fitting into the following diagram of cellular maps
\begin{equation*}
\xymatrix{
X \ar@<-.5ex>[d]^{\ \Hcellfibre} &A \ar[l]^{\iota} \ar@<-.5ex>[d]^{\ \Hcellfibre} \ar[r]^{f} & Y \ar@<-.5ex>[d]^{\ \Hcell} \\
X'\ar@<-.5ex>[u]^{\Gcellfibre \ } &A' \ar[l]^{\iota} \ar@<-.5ex>[u]^{\Gcellfibre \ } \ar[r]^{f '} & Y' \ar@<-.5ex>[u]^{\Gcell \ },
}
\end{equation*}
where the $\iota$'s denote inclusions, $f'=\Gcell \circ f \circ \Hcellfibre$ and the maps $\Hcellfibre \circ \Gcellfibre$ and $\Hcell \circ \Gcell$ are homotopic to the identity. Assume furthermore that any cell $e$ of $X$ that intersects $A$ is contained in $A$, similarly for $X'$ and $A'$, and  that $\Gcellfibre^{-1}(A')=A$ and $\Hcellfibre^{-1}(A)=A'$. 

Let $I=[0,1]$. Fix explicit homotopies 
$$\Homcellfibre \colon I\times X\to X \quad \mbox{ and } \quad \Homcell \colon I \times Y\to Y,$$
$$\Homcellfibre  (0,x)=x,\quad \Homcellfibre  (1,x)=\Hcellfibre \circ \Gcellfibre (x),\quad \Homcell (0, y)=y,\quad \Homcell (1,y)=\Hcell \circ \Gcell (y),$$ 
such that $\Homcellfibre (\cdot , A) \subseteq A$.  

The cellular chain complex $C_\bullet (X\sqcup_f Y)$ can be naturally identified with 
$$C_\bullet(X)\oplus C_\bullet(Y)/ (\iota ,-f)C_\bullet(A).$$ 
In the following we use the same letter for the maps and the induced maps on a cellular chain complex except that the chain maps are not bold faced. 

For any cell $e\subseteq X^{(n)}$ let $[e]:=e_*([\mathbb B^n, \mathbb S^{n-1}]).$ The homotopy $\Homcellfibre$ induces a map $\Homhomfibre \colon C_\bullet(X)\to C_{\bullet+1}(X),$ given by 
$$\Homhomfibre ([e])=(\Homcellfibre )_{*}([I, \p I ]\otimes [e]), \textrm{ for any cell } e\subseteq X.$$
The map $\Homhomfibre$ is then a chain homotopy between $1$ and $\Hhomfibre \circ \Ghomfibre$, i.e.  
$$\Hhomfibre \circ \Ghomfibre -1 = \p \Homhomfibre + \Homhomfibre \p.$$ 
The same discussion applies to $Y$; we denote by $\Homhom$ the corresponding chain homotopy.

Decompose the cellular chain complex $C_\bullet (X)$ as
$$C_\bullet(X)=\sum_{e\not\subseteq A}\Z[e]\oplus \iota_*C_\bullet(A)$$ 
and let $1_A\colon C_\bullet (X)\to \iota_*C_\bullet(A)$ be the projection onto the second component. Define analogously $1_{A'}\colon C_\bullet (X')\to \iota_*'C_\bullet (A')$. It follows from our assumptions $\Ghomfibre^{-1}(A')\subseteq A$ and $\Hhomfibre^{-1}(A)\subseteq A'$ 
that 
$$\Ghomfibre \circ 1_A=1_{A'} \circ \Ghomfibre  \quad \mbox{and} \quad \Hhomfibre \circ 1_{A'}=1_A \circ \Hhomfibre.$$ 
In order to lighten the 
notations we will suppress $\iota_*$ and identify $C_\bullet(A)$ with a sub-complex of $C_\bullet(X)$. 

\begin{proposition}[Explicit gluing Lemma] 
 \label{lem-QGluing} 
 1. There exist cellular maps 
$$\tilde \Gcell \colon X \cup_f Y\to X' \cup_{f'}Y' \quad \mbox{and} \quad \tilde \Hcell \colon X'\cup_{f'} Y'\to X\cup_f Y$$ 
such that $\tilde \Gcell |_Y=\Gcell,$ $\tilde \Hcell |_Y=\Hcell$ and $\tilde \Hcell \circ \tilde \Gcell$ is homotopy equivalent to the identity. 

2. The chain maps $\tilde \Ghom : C_\bullet (X \cup_f Y) \to C_\bullet (X' \cup_{f'}Y')$ and $\tilde \Hhom : C_\bullet (X' \cup_{f'}Y') \to C_\bullet (X \cup_f Y)$ respectively associated to $\tilde \Gcell$ and $\tilde \Hcell$ are induced by maps defined on $C_\bullet (X)\oplus C_\bullet(Y)$ and $C_\bullet (X')\oplus C_\bullet(Y')$ by the following formulas 
$$\tilde \Ghom = (\Ghomfibre \circ (1-1_A)+ \Ghom \circ f \circ 1_A - \Ghom \circ f \circ \Homhomfibre \circ (1_A\p -\p 1_A)) \oplus \Ghom $$
and
$$\tilde \Hhom =(\Hhomfibre  \circ (1-1_{A'})+ \Hhom \circ f' \circ 1_{A'} + \Homhom \circ f \circ \Hhomfibre \circ (1_{A'}\p-\p 1_{A'})) \oplus \Hhom,$$ 
where $\Ghom$ and $\Hhom$ are the chain maps respectively induced by $\Gcell$ and $\Hcell$.

3. There exists a homotopy map 
$$\widetilde \Homcell \colon I\times ( X\cup_f Y) \to X\cup_{f} Y$$ 
such that $\widetilde \Homcell (0,z)=z$ and $\widetilde \Homcell (1,z)=\tilde \Hcell \circ \tilde \Gcell (z).$ 

4. The chain homotopy map $\tilde \Homhom (c) =\widetilde \Homcell_*([I , \p I ]\otimes c)$ associated to $\widetilde \Homcell$ is induced by a map defined on $C_\bullet (X)\oplus C_\bullet(Y)$ by the following formula
$$\tilde \Homhom = ( \Homhomfibre \circ (1-1_A)+ \Homhom \circ f \circ 1_A- \Homhom \circ f \circ \Homhomfibre \circ (1_A\p-\p 1_A) ) \oplus \Homhom.$$
\end{proposition}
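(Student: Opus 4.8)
The plan is to realize $\tilde\Gcell$, $\tilde\Hcell$ and $\widetilde\Homcell$ by explicit cell-by-cell constructions over the cells of $X$ not contained in $A$ (resp. of $X'$ not in $A'$), arranged so that the maps they induce on cellular chains are visibly the stated ones; the chain-map and chain-homotopy identities of parts~2 and~4 are then automatic, since the chain homotopy induced by a cellular homotopy always computes the difference of the induced chain maps, and in particular the assertion of part~1 that $\tilde\Hcell\circ\tilde\Gcell$ is homotopic to the identity is exactly what part~3 produces. I would first set the stage. By the hypothesis that every cell of $X$ meeting $A$ lies in $A$, the adjunction space $X\cup_f Y$ carries a CW structure whose cells are those of $Y$ together with those of $X\setminus A$; writing $p_X\colon X\to X\cup_f Y$ and $p_Y\colon Y\to X\cup_f Y$ for the canonical maps, $p_Y$ embeds $Y$ as a subcomplex, $p_X\iota=p_Y f$, $C_\bullet(X\cup_f Y)=\bigl(C_\bullet(X)\oplus C_\bullet(Y)\bigr)/(\iota_*,-f_*)C_\bullet(A)$ as stated, and $(p_X)_*=(1-1_A)+f_*1_A$ on $C_\bullet(X)$; identically for $X'\cup_{f'}Y'$. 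I would also arrange, by cellular approximation, that $\Gcellfibre,\Hcellfibre,\Gcell,\Hcell$ and the homotopies $\Homcellfibre,\Homcell$ are cellular, with $\Homcellfibre(I\times A)\subseteq A$, so that $\Homhomfibre$ and $\Homhom$ make sense on chains.

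Next I would construct $\tilde\Gcell$. Because $f'=\Gcell\circ f\circ\Hcellfibre$ and $\Homcellfibre(1,\cdot)=\Hcellfibre\circ\Gcellfibre$, the two cellular maps $A\to X'\cup_{f'}Y'$ given by $p_{Y'}\circ\Gcell\circ f$ and $p_{X'}\circ\Gcellfibre|_A$ are joined by the cellular homotopy $(t,a)\mapsto p_{Y'}\,\Gcell f\,\Homcellfibre(1-t,a)$. I would build, by induction on skeleta, a cellular map $\tilde\Gcell\colon X\cup_f Y\to X'\cup_{f'}Y'$ with $\tilde\Gcell|_Y=\Gcell$ together with a cellular homotopy $K\colon I\times X\to X'\cup_{f'}Y'$ from $p_{X'}\circ\Gcellfibre$ to $\tilde\Gcell\circ p_X$ restricting on $I\times A$ to the homotopy just displayed; over a $k$-cell $e$ of $X\setminus A$, both $\tilde\Gcell|_e$ and $K|_{I\times\overline e}$ are defined at once by extending the already-defined data on the ``cup'' $(\{0\}\times\overline e)\cup(I\times\p\overline e)$ via the retraction of $I\times\overline e$ onto it, an extension that keeps $K|_{I\times\overline e}$ inside the $k$-skeleton (cellular approximation rel the cup preserves this) and therefore makes the chain homotopy $D_K$ induced by $K$ vanish on every cell of $X\setminus A$. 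Since $K|_{I\times A}$ is, up to composition with $\Gcell f$, the reverse of the homotopy that defines $\Homhomfibre$, this forces $D_K=-\Ghom\circ f\circ\Homhomfibre\circ 1_A$ on $C_\bullet(X)$. Feeding this into the chain-homotopy identity $\tilde\Gcell_*\circ(p_X)_*-(p_{X'}\circ\Gcellfibre)_*=\p D_K+D_K\p$, and using $(p_{X'}\circ\Gcellfibre)_*=\Ghomfibre(1-1_A)+f'_*\Ghomfibre 1_A$ with $f'_*=\Ghom\circ f\circ\Hhomfibre$ and the identity $\p\Homhomfibre+\Homhomfibre\p=\Hhomfibre\Ghomfibre-1$ on $C_\bullet(A)$, the term $f'_*\Ghomfibre 1_A$ cancels against $\Ghom f\,\Hhomfibre\Ghomfibre\,1_A$ and what remains is precisely
\[\tilde\Gcell_*\circ(p_X)_*=\Ghomfibre(1-1_A)+\Ghom\circ f\circ 1_A-\Ghom\circ f\circ\Homhomfibre\circ(1_A\p-\p 1_A),\]
while on $C_\bullet(Y)$ one gets $\Ghom$; this is the displayed formula for $\tilde\Ghom$.

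The map $\tilde\Hcell$ is obtained by the same construction with $X,A,\Gcell$ replaced by $X',A',\Hcell$; the only difference is that over $A'$ the relevant discrepancy is $\Hcell\circ f'=\Hcell\Gcell f\Hcellfibre$ versus $f\Hcellfibre$, which is now bridged by the homotopy $\Homcell$ on $Y$ instead of by $\Homcellfibre$; since the inner-ball/collar split then runs this homotopy in the opposite sense relative to the $\tilde\Gcell$ case, the correction term acquires a $+$ sign and becomes $\Homhom\circ f\circ\Hhomfibre\circ(1_{A'}\p-\p 1_{A'})$, yielding the stated formula for $\tilde\Hhom$. The homotopy $\widetilde\Homcell$ of part~3 is produced by repeating the $\tilde\Gcell$-construction one dimension up: one starts from $\widetilde\Homcell|_{I\times Y}=\Homcell$ and corrects the cells of $X\setminus A$ using $\Homcellfibre$, which is exactly why $\tilde\Homhom$ has the same shape as $\tilde\Ghom$ with $(\Ghomfibre,\Ghom)$ replaced by $(\Homhomfibre,\Homhom)$. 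Finally one checks that each of the three displayed maps annihilates the relator submodule $(\iota_*,-f_*)C_\bullet(A)$---on a relator $(\iota_*a,-f_*a)$ the first and third terms of the $C_\bullet(X)$-part die, $\Ghom\circ f\circ 1_A$ contributes $\Ghom(f_*a)$, and this cancels the $-\Ghom(f_*a)$ coming from the $C_\bullet(Y)$-part---so the formulas descend to the quotient complexes, where they are the chain maps/homotopy induced by $\tilde\Gcell$, $\tilde\Hcell$, $\widetilde\Homcell$; the remaining relations of parts~2 and~4 then hold for free.

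The conceptual content is modest; the difficulty is bookkeeping. The three points needing care are: carrying out the simultaneous induction so that $\tilde\Gcell$ and $K$ stay cellular while $K|_{I\times\overline e}$ remains in the $k$-skeleton (i.e.\ verifying the ``cup retraction'' is compatible with cellular approximation rel the appropriate subcomplex); pinning down every sign, in particular the orientation reversal responsible for the $-\Homhomfibre$ in $D_K$ and the opposite signs of the $\tilde\Ghom$ and $\tilde\Hhom$ correction terms; and handling the interplay between $1_A\p$ and $\p 1_A$, the combination $1_A\p-\p 1_A$ (rather than $1_A\p$) being forced by well-definedness on the quotient since $1_A[e]=0$ when $e\not\subseteq A$. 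The single genuinely asymmetric ingredient is the relation $f'=\Gcell\circ f\circ\Hcellfibre$, which is what makes the formulas for $\tilde\Ghom$ and $\tilde\Hhom$ differ.
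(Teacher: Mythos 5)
Your construction of $\tilde\Gcell$ and $\tilde\Hcell$ by skeletal induction via cup retractions is close in spirit to the paper's, which factors the same idea through the explicit retraction $p\colon I\times X\to\{0\}\times X\cup I\times A$ of Lemma~\ref{lem-pconst} and defines $\tilde\Gcell(x)$ by cases on whether $p_1(x)$ lands in $\{0\}\times X$ or $I\times A$; in both treatments the chain formula for $\tilde\Ghom$ drops out of the chain-level identity for the pushed-to-$A$ retraction (your $D_K$ versus the paper's $(p_1)_*$ formula), and the derivation of $\tilde\Hhom$ by symmetry is fine. That part is essentially the paper's argument in a slightly different dress.

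The gap is in parts~3 and~4. You assert that $\widetilde\Homcell$ is "produced by repeating the $\tilde\Gcell$-construction one dimension up, \dots correcting the cells of $X\setminus A$ using $\Homcellfibre$," and that the formula for $\tilde\Homhom$ then "holds for free." Neither claim stands as written. On a cell $e\subseteq X\setminus A$, the endpoint map $\tilde\Hcell\circ\tilde\Gcell$ is not $\Hcellfibre\circ\Gcellfibre$ (which is all that $\Homcellfibre$ connects the identity to); after unwinding, it is a three-case composite involving $\Hcellfibre$, $\Gcellfibre$, the two retractions $p$ and $q$, and the homotopy $\Homcell$ on $Y$, depending on whether $p_1(x)$ and then $q_1\circ\Gcellfibre\circ p_1^2(x)$ fall in the interior or collar region. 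The paper's proof must split the time parameter into $[0,1/2]$ (pushing along $p$ and applying $\Homcellfibre$) and $[1/2,1]$ (running $q_{2s}$, with three subregions $\mathcal C_1,\mathcal C_2,\mathcal C_3$ whose common boundaries have to be checked to agree), and only then can one compute $\tilde\Homhom$ by barycentric subdivision of $I$. An abstract cup-retraction extension would produce \emph{some} cellular homotopy, but the specific chain formula with the $-\Homhom\circ f\circ\Homhomfibre\circ(1_A\partial-\partial 1_A)$ correction term is not automatic; it reflects the concrete shape of $\widetilde\Homcell$ (in particular, the term comes from the degenerate square $\gamma_*([I,\partial I]\otimes[I,\partial I]\otimes\cdot)$ in the $[1/2,1]$-piece of the paper's computation). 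So parts~3--4 of your proposal are an announcement rather than an argument, and that is exactly where the real work of the proposition lies.
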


The proof of Proposition~\ref{lem-QGluing} consists of explicating the construction of the maps $\tilde\Hcell $ and $ \tilde\Gcell$. 
We postpone it until Section \ref{sec-QGluing}.
 The construction is quite technical so the reader may want to skip Section \ref{sec-QGluing} on a first reading.  
It is instructive to check that our formulas work at the level of homology groups.

\subsection{\texorpdfstring{Rebuilding of $\Totaldown$}{Rebuilding of Sigma}}

Starting from 
$$Y_0 = \sqcup_{e \in E_0} F_e, \quad Y_0' = \sqcup_{e \in E_0} F_e', \quad \Gcell_0 = \Gcellfibre_0, \quad \Hcell_0 = \Hcellfibre_0, \quad \mbox{and} \quad \Homcell_0 = \Homcellfibre_0,$$
we inductively apply Proposition \ref{lem-QGluing} to \eqref{diagn} and construct the desired extensions $\Gcell_n$, $\Hcell_n$ and $\Homcell_n$ of $\Gcell_{n-1}$, $\Hcell_{n-1}$ and $\Homcell_{n-1}$.

We identify 
$$C_\bullet (X_n ) = \bigoplus_{e \in E_n} C_\bullet (F_e ) \otimes C_\bullet (\mathbb B^n)$$ 
and let 
$$[\mathbb{B}^n , \mathbb{S}^n] \in H_n (\mathbb{B}^n , \mathbb{S}^n ) = C_n (\mathbb{B}^n)$$ 
be the relative fundamental class that generates the $\Z$-module of rank one $C_n (\mathbb{B}^n)$. Note that given $c \in C_\bullet (F_e)$ we have:
$$1_{A_n} (c \otimes [\mathbb{B}^n , \mathbb{S}^n] )=0 \quad \mbox{and} \quad (1_{A_n}\circ \p -\p \circ 1_{A_n})(c \otimes [\mathbb{B}^n , \mathbb{S}^n] ) = 1_{A_n}\p (c \otimes [\mathbb{B}^n , \mathbb{S}^n] ) = (-1)^{\dim c} c\otimes [\mathbb{S}^{n-1}]$$ 
where the last equality follows from Lemma \ref{L23}. There are analogous formulas for $c' \in C_\bullet (F_e')$ replacing $A_n$ by $A'_n$. 

Denoting by $\Ghom_n$ and $\Hhom_n$ the chain maps respectively associated to $\Gcell_n$ and $\Hcell_n$ and by $\Homhomfibre_n$ and $\Homhom_n$ the chain homotopies respectively associated to $\Homcellfibre_n$ and $\Homcell_{n}$,  Proposition \ref{lem-QGluing} gives 
$$\Ghom_n (c \otimes [\mathbb{B}^n , \mathbb{S}^n])= \Ghomfibre_n (c \otimes [\mathbb{B}^n , \mathbb{S}^n]) - (-1)^{\dim c} \Ghom_{n-1} \circ f \circ  \Homhomfibre_n (c\otimes [\mathbb{S}^{n-1}]),$$
$$\Hhom_n (c' \otimes [\mathbb{B}^n , \mathbb{S}^n]) = \Hhomfibre_n (c' \otimes [\mathbb{B}^n , \mathbb{S}^n])+(-1)^{\dim c} \Homhom_{n-1} \circ f \circ \Hhomfibre_{n-1} (c' \otimes [\mathbb{S}^{n-1}])$$
and
$$\Homhom_n (c \otimes [\mathbb{B}^n , \mathbb{S}^n]) = \Homhomfibre_n (c \otimes [\mathbb{B}^n , \mathbb{S}^n]) - (-1)^{\dim c} \Homhom_{n-1} \circ f \circ \Homhomfibre_n (c\otimes [\mathbb{S}^{n-1}]).$$

We can simplify these formulas by identifying 
$$C_\bullet (Y_n)=(C_\bullet(X_n)\oplus C_\bullet(Y_{n-1}))/ (1,-f_*)C_\bullet(A_n),$$ 
with 
$$\bigoplus_{e \in E_n} C_\bullet(F_e) \otimes H_n (\mathbb{B}^n , \mathbb{S}^{n-1}) \subseteq  F^n C_\bullet(\Totaldown ).$$ 
The map $\Gcell : \Totaldown \to \Totaldown'$ induces chain maps $\Ghom_n : F^n C_\bullet (\Totaldown) \to F^n C_\bullet (\Totaldown' )$, and by Lemma \ref{L23} the chain maps $\Ghom_n$ are inductively defined as follows:
\begin{itemize}
\item The restriction of $\Ghom_n$ to $F^{n-1} C_\bullet (\Totaldown)$ is equal to $\Ghom_{n-1}$.
\item If $e$ is a $n$-cell of $\Basedown$, the restriction of $\Ghom_n$ to $C_\bullet (F_e) \otimes H_n (\mathbb{B}^n , \mathbb{S}^n)$ is given by 
\begin{equation} \label{kn}
\Ghom_n (c \otimes [\mathbb{B}^n , \mathbb{S}^n] ) = \Ghomfibre_e (c) \otimes [\mathbb{B}^n , \mathbb{S}^{n-1}] +  \Ghom_{n-1} \circ \p^{\rm hor} (\Homhomfibre_e (c) \otimes [\mathbb{B}^n , \mathbb{S}^{n-1}] )  \quad \left( c \in C_\bullet (F_e ) \right).
\end{equation}
\end{itemize}
The sign changed because $\dim \sigma_e(c)=\dim c +1$, so $(-1)^{\dim c}f\circ\sigma_n(c\otimes [\mathbb S^{n-1}])=-\partial^{\rm hor}(\sigma_e(c)\otimes [\mathbb B^n,\mathbb S^{n-1}]),$ by Lemma \ref{L23}.
By induction, we get:
\begin{equation} \label{E:k}
\Ghom = \Ghomfibre \circ \left( \sum_{i=0}^{n} \left( \partial^{\rm hor} \circ \Homhomfibre \right)^i \right).\end{equation}
Note that the map $\Homhomfibre$ preserves the filtration
$F^{n} C_\bullet (\Totaldown)$ and that the map $\partial^{\rm hor}$ maps $F^n C_\bullet (\Totaldown)$ to $F^{n-1} C_{\bullet-1} (\Totaldown)$.
It follows that $ \left( \partial^{\rm hor} \circ \Homhomfibre \right)^i$ vanishes on $C_n(\Totaldown)$ for $i\geq n+1$.

We similarly get formulas for $\Hhom$ and $\Homhom$. To sum up we get:

\begin{proposition}[Effective rebuilding Lemma]\label{P2}
We have
\begin{enumerate}

\item The rebuilding lemma yields a CW-stack $\Pi'\colon \Totaldown' \to \Basedown$, cellular maps $\Gcell \colon \Totaldown \to \Totaldown '$, $\Hcell \colon \Totaldown ' \to \Totaldown$ and a homotopy $\Homcell \colon I\times \Totaldown \to \Totaldown$ such that 
$$\Homcell (0,\cdot )={\rm Id}_{\Totaldown} \quad \mbox{and} \quad \Homcell (1,\cdot )=\Hcell \circ \Gcell.$$ 

\item The chain maps  $\Ghom : C_\bullet (\Totaldown) \to C_\bullet (\Totaldown')$, $\Hhom : C_\bullet (\Totaldown') \to C_\bullet (\Totaldown)$ and $\Homhom :  C_\bullet (\Totaldown) \to C_\bullet (\Totaldown)$ respectively associated to $\Gcell$, $\Hcell$ and $\Homcell$ are respectively given by 
\begin{equation}\label{E: effective k}
\Ghom = \Ghomfibre \circ \left( \sum_{i=0}^{\infty} \left( \partial^{\rm hor} \circ \Homhomfibre \right)^i \right) \quad \mbox{ on } C_\bullet (\Totaldown ),
\end{equation}
\begin{equation} 
\Hhom = \left( \sum_{i=0}^{\infty} \left(  \Homhomfibre \circ \partial^{\rm hor}   \right)^i \right) \circ \Hhomfibre \quad \mbox{ on } C_\bullet (\Totaldown' ), \end{equation}
and
\begin{equation}
\Homhom = \Homhomfibre \circ \left( \sum_{i=0}^{\infty} \left(\partial^{\rm hor} \circ \Homhomfibre \right)^i \right) \quad \mbox{ on } C_\bullet (\Totaldown ). \end{equation} 

\item The formula for the boundary operator $\partial'$ on the cellular chain complex $C_\bullet (\Totaldown ')$ of the rebuilt CW-complex $\Totaldown'$ is given on $C_\bullet (\Totaldown ')$ by 
\begin{equation}\label{E:P2} 
\partial ' = (\partial ' )^{\rm vert} + \Ghomfibre \circ \left( \sum_{i=0}^{\infty} \left( \partial^{\rm hor} \circ \Homhomfibre \right)^i \right) \circ  \partial^{\rm hor} \circ \Hhomfibre. \end{equation}
\end{enumerate}
\end{proposition}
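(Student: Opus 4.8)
The plan is to assemble the three assertions from the inductive construction already described and the Explicit Gluing Lemma (Proposition \ref{lem-QGluing}). First I would set up the base of the induction: on the $0$-skeleton $\Basedown^{(0)}$ the stack is just the disjoint union of fibers, so $\Gcell_0 = \Gcellfibre_0$, $\Hcell_0 = \Hcellfibre_0$, $\Homcell_0 = \Homcellfibre_0$, and the formulas \eqref{E: effective k}--\eqref{E:P2} hold trivially there since $\partial^{\rm hor}$ vanishes on $F^0 C_\bullet(\Totaldown)$ (there are no cells of negative dimension in the base to map into). Then I would run the induction on $n = \dim$ of the base cells: assuming $\Gcell_{n-1}$, $\Hcell_{n-1}$, $\Homcell_{n-1}$ have been built on $\Pi^{-1}(\Basedown^{(n-1)})$ together with the stated chain-level formulas truncated at filtration level $n-1$, apply Proposition \ref{lem-QGluing} to the diagram \eqref{diagn}. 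The hypotheses of that lemma are met by construction: the pushout description $Y_n = Y_{n-1}\cup_{f_n} X_n$, the fact that a cell of $X_n = \bigsqcup_e F_e\times \mathbb{B}^n$ meeting $A_n = \bigsqcup_e F_e\times \mathbb{S}^{n-1}$ lies in $A_n$, and the identities $\Gcellfibre_n^{-1}(A_n') = A_n$, $\Hcellfibre_n^{-1}(A_n) = A_n'$ coming from the product structure. This yields the extensions $\Gcell_n$, $\Hcell_n$, $\widetilde\Homcell_n$ and part (1) of the Proposition by passing to the direct limit over $n$, the limits being well-defined since each map restricts to its predecessor on the lower skeleton.

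Next I would extract part (2). Plugging into the formulas of Proposition \ref{lem-QGluing}, part 2, the special vanishing $1_{A_n}(c\otimes[\mathbb{B}^n,\mathbb{S}^n]) = 0$ and $(1_{A_n}\partial - \partial 1_{A_n})(c\otimes[\mathbb{B}^n,\mathbb{S}^n]) = (-1)^{\dim c} c\otimes[\mathbb{S}^{n-1}]$ (both consequences of Lemma \ref{L23}) kills the $1_A$-terms and collapses the formula for $\Ghom_n$ on $C_\bullet(F_e)\otimes H_n(\mathbb{B}^n,\mathbb{S}^n)$ to the single recursion \eqref{kn}; here one must track the sign carefully, noting $\dim\sigma_e(c) = \dim c + 1$ so that $(-1)^{\dim c} f\circ\sigma_n(c\otimes[\mathbb{S}^{n-1}]) = -\partial^{\rm hor}(\sigma_e(c)\otimes[\mathbb{B}^n,\mathbb{S}^{n-1}])$ by the explicit description of $\partial^{\rm hor}$ in Lemma \ref{L23}. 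Iterating \eqref{kn} down through the filtration and using that $\partial^{\rm hor}\circ\Homhomfibre$ drops the filtration level by one (so that only finitely many terms are nonzero on any fixed $C_m(\Totaldown)$, namely $i \leq m$) gives the geometric-series expression $\Ghom = \Ghomfibre\circ\sum_{i\geq 0}(\partial^{\rm hor}\circ\Homhomfibre)^i$. The formulas for $\Hhom$ and $\Homhom$ follow by the symmetric computation with the roles of $X,X'$ swapped (using $\Hhomfibre\circ 1_{A'} = 1_A\circ\Hhomfibre$) and the corresponding sign bookkeeping.

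Finally, part (3): the boundary $\partial'$ on $C_\bullet(\Totaldown')$ decomposes as $(\partial')^{\rm vert} + (\partial')^{\rm hor}$ by Lemma \ref{L23} applied to the stack $\Pi'$, where $(\partial')^{\rm hor}$ is governed by the attaching maps $f_n' = \Gcell_{n-1}\circ f_n\circ\Hcellfibre_n$. Translating that into chain maps, $(\partial')^{\rm hor}$ on $C_\bullet(F_e')\otimes H_n(\mathbb{B}^n,\mathbb{S}^{n-1})$ is $\Ghom_{n-1}$ (restricted suitably, i.e. the full $\Ghom$ truncated) composed with $\partial^{\rm hor}$ composed with $\Hhomfibre_e$; substituting the series expression for $\Ghom$ already obtained gives \eqref{E:P2}. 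I would close by reconfirming convergence of all the infinite sums as before — on $C_m(\Totaldown)$ or $C_m(\Totaldown')$ every series terminates after at most $m+1$ terms because $\partial^{\rm hor}$ strictly decreases the base-filtration degree, which also legitimizes writing $\sum_{i=0}^\infty$ in place of the truncated sums $\sum_{i=0}^{n}$.

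\emph{Main obstacle.} The genuinely delicate point is not the series manipulation but the bookkeeping inside Proposition \ref{lem-QGluing}: verifying that the maps $\tilde\Gcell_n,\tilde\Hcell_n$ produced at stage $n$ genuinely restrict to $\Gcell_{n-1},\Hcell_{n-1}$ on $Y_{n-1}$ (so the direct limit makes sense and the "restriction to $F^{n-1}$ equals the previous map" bullet holds), and that the signs arising from the identification $C_\bullet(F_e\times\mathbb{B}^n)\cong C_\bullet(F_e)\otimes C_\bullet(\mathbb{B}^n)$ with its Koszul boundary $\partial\otimes\mathrm{id} + (-1)^{\dim}\mathrm{id}\otimes\partial$ propagate consistently through the recursion so that the compact formula \eqref{E: effective k} holds on the nose and not merely up to sign. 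Since the construction of $\tilde\Gcell$ and $\tilde\Hcell$ underlying Proposition \ref{lem-QGluing} is itself intricate, I would treat that lemma as a black box here (its proof being deferred to Section \ref{sec-QGluing}) and concentrate on confirming its hypotheses and correctly specializing its output.
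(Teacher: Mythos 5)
Your proposal is correct and follows essentially the same route as the paper: the paper likewise sets $\Gcell_0=\Gcellfibre_0$, etc., applies the Explicit Gluing Lemma (Proposition~\ref{lem-QGluing}) inductively along the diagram~\eqref{diagn}, uses the vanishing $1_{A_n}(c\otimes[\mathbb B^n,\mathbb S^n])=0$ together with $(1_{A_n}\p-\p 1_{A_n})(c\otimes[\mathbb B^n,\mathbb S^n])=(-1)^{\dim c}c\otimes[\mathbb S^{n-1}]$ to collapse to the recursion~\eqref{kn}, tracks the sign via $\dim\Homhomfibre_e(c)=\dim c+1$, iterates to get the (terminating) geometric series, and derives~(3) from $f_e'=\Gcell_{n-1}\circ f_e\circ\Hcellfibre_e$ and Lemma~\ref{L23} applied to the rebuilt stack. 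The one concern you flag (that $\tilde\Gcell_n$ restricts to $\Gcell_{n-1}$ on $Y_{n-1}$) is built directly into the statement of Proposition~\ref{lem-QGluing}, so treating that lemma as a black box, as you propose, is exactly what the paper does.
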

\begin{proof} It only remains to prove (3). 
Lemma \ref{L23} applies to the stack $\Pi ' : \Totaldown' \to \Basedown$ so that if $e$ is an $n$-cell of $\Basedown$ and $c \in C_\bullet (F_e)$ we have:
\begin{equation} ((\partial ')^{\rm hor}) (c \otimes e) = (-1)^{\dim c} f_e ' (c \otimes [\mathbb{S}^{n-1}]).\end{equation}
On the other hand it follows from the construction of $\Pi' : \Totaldown' \to \Basedown$ that 
\begin{equation}
f_e ' (c \otimes [\mathbb{S}^{n-1}]) = \Ghom_{n-1} (f_e (\Hhomfibre_e (c) \otimes [\mathbb{S}^{n-1}])).\end{equation} 
We conclude that 
\begin{equation*}
\begin{split}
((\partial ')^{\rm hor}) (c \otimes e) & = (-1)^{\dim c} f_e ' (c \otimes [\mathbb{S}^{n-1}]) \\
& = (-1)^{\dim c} \Ghom_{n-1} (f_e (\Hhomfibre _e (c) \otimes [\mathbb{S}^{n-1}])) \\
& = \Ghom_{n-1} \circ \partial^{\rm hor} \circ \Hhomfibre (c \otimes e),
\end{split}
\end{equation*}
and \eqref{E:P2} follows from \eqref{E: effective k}.
\end{proof}

\section{Proof of Proposition \ref{lem-QGluing}}\label{sec-QGluing}

Let $X$ be a CW-complex and let $A\subseteq X$ be a sub-complex. Then $( \{0 \} \times X ) \cup (I \times A )$ is a strong deformation retract of $I \times X$; see e.g. \cite[1.3.15]{Geoghegan}. We refine this property in the following lemma.

\begin{lemma}\label{lem-pconst}
There exists a cellular map 
$$p\colon I\times X\to \{0\}\times X\cup I\times A$$ 
such that for all $x \in X$, $a \in A$ and $s \in ]0,1]$,
\begin{enumerate}
\item $p(0,x)=(0,x) \in \{0\}\times X$;
\item $p (s,x) = (p^1 (s,x) , p^2 (s,x)) \in [0 ,s] \times X$;
 \item $p(s,a)=(s,a) \in I\times A$
\end{enumerate}
and the following formulas hold for any chain $c\in C_\bullet(X)$: 
\begin{align*}
p_*([I, \p I]\otimes c)=&[I , \p I] \otimes 1_Ac, \\
\delta_*([I, \p I]\otimes c)=&[I , \p I] \otimes (1-1_A)c,
\end{align*}
where 
$$\delta\colon I\times  X\to I\times X, \quad \delta(s,x)=(s-p^1(s,x),p^2(s,x)).$$
\end{lemma}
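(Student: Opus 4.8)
The plan is to build the map $p$ by induction on the skeleta of $X$, starting from the standard deformation retraction of $I\times X$ onto $(\{0\}\times X)\cup(I\times A)$ and arranging that it be cellular and compatible with $A$. First I would recall the explicit retraction on a single cell: for the pair $(\mathbb B^n,\mathbb S^{n-1})$ (or rather $(\mathbb B^n,\partial_0)$ where $\partial_0\subseteq\mathbb S^{n-1}$ is the part of the boundary lying over $A$), one has the classical radial-type retraction $r\colon I\times\mathbb B^n\to(\{0\}\times\mathbb B^n)\cup(I\times\partial_0)$ obtained by projecting from a point above the cylinder; see \cite[1.3.15]{Geoghegan}. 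The key point is that this model retraction already satisfies properties (1)--(3): it fixes $\{0\}\times\mathbb B^n$, it fixes $I\times\partial_0$ pointwise, and it moves the $I$-coordinate only downward, i.e.\ $r(s,x)\in[0,s]\times\mathbb B^n$. Then I would make $r$ cellular by a further homotopy through maps satisfying (1)--(3) (using cellular approximation relative to the subcomplex $(\{0\}\times X)\cup(I\times A)$ on which it is already cellular and already satisfies the constraints), and glue these cell-by-cell maps over the attaching maps to get $p$ defined on all of $I\times X$. Giving $I\times X$ the product CW-structure with $I$ subdivided as $\{0\}\cup\{1\}\cup(0,1)$, the inductive step only needs to extend $p$ over the new cells $I\times e$ for $e$ an $n$-cell of $X$ not in $A$, and the constraints (1)--(3) are preserved because they are already imposed on the part of $\partial(I\times e)$ where $p$ is defined.

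Next I would read off the chain-level formulas. Decompose $C_\bullet(I\times X)\cong C_\bullet(X)\oplus\bigl([I,\partial I]\otimes C_\bullet(X)\bigr)\oplus(\{1\}\otimes C_\bullet(X))$ in the obvious way coming from the subdivision of $I$; the summand $[I,\partial I]\otimes c$ is represented by the ``tube'' $I\times c$. For property (1), $p(0,\cdot)=\mathrm{id}$, so $p_*$ kills nothing coming from $\{0\}\times X$; for property (3), $p$ is the identity on $I\times A$, so $p_*([I,\partial I]\otimes a)=[I,\partial I]\otimes a$ for $a\in C_\bullet(A)$. For a cell $e\not\subseteq A$, property (2) forces $p$ to send $I\times e$ into $[0,\text{(top value)}]\times X$ with its top face collapsed onto $\{0\}\times X$, so the tube $I\times e$ is collapsed: $p_*([I,\partial I]\otimes[e])=0$. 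Combining, $p_*([I,\partial I]\otimes c)=[I,\partial I]\otimes 1_A c$, which is the first displayed formula. Since $\delta(s,x)=(s-p^1(s,x),p^2(s,x))$ is, informally, ``the complementary part of the cylinder'' (what $I\times x$ does after we strip off the retracted piece), one has $p_*+\delta_*=\mathrm{id}$ on the tube summand up to boundaries, hence $\delta_*([I,\partial I]\otimes c)=[I,\partial I]\otimes(1-1_A)c$; I would justify this by checking that $(s,x)\mapsto(p^1(s,x),p^2(s,x))$ and $(s,x)\mapsto(s-p^1(s,x),p^2(s,x))$ together give a subdivision of $I\times e$ into two prisms whose $[I,\partial I]$-components add up to $[I,\partial I]\otimes[e]$, with the first contributing $0$ and the second the full class (and conversely the roles are swapped over $A$).

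The main obstacle I expect is not the chain formulas — those are essentially bookkeeping once the geometry is set up — but making the retraction simultaneously (a) cellular, (b) strictly monotone in the $I$-coordinate as in (2), and (c) the identity on $I\times A$, all while gluing consistently over attaching maps. The tension is that the ``obvious'' geometric retraction is not cellular, and a naive cellular approximation could violate the monotonicity constraint (2) or fail to be stationary on $I\times A$. The fix is to apply cellular approximation \emph{relative} to the subcomplex where the constraints are already met, and to observe that the constraint region $[0,s]\times X$ from (2) is itself a subcomplex of $I\times X$ for $s$ a $0$-cell of the subdivided interval, so one can do the cellular approximation within it; one then carries along the homotopy $\delta$ accordingly. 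I would also need to double-check the sign and indexing conventions so that the identification $C_\bullet(I\times X)\cong C_\bullet(I)\otimes C_\bullet(X)$ matches the one used elsewhere in the paper (cf.\ the footnote to Lemma~\ref{L23}), since the formulas for $p_*$ and $\delta_*$ are stated without signs and that should be consistent with $[I,\partial I]$ being the generator of $C_1(I)$ in the chosen convention.
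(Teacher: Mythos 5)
Your overall skeleton matches the paper's: build $p$ cell by cell by induction on skeleta, force $p$ to be the identity over $I\times A$, argue that for a cell $e\not\subseteq A$ the image of $I\times e$ under $p$ drops into the $n$-skeleton and hence $p_*([I,\partial I]\otimes[e])=0$, and then extract $\delta_*$. The paper does essentially what you sketch for the construction (cell-by-cell extension via the homotopy extension property for $(I\times\mathbb B^n,\{0\}\times\mathbb B^n\cup I\times\mathbb S^{n-1})$) and for the $p_*$ formula. However, there are two places where your plan diverges and the divergence creates genuine gaps.

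First, your fix for constraint (2). You propose doing a relative cellular approximation ``within $[0,s]\times X$,'' observing that this is a subcomplex for $s$ a $0$-cell. But (2) is a constraint for \emph{every} $s\in(0,1]$, and for $s$ in the open $1$-cell the region $[0,s]\times X$ is not a subcomplex, so the relative cellular approximation theorem does not let you impose it. The paper's fix is more elementary and actually works: after constructing an extension via HEP, simply replace $p^1(s,x)$ by the clamped value (so that $p^1(s,x)\leq s$); this does not disturb cellularity or the values on $\{0\}\times X\cup I\times A$ and immediately gives (2).

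Second, and more seriously, your argument for $\delta_*$. The claim ``$p_*+\delta_*=\mathrm{id}$ on the tube summand up to boundaries'' is too weak — the lemma asserts an exact identity of chains in $C_{n+1}$, not an identity modulo $\operatorname{im}\partial$. And the ``subdivision of $I\times e$ into two prisms'' you propose to justify the exact identity does not correspond to anything geometric: $p$ and $\delta$ are two maps defined on the \emph{whole} of $I\times e$ (both with second coordinate $p^2$), not restrictions of a single map to complementary sub-prisms, so there is no such decomposition of the domain. What actually makes the computation go through in the paper is a dimension argument: since $\delta(I\times e)\subseteq I\times e$ and $e$ is an $n$-cell with $Z'$ of dimension $n$, one has $C_{n+1}(I\times Z')=C_1(I)\otimes C_n(Z')$, so $\delta_*([I,\partial I]\otimes[e])$ must be $\alpha\,[I,\partial I]\otimes[e]$ for some $\alpha\in\mathbb Z$; then one computes $\partial\delta_*([I,\partial I]\otimes[e])=\delta_*\partial([I,\partial I]\otimes[e])$ using the already-derived formulas for $\delta_*([0]\otimes\cdot)$, $\delta_*([1]\otimes\cdot)$ and (inductively) $\delta_*([I,\partial I]\otimes[\partial e])$ to find $\alpha=0$ if $e\subseteq A$ and $\alpha=1$ otherwise. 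You would need to replace the prism heuristic with an argument of this type.
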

\begin{proof}
We define the map $p$ cell by cell, starting from $0$-dimensional cells and attaching any available cell of the lowest dimension as pictured on Figure \ref{fig:p}.

\begin{figure}[ht]      
\begin{center}
\includegraphics[width=.3\textwidth]{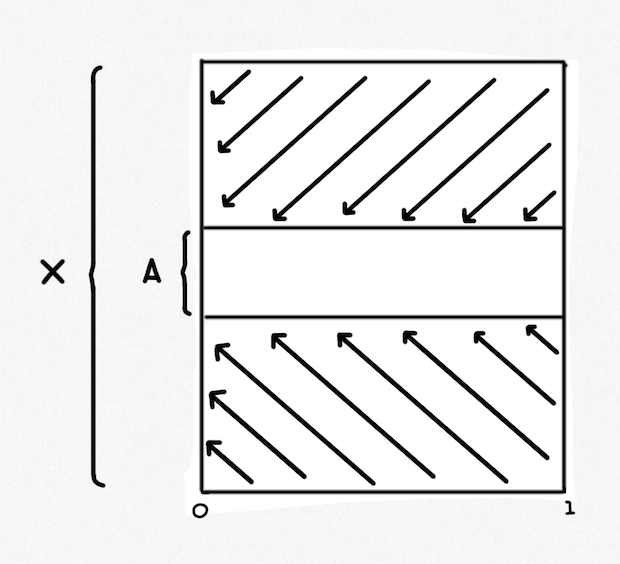}
\end{center}
\caption{The map $p$}\label{fig:p}
\end{figure}

The proof reduces to the following statement. Let $Z$ be a CW-sub-complex of $X$ of dimension at most $n$, and let $e\colon \mathbb B^n\to X$ be an $n$ cell with $e(\mathbb S^{n-1})\subseteq Z^{(n-1)}$.  Suppose we are given a cellular map 
$$p \colon I\times Z\to \{0\}\times Z\cup I\times (A\cap Z)$$ 
satisfying all the desired properties. Put $Z'=\mathbb B^n\sqcup_{e} Z$. We shall construct an extension 
$$p \colon I \times Z'\to \{0\}\times Z'\cup I \times (A\cap Z')$$ 
of $p$, satisfying all the required properties. To do so it is enough to define $p$ on $I\times \mathbb B^n$ so that 
$$\forall s \in I , \ \forall x\in \mathbb S^{n-1},  \quad p(s,x)=p(s,e(x)).$$
Now if $e\subseteq A$ we put 
\begin{equation} \label{E:ponA}
p(s,e(x))=(s,e(x)) \in I \times A.
\end{equation}
Otherwise $p(s,e(x))$ is already defined for all $x\in \mathbb S^{n-1}$ so we put $p(s,x)=p(s,e(x))$ for $x\in \mathbb S^{n-1}$, $p(0,x)=(0,x)$ for $x\in\mathbb B^n$ and extend the map to $I\times \mathbb B^n$ using the homotopy extension property for the pair $\{0\}\times \mathbb B^n\cup I\times \mathbb S^{n-1}$ (cf. \cite[1.3.15]{Geoghegan}). By replacing $p^1(s,x)$ with $\max\{s, p^1(s,x)\}$ we can ensure that the second condition is satisfied. 

\begin{figure}[ht]      
\begin{center}
\includegraphics[width=.25\textwidth]{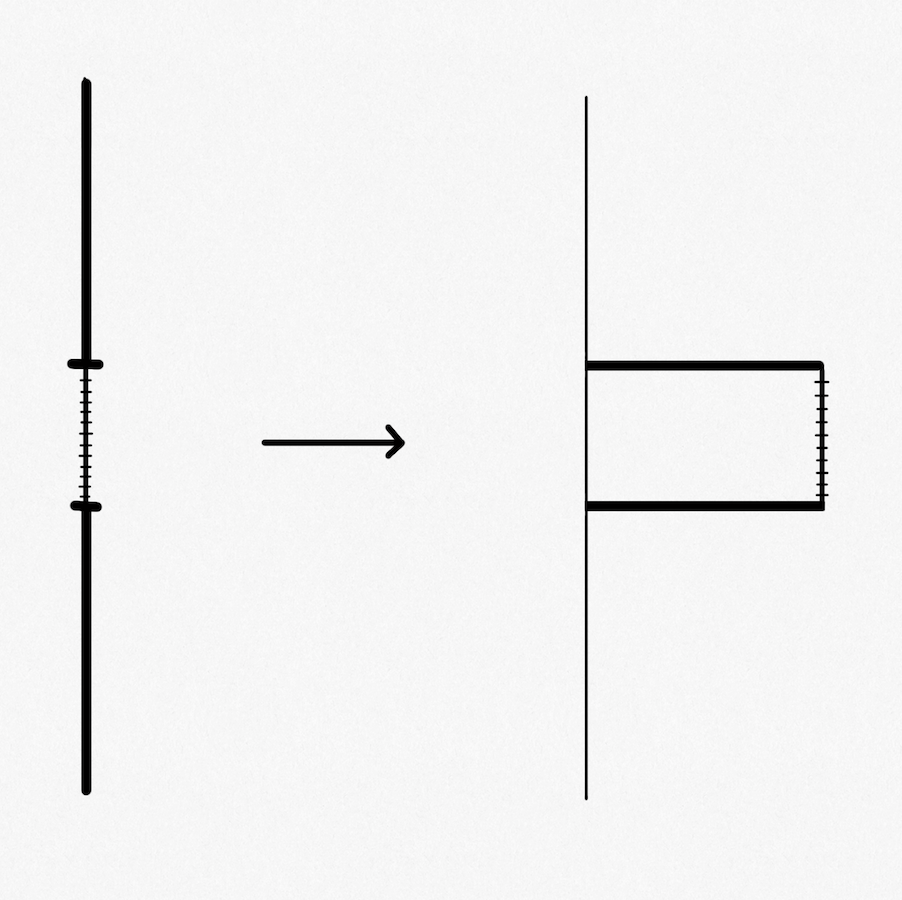}
\end{center}
\caption{The map $p(1 , -)$}\label{fig:p1}
\end{figure}

By construction, the maps $p$ and $\delta$ are cellular and both $p(I\times e)$ and $\delta(I\times e)$ are contained in $I\times e$.  Write $[e]=e_*([\mathbb{B}^n , \mathbb{S}^n ])$ and $[\p e] = e_* ([\mathbb{S}^{n-1}])$.  We check the formulas for $p_*$ and $\delta_*$ on $[I,\p I]\otimes [e]$.
When $e\subseteq A$, the formula for $p_*$ follows from the definition (see \eqref{E:ponA}). If $e\not \subseteq A$ then the $(n+1)$-cell $I\times e$ is mapped by $p$ into 
$$\{0 \} \times (Z')^{(n)} \cup  I \times Z^{(n-1)}$$ 
which is contained in the $n$-skeleton of $I \times Z'$. It follows that $p_*([I , \p I ]\otimes[e])=0$. This proves the formula for $p_*$. 

Using that $p_* \circ \p = \p \circ p_*$, the formula for $p_*$ now implies that 
\begin{equation} \label{E:p*formula}
p_*([1]\otimes c)=[0]\otimes (1-1_A)c+[I , \p I ]\otimes (1_A\p-\p 1_A)c+[1]\otimes 1_Ac
\end{equation} 
and consequently that 
\begin{equation} \label{E:delta*formula}
\delta_*([1]\otimes c)=[1]\otimes (1-1_A)c-[I , \p I ]\otimes (1_A\p-\p1_A)c+[0]\otimes 1_Ac.
\end{equation}

\begin{figure}[ht]      
\begin{center}
\includegraphics[width=.4\textwidth]{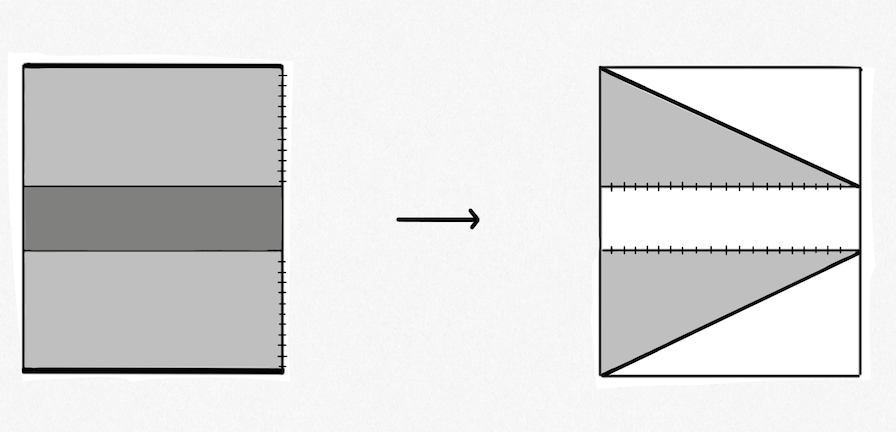}
\end{center}
\caption{The map $\delta$}\label{fig:delta}
\end{figure}

We finally derive the general formula for  $\delta_*([I,\p I]\otimes [e])$ from \eqref{E:delta*formula}: we know that 
$$\delta_*([I,\p I]\otimes [e])\in C_n(I\times Z')=C_0(I)\otimes C_{n+1}(Z')\oplus C_1(I)\otimes C_n(Z')=C_1(I)\otimes C_n(Z'),$$ 
since $Z'$ is $n$-dimensional. As $\delta(I\times e)\subseteq I\times e$ we must have 
$$\delta_*([I,\p I]\otimes [e])=\alpha [I,\p I]\otimes [e]$$ 
for some $\alpha\in \mathbb Z$. To find $\alpha$ we compute $\p \delta_*([I,\p I]\otimes [e])$ using \eqref{E:delta*formula} and, by induction, the formula for $\delta_*$ on $[I , \p I ]\otimes [\p e]$~:
\begin{equation*}
\begin{split}
\p \delta_*([I,\p I]\otimes [e]) & = \delta_* ( \p ([I,\p I]\otimes [e]) ) \\
& = \delta_*([1]\otimes [e])-\delta_*([0]\otimes [e]) - \delta_*([I , \p I ]\otimes [\p e] ) \\ 
& =\p ([I , \p I ]\otimes (1-1_A)[e]).
\end{split}
\end{equation*} 
Hence $\alpha=0$ if $e\subseteq A$ and $\alpha=1$ otherwise. This proves the formula for $\delta_*([I,\p I]\otimes [e]).$

 \end{proof}

\medskip

We now come back to the notations of Proposition \ref{lem-QGluing}.

Let $$p\colon I\times (X\cup_{\iota} A )  \to (\{0\}\times X ) \cup (I\times A)$$ be the map afforded by Lemma \ref{lem-pconst} applied to the pair $(X,A)$.
We write 
$$p_s(x) =p(s,x), \quad ( s\in I, \ x\in X).$$
We have 
$$C_\bullet(\{0\}\times X\cup I \times A)=[0]\otimes C_\bullet(X) + [I , \p I] \otimes C_\bullet(A)+[1]\otimes C_\bullet(A)$$ 
and, according to Lemma \ref{lem-pconst} and Equation \eqref{E:p*formula}, the map $p$ can be chosen so that 
\begin{equation*}
\begin{split}
p_*([I, \p I ] \otimes c) & = [I, \p I] \otimes1_Ac,\\ 
(p_1)_*(c) &= [0]\otimes (1-1_A)c+[1]\otimes 1_Ac+[I , \p I] \otimes (1_A\p-\p1_A)c \\
 & = [0]\otimes c+\p [I , \p I ]\otimes 1_Ac+[I, \p I ] \otimes(1_A\p-\p 1_A)c. 
\end{split}
\end{equation*} 
As in Lemma \ref{lem-pconst} we write $p^1_s\colon I\times X\to I$ and $p^2_s\colon I\times X\to X$ for the coordinates of $p_s$. Replacing $p_s^1(x)$ with $\min\{s, p_s^1(x)\}$  we may assume that $p_s^1(x)\leq s$ for $s\in I$ (actually the construction in Lemma \ref{lem-pconst} already gives such $p$).

Similarly, using Lemma \ref{lem-pconst} we choose a map 
$$q\colon I\times (X' \cup_{\iota} A' ) \to (\{0\}\times X' ) \cup (I\times A' ).$$ 
For the readers' convenience we spell out the relevant properties of $q$. For all $x'\in X'$, $a'\in A'$ and $s\in I$
$$q(0,x')=(0,x'), \quad q(s,a')=(s,a')$$  
and  
\begin{equation*}
\begin{split}
q_*([I , \p I] \otimes c) & = [I , \p I ] \otimes1_{A'} c, \\ 
(q_1)_*(c) & = [0]\otimes (1-1_{A'} )c+[1]\otimes 1_{A'}c+[I, \p I] \otimes (1_{A'}\p-\p1_{A'})c \\
& = [0]\otimes c+\p [I , \p I ]\otimes 1_{A'}c+[I , \p I ]\otimes(1_{A'}\p-\p 1_{A'})c. 
\end{split}
\end{equation*} 
We define the map
$$\tilde \Gcell : X \cup_f Y \to X' \cup_{f'} Y '$$ 
to be equal to $\Gcell$ on $Y$ and for $x \in X$ by the formula 
\begin{equation*}
\tilde \Gcell (x)=\begin{cases} \Gcellfibre \circ p^2_1(x) &\textrm{ if } p_1(x)\in \{0\}\times X,\\
\Gcell \circ f\circ \Homcellfibre (1-p_1^1(x),p_1^2(x)) &\textrm{ if } p_1(x)\in I\times A.\end{cases}
\end{equation*}
To check that $\tilde \Gcell$ is a well defined continuous map it is enough to verify that the partial formulas coincide on 
$$\{x\in X \; : \;  p_1(x)\in \{0\}\times A\}$$ 
and that for all $a \in A$ we have $\tilde \Gcell (a)=\Gcell \circ f(a)$. If $p_1(x)\in \{0\}\times A$ we have 
$$\Gcellfibre \circ p_1^2(x)=f'\circ \Gcellfibre \circ p_1^2(x) \quad \mbox{in} \quad X' \cup_{f'} Y '$$
and
$$f'\circ \Gcellfibre \circ p_1^2(x) =\Gcell \circ f\circ \Hcellfibre \circ \Gcellfibre \circ p_1^2(x)=\Gcell \circ f \circ \Homcellfibre (1,p_1^2(x))=\Gcell \circ f \circ \Homcellfibre (1-p_1^1(x),p_1^2(x)).$$
For $a\in A$ we have $p_1(a)=(1,a)$ so 
$$\tilde \Gcell (a)=\Gcell \circ f\circ \Homcellfibre (0,a)=\Gcell \circ f(a),$$ 
as desired.

Analogously we define the map
$$\tilde \Hcell : X' \cup_{f'} Y' \to X \cup_{f} Y$$ 
to be equal to $\Hcell$ on $Y'$ and for $x \in X'$ by the formula 
\begin{equation*}
\tilde \Hcell (x)= \begin{cases} 
\Hcellfibre \circ q^2_1(x) &\textrm{ if } q_1(x)\in\{0\}\times X',\\
\Homcellfibre (q_1^1(x),f\circ \Hcellfibre \circ q_1^2(x)) & \textrm{ if } q_1(x)\in I\times A' .
\end{cases} 
\end{equation*} 
The verification that $\tilde \Hcell$ is well defined and continuous is completely analogous to what we did for $\tilde \Gcell$.   

The formulas for the chain maps $\tilde \Ghom$ and $\tilde \Hhom$ respectively associated to $\tilde \Gcell$ and $\tilde \Hcell$ now follow from those for $(p_1)_*$ and $(q_1)_*$; more specifically the identity (\ref{E:p*formula}). 

It remains to construct an explicit homotopy between the identity map and $\tilde \Hcell \circ \tilde \Gcell$, which is the main content of the proposition. 
On $X$ we have 
\begin{equation*} 
\tilde \Hcell \circ\tilde \Gcell (x)= \left\{
\begin{array}{ll} 
\Hcellfibre \circ q_1^2\circ \Gcellfibre \circ p_1^2(x) & \mbox{if } p_1(x)\in \{0\}\times X \mbox{ and } q_1\circ \Gcellfibre \circ p_1^2 (x)\in \{0\}\times X', \\
\Homcell ( q_1^1\circ \Gcellfibre \circ p_1^2(x), f\circ \Hcellfibre \circ q_1^2\circ \Gcellfibre \circ p_1^2(x)) & \mbox{if } p_1(x)\in \{0\}\times X \mbox{ and }  q_1\circ \Gcellfibre \circ p_1^2(x)\in I\times A',\\
\Hcell \circ \Gcell \circ f\circ \Homcellfibre (1-p_1^1(x), p_1^2(x)) &  \mbox{if } p_1(x)\in I\times A,
\end{array} \right.
\end{equation*} 
while on $Y$ we have 
$$\tilde \Hcell \circ \tilde \Gcell (y)=\Hcell \circ \Gcell (y), \quad \mbox{for all } y\in Y.$$ 
Note that for $p_1(x)\in I\times A,$ we have $\Gcellfibre \circ p_1^2(x)\in A'$ so that $q_1 ( \Gcellfibre \circ p_1^2(x) ) = (1 , \Gcellfibre \circ p_1^2(x) )$ and therefore
\begin{equation*}
\begin{split}
\Hcell \circ \Gcell \circ f\circ \Homcellfibre (1-p_1^1(x), p_1^2(x)) &= \Homcell (1, f\circ \Homcellfibre (1-p_1^1(x), p_1^2(x)) \\ 
& = \Homcell (q_1^1\circ \Gcellfibre \circ p_1^2(x), f\circ \Homcellfibre (1-p_1^1(x), p_1^2(x)).
\end{split}
\end{equation*} 
This already suggests the rough form of the homotopy between $1$ and $\tilde \Hcell \circ \tilde \Gcell$. We want to construct a map 
$$\tilde \Homcell \colon I\times (X\cup_f Y)\to X\cup_f Y \quad \mbox{with} \quad \tilde \Homcell (0,z)=z \quad \mbox{and} \quad \tilde \Homcell (1,z)=\tilde \Hcell \circ \tilde \Gcell (z).$$ 
We define it piece by piece starting with $[1/2,1]\times (X\cup_f Y)$. Define subsets 
\begin{align*}
\mathcal C_1:=&\{(s,x)\in [0,1/2]\times X |\, p_1(x)\in \{0\}\times X \mbox{ and } q_{2s}\circ \Gcellfibre \circ p_1^2(x)\in \{0\}\times X'\},\\
\mathcal C_2:=&\{(s,x)\in [0,1/2]\times X |\, p_1(x)\in \{0\}\times X \mbox{ and } q_{2s}\circ \Gcellfibre \circ p_1^2(x)\in I\times A'\},\\
\mathcal C_3:=&\{(s,x)\in [0,1/2]\times X |\, p_1(x)\in I\times A\}. 
\end{align*} 

For all $s \in [0,1/2]$ we set 
$$\tilde \Homcell (1/2+s,y)= \Homcell (2s,y) \quad \mbox{if } y \in Y$$ 
and define 
\begin{equation*}
\tilde \Homcell (1/2+s,x)= \left\{ 
\begin{array}{ll} 
\Hcellfibre \circ q_{2s}^2\circ \Gcellfibre \circ p_1^2(x) &  \mbox{if } (s,x)\in \mathcal C_1,\\
\Homcell ( q_{2s}^1\circ \Gcellfibre \circ p_1^2(x), f\circ \Hcellfibre \circ q_{2s}^2\circ \Gcellfibre \circ p_1^2(x)) & \mbox{if } (s,x)\in \mathcal C_2,\\
\Homcell (q_{2s}^1\circ \Gcellfibre \circ p_1^2(x), f\circ \Homcellfibre (1-p_1^1(x), p_1^2(x))) & \mbox{if } (s,x)\in \mathcal C_3, 
\end{array} \right.
\end{equation*} 
for all $x \in X$. 
Let's check that the partial maps agree on the common boundaries.
\begin{itemize}
\item  The common boundary to $\mathcal C_1$ and $\mathcal C_2$ is the set 
$$\{(1/2+s,x) \; : \; p_1(x)\in \{0\}\times X \mbox{ and } q_{2s}\circ \Gcellfibre \circ p_1^2(x)\in \{0\}\times A'\}.$$ 
For $(1/2+s,x)$ therein, we have $\Hcellfibre \circ q^2_{2s}\circ \Gcellfibre \circ p_1^2(x)\in A$ so that
$$\Hcellfibre \circ q_{2s}\circ \Gcellfibre \circ p_1^2(x)=f\circ \Hcellfibre \circ q_{2s}\circ \Gcellfibre \circ p_1^2(x)=\Homcell (q_{2s}^1\circ \Gcellfibre \circ p_1^2(x), f\circ \Hcellfibre \circ q_{2s}\circ \Gcellfibre \circ p_1^2(x)).$$
\item The common boundary of $\mathcal C_2$ and $\mathcal C_3$ is 
$$\{(1/2+s,x) \; : \;  p_1(x)\in \{0\}\times A \mbox{ and } q_{2s}\circ \Gcellfibre \circ p_1^2(x)\in I\times A'\}.$$ 
Note that $p_1(x)\in \{0\}\times A$ forces $\Gcellfibre \circ p_1^2(x)\in A'$ so that $q_{2s}\circ \Gcellfibre \circ p_1^2(x)=(2s, \Gcellfibre \circ p_1^2(x)).$  We then have 
\begin{equation*}
\begin{split}
\Homcell (q^1_{2s}\circ \Gcellfibre \circ p^2_1(x), f\circ \Hcellfibre \circ q^2_{2s}\circ \Gcellfibre \circ p^2_1(x)) &= \Homcell (q^1_{2s}\circ \Gcellfibre \circ p^2_1(x), f\circ \Hcellfibre \circ \Gcellfibre \circ p^2_1(x))\\ 
& = \Homcell (q^1_{2s}\circ \Gcellfibre \circ p^2_1(x), f\circ \Homcellfibre (1-p^1_1(x), p^2_1(x)).
\end{split}
\end{equation*}
\item The common boundary of $\mathcal C_1$ and $\mathcal C_3$ is 
$$\{(1/2,x) \; : \; p_1(x)\in \{0\}\times A\}.$$ 
It is contained in $\mathcal C_1\cap \mathcal C_2\cap \mathcal C_3$ so that this case follows from the two previous ones. 
\item Finally $Y$ intersects non-trivially only $\mathcal C_3$ and their common subset is 
$$\{(1/2+s, a) \; : \;  a\in A\}.$$ 
There we have 
$$q^1_{2s}\circ \Gcellfibre \circ p^2_1(a)=2s, \quad p_1^1(a)=1 \quad \mbox{and} \quad p_1^2(a)=a$$ 
so that  
$$\Homcell (q^1_{2s}\circ \Gcellfibre \circ p^2_1(a), f\circ \Homcellfibre (1-p^1_1(a), p^2_1(a))) = \Homcell (2s,f(a)).$$
\end{itemize}
Now that we have a well defined continuous map $\tilde \Homcell (s,\cdot)$ for $s\in [1/2,1]$, we proceed to define $\tilde \Homcell$ for $s\in [0,1/2]$. First note that 
$\tilde \Homcell (1/2,\cdot)$ can be more simply defined by 
$$\tilde \Homcell (1/2,y)=y, \quad \mbox{if } y\in Y, \quad \mbox{and} \quad \tilde \Homcell (1/2,x)= \Homcellfibre (1-p_1^1(x),p_1^2(x)), \quad \mbox{if } x\in X.$$
The last expression is indeed equal to 
$$\Hcellfibre \circ \Gcellfibre \circ p^2_1(x), \quad \mbox{if } p_1(x)\in \{0\}\times X, \quad \mbox{and} \quad f\circ \Homcellfibre (1-p^1_1(x), p^2_1(x)), \quad \mbox{if } p_1(x)\in I\times A.$$
For $s\in [0,1/2]$ we then set 
\begin{equation*}
\tilde \Homcell (s,y)=y,  \quad \mbox{if } y\in Y, \quad \mbox{and} \quad \tilde \Homcell (s,x)= \Homcellfibre (2s-p_{2s}^1(x),p_{2s}^2(x)), \quad \mbox{if } x\in X.
\end{equation*} 
The total map $\tilde \Homcell \colon I\times (X\cup_f Y)\to X\cup_f Y$ is continuous by construction. It remains to compute 
$$\tilde \Homhom (c) = \tilde \Homcell_*([I , \p I ]\otimes c) \quad \mbox{for all } c\in C_\bullet(X\cup_f Y).$$ 
Let us refine the CW-complex structure of $I$ by taking the first barycentric subdivision of the original one. There are now two $1$-dimensional cells $I_1=(0,1/2)$ and $I_2=(1/2,1)$ such that $[I, \p I ]=[I_1 , \p I_1 ]+[I_2 , \p I_2]$. It follows that
$$\tilde \Homcell_*([I , \p I ]\otimes c)=\tilde \Homcell_*([I_1 , \p I_1 ]\otimes c)+\tilde \Homcell_*([I_2 , \p I_2 ]\otimes c).$$
Given $c\in C_\bullet (Y)$ we have 
$$\tilde \Homcell_*([I_1 , \p I_1 ]\otimes c)=0 \quad \mbox{and} \quad  \tilde \Homcell_*([I_2 , \p I_2 ]\otimes c) = \Homhom (c),$$ 
whence $\tilde \Homcell_*([I , \p I ]\otimes c)=\Homhom (c).$ 

Now let $c\in C_\bullet(X).$ We first compute $\tilde \Homcell_*([I_2, \p I_2 ]\otimes c)$. Recall that it follows from \eqref{E:p*formula} that
$$(p_1)_*(c)=[0]\otimes (1-1_A)c +[I , \p I ]\otimes (1_A\p-\p 1_A)c+ [1]\otimes 1_A c.$$ 
From this and the definition of $\tilde \Homcell$ on $[1/2 , 1] \times (X\cup_f Y)$ we get 
\begin{multline}\label{eq-H1} 
\tilde \Homcell_*([I_2 , \p I_2 ]\otimes c)= \alpha_*\beta_*([I , \p I ]\otimes (1-1_A)c)) \\ + \Homcell_*(\gamma_*([I , \p I] \otimes[I , \p I ]\otimes (1_A\p-\p 1_A)c)) \\ +\Homcell_*([I , \p I ]\otimes f \circ 1_A (c) ),
\end{multline} 
where 
$$\alpha\colon (\{0\}\times X' ) \cup (I\times A' ) \to X\cup_f Y, \quad \beta\colon I\times X\to (\{0\}\times X' ) \cup (I\times A') \quad \mbox{and} \quad  \gamma\colon I\times I \times A\to I\times Y$$ 
are respectively given by 
\begin{equation*} 
\alpha(s,x)= \left\{
\begin{array}{ll} 
\Hcellfibre (x), & \mbox{if } (s,x)\in \{0\}\times X',\\ 
\Homcell (s, f\circ \Hcellfibre (x)), &  \mbox{if }  (s,x)\in I\times A',
\end{array} \right.  \qquad \beta(s,x)=q_s\circ \Gcellfibre (x)
\end{equation*}
and
$$\gamma(s,t,a) =( q_s^1\circ \Gcellfibre (a), f\circ \Homcellfibre (1-t,a))=(s,f\circ \Homcellfibre (1-t,a)).$$
Recall the notations 
$$\Homhomfibre (u)=\Homcellfibre_*([I , \p I ]\otimes u) \quad \mbox{and} \quad \Homhom (u)=\Homcell_*([I , \p I ]\otimes u)$$ 
for respectively $u\in C_\bullet(X)$ and $C_\bullet (Y)$. 

For all $u\in C_\bullet(X)$ we have
$$\beta_*([I , \p I ]\otimes u)= q_*([I , \p I ]\otimes \Ghomfibre (u) )=[I , \p I ]\otimes 1_{A'} \circ \Ghomfibre (u)$$
and
$$\gamma_*([I , \p I ]\otimes [I , \p I ]\otimes u)= -[I , \p I ]\otimes f \circ \Homhomfibre (u).$$
Note that it follows from the formula for $\beta_*$ that 
$$\beta_*([I , \p I ]\otimes (1-1_A)(c))=[I , \p I ]\otimes 1_{A'} \circ \Ghomfibre \circ (1-1_A)(c)=[I , \p I ]\otimes \Ghomfibre \circ 1_A \circ (1-1_A) (c)=0.$$
So we do not need to compute $\alpha_*$ in (\ref{eq-H1}) to conclude that 
$$\tilde \Homcell_* ([I_2 , \p I_2 ]\otimes c)=-\Homhom \circ f \circ \Homhomfibre \circ (1_A\p-\p 1_A) (c)+ \Homhom \circ f \circ 1_A (c).$$

We finally compute $\tilde \Homcell_*([I_1 , \p I_1 ]\otimes c).$ Let 
$$\delta\colon I \times X\to I\times X, \quad \delta(s,x)=(s-p^1_s(x), p_s^2(x)).$$ 
By Lemma \ref{lem-pconst}, we can choose $p$ so that $\delta$ is cellular and 
$$\delta_*([I , \p I ]\otimes c)=[I , \p I]\otimes (1-1_A)c.$$ 
It then follows from the definition of $\tilde \Homcell$ on $[0 , 1/2] \times (X\cup_f Y)$ that
$$\tilde \Homcell_* ([I_1 , \p I_1 ]\otimes c)=\Homcellfibre_* \circ \delta_* ([I , \p I ]\otimes c)=\Homhomfibre \circ (1-1_A)(c).$$ 
We conclude that 
$$\tilde \Homhom = (\Homhomfibre \circ (1-1_A) - \Homhom \circ f \circ \Homhomfibre \circ (1_A\p-\p 1_A) + \Homhom \circ f \circ 1_A) \oplus \Homhom .$$
This finishes the proof of Proposition \ref{lem-QGluing}. \qed

\section{Quality of rebuilding for nilpotent groups (Proof of Theorem \ref{thm-UnipRewiring})} \label{S:unip}

The goal of the section is to prove the following:

\begin{theorem}[Theorem~\ref{thm-UnipRewiring}]\label{thm-UnipRewiring-in-text}
Let $\Lambda$ be a finitely generated torsion-free nilpotent group. If $Y_0$ is a compact $K(\Lambda,1)$ space, 
 there exists a constant ${\kappa}\geq 1$ such that for every finite index subgroup $\Lambda_1\leq \Lambda$, the cover $Y_1=\Lambda_1\bs \tilde Y_0$ admits an $\indice$-rebuilding $(Y_1,Y_1',\Gcell ,\Hcell ,  \Homcell )$
of quality $([\Lambda : \Lambda_1], {\kappa})$ for every $\indice$.
\end{theorem}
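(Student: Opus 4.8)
Throughout I write $\Hirsch=\Hirsch(\Lambda)$ for the Hirsch length, and $|X^{(j)}|$ for the number of $j$-cells of a CW-complex $X$.

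\medskip
\noindent\textbf{Overall strategy.} The plan is to induct on $\Hirsch$. The case $\Hirsch=0$ is trivial, and the base case $\Lambda\cong\Z$ is done by hand: a compact $K(\Z,1)$ is the circle with one cell in each of the dimensions $0,1$; a finite index subgroup $n\Z$ gives the $n$-fold cyclic cover $Y_1$ (a ``circle'' with $n$ vertices and $n$ edges); and one rebuilds $Y_1$ onto $Y_1'=S^1$ by the evident cellular maps --- $\Gcell$ collapses all vertices and all edges, $\Hcell$ wraps the single edge once around the loop, and $\Homcell$ spreads out along the loop. Inspecting these, every chain map has $\ell^2$-operator norm $O(n)$ and $\partial'$ is trivial, so this is a rebuilding of quality $(n,\kappa_\circ)$ for an absolute constant $\kappa_\circ$; and since $Y_1'$ is a genuine finite $K(n\Z,1)$, the same data serves as an $\indice$-rebuilding for every $\indice$ at once.

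\medskip
\noindent\textbf{Reduction to a good model.} For the inductive step I first pick a convenient model. Any two compact $K(\Lambda,1)$ complexes are homotopy equivalent through fixed cellular maps of finite complexity, and a rebuilding of $\Lambda_1\bs\widetilde{Y_0}$ transports through such a homotopy equivalence at the cost of enlarging $\kappa$ by a bounded amount (lifts of a fixed cellular map to finite covers have chain maps of norm bounded independently of the cover, and the nilmanifold model below has cells only in degrees $\le\Hirsch$, where every $K(\Lambda,1)$ has at least one cell). So it suffices to treat one model $\hat Y_0$ of our choosing. I fix a primitive character $\phi\colon\Lambda\twoheadrightarrow\Z$, put $N=\ker\phi$ --- finitely generated torsion-free nilpotent of Hirsch length $\Hirsch-1$ --- choose $t\in\Lambda$ with $\phi(t)=1$ so that $\Lambda=N\rtimes\langle t\rangle$, and let $\hat Y_0$ be the mapping torus of a cellular self-map $\mu_0\colon W_0\to W_0$ of a compact $K(N,1)$ realizing conjugation by $t$. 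The decisive point is to take an \emph{iterated torus bundle} model in which $\mu_0$ is ``triangular'', so that the chain map $(\mu_0)_\bullet$ is a unipotent integer matrix in the cell basis; this is available because conjugation by $t$ is a unipotent automorphism of the nilpotent group $N$ (for $\gamma\in\gamma_i(\Lambda)\cap N$ one has $c_t(\gamma)\gamma^{-1}=[t,\gamma]\in\gamma_{i+1}(\Lambda)\cap N$), and this form of model is preserved by the induction (the mapping torus of a triangular affine map is the next level of the tower). With $\hat Y_0\to S^1$ the resulting stack (fibre $W_0$ over both cells), a finite index $\Lambda_1\le\Lambda$ gives, writing $d=[\Z:\phi(\Lambda_1)]$ and $N_1=\Lambda_1\cap N$ (so $[\Lambda:\Lambda_1]=d[N:N_1]$), a cover $Y_1=\Lambda_1\bs\widetilde{\hat Y_0}$ which is a stack over the $d$-fold cyclic cover of $S^1$, all $2d$ fibres equal to the $N_1$-cover $W_1$ of $W_0$ and the horizontal attaching maps alternating between identities and lifts $\tilde\mu_0,\dots,\tilde\mu_{d-1}$ of $\mu_0$.

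\medskip
\noindent\textbf{The inductive step: two moves, in this order.} \emph{First, collapse the $d$-gon.} By the ``fold the telescope'' maps (exactly as in the base case, now carrying a fibre) one builds an elementary rebuilding $(Y_1,\check Y_1)$, where $\check Y_1\to S^1$ is the stack with single fibre $W_1$ and monodromy $\mu^{(d)}=\tilde\mu_{d-1}\circ\cdots\circ\tilde\mu_0$ (so $\pi_1\check Y_1\cong N_1\rtimes\Z\cong\Lambda_1$), and $|\check Y_1^{(j)}|=[N:N_1]\,|\hat Y_0^{(j)}|$. The key estimate is that all chain maps of this rebuilding, the chain homotopy of the collapse, and $\|(\mu^{(d)})_\bullet\|$ grow only \emph{polynomially} in $[\Lambda:\Lambda_1]$: although $\mu^{(d)}$ is a composite of $d$ maps, $(\mu^{(d)})_\bullet$ is a lift to $W_1$ of the fixed matrix $(\mu_0)_\bullet^{\,d}$, whose entries are polynomials in $d$ of degree $\le\Hirsch$ because $(\mu_0)_\bullet$ is unipotent, whence $\|(\mu^{(d)})_\bullet\|\le[N:N_1]^{1/2}\cdot O(d^{\Hirsch})$. \emph{Second, rebuild the single fibre.} By the induction hypothesis the cover $W_1=N_1\bs\widetilde{W_0}$ admits a rebuilding $(W_1,W_1')$ of quality $([N:N_1],\kappa_{W_0})$; feeding it into the Effective Rebuilding Lemma (Proposition~\ref{P2}) applied to $\check Y_1\to S^1$ --- where the series $\sum_i(\partial^{\mathrm{hor}}\circ\Homhomfibre)^i$ stops after $i=1$ since the base is $1$-dimensional --- yields a stack $Y_1'\to S^1$ with fibre $W_1'$, a genuine finite $K(\Lambda_1,1)$ with $|Y_1'{}^{(j)}|\le|W_1'{}^{(j)}|+|W_1'{}^{(j-1)}|\le\kappa_{W_0}\,|\hat Y_0^{(j)}|$, and all of whose chain maps, chain homotopy and boundary maps are bounded compositions of the fibre-rebuilding maps (log-norms $\le\kappa_{W_0}(1+\log[N:N_1])$) and of $\partial^{\mathrm{hor}}$ for $\check Y_1$ (norm $\le 1+\|(\mu^{(d)})_\bullet\|$). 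Composing the two rebuildings --- so that the fibre-rebuilding equivalences, which are only chain-homotopic to the identity, get composed once --- gives a rebuilding $(Y_1,Y_1')$ with $|Y_1'{}^{(j)}|\le\kappa_{W_0}\,|\hat Y_0^{(j)}|=\kappa_{W_0}\,[\Lambda:\Lambda_1]^{-1}|Y_1^{(j)}|$ and all log-norms $O\big(\log[\Lambda:\Lambda_1]+\kappa_{W_0}(1+\log[\Lambda:\Lambda_1])\big)$, i.e.\ of quality $([\Lambda:\Lambda_1],\kappa)$ with $\kappa$ depending only on $\Hirsch$, $\kappa_{W_0}$ and the model; transporting back to the given $Y_0$ enlarges $\kappa$ boundedly, closing the induction (once more for all $\indice$ at once).

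\medskip
\noindent\textbf{Where the difficulty lies.} The one genuinely delicate point --- and the only place nilpotence is used --- is the norm bound in the first move. One must collapse the $d$-gon \emph{before} rebuilding the fibre: otherwise the fibre-rebuilding homotopy equivalences get composed $d$ times and their norms blow up exponentially in the index. And one must recognise $(\mu^{(d)})_\bullet$ as (a lift of) a power of a fixed \emph{unipotent} matrix, which is what turns the a priori exponential bound $\|(\mu_0)_\bullet\|^{\,d}$ into the polynomial bound $O(d^{\Hirsch})$; for an arbitrary extension $N\rtimes\Z$ (e.g.\ $F_2\rtimes\Z$ with a hyperbolic monodromy) this fails, consistently with the fact that such groups do not satisfy the theorem. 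Securing this polynomiality is precisely why one commits to iterated-torus-bundle models throughout the induction. Everything else --- the base case, the verification that the telescope collapse is a rebuilding of the stated quality, the bookkeeping through Proposition~\ref{P2}, the composition of rebuildings, and the homotopy transport --- is routine.
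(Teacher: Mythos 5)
Your proposal follows essentially the same route as the paper: induction on Hirsch length, exhibit $\Lambda=N\rtimes\langle t\rangle$, work with a mapping-torus model whose monodromy realizes the unipotent conjugation by $t$, then for a finite-index $\Lambda_1$ do a two-stage rebuilding --- first collapse the $d$-gon of the base circle, then rebuild the fibre by the inductive hypothesis --- and close via the effective rebuilding lemma and the composition lemma. Your emphasis on doing the base collapse before the fibre rebuilding (to avoid composing fibre homotopy equivalences $d$ times) and on unipotence of the monodromy being the source of the polynomial rather than exponential norm growth is exactly the crux of the paper's argument.

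Two small remarks on where you and the paper diverge in bookkeeping, neither a gap. First, you assert that the model can be chosen so that the chain map $(\mu_0)_\bullet$ is literally a unipotent integer matrix; the paper does not make this claim. Instead its Lemma~\ref{lem-NilpotentAutmorphism} proves directly, by the same nested mapping-torus induction, the estimate $\log\Vert\theta^m\Vert\le \Hirsch\log|m|+O(1)$, exploiting that $\theta_\bullet$ is block upper triangular with diagonal blocks $\theta'_\bullet$ satisfying the inductive bound. Your literal-unipotence claim does follow from this block structure (since the base case has $\theta_\bullet=\mathrm{id}$), but note the construction is not quite as immediate as ``pick a triangular cellular model'': one needs the inner-automorphism homotopy correction (the paper's Lemma~\ref{lem-InnerAut}) to reconcile $\tau_0\circ\theta'$ with $\theta'\circ\tau_0$, which only agree up to conjugation by $t_0^{-1}\sigma(t_0)\in\Gamma'$. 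Second, your factor $[N:N_1]^{1/2}$ in the bound for $\Vert(\mu^{(d)})_\bullet\Vert$ is spurious: as in Lemma~\ref{lem:Induced rebuilding to finite cover}, a lift of a fixed cellular map to a finite cover has $\ell^2$-operator norm bounded by a constant depending only on the base, independently of the cover (row and column sums of the lifted matrix do not grow). Your extra factor is harmless since its log is absorbed into the polynomial bound, but it should be dropped.
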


\subsection{Generalities on rebuildings and quality}

Let $\indice \in \mathbb N$ and let $X$ be a CW-complex with finite $\indice$-skeleton. Recall (Definition~\ref{def: Rebuilding} of the introduction) that an $\indice$-\defin{rebuilding} of $X$ is a collection $(X,X',\Gcell ,\Hcell ,  \Homcell )$ that consists of a CW-complex with finite $\indice$-skeleton $X'$, two cellular maps  between the $\indice$-skeleta
$$\Gcell \colon X^{(\indice)}\to X'{}^{(\indice )} \quad \mbox{and} \quad \Hcell \colon X'{}^{(\indice )}\to X^{(\indice )}$$
that are homotopy inverse to each other up to dimension $\indice-1$, and a cellular homotopy $\Homcell \colon [0,1] \times X^{(\indice -1)}\to X^{(\indice)}$  between the identity and $\Hcell \circ \Gcell$ on $X^{\indice-1}$.

Recall furthermore (Definition~\ref{def: Rebuilding and quality} of the introduction) that given two real numbers  ${T} , {\kappa}\geq 1$, an $\indice$-rebuilding $(X,X',\Gcell ,\Hcell , \Homcell)$ is of \defin{quality $(T , \kappa )$} if 
\begin{align}
\forall j \leq \indice , \quad |X'{}^{(j)}|  & \leq  {\kappa}{T}^{-1}|X^{(j)}| \tag{cells bound}\\
\forall j \leq \indice , \quad \log \|\Ghom_j \|,\log \| \Hhom_j \|,\log \| \Homhom_{j-1} \|,\log \|\p'_{j} \|  & \leq   {\kappa}(1+\log {T} ) \tag{norms bound}
\end{align}
where $\Ghom,\Hhom$ are the chain maps respectively associated to $\Gcell,\Hcell$ and where $\Homhom \colon C_\bullet(X)\to C_{\bullet+1}(X)$ is the chain homotopy induced by $\Homcell$ in the cellular chain complexes \eqref{eq: cellular chain homotopy}.

In this paragraph we make two general observations regarding rebuildings and their quality. 

First note that if $X_1\to X$ is a finite cover, every $\indice$-rebuilding $(X,X',\Gcell ,\Hcell , \Homcell )$ of $X$ induces (via the lifting property \cite[Prop. 1.33 and 1.34]{Hatcher-book-Algeb-topo}) an $\indice$-rebuilding $(X_1,X_1',\Gcell_1 ,\Hcell_1 , \Homcell_1 )$ with $X_1'=\widetilde{X'}/\pi_1(X_1)$.

\begin{lemma}[Rebuilding induced to finite cover]\label{lem:Induced rebuilding to finite cover}
Let $X$ be a finite CW-complex. 
There is a constant $\delta_X$ such that for every $\indice$-rebuilding $(X,X')$ of quality $({T}, {\kappa})$ and for every finite covering $X_1\to X$, the induced $\indice$-rebuilding $(X_1,X_1' )$ is of quality $({T},{\kappa}\delta_X)$.
\end{lemma}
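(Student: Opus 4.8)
The plan is to track how each of the five pieces of data in a rebuilding behaves under pullback along a finite covering $X_1 \to X$, and to bound the resulting norms and cell counts in terms of the original ones times a constant depending only on $X$. Write $m = [\,\pi_1(X) : \pi_1(X_1)\,]$ for the degree of the cover. First I would record the effect on cells: since every $j$-cell of $X$ is covered by exactly $m$ cells of $X_1$, we have $|X_1^{(j)}| = m\,|X^{(j)}|$, and likewise $|(X_1')^{(j)}| = m\,|(X')^{(j)}|$ because $X_1' = \widetilde{X'}/\pi_1(X_1)$ and $X'$ has the same fundamental group as $X$. Hence the ``cells bound'' is inherited verbatim: $|(X_1')^{(j)}| = m|(X')^{(j)}| \le m\,\kappa T^{-1}|X^{(j)}| = \kappa T^{-1}|X_1^{(j)}|$, so no loss in $\kappa$ is incurred from the cell count; the constant $\delta_X$ is only needed for the norms.

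Next I would analyze the norms. The chain complexes $C_\bullet(X_1)$, $C_\bullet(X_1')$ decompose, as $\ell^2$-spaces, into an orthogonal direct sum of $m$ ``sheets'' indexed by a transversal, each isometric to $C_\bullet(X)$ resp. $C_\bullet(X')$ (more precisely, $C_\bullet(X_1) \cong \mathbb{Z}[\pi_1(X)/\pi_1(X_1)] \otimes_{\mathbb{Z}} C_\bullet(X)$ with the $\ell^2$ structure for which the cells of $X_1$ are orthonormal). The lifted maps $\Gcell_1, \Hcell_1, \Homcell_1$ and the boundary $\partial_1'$ are \emph{not} block-diagonal for this decomposition, because a cell's boundary (or its image under $\Gcell$) may wrap around several sheets; but the key point is that the number of sheets that a single cell can be sent into is controlled: if a $j$-cell $\sigma$ of $X$ has $\Ghom_j \sigma = \sum_\tau a_\tau \tau$ in $C_\bullet(X')$, then each lift $\tilde\sigma$ of $\sigma$ in $X_1$ satisfies $(\Ghom_1)_j \tilde\sigma = \sum_\tau a_\tau \tilde\tau_{(\tilde\sigma)}$ where $\tilde\tau_{(\tilde\sigma)}$ is the lift of $\tau$ determined by $\tilde\sigma$ and the path data encoded in $\Gcell$; in particular the multiset of coefficients is identical to that of $\Ghom_j$ on $\sigma$. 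Therefore the operator $(\Ghom_1)_j$ has, in the orthonormal cell bases, a matrix each of whose rows and columns has the same collection of nonzero entries as some row/column of the matrix of $\Ghom_j$. A standard estimate — e.g. $\|A\|_{\mathrm{op}} \le (\max_i \|A e_i\|_1^{1/2})(\max_k \|A^\top e_k\|_1^{1/2})$, or simply $\|A\| \le \sqrt{(\max\text{row }\ell^1)(\max\text{col }\ell^1)}$ — then gives $\|(\Ghom_1)_j\| \le C_X \|\Ghom_j\|$ for a constant $C_X$ depending only on the finitely many cells of $X$ and their attaching data (through the bounded number of sheets each cell can touch, which is bounded independently of the cover). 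The same argument applies to $\Hhom_j$, $\Homhom_{j-1}$, and $\partial_j'$. Taking logarithms converts the multiplicative constant $C_X$ into an additive constant $\log C_X$, and since the ``norms bound'' has the form $\le \kappa(1+\log T)$ with $\kappa, T \ge 1$, we have $\log C_X + \kappa(1+\log T) \le (\log C_X + \kappa)(1 + \log T) \le (1 + \log C_X)\kappa(1+\log T)$, so it is absorbed by replacing $\kappa$ with $\kappa\delta_X$ where $\delta_X = 1 + \log C_X$ (or the max of this with the analogous constant coming from the cells side, which is $1$). This gives the claimed quality $(T, \kappa\delta_X)$.

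The main obstacle I anticipate is making precise the claim that the lifted chain maps have ``the same multiset of coefficients, just redistributed among a bounded number of sheets,'' and extracting from that a clean operator-norm bound with a constant $\delta_X$ \emph{independent of the cover} $X_1 \to X$. The subtlety is that, a priori, different lifts $\tilde\sigma$ of the same cell $\sigma$ could be sent to cells lying in many different sheets, and one must check that while the \emph{target} sheet varies with $\tilde\sigma$, the number of distinct sheets appearing in the expansion of a \emph{single} $(\Ghom_1)_j\tilde\sigma$ is bounded by (a quantity like) $\max_\sigma |\mathrm{supp}(\Ghom_j\sigma)|$, which depends only on $X$ and $X'$ — hence only on the fixed rebuilding $(X,X')$, not on the cover. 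Once this bookkeeping is set up correctly (most transparently via the description $C_\bullet(X_1) = \mathbb{Z}[\pi_1(X)/\pi_1(X_1)]\otimes C_\bullet(\widetilde X)^{\pi_1(X)}$ and the observation that each structure map is obtained from a map defined over $\mathbb{Z}[\pi_1(X)]$ with boundedly many terms by reducing the group ring modulo the subgroup), the norm estimate follows from elementary linear algebra, and one sets $\delta_X$ accordingly.
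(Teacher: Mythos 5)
Your treatment of the cells bound is correct and agrees with the paper: the degree $m$ of the cover multiplies both $|X_1^{(j)}|$ and $|(X_1')^{(j)}|$, so that ratio is untouched.

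The norms part has a genuine gap. You claim that $(\Ghom_1)_j\tilde\sigma = \sum_\tau a_\tau\,\tilde\tau_{(\tilde\sigma)}$, where $\tilde\tau_{(\tilde\sigma)}$ is \emph{the} lift of $\tau$ determined by $\tilde\sigma$, so that ``the multiset of coefficients is identical.'' This is false in general. When $\Gcell$ is lifted to the universal cover, the $(\sigma,\tau)$-entry of $\widetilde\Ghom_j$ is an element $\tilde a_{\sigma\tau}\in\mathbb{Z}[\pi_1(X')]$ whose augmentation equals the entry $a_{\sigma\tau}$ of $\Ghom_j$, but which may have several nonzero coefficients, possibly with opposite signs. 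Concretely, on the torus $X=X'=T^2$ take $\Gcell$ inducing the identity on $\pi_1$ with $\Gcell(b)$ the loop $a^N b a^{-N}$. Then $\Ghom_1(b)=b$, so $\|\Ghom_1\|=1$, yet $\tilde a_{b,a}=(\sum_{k=0}^{N-1}a^k)(1-b)$ with $\ell^1$-norm $2N$; passing to the cover $H=\langle ma,mb\rangle$ with $m>N$, that entry survives intact and $\|(\Ghom_1)_1\|\ge\sqrt{2N}$. Hence the inequality $\|(\Ghom_1)_j\|\le C_X\|\Ghom_j\|$ you use is simply not true, and there is no multiplicative comparison between the lifted norm and the original one.

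What \emph{is} true — and what your last paragraph is reaching toward — is that the $\ell^1$ row and column sums of $(\Ghom_1)_j$ are bounded by those of the $\mathbb{Z}[\pi_1(X)]$-matrix of $\widetilde\Ghom_j$: reducing the group ring modulo a subgroup can only merge terms, never create them, and $|\sum_g c_g|\le\sum_g|c_g|$. This gives an \emph{absolute} bound $\|(\Ghom_1)_j\|\le C$ with $C$ a combinatorial invariant of the cellular maps $\Gcell,\Hcell,\Homcell$ (and of $X$, $X'$), uniform over all covers. Since $\kappa\ge 1$ and $1+\log T\ge 1$, one absorbs $\log C$ by taking $\delta=1+\log C$. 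But note this constant depends on the fixed rebuilding $(X,X',\Gcell,\Hcell,\Homcell)$ and not just on $X$; your proposed surrogate $\max_\sigma|\mathrm{supp}(\Ghom_j\sigma)|$ still undercounts, since one $\mathbb{Z}[\pi_1(X)]$-entry can contribute many terms. (The paper's own two-line proof invokes the bounded degrees of attaching maps along coverings — which cleanly controls $\partial'$, whose combinatorics depends only on $X$ — and implicitly relies on the analogous boundedness of the cellular maps $\Gcell,\Hcell,\Homcell$ for the other three norms; in every application of the lemma the rebuilding is fixed, so this reading is what is actually used.) Also, as a small notational point, the identification should read $C_\bullet(X_1)\cong C_\bullet(\widetilde X)\otimes_{\mathbb{Z}[\pi_1 X]}\mathbb{Z}[\pi_1 X/\pi_1 X_1]$, not with $\pi_1(X)$-invariants.
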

\begin{proof}
A covering map of CW-complexes induces a trivial covering over each open cell. In particular: 
\begin{enumerate}
\item both sides of the cells bounds are multiplied by the degree of the cover leaving the quality of the cells bounds unchanged, and
\item the degrees of the attaching map of each cell remains bounded along coverings by a constant depending only on $X$. 
\end{enumerate}
It follows from this last observation that the norms (induced by the $\ell^2$-norms) of the boundary map and of the maps $\Ghom$, $\Hhom$ and $\Homhom$ remain bounded by a constant $\delta_X$ along coverings. 
\end{proof}

\begin{lemma}[Composition of rebuildings]\label{lem-composition}
Let $(X,X',\Gcell_1,\Hcell_1,\Homcell_1)$ and $(X',X'', \Gcell_2, \Hcell_2 ,\Homcell_2)$ be two $\indice$-rebuildings of respective quality $(T_1,{\kappa}_1)$ and $(T_2,{\kappa}_2)$, with $T_1, T_2, {\kappa}_1 ,{\kappa}_2\geq 1$. 
Let 
$$\Gcell_3= \Gcell_2\circ \Gcell_1, \quad \Hcell_3= \Hcell_1\circ \Hcell_2, \quad \mbox{and} \quad \Homcell_3(t,x)=\left\{ \begin{array}{ll} \Homcell_1(2t,x) & \mbox{if } 0\leq t\leq 1/2,\\
\Hcell_1\circ \Homcell_2 (2t-1, \Gcell_1(x)) & \mbox{if } 1/2<t\leq 1.\end{array} \right.$$ 
Then $(X,X'',\Gcell_3,\Hcell_3, \Homcell_3)$ is an $\indice$-rebuilding of quality $(T_1 T_2, 4{\kappa}_1{\kappa}_2).$
\end{lemma}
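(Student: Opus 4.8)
The plan is to verify in turn the three items of Definition~\ref{def: Rebuilding} for the tuple $(X,X'',\Gcell_3,\Hcell_3,\Homcell_3)$, and then to bound the five quantities of Definition~\ref{def: Rebuilding and quality}. That $(X'')$ has finite $\indice$-skeleton and that $\Gcell_3=\Gcell_2\circ\Gcell_1$, $\Hcell_3=\Hcell_1\circ\Hcell_2$ are cellular maps between $\indice$-skeleta is immediate. For the homotopy-inverse condition, write $\Hcell_3\circ\Gcell_3=\Hcell_1\circ(\Hcell_2\circ\Gcell_2)\circ\Gcell_1$; since $\Gcell_1$ carries $X^{(\indice-1)}$ into $X'{}^{(\indice-1)}$, applying $\Hcell_1$ to the homotopy $(t,x)\mapsto\Homcell_2(t,\Gcell_1(x))$ produces a homotopy on $X^{(\indice-1)}$ within $X^{(\indice)}$ running from $\Hcell_1\circ\Gcell_1$ to $\Hcell_1\circ\Hcell_2\circ\Gcell_2\circ\Gcell_1=\Hcell_3\circ\Gcell_3$; prepending $\Homcell_1$ (which runs from $\mathrm{id}$ to $\Hcell_1\circ\Gcell_1$) gives a homotopy from $\mathrm{id}$ to $\Hcell_3\circ\Gcell_3$, and that concatenation is precisely $\Homcell_3$. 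A symmetric but purely existential argument gives $\Gcell_3\circ\Hcell_3\sim\mathrm{id}$ on $(X'')^{(\indice-1)}$ within $(X'')^{(\indice)}$. It remains to observe that $\Homcell_3$ is genuinely a cellular homotopy $[0,1]\times X^{(\indice-1)}\to X^{(\indice)}$ from $\mathrm{id}$ to $\Hcell_3\circ\Gcell_3$: the two branches agree at $t=1/2$ since $\Homcell_1(1,x)=\Hcell_1\Gcell_1(x)=\Hcell_1\Homcell_2(0,\Gcell_1(x))$, the endpoints are as prescribed, the image lies in $X^{(\indice)}$ as $\Hcell_1$ maps $X'{}^{(\indice)}$ to $X^{(\indice)}$, and cellularity holds once $[0,1]$ is refined by adding a $0$-cell at $1/2$, each branch then being a reparametrised composite of cellular maps.

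Next I would read off the induced maps on cellular chains. Functoriality gives $\Ghom_3=\Ghom_2\circ\Ghom_1$ and $\Hhom_3=\Hhom_1\circ\Hhom_2$. For the chain homotopy I would use the subdivision $[0,1]=I_1\cup I_2$ at $1/2$, so that $[I,\partial I]=[I_1,\partial I_1]+[I_2,\partial I_2]$, and evaluate $(\Homcell_3)_*([I,\partial I]\otimes c)$ branch by branch exactly as in the proof of Lemma~\ref{lem-pconst}; the relevant reparametrisations are orientation-preserving homeomorphisms of edges, so no signs intervene and one obtains
\[\Homhom_3=\Homhom_1+\Hhom_1\circ\Homhom_2\circ\Ghom_1.\]
The boundary operator of $X''$ is literally the operator $\partial''$ of the second rebuilding, so there is nothing to compute there.

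For the quality, the cells bound composes: $|(X'')^{(j)}|\le\kappa_2 T_2^{-1}|X'{}^{(j)}|\le\kappa_1\kappa_2(T_1T_2)^{-1}|X^{(j)}|$. For the norm bounds, submultiplicativity of operator norms gives $\log\|\Ghom_{3,j}\|,\log\|\Hhom_{3,j}\|\le\kappa_1(1+\log T_1)+\kappa_2(1+\log T_2)$, the estimate $\log\|\partial''_j\|\le\kappa_2(1+\log T_2)$ is inherited, and from $\|\Homhom_{3,j-1}\|\le\|\Homhom_{1,j-1}\|+\|\Hhom_{1,j}\|\,\|\Homhom_{2,j-1}\|\,\|\Ghom_{1,j-1}\|$ one gets $\log\|\Homhom_{3,j-1}\|\le\log 2+2\kappa_1(1+\log T_1)+\kappa_2(1+\log T_2)$. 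Since $\kappa_1,\kappa_2\ge1$ and $T_1,T_2\ge1$ — so $\log T_i\le\log(T_1T_2)$ and $\log 2\le1\le\kappa_1\kappa_2(1+\log(T_1T_2))$ — each of these four expressions is at most $4\kappa_1\kappa_2(1+\log(T_1T_2))$, and the cells bound is a fortiori at most $4\kappa_1\kappa_2(T_1T_2)^{-1}|X^{(j)}|$; the factor $4$ is exactly what the chain-homotopy estimate consumes ($1$ for $\log 2$, two copies of the first rebuilding's bound, one of the second's). The one place where genuine care is needed is the middle step: one must check that $\Homcell_3$ is honestly cellular and that the concatenation/subdivision bookkeeping delivers precisely the displayed formula for $\Homhom_3$ with the correct (trivial) signs — the same sort of reparametrisation argument already performed in Lemmas~\ref{lem-pconst} and~\ref{lem-QGluing}. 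Everything else is formal.
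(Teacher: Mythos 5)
Your proof is correct and follows the same route as the paper: identify $\Ghom_3=\Ghom_2\circ\Ghom_1$, $\Hhom_3=\Hhom_1\circ\Hhom_2$, derive the key formula $\Homhom_3=\Homhom_1+\Hhom_1\circ\Homhom_2\circ\Ghom_1$ from the concatenated homotopy, and bound norms by submultiplicativity plus the $\log 2$ term from the sum, absorbing everything into $4\kappa_1\kappa_2(1+\log T_1T_2)$. The paper states the rebuilding conditions follow ``from the definition'' and concentrates only on the norm bounds, whereas you spell out the subdivision of $[0,1]$, the cellularity of $\Homcell_3$, and the chain-level derivation of $\Homhom_3$ in detail; these are exactly the implicit steps the paper's terse proof relies on.
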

\begin{proof} The fact that $(X,X'',\Gcell_3,\Hcell_3, \Homcell_3)$ is an $\indice$-rebuilding follows from the definition. The quality of the cells bounds being  multiplicative, it remains to check the norms bounds. Since we have
\begin{equation*}
\begin{split}
\log \|\Ghom_3\| & \leq  \log \|\Ghom_1\|+\log\|\Ghom_2\|\leq {\kappa}_1(1+\log T_1 )+{\kappa}_2(1+\log T_2 )\\ 
& \leq {\kappa}_1+{\kappa}_2+{\kappa}_1 \log T_1+{\kappa}_2\log T_2  \\ 
& \leq 2{\kappa}_1{\kappa}_2(1+\log T_1 T_2 ),
\end{split}
\end{equation*}
similarly for $\Hhom_3$, and 
\begin{equation*}
\begin{split}
\log \| \Homhom_3\| &= \log \| \Homhom_1+ \Hhom_1\circ \Homhom_2\circ \Ghom_1\|\leq \log 2+\max\{ \log \|\Homhom_1\|, \log \|\Hhom_1\circ \Homhom_2\circ \Ghom_1\|\}\\
& \leq \log2+  {\kappa}_1(1+\log T_1 )+ {\kappa}_2 (1+\log T_2) + {\kappa}_1(1+\log T_1)\\
& \leq 4{\kappa}_1{\kappa}_2(1+\log T_1 T_2 ),
\end{split}
\end{equation*}
Here the degrees match well, e.g. $(\Homhom_3)_{\indice-1}=(\Homhom_1)_{\indice -1} + (\Hhom_1)_{\indice} \circ (\Homhom_2)_{\indice -1} \circ (\Ghom_1)_{\indice -1}$ in top degree.
The lemma follows.
\end{proof}

\begin{corollary}[Starting with an homotopy equivalent complex]\label{cor: homot equiv. complex}
Let $X$ be a finite CW-complex. 
Let $Y$ be a finite CW-complex that is homotopy equivalent with $X$. There are constants $(T,\kappa)$ such that if a finite cover $Y_1\to Y$ admits an $\indice$-rebuilding $Z_1$ of quality $(T_1,\kappa_1)$, then the corresponding finite cover $X_1\to X$ admits an $\indice$-rebuilding of quality $(T T_1,\kappa \kappa_1)$.
\end{corollary}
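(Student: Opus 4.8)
The plan is to observe that a homotopy equivalence between finite CW-complexes is itself a rebuilding of controlled quality, exhibit such an $\indice$-rebuilding $(X,Y)$ of $X$, push it down to the cover with Lemma~\ref{lem:Induced rebuilding to finite cover}, and glue it to the given rebuilding of $Y_1$ with the composition Lemma~\ref{lem-composition}.

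First I would fix a homotopy equivalence between $X$ and $Y$ and, by cellular approximation \cite[Ch.~4]{Hatcher-book-Algeb-topo}, represent it and a homotopy inverse by cellular maps $\Gcell\colon X\to Y$ and $\Hcell\colon Y\to X$, together with cellular homotopies $I\times X\to X$ from $\mathrm{id}_X$ to $\Hcell\circ\Gcell$ and $I\times Y\to Y$ from $\mathrm{id}_Y$ to $\Gcell\circ\Hcell$; these may be chosen rel $\{0,1\}\times X$, resp.\ $\{0,1\}\times Y$, since their endpoints are already cellular. Restricting to skeleta yields cellular maps $X^{(\indice)}\to Y^{(\indice)}$ and $Y^{(\indice)}\to X^{(\indice)}$, and, since $I\times X^{(\indice-1)}$ is an $\indice$-dimensional subcomplex of $I\times X$, the first homotopy restricts to a cellular $\Homcell\colon I\times X^{(\indice-1)}\to X^{(\indice)}$ between $\mathrm{id}$ and $\Hcell\circ\Gcell$, while the $Y$-side homotopy witnesses the remaining condition of Definition~\ref{def: Rebuilding}. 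So $(X,Y,\Gcell,\Hcell,\Homcell)$ is an $\indice$-rebuilding of $X$, for every $\indice$. Because $X$ and $Y$ are finite, the cell counts $|X^{(j)}|,|Y^{(j)}|$ and the operator norms of $\Ghom_j,\Hhom_j,\Homhom_{j-1},\p'_j$ are finite, so there is a constant $\kappa_0=\kappa_0(X,Y)\geq1$ for which $(X,Y)$ has quality $(1,\kappa_0)$ in the sense of Definition~\ref{def: Rebuilding and quality}; before fixing $\kappa_0$ I would arrange, by finitely many elementary expansions of $X$ if necessary, that $X$ has a cell in every dimension $j\leq\indice$, so that the cells bound $|Y^{(j)}|\leq\kappa_0|X^{(j)}|$ is not vacuously false (this is automatic for the complexes arising in our applications).

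Next I would transport this rebuilding to the cover. The cover $Y_1\to Y$ corresponds to a finite-index subgroup $\Lambda_1\leq\pi_1(Y)$; I take $X_1\to X$ to be the cover corresponding, under the isomorphism $\pi_1(Y)\xrightarrow{\sim}\pi_1(X)$ induced by $\Hcell$, to $\Lambda_1$. The lift of the homotopy equivalence to universal covers is equivariant for this isomorphism and descends to $X_1\to Y_1$; equivalently, inducing $(X,Y)$ along $X_1\to X$ as in the discussion preceding Lemma~\ref{lem:Induced rebuilding to finite cover} produces exactly the pair $(X_1,Y_1)$, which by that lemma is an $\indice$-rebuilding of quality $(1,\kappa_0\delta_X)$ with $\delta_X$ depending only on $X$. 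Finally, applying the composition Lemma~\ref{lem-composition} to $(X_1,Y_1)$ of quality $(1,\kappa_0\delta_X)$ and the given $(Y_1,Z_1)$ of quality $(T_1,\kappa_1)$ gives an $\indice$-rebuilding $(X_1,Z_1)$ of quality $(T_1,\,4\kappa_0\delta_X\kappa_1)$. Setting $T:=1$ and $\kappa:=4\kappa_0\delta_X$, which depend only on $X$ and $Y$, yields the asserted quality $(TT_1,\kappa\kappa_1)$.

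The step I expect to require the most care is the first one: checking that cellular approximation genuinely delivers all the data of a rebuilding at once and in a way compatible with restriction to the $\indice$-skeleton --- in particular that the homotopies can be deformed to cellular ones \emph{rel} their already-cellular endpoints, so that $\Homcell(0,\cdot)=\mathrm{id}$ and $\Homcell(1,\cdot)=\Hcell\circ\Gcell$ still hold. Once $(X,Y)$ is available, the statement is a formal consequence of Lemmas~\ref{lem:Induced rebuilding to finite cover} and~\ref{lem-composition}; the multiplicativity of the cells bound reflects only that $X$, $Y$ and $Z_1$ all lie over a common base with matching covering degrees.
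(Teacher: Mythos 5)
Your argument is correct and matches the paper's proof step for step: treat the homotopy equivalence as an $\indice$-rebuilding $(X,Y)$ of some quality, push it down to the covers via Lemma~\ref{lem:Induced rebuilding to finite cover}, and compose with $(Y_1,Z_1)$ via Lemma~\ref{lem-composition}. The paper simply names the quality of $(X,Y)$ an unspecified $(T_0,\kappa_0)$ and sets $T=T_0$, $\kappa=4\delta_X\kappa_0$, whereas you normalize $T_0=1$ and pause over whether the cells bound is even satisfiable when $X$ lacks $j$-cells that $Y$ has---a genuine edge case the paper passes over silently, harmless in the applications but worth noting (though the elementary-expansion fix should really be folded into the choice of $X$ at the outset rather than applied to the $X$ fixed by the statement).
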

\begin{proof} 
The homotopy equivalence between $X$ and $Y$ make of $Y$ an $\indice$-rebuilding of $X$ of a certain quality 
$(T_0,\kappa_0)$.
By Lemma~\ref{lem:Induced rebuilding to finite cover}, 
the  $\indice$-rebuilding induced between the finite covers $X_1$ and $Y_1$ is of quality $(T_0,\delta_X\kappa_0)$.
The composition $\indice$-rebuilding from Lemma~\ref{lem-composition} between $X_1$ and $Z_1$ has quality $(T_0T_1,4 \delta_X \kappa_0 \kappa_1)$.
Set $T=T_0$ and $\kappa=4\delta_X \kappa_0$.
\usetikzlibrary{decorations.pathmorphing}
$$\begin{tikzcd}[column sep=1.2em]
X_1  \arrow[rrrrrr, bend left=40,"{(T T_1,\kappa \kappa_1)}",squiggly
] \arrow[rrr,"{(T_0,\delta_X\kappa_0)}",squiggly] 
\arrow[d]&&& Y_1 \arrow[d] \arrow[rrr,"{(T_1,\kappa_1)}",squiggly] &&& Z_1&\\
X \arrow[rrr,"{(T_0,\kappa_0)}",squiggly]&&& Y & &
\end{tikzcd}
$$
\end{proof}

\subsection{Unipotent lattices}

Now let $\Gamma$ be a finitely generated, torsion free nilpotent group, equivalently --- by a theorem of Malcev \cite{Malcev1} --- the group $\Gamma$ is a \defin{unipotent lattice}, i.e., it is isomorphic to a lattice in a connected, finite dimensional unipotent Lie group.

We define a central series $L_i(\Gamma)$ by 
$$L_0(\Gamma)=\Gamma \quad \mbox{and} \quad L_{i+1}(\Gamma)=\ker \left( L_i(\Gamma)\to (L_i(\Gamma)/[\Gamma,L_i(\Gamma)])\otimes_{\mathbb Z}\mathbb Q \right).$$ 
By definition, the quotients $L_i(\Gamma)/L_{i+1}(\Gamma)$ are torsion free abelian. We define the graded group 
$$\gr \Gamma=\bigoplus_{i=0}^\infty L_i(\Gamma)/L_{i+1}(\Gamma).$$ 
We have $\gr \Gamma\simeq \mathbb Z^{\Hirsch}$ where $\Hirsch$ is the \defin{Hirsch length} of $\Gamma$. For the purpose of this paper this can be taken as the definition of Hirsch length.   

The goal of this section is to prove Theorem  \ref{thm-UnipRewiring} according to which finite covers of classifying spaces of unipotent lattices admit a rebuilding of quality proportional to the degree of the cover. This is, up to a constant, the best quality one could hope for. 

To prepare for the proof we will need a few simple lemmas. 

\begin{definition}
An automorphism $\sigma$ of an unipotent lattice $\Gamma$ is a \defin{unipotent automorphism} if the induced automorphism of the graded group 
$$\gr \sigma\in \Aut(\gr \Gamma)\simeq \GL_{\Hirsch}(\mathbb Z)$$ 
is unipotent. 
\end{definition}
Equivalently $\sigma\in \Aut(\Gamma)$ is unipotent if and only if the associated semi-direct product 
$$\Gamma \rtimes_\sigma \Z  =  \langle (\gamma , t ) \; | \;  t \gamma t^{-1} =\sigma( \gamma ) \rangle$$ 
is nilpotent. 

\begin{lemma} \label{lem-UnipAuto}
Let $\Gamma$ be a non-trivial unipotent lattice and $\sigma$ be a unipotent automorphism of $\Gamma$. There exists a normal subgroup $\Gamma'\leq \Gamma$ with quotient $\Gamma/ \Gamma'\simeq \mathbb Z$ such that $\Gamma'$ is preserved by $\sigma$ and $\sigma$ acts trivially on $\Gamma/ \Gamma'$.
\end{lemma}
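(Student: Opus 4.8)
The plan is to produce the required $\Z$-quotient already at the level of the torsion-free abelianization of $\Gamma$ and then pull it back. Set $V:=\Gamma/L_1(\Gamma)$. By the very definition of $L_1(\Gamma)$ the group $V$ is torsion-free, and it is finitely generated of rank $b_1(\Gamma)$; moreover $b_1(\Gamma)\geq 1$, since a finitely generated nilpotent group with finite abelianization is finite whereas $\Gamma$ is torsion-free and non-trivial. Hence $V\cong \Z^b$ with $b\geq 1$. Because every term of the central series $(L_i(\Gamma))_i$ is a characteristic subgroup, $\sigma$ descends to an automorphism $\bar\sigma$ of $V$, and $\gr\sigma$ respects the grading of $\gr\Gamma$; thus $\bar\sigma$ is precisely the degree-$0$ block of the unipotent matrix $\gr\sigma\in\GL_{\Hirsch}(\Z)$, so $\bar\sigma\in\GL_b(\Z)$ is itself unipotent.

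Next I would pass to the coinvariants $\bar V:=V/(\bar\sigma-\mathrm{id})V$. The endomorphism $\bar\sigma-\mathrm{id}$ of $V$ is nilpotent, so its kernel on $V\otimes\Q$ is non-zero; equivalently $\bar V$ has rank $r\geq 1$. I then choose a surjection $\phi\colon V\to\Z$ by composing $V\to\bar V\to \bar V/\mathrm{tors}\cong\Z^r$ with a coordinate projection. By construction $\phi\circ\bar\sigma=\phi$.

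Finally, let $p\colon\Gamma\to V$ be the quotient map, put $\psi:=\phi\circ p\colon\Gamma\to\Z$ and $\Gamma':=\ker\psi$. Then $\Gamma'\triangleleft\Gamma$ with $\Gamma/\Gamma'\cong\Z$ because $\psi$ is surjective. Since $p\circ\sigma=\bar\sigma\circ p$ and $\phi\circ\bar\sigma=\phi$, one gets $\psi\circ\sigma=\psi$, whence $\sigma(\Gamma')=\Gamma'$ and the automorphism induced by $\sigma$ on $\Gamma/\Gamma'\cong\Z$ (through $\psi$) is the identity. This gives all four required properties.

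The only genuinely substantive point is the rank estimate $r\geq 1$, and this is exactly where the hypothesis that $\sigma$ — equivalently $\bar\sigma$ — is unipotent is used: for a general element of $\GL_b(\Z)$ the coinvariants can be finite, so no $\sigma$-fixed $\Z$-quotient need exist. The non-triviality of $V$ is the only other input, handled by the elementary fact about finitely generated nilpotent groups recalled above.
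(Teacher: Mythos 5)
Your proof is correct and follows essentially the same strategy as the paper's: both pass to the free abelian quotient $V=\Gamma/L_1(\Gamma)$, use the unipotence of the induced automorphism to produce a $\sigma$-equivariant surjection $V\to\Z$ on which $\sigma$ acts trivially, and pull back its kernel to get $\Gamma'$. The paper phrases the key step as finding a codimension-one $\sigma_0$-invariant rational subspace $W\subseteq V\otimes\Q$ with trivial quotient action and then takes $U=\Z^d\cap W$, whereas you pass through the coinvariants $V/(\bar\sigma-\mathrm{id})V$; these are dual formulations of the same unipotence observation.
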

\begin{proof}
The group $L_1(\Gamma)$ is preserved by $\sigma$ and by construction we have $\Gamma/ L_1(\Gamma)\simeq \mathbb Z^{d}$ for some $d>0$. Write $\sigma_0\in \GL(d,\mathbb Z)$ for the automorphism induced on $\Gamma/ L_1(\Gamma)\simeq \mathbb Z^{d}$. Since $\sigma_0$ is unipotent we can find a codimension $1$ subspace $W\subseteq V=\mathbb Q^d$ such that $\sigma_0 W=W$ and the induced action on $V/W$ is trivial. Let $U=\mathbb Z^d\cap W$ and put $\Gamma'=UL_1(\Gamma).$ Then $\sigma(\Gamma')=\sigma_0(U)L_1(\Gamma)=\Gamma'$, the quotient 
$$\Gamma/\Gamma'\simeq \mathbb Z^d/ \mathbb Z^d\cap W\simeq \mathbb Z$$ 
and the induced action of $\sigma$ on this quotient is trivial.
\end{proof}

\begin{lemma} \label{lem-InnerAut}
Let $(X_0,x_0)$ be a pointed CW-complex. Let $G=\pi_1(X_0,x_0)$ and assume that $X_0$ is a classifying space for $G$. Let $\alpha$ and $\beta\colon (X_0,x_0)\to (X_0,x_0)$ be two cellular maps such that the induced morphisms 
$$\alpha_* \quad \mbox{and} \quad \beta_* : G \to G$$
are conjugated by some element $g_0 \in G$~: $\alpha_* = g_0 \beta_* g_0^{-1}$. Then, there exists a cellular homotopy 
$$H\colon I\times X_0\to X_0 \quad \mbox{such that} \quad H(0, \cdot )=\alpha, \quad H(1,\cdot )=\beta$$ 
and the loop 
$$[0,1] \to X_0, \quad s\mapsto H(s,x_0)$$ 
represents $g_0\in G$. 
\end{lemma}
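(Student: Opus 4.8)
The plan is to reduce to the standard fact that pointed maps into an aspherical complex are classified, up to pointed homotopy, by the induced homomorphism on $\pi_1$, after first correcting the basepoint behaviour of $\alpha$ by dragging it along a loop representing $g_0$.

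First I would choose a loop $\gamma\colon (I,\partial I)\to (X_0,x_0)$ with $[\gamma]=g_0$. Since $x_0$ is a $0$-cell, the inclusion $\{x_0\}\hookrightarrow X_0$ is a cofibration, so the homotopy extension property applies: viewing $\gamma$ as a homotopy of the map $\{x_0\}\to X_0$, $x_0\mapsto \alpha(x_0)=x_0=\gamma(0)$, I extend it to a homotopy $H_1\colon I\times X_0\to X_0$ with $H_1(0,\cdot)=\alpha$ and $H_1(s,x_0)=\gamma(s)$. Set $\alpha'=H_1(1,\cdot)$, so $\alpha'(x_0)=x_0$. The usual change-of-basepoint computation (the free homotopy $H_1\circ(\mathrm{id}_I\times\omega)$ of loops, dragging the basepoint along $\gamma$) shows that $\alpha_*=c_{g_0}\circ\alpha'_*$ as homomorphisms $G\to G$, where $c_{g_0}$ denotes conjugation by $g_0$. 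Combined with the hypothesis $\alpha_*=c_{g_0}\circ\beta_*$ and injectivity of $c_{g_0}$, this yields $\alpha'_*=\beta_*$ exactly.

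Next, since $X_0$ is a $K(G,1)$, the map $f\mapsto f_*$ gives a bijection between pointed homotopy classes of pointed maps $(X_0,x_0)\to(X_0,x_0)$ and $\mathrm{Hom}(G,G)$ (e.g.\ via obstruction theory / the representability of $H^1$ by $K(G,1)$'s). As $\alpha'_*=\beta_*$, there is a pointed homotopy $H_2\colon I\times X_0\to X_0$ with $H_2(0,\cdot)=\alpha'$, $H_2(1,\cdot)=\beta$ and $H_2(s,x_0)=x_0$ for all $s$. Concatenating $H_1$ and $H_2$ (reparametrising $I$) produces a homotopy $H\colon I\times X_0\to X_0$ from $\alpha$ to $\beta$ whose track $s\mapsto H(s,x_0)$ equals $\gamma$ followed by the constant loop at $x_0$, hence represents $g_0$.

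Finally, to obtain a \emph{cellular} homotopy I would apply the cellular approximation theorem to $H$ relative to the subcomplex $\partial I\times X_0$, on which $H$ already coincides with the cellular maps $\alpha$ and $\beta$. The resulting cellular $H'$ still satisfies $H'(0,\cdot)=\alpha$, $H'(1,\cdot)=\beta$, and restricting the approximating homotopy-of-homotopies to $\{x_0\}$ gives a homotopy rel endpoints between the track of $H'$ and the track of $H$, so the track of $H'$ also represents $g_0$. This lemma is essentially routine; the only point demanding care is the bookkeeping of the basepoint track, namely getting the direction of the change-of-basepoint conjugation right so that one drags along $g_0$ (rather than $g_0^{-1}$) to land on $\alpha'_*=\beta_*$ — this is settled by the explicit description of how a free homotopy of loops through a path acts on $\pi_1$, and presents no real obstacle.
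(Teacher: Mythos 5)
Your proof is correct, and it follows essentially the same overall plan as the paper's: drag $\alpha$ along a loop representing $g_0$ to produce a pointed map inducing $\beta_*$ on $\pi_1$, invoke the classification of pointed maps into a $K(G,1)$ up to pointed homotopy, and concatenate; the resulting basepoint track is the chosen loop followed by a constant, hence represents $g_0$. Your bookkeeping of the conjugation direction (the track of a homotopy from $\phi_0$ to $\phi_1$ gives $(\phi_0)_*=c_{[\omega]}\circ(\phi_1)_*$, so a track in class $g_0$ is exactly what is needed) is correct.

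The one place where your argument diverges technically from the paper's is in how cellularity is arranged. The paper produces the ``dragging'' homotopy in cellular form from the outset: it builds an explicit pointed cellular self-map $q$ of $X_0$ inducing conjugation, using the retraction $p$ constructed in Lemma~\ref{lem-pconst}, together with a cellular free homotopy from $\mathrm{id}_{X_0}$ to $q$; this is then precomposed with $\alpha$. You instead obtain a (possibly non-cellular) free homotopy via the homotopy extension property for the cofibration $\{x_0\}\hookrightarrow X_0$, and restore cellularity at the very end by cellular approximation rel $\partial I\times X_0$, noting that the track is unchanged up to homotopy rel endpoints. Both routes are valid. The paper's choice is natural in context, since Lemma~\ref{lem-pconst} is needed elsewhere anyway (e.g.\ to get explicit chain-level control); your route is a little cleaner as a stand-alone proof but gives up that explicitness. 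Both give the required conclusion, so the proof is acceptable.
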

\begin{proof}
We first construct an explicit cellular map $q\colon (X_0,x_0)\to (X_0,x_0)$ that induces the morphism 
$$q_* : G \to G, \quad g \mapsto g_0^{-1} g g_0 $$ 
and a (non-pointed!) homotopy between the identity map and $q$. Start with a map
$$q_0\colon (\{0\}\times X_0) \cup (I\times \{x_0\}) \to X_0, \quad q_0(0, \cdot)= \mathrm{id}_{X_0} \quad \mbox{and} \quad q_0( \cdot ,x_0)=s$$ 
where $s\colon I\to X_0$ is some loop based at $x_0$ and representing $g_0^{-1} \in G$. By Lemma \ref{lem-pconst} there is a map 
$$p\colon I\times X_0\to (\{0\}\times X_0) \cup (I\times \{x_0\}) \quad \mbox{such that} \quad p(0, \cdot )=\{ 0 \} \times \mathrm{id}_{X_0} \quad \mbox{and} \quad p(1,x_0)=(1,x_0).$$ 
Let 
$$q\colon (X_0,x_0)\to (X_0,x_0) ; \quad x \mapsto q_0 (p(1,x)).$$ 
It follows from the construction that $q$ induces the morphism 
$$q_* : G \to G, \quad g \mapsto g_0^{-1} g g_0$$ 
and the map 
$$H_0\colon I \times X_0 \to X_0;  (t,x) \mapsto q_0 (p(t,x))$$
gives a homotopy between the identity map and $q$. 
The maps 
$$q\circ \alpha \quad \mbox{and} \quad  \beta\colon (X_0,x_0)\to (X_0,x_0)$$ 
induce the same endomorphisms of $G$. It therefore follows from e.g. \cite[Proposition 7.1.6]{Geoghegan} that there exists a 
homotopy 
$$H_1\colon I\times X_0\to X_0 \quad \mbox{such that} \quad H_1(0,\cdot )=q \circ \alpha , \quad H_1(1, \cdot )=\beta \quad \mbox{and} \quad H_1( \cdot , x_0)=x_0.$$ 
Therefore, the map 
$$H(t,x)=\begin{cases}
H_0(2t,\alpha(x)) & \textrm{ if } 0\leq t\leq 1/2,\\
H_1(2t-1,x) & \textrm{ if } 1/2\leq t\leq 1,
\end{cases}$$ 
is a homotopy between $\alpha$ and $\beta$ and the loop $s\mapsto H(s,x_0)$ represents $g_0.$
\end{proof}

\begin{lemma}\label{lem-NilpotentAutmorphism}
Let $\Gamma$ be a unipotent lattice of Hirsch length $\Hirsch$ and let $\sigma\colon \Gamma\to \Gamma$ be a unipotent automorphism. There exists a $K(\Gamma,1)$ CW-complex $X_0$ with a distinguished point $x_0$ and a cellular map $\theta \colon X_0\to X_0$ fixing $x_0$ such that for every $\gamma \in \Gamma$, 
$$\theta_* (\gamma)= \sigma (\gamma).$$ 
and for every $m\in \N$ we have 
$$\log \|\theta^m \|\leq  \Hirsch\log |m|+O_\Gamma(1),$$
where $\| \theta^m \|$ denotes the norm  induced by the $\ell^2$-norm of the action of $\theta^m$ on the cellular chain complex associated to $X_0$. 
\end{lemma}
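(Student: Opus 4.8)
The plan is to argue by induction on the Hirsch length $\Hirsch$ of $\Gamma$, building $X_0$ as an iterated mapping torus adapted to a $\sigma$-invariant flag and realising $\theta$ through the explicit gluing machinery of Section~\ref{sec-QGluing}. The base cases are immediate: if $\Hirsch=0$ then $\Gamma$ is trivial, $X_0$ is a point and $\theta=\mathrm{id}$; if $\Hirsch\leq 1$ then a unipotent automorphism of $\Z$ is the identity, so $X_0=S^1$ and $\theta=\mathrm{id}$ work. Note that a rebuilding-type constant $O_\Gamma(1)$ in the final estimate is allowed to absorb, among other things, the (bounded) chain norms of the maps produced by Lemmas~\ref{lem-pconst} and~\ref{lem-InnerAut}, of the attaching maps of $X_0$, and of passage between homotopy equivalent finite complexes (Corollary~\ref{cor: homot equiv. complex}).

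\textbf{Inductive step: peeling off a $\Z$.} Apply Lemma~\ref{lem-UnipAuto} to get a normal subgroup $\Gamma'\triangleleft\Gamma$ with $\Gamma/\Gamma'\cong\Z$, preserved by $\sigma$, on which $\sigma$ acts trivially. Choosing $t\in\Gamma$ lifting a generator of the quotient presents $\Gamma=\Gamma'\rtimes_\phi\Z$ with $\phi=\mathrm{conj}_t|_{\Gamma'}$; since $\Gamma$ is nilpotent $\phi$ is a unipotent automorphism of the unipotent lattice $\Gamma'$, whose Hirsch length is $\Hirsch-1$. Write $\sigma'=\sigma|_{\Gamma'}$ and $\sigma(t)=\phi(w)\,t$ with $w\in\Gamma'$, so that $\sigma$ is determined by $\sigma'$ and $w$, and the identity $\sigma(t\gamma t^{-1})=\sigma(\phi(\gamma))$ reads on $\Gamma'$ as $\sigma'\circ\phi=\mathrm{conj}_{\phi(w)}\circ(\phi\circ\sigma')$. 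By the induction hypothesis applied to $(\Gamma',\sigma')$ we obtain a finite pointed $K(\Gamma',1)$ complex $(X_0',x_0')$ and a cellular self-map $\sigma_0'$ fixing $x_0'$, inducing $\sigma'$ on $\pi_1$, with $\log\|(\sigma_0')^m\|\leq(\Hirsch-1)\log|m|+O_{\Gamma'}(1)$; fix in addition any cellular $\phi_0\colon X_0'\to X_0'$ fixing $x_0'$ and inducing $\phi$ (only $\|\phi_0\|$, a fixed constant, will enter — $\phi_0$ is never iterated on chains). Take $X_0$ to be the mapping torus of $\phi_0$, with basepoint $x_0=(x_0',\ast)$: it is aspherical, $\pi_1(X_0)=\Gamma$, and its cellular chain complex is $C_n(X_0)=C_n(X_0')\oplus C_{n-1}(X_0')$, the second summand recording the cells $c\times I$.

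\textbf{Construction of $\theta$ and its chain map.} I would build $\theta$ by the Explicit Gluing Lemma (Proposition~\ref{lem-QGluing}) applied to the square relating $X_0'\times I$, $X_0'\times\{0,1\}$ and $X_0'$: $\theta$ equals $\sigma_0'$ on the fibre copy of $X_0'$, wraps the circle direction once, and on a cell $c\times I$ inserts the correction dictated by a cellular homotopy between $\phi_0\circ\sigma_0'$ and $\sigma_0'\circ\phi_0$ together with the loop accounting for $\sigma(t)=\phi(w)t$. Such a homotopy, with controlled basepoint loop, is furnished by Lemma~\ref{lem-InnerAut}, since $\phi_0\sigma_0'$ and $\sigma_0'\phi_0$ induce the conjugate endomorphisms $\phi\sigma'$ and $\sigma'\phi=\mathrm{conj}_{\phi(w)}(\phi\sigma')$ of $\Gamma'$. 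With this choice $\theta$ fixes $x_0$, induces $\sigma$ on $\pi_1$, and by the explicit formulas of Section~\ref{sec-QGluing} its chain map is block upper–triangular with respect to the fibre filtration,
\[
\theta_n=\begin{pmatrix}(\sigma_0')_n & R_n\\ 0 & \pm(\sigma_0')_{n-1}\end{pmatrix},\qquad R_n\colon C_{n-1}(X_0')\to C_n(X_0'),
\]
where $R_n$ is the fixed correction read off from Proposition~\ref{lem-QGluing} (built from $\sigma_0'$, $\phi_0$ and the homotopy, hence of norm bounded independently of $m$). Iterating,
\[
\theta_n^{\,m}=\begin{pmatrix}(\sigma_0')_n^{\,m} & \ \sum_{i+j=m-1}(\sigma_0')_n^{\,i}\,R_n\,(\pm\sigma_0')_{n-1}^{\,j}\\ 0 & (\pm)^m(\sigma_0')_{n-1}^{\,m}\end{pmatrix}.
\]

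\textbf{The estimate and the main obstacle.} The diagonal blocks have $\ell^2$-operator norm $O(m^{\Hirsch-1})$ by the induction hypothesis, and each summand of the off-diagonal block is bounded by $\|(\sigma_0')^i\|\cdot\|R_n\|\cdot\|(\sigma_0')^j\|$; the crude bound $\sum_{i+j=m-1}i^{\Hirsch-1}j^{\Hirsch-1}=O(m^{2\Hirsch-1})$ already yields polynomial control $\log\|\theta^m\|=O_\Gamma(\log m)$, which is all that is needed for Theorem~\ref{thm-UnipRewiring}. To reach the sharp exponent $\Hirsch$ one must choose the homotopy of Lemma~\ref{lem-InnerAut} (equivalently, the cellular representative of $\theta$) compatibly with $\sigma_0'$, so that the correction intertwines the two $\sigma_0'$–actions, $(\sigma_0')_n R_n=R_n(\sigma_0')_{n-1}$ up to a lower–order error; then $\sum_{i+j=m-1}(\sigma_0')^iR_n(\sigma_0')^j$ collapses to $m\,R_n(\sigma_0')^{m-1}+(\text{lower order})$, of norm $O(m^{\Hirsch})$, so that $\log\|\theta_n^{\,m}\|\leq\Hirsch\log|m|+O_\Gamma(1)$ uniformly in $n$. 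Establishing this compatibility — that is, performing the gluing of Proposition~\ref{lem-QGluing} in a sufficiently $\sigma$–equivariant way so that no filtration step contributes more than one extra power of $m$ under iteration — is the main technical obstacle; the remaining verifications (that $\theta$ is well defined, cellular, fixes $x_0$ and induces $\sigma$, and that $X_0$ is a $K(\Gamma,1)$) are routine manipulations with the formulas of Section~\ref{sec-QGluing}, and the bookkeeping of constants uses Lemma~\ref{lem-composition} and Corollary~\ref{cor: homot equiv. complex}.
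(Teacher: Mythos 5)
Your strategy coincides with the paper's: induction on the Hirsch length via Lemma~\ref{lem-UnipAuto}, mapping torus of the (cellular) conjugation map, and Lemma~\ref{lem-InnerAut} to compare $\tau_0\circ\theta'$ with $\theta'\circ\tau_0$. The paper writes $\theta$ down directly rather than invoking Proposition~\ref{lem-QGluing}, but that is cosmetic; the resulting chain map is the same block upper-triangular operator.

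Where you diverge, you are in fact correcting the paper. The paper asserts, ``by induction,''
\begin{equation*}
\theta^m (c\otimes [I , \p I ])=\theta'^m(c)\otimes [I , \p I ]+\sum_{i=0}^{m-1}\theta'^{i}(r(c))\otimes [0],
\end{equation*}
but iterating $\theta(c\otimes [I,\p I])=\theta'(c)\otimes [I,\p I]+r(c)\otimes[0]$ actually produces the off-diagonal block $\sum_{i+j=m-1}\theta'^{i}\,r\,\theta'^{j}$, as you wrote; the paper's display has silently replaced $r\circ\theta'^{\,j}$ by $r$. Already at $m=2$ the paper's formula requires $r\circ\theta'=r$, which does not hold and is not arranged for in the construction (indeed $r\theta'$ is a chain homotopy between $\theta'\tau_0\theta'$ and $\tau_0\theta'^{2}$, not between $\theta'\tau_0$ and $\tau_0\theta'$). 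With the corrected formula, the naive estimate is
\begin{equation*}
\|\theta^m\|\leq \|\theta'^m\|+\sum_{i+j=m-1}\|\theta'^{i}\|\,\|r\|\,\|\theta'^{j}\|=O_\Gamma\bigl(m^{2\Hirsch-1}\bigr),
\end{equation*}
so the sharp exponent $\Hirsch$ in the lemma's statement is not justified by the paper's argument. You are also right that this does not matter downstream: every application (the ``norms bound'' in Definition~\ref{def: Rebuilding and quality}, the estimate \eqref{eq-NormEstimates} inside the proof of Theorem~\ref{thm-UnipRewiring}) only needs $\log\|\theta^m\|=O_\Gamma(1+\log m)$, i.e.\ polynomial growth of $\|\theta^m\|$ in $m$ with unspecified degree, and that follows from either estimate. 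To salvage the exponent $\Hirsch$ one would need a compatibility of $r$ with $\theta'$ along the lines you describe (so that the sum telescopes to $m$ terms of size $O(m^{\Hirsch-1})$ rather than $m$ terms of size $O(m^{2\Hirsch-2})$); the paper does not carry this out, and it is not clear it can be done by a simple choice of the homotopy in Lemma~\ref{lem-InnerAut}, since $\theta'$-equivariance of $H$ already fails at the endpoints unless $\theta'$ and $\tau_0$ commute. So: correct approach, correct iterated formula, a genuine (if harmless) gap identified in the paper's stated constant, and a correct observation that the weaker polynomial bound suffices.
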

\begin{proof}
We prove the lemma by induction on the Hirsch length of $\Gamma$. The base case $\Hirsch=0$ corresponds to the trivial group and one can take $X_0$ to be the CW-complex that consists of exactly one $0$-cell. 

Suppose now that the lemma is proved up to Hirsch length $\Hirsch-1$ and consider a unipotent lattice $\Gamma$ of Hirsch length $\Hirsch$. By Lemma \ref{lem-UnipAuto} there exists a $\sigma$-invariant normal subgroup $\Gamma'\leq\Gamma$ such that $\Gamma/\Gamma'\simeq \mathbb Z$ and the induced action of $\sigma$ on $\Gamma/\Gamma'$ is trivial. Choose $t_0\in \Gamma$ such that $t_0\Gamma'$ generates  $\Gamma/\Gamma'$.

By the inductive hypothesis there exists a $K(\Gamma',1)$ space $X_0 '$ with a distinguished point $x_0'$ and a cellular automorphism $\theta ' \colon X'_0\to X'_0$ that fixes $x_0'$, induces $\sigma '$ on $\pi_1(X_0',x_0')= \Gamma'$ and satisfies 
$$\log \|(\theta ')^m\|\leq (\Hirsch-1)\log m+O_{\Gamma'}(1)$$ 
for $m\in \mathbb N$. Choose a cellular map $\tau_0 \colon X_0'\to X_0'$ which fixes $x_0'$ and induces the automorphism $\gamma\mapsto t_0^{-1}\gamma t_0$ on the fundamental group. Take $X_0$ to be the quotient 
$$X_0 =(X_0'\times [0,1]) / \sim \quad \mbox{where } \sim \mbox{ is generated by }  (x,1)\sim (\tau_0 (x ),0)  \ (x \in X_0 ')$$ 
and let $x_0$ be the image of $(x_0',0)$ in the quotient. The CW-complex $X_0$ is a $K(\Gamma , 1)$ space and the image of $\{x_0 ' \} \times [0,1]$ in $X_0$ is a loop based at $x_0$ that represents $t_0$ in $\pi_1(X_0)\simeq \Gamma$. We proceed to construct the desired cellular map $\theta \colon X_0\to X_0$. Since $\sigma$ acts trivially on $\Gamma / \Gamma'$ the two maps 
$$\tau_0 \circ \theta' \quad \mbox{and} \quad \theta' \circ \tau_0 : (X_0 , x_0) \to (X_0 , x_0)$$ 
induce two endomorphisms of $\Gamma$ that are conjugated by $t_0^{-1} \sigma (t_0) \in \Gamma '$. Lemma \ref{lem-InnerAut} therefore implies that 
there exists a homotopy 
$$H\colon I\times X_0'\to X_0' \quad \mbox{such that} \quad H(0, \cdot )= \tau_0 \circ \theta' ,  \quad H(1,\cdot )=\theta' \circ \tau_0 $$ 
and the loop 
$$[0, 1 ] \to X_0 , \quad s\mapsto H(s,x_0)$$ 
represents $t_0^{-1} \sigma (t_0)  \in \Gamma'$.
We define $\theta : X_0 \to X_0$ by 
\begin{align*}
\theta (x,t) =\begin{cases} (\theta ' (x ),2t) &\textrm{ if } 0\leq t\leq 1/2\\
(H(2t-1 , x), 0) &\textrm{ if } 1/2<t\leq 1.\end{cases}
\end{align*}
We claim that the endomorphism of $\Gamma$ induced by $\theta$ is $\sigma$. Since we already know that $\theta$ induces $\sigma$ on $\Gamma'$ it is enough to compute its value on $t_0$. By construction, $t_0$ is represented by the loop in $X_0$ image of $\{x_0 ' \} \times [0,1]$. This loop is sent  
by $\theta$ to the concatenation of $s\mapsto (x_0,s)$ with $s\mapsto H(s,x_0)$. The latter represents $t_0^{-1} \sigma (t_0)  \in \Gamma'$ so the concatenation represents $\sigma (t_0 )$ as desired. 

It remains to compute the chain maps and to check the bound on the norm. Recall that as $\mathbb Z$-modules, 
$$C_\bullet(X_0) = (C_\bullet (X_0')\otimes [0] ) \oplus (C_\bullet(X_0')) \otimes [I, \p I ].$$ 
In these coordinates, the chain map induced by $\theta$ is given by the following formulas 
$$\theta ( c\otimes [0])= \theta '(c)\otimes [0] \quad \mbox{and} \quad  \theta (c\otimes [I, \p I ])= \theta ' (c)\otimes [I , \p I ]+ r(c)\otimes [0],$$ 
where $r(c) =H([I , \p I ]\otimes c).$
By induction we have 
$$\theta^m (c\otimes [0])=\theta'^m (c)\otimes [0] \quad \mbox{and} \quad \theta^m (c\otimes [I , \p I ])=\theta'^m(c)\otimes [I , \p I ]+\sum_{i=0}^{m-1}\theta'^{i}(r(c))\otimes [0].$$ 
It follows that 
$$\|\theta^m \| \leq m \sup \{  \|\theta'^m \| ,  \| \theta'^{i} \circ r \|  \; : \; i = 1 , \ldots , m-1 \} + || r ||$$
which, by induction, gives 
$$\log\| \theta^m \| \leq \Hirsch\log m+2\log \|r\|+O_{\Gamma',\sigma'}(1)= \Hirsch\log m+ O_{\Gamma,\sigma}(1).$$
\end{proof}

\subsection{The Proof of Theorem \ref{thm-UnipRewiring}}

\begin{proof}[Proof of Theorem \ref{thm-UnipRewiring}]
We prove the theorem by induction on the Hirsch length $\Hirsch$. The base case $\Hirsch=0$ holds trivially since then $\Gamma$ is the trivial group.  

Suppose now that the lemma is proved up to Hirsch length $\Hirsch-1$ and consider a unipotent lattice $\Gamma$ of Hirsch length $\Hirsch>0$. Thanks to Corollary~\ref{cor: homot equiv. complex} we only need to prove the theorem for a single classifying space of $\Gamma$. Therefore our first step is the construction of a convenient $Y_0$. 

Let $\Lambda$ be a normal subgroup of $\Gamma$ such that $\Gamma/\Lambda\simeq \mathbb Z$. Choose $t_0\in \Gamma$ such that $t_0\Lambda$ generates $\Gamma/\Lambda$. Then $\Gamma$ decomposes as a semidirect product 
$$\Gamma=\langle t_0\rangle \ltimes \Lambda.$$
The automorphism 
$$\Lambda \to \Lambda; \quad \lambda \mapsto t_0 \lambda t_0^{-1}$$ 
is unipotent. By Lemma \ref{lem-NilpotentAutmorphism} there exists a pointed CW-complex $(X_0,x_0)$ which is a $K( \Lambda , 1)$ space and a cellular map $\theta \colon (X_0,x_0)\to (X_0,x_0)$ such that the induced endomorphism $\theta_* : \Lambda \to \Lambda$ is precisely equal to the unipotent automorphism $\lambda \mapsto t_0 \lambda t_0^{-1}$ and 
$$\|\theta^m\|\leq (\Hirsch-1)\log m+ O_{\Gamma', t_0}(1).$$ 
We let $Y_0$ be the quotient  
$$Y_0= (X_0\times [0,1] ) / (x,1)\sim (\theta (x),0)$$
and take the image $y_0$ of $(x_0 , 0)$ as basepoint. Then $\pi_1(Y_0,y_0)$ is isomorphic to $\Gamma$ and $Y_0$ is a $K(\Gamma , 1)$ space. Note for future reference that the projection of $\{ x_0 \} \times [0,1]$ into $Y_0$ is a loop that represents the element $t_0$ in $\Gamma$. 

Now let $\Gamma_1\leq \Gamma$ be a finite index subgroup. Set $\ell=[\Gamma:\Lambda\Gamma_1]$ and let $\Lambda_1 =\Lambda\cap \Gamma_1$. Since $t_0^\ell\in \Lambda \Gamma_1$ we can choose $a\in \Lambda$ such that $t_1=a t_0^\ell \in \Gamma_1.$ The group 
$\Gamma_1$ then decomposes as a semi-direct product $\Gamma_1=\langle t_1\rangle \ltimes \Lambda_1.$ It follows that $[\Gamma:\Gamma_1]=\ell[\Lambda:\Lambda_1].$ Put 
$$Y_1=\Gamma_1\bs \tilde Y_0.$$
This is the complex we want to rebuild. It is naturally a stack over the CW-complex obtained by lifting the standard CW-complex structure on the circle by a $\ell$-fold self-covering map. We rebuild $Y_1$ in two steps. The first rebuilding $(Y_1,Y_1')$ consists of changing the base of the stack to get a stack over the standard CW-complex on the circle. It decreases the number of cells to $|Y_1'|=|Y_1|\ell^{-1}$. The second rebuilding $(Y_1',Y_1'')$ is obtained by applying our effective version of the rebuilding lemma after applying the inductive hypothesis to the fibers of the stack. This second rebuilding will bring the number of cells down to $|Y_1''|=|Y_1|[\Gamma:\Gamma_1]^{-1}=|Y_0|.$
 
Before performing the two rebuildings we give an explicit description of the stack structure of $Y_1$: the map $\theta \colon X_0\to X_0$ lifts to a map $\theta \colon \tilde X_0\to \tilde X_0$ such that for all $\lambda \in \Lambda$ and for all $\tilde x \in \tilde X_0$, we have
$$\theta (\lambda \cdot \tilde x) = \theta_* (\lambda ) \cdot \tilde x.$$
In particular $\theta$ maps each $(t_0^i \Lambda_1 t_0^{-i})$-orbit of $\tilde x$ (with $i \in \mathbb Z$) to the $(t_0^{i+1} \Lambda_1 t_0^{-i-1})$-orbit of $\tilde x$. Writing
$$X_{1,(i)} =(t_0^i \Lambda_1 t_0^{-i}) \bs \tilde X_0 \quad \mbox{and simply} \quad  X_1 =X_{1,(0)},$$
we conclude that $\theta$ induces a well-defined map 
$$\theta \colon X_{1,(i)}\to X_{1,(i+1)}.$$ 
Finally since 
$$a (t_0^\ell \Lambda_1 t_0^{-\ell}) a^{-1} = t_1 \Lambda_1 t_1^{-1} = \Lambda_1 ,$$
the map 
$$\tilde X_0 \to \tilde X_0; \quad \tilde x \mapsto a \cdot \tilde x$$
induces a well-defined map 
$$a  \colon X_{1,(\ell )}\to X_0; \quad (t_0^\ell \Lambda_1 t_0^{-\ell} ) \tilde x \mapsto  \Lambda_1 a\tilde x.$$
It follows that 
\begin{equation} \label{E:Y1}
Y_1\simeq \bigsqcup_{i=0}^{\ell-1} (X_{1,(i)} \times [0,1] ) / \sim,
\end{equation}
where $\sim$ is generated by 
$$(x_{(i)},1)\sim (\theta (x_{(i)}),0), \quad \mbox{for} \quad i\in \{0,\ldots, \ell-2 \}, \quad \mbox{and} \quad (x_{(\ell-1)},1)\sim (a \circ \theta (x_{(\ell-1)}),0).$$ 
Indeed: we have
$$\Lambda_1\bs \tilde Y_0 = \bigsqcup_{i\in \mathbb Z} (X_{1,(i)} \times [0,1] ) / (x_{(i)},1) \sim (\theta (x_{(i)}) , 0)$$ 
and quotienting it further by the action of $\langle t_1\rangle$, which has the effect of identifying $(x_{(i)},t)$ with $(a (x_{(i+\ell)}),t)$, we get \eqref{E:Y1}.  

\subsection*{First rebuilding} 
Let 
$$Y_1'= X_1\times [0,1]/ (x,1)\sim (a \circ \theta^\ell (x),0).$$ 
It is aspherical and $\pi_1(Y_1')\simeq \langle t_1\rangle \ltimes \Lambda_1=\Gamma_1$ so $Y_1'$ is another $K(\Gamma_1 , 1)$ space. Note that $|Y_1'|=\ell^{-1}|Y_1|.$
We define three maps 
$$\Gcell \colon  Y_1\to Y_1', \quad \Hcell \colon Y_1'\to Y_1 \quad \mbox{and} \quad  \Homcell \colon I\times Y_1\to Y_1$$ 
by the following formulas, with $x_{(i)} \in X_{1,(i)}$, $x \in X_1$ and $s,t \in [0,1]$,
\begin{equation*}
\begin{split}
\Gcell (x_{(i)} ,t) & =\left\{ \begin{array}{ll} (x_{(i)},t) & \mbox{if } i=0,\\ (a \circ \theta^{\ell-i} (x_{(i)}),0) & \mbox{if } \quad i=1,\ldots, \ell-1, \end{array} \right. \\
\Hcell (x,t) & =(\theta^{\lfloor \ell t\rfloor}(x), \ell t-\lfloor \ell t\rfloor),
\end{split}
\end{equation*}
and
\begin{equation*}
\Homcell (s,(x_{(i)},t)) = \left\{ \begin{array}{ll} (\theta^{\lfloor \alpha \rfloor}(x_{(i)}), \alpha-\lfloor \alpha \rfloor) & \mbox{if } i+t\leq (\ell-1)s \mbox{ and } \alpha =\frac{\ell(i+t-s(\ell-1))}{\ell-s(\ell-1)},\\
(a \circ \theta^{\ell-i}(x_{(i)}) , 0) & \mbox{if } i+t\geq (\ell-1)s.
\end{array} \right.
\end{equation*}
The maps $\Gcell$ and $\Hcell$ are cellular and $\Homcell$ is a cellular homotopy between the identity and $\Hcell \circ \Gcell$. The composition $\Gcell \circ \Hcell$ is also homotopic to the identity since it induces the identity on $\pi_1(Y_1')$. Therefore $(Y_1,Y_1',\Gcell ,\Hcell ,\Homcell )$ is a rebuilding. 

We now compute the chain maps induced by $\Gcell$, $\Hcell$ and $\Homcell$ as well as the boundary map on $C_\bullet (Y_1')$, which we shall denote by 
$\p_{Y_1'}$. First remark that
$$
C_\bullet(Y_1) = \bigoplus_{i=0}^{\ell-1}\left(C_\bullet(X_{1,(i)} )\otimes [0]\oplus C_\bullet(X_{1,(i)}) \otimes [I]\right) \quad \mbox{and} \quad 
C_\bullet(Y_1')\simeq C_\bullet(X_1)\otimes [0]\oplus C_\bullet(X_1)\otimes [I],  $$ 
as $\Z$-modules. In these coordinates, for $c \in C_\bullet (X_1)$ we have
$$\p_{Y_1'}(c\otimes [0])= \p_{X_1}c\otimes [0] \quad \mbox{and} \quad 
\p_{Y_1'}(c\otimes [I])= \p_{X_1}c\otimes [I]+ (-1)^{\dim c}(-c\otimes [0]+a\circ \theta^\ell(c))\otimes [0]),$$
for $c_{(i)} \in C_\bullet(X_{1,(i)})$ we have
$$\Ghom (c_{(i)} \otimes [0])= \left\{ \begin{array}{ll}  c_{(i)} \otimes [0] &\mbox{if } i=0,\\
a\circ \theta^{\ell-i}(c_{(i)} ) \otimes [0] & \mbox{if } i>0,
\end{array} \right. \quad \mbox{and} \quad 
\Ghom (c_{(i)} \otimes [I])=  \left\{ \begin{array}{ll}  c_{(i)} \otimes [I] & \textrm{ if } i=0,\\
0 & \textrm{ if } i>0,\end{array} \right.$$
for $c \in C_\bullet (X_1)$ we have
$$\Hhom (c\otimes [0])= c\otimes [0] \quad \mbox{and} \quad 
\Hhom (c\otimes [I])= \sum_{i=0}^{\ell-1} \theta^i(c)\otimes [I],$$
and for $c_{(i)} \in C_\bullet(X_{1,(i)})$ we have
$$\Homhom (c_{(i)} \otimes [0])= \left\{ \begin{array}{ll} 
0 & \mbox{if } i=0,\\ 
\sum_{j=i}^{\ell-1} \theta^j(c)\otimes [I], & \mbox{if } i>0,
\end{array}\right. \quad \mbox{and} \quad  
\Homhom (c_{(i)} \otimes [I])= 0.$$
The  map $a \colon C_\bullet (X_{1,  (\ell)} )\to C_\bullet(X_1)$ is unitary and  by Lemma \ref{lem-NilpotentAutmorphism} $\|\theta^\ell\| \leq  O_{X_0,\sigma} (1+ \log \ell ).$ We deduce that 
\begin{equation}\label{eq-NormEstimates}
\|\p_{Y_1'}\|, \quad \|\Ghom \|, \quad \|\Hhom \|, \quad \| \Homhom \| \leq O_{Y_0} (1+ \log \ell ). 
\end{equation} 
This concludes the first step.

\subsection*{Second rebuilding} 
By induction there exists a rebuilding $(X_1,X_1', \Gcellfibre, \Hcellfibre , \Homcellfibre )$ of quality $([\Lambda:\Lambda_1], O_{X_0}(1)).$ 
To shorten notation we shall write $\theta_1 =a \circ \theta^\ell.$ Let 
$$A =X_1\times\{0,1\} \quad \mbox{and} \quad A' =X_1'\times\{0,1\}.$$ 
Define a map $f\colon A\to X_1$ by 
$$f(x,0)=x \quad \mbox{and} \quad f(x,1)=\theta_1(x).$$ 
We then have 
$$Y_1'=X_0\times [0,1]/(x,1)\sim (\theta_1 (x),0)=(X_1\times [0,1])\cup_f X_1.$$
Now consider the following diagram 
\begin{equation*}
\begin{tikzcd}[sep=large]
X_1\times [0,1]\arrow[d,"\Gcellfibre \times \mathrm{id}",swap, shift right] &\arrow[l,hook] A\arrow[d,swap,"\Gcellfibre \times \mathrm{id}",shift right] \arrow[r,"f"] & X_1\arrow[d,swap,"\Gcellfibre", shift right]\\
X_1'\times [0,1]\arrow[u," \Hcellfibre\times \mathrm{id}",swap, shift right]&\arrow[l,hook] A'\arrow[r,"f'"] \arrow[u," \Hcellfibre\times \mathrm{id}",swap, shift right]& X_1'\arrow[u," \Hcellfibre",swap, shift right],
\end{tikzcd}
\end{equation*}
where $f'= \Gcellfibre\circ f\circ ( \Hcellfibre \times \mathrm{id})$. Define 
$$Y_1''=(X_1'\times [0,1])\sqcup_{f'} X_1'.$$ The number of cells in $Y_1''$ is $2|X_1'|=O_{X_0}(1)$ and 
$$C_\bullet(Y_1'') = C_\bullet(X_1)\otimes [I]\oplus C_\bullet(X_1),$$ 
as $\mathbb Z$-modules. The boundary map is given by 
$$\p_{Y_1'}(c\otimes [I])=\p_{X_1'}(c)\otimes [I]+(-1)^{\dim c}(-\Ghomfibre (\theta_1 (\Hhomfibre (c)))+\Ghomfibre ( \Hhomfibre (c)) \quad \mbox{and} \quad \p_{Y_1'}(c)=\p_{X_1'}(c),$$ 
for $c\in C_\bullet(X_1').$

Let 
$$\Gcell ' \colon Y_1'\to Y_1'' \quad \mbox{and} \quad \Hcell ' \colon Y_1''\to Y_1'$$ 
be the cellular maps afforded by Proposition \ref{lem-QGluing} and let 
$$\Homcell ' \colon I\times Y_1'\to Y_1'$$ 
be the homotopy between the identity and $\Hcell ' \circ \Gcell '$, also provided by Proposition \ref{lem-QGluing}. Let 
$$ \Homhomfibre \colon C_\bullet(X_1)\to C_{\bullet+1}(X_1) \quad \mbox{and} \quad \Homhom ' \colon C_\bullet(Y_1')\to C_{\bullet+1}(Y_1')$$ 
be the chain homotopies respectively induced by $\Homcellfibre$ and $\Homcell '$.

By induction we have 
$$\log \|\Ghomfibre \|, \quad \log \|\Hhomfibre \|, \quad \log \|\Homhomfibre \|\leq O_{X_0} ( 1+ \log [\Lambda:\Lambda_1] ) \quad \mbox{and} \quad \log \|f\|\leq O_{Y_0} (1+\log \ell ).$$  
Using the explicit formulas for the chain maps from Lemma \ref{lem-QGluing} we deduce that 
\begin{equation}\label{eq-NormEstimates2}
\log\| \p_{Y_1'} \|, \log\| \Ghom ' \|, \log \| \Hhom ' \|, \log \| \Homhom ' \|\ll_{Y_0} \log\ell+ \log [\Lambda:\Lambda_1]+1=\log [\Gamma:\Gamma_1]+1.
\end{equation}
Let
$$\Gcell ''=\Gcell ' \circ \Gcell, \quad \Hcell ''=\Hcell \circ \Hcell '$$ 
and 
$$\Homcell ''(s,y)= \left\{ 
\begin{array}{ll} 
\Homcell (2s, y) & \mbox{if } 0\leq s\leq 1/2,\\
\Hcell ( \Homcell ' (2t-1,g(y)) & \mbox{if } 1/2\leq s\leq 1,
\end{array} \right. \quad \mbox{for} \quad y\in Y_1.$$ 
Using \eqref{eq-NormEstimates} and \eqref{eq-NormEstimates2} we see that $(Y_1,Y_1'', \Gcell '', \Hcell'', \Homcell'')$ is a rebuilding of quality $([\Gamma:\Gamma_1],O_{Y_0}(1))$. 
This proves Theorem~\ref{thm-UnipRewiring}.
\end{proof}

\section{Quality of rebuilding of  extensions by unipotent lattices} \label{S:unipext}

Consider a countable group $\Glemma$ that contains a finitely generated, torsion-free, nilpotent, \defin{normal} subgroup $\Alemma$. Suppose furthermore that $\Glemma / \Alemma$ is of type $F_{\indice}$ for some integer $\indice \geq 0$. 

Let $\basezero$ be a classifying space (CW-complex) for $\Glemma / \Alemma$ with finite $\indice$-skeleton.
The group $\Glemma$ acts on its universal cover $\widetilde{\basezero}$ with cell-stabilizers all equal to $\Alemma$.

Let $Y$ be a finite classifying space for $\Alemma$. The Borel construction followed by the Proposition~\ref{prop: rebuilding}
yields a stack of CW-complexes $\stackzero \to \basezero$ with base $\basezero$ where each fiber is $Y$.  

The goal of this section is to prove the following effective rebuilding statement.

\begin{proposition} \label{L3}
Let $\Glemone \leq \Glemma$ be a finite index subgroup, let $\stackone \to \baseone$ be the associated stack and let $ \Alemone = \Alemma \cap \Glemone$. Then the total space $\stackone$ is a classifying space for $\Glemone$ and there exists an $\indice$-rebuilding $(\stackone , \stackone ' , \Gcell , \Hcell , \Homcell )$ of quality $([\Alemma : \Alemone ] , O_{Y,D} (1) )$.
\end{proposition}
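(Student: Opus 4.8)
The plan is to exhibit $\stackone$ as the total space of the stack over $\baseone$ coming from the finite covering $\stackone\to\stackzero$, to rebuild each of its fibers by Theorem~\ref{thm-UnipRewiring}, and to glue these fiberwise rebuildings into a rebuilding of $\stackone$ via the Effective Rebuilding Lemma (Proposition~\ref{P2}), keeping track of $\ell^2$-operator norms throughout. Concretely: $\Glemma$ acts on $\widetilde{\basezero}$ through $\Glemma\to\Glemma/\Alemma$ with each cell fixed pointwise by $\Alemma$, so $\Glemone$ acts with $\baseone=\Glemone\backslash\widetilde{\basezero}$, and $\stackone=\Glemone\backslash\widetilde{\stackzero}\to\baseone$ is the associated stack (cf.\ the Remark in Section~\ref{Sect: Borel construction}). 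As $\widetilde{\basezero}$ is contractible, Proposition~\ref{prop: existence of a stack} gives that $\widetilde{\stackzero}$ is contractible, so $\stackone$ is a $K(\Glemone,1)$. The only place normality of $\Alemma$ enters is the identity $\mathrm{Stab}_{\Glemone}(g\tilde\cellup)=\Glemone\cap g\Alemma g^{-1}=\Glemone\cap\Alemma=:\Alemone$, valid for every $g\in\Glemma$ and every cell $\tilde\cellup$ of $\widetilde{\basezero}$; it forces \emph{every} fiber of $\stackone\to\baseone$ to be the one fixed cover $Y_1:=\Alemone\backslash\widetilde Y$ of $Y$. Writing $\ell:=[\Glemma:\Alemma\Glemone]$, one has $[\Glemma:\Glemone]=\ell\,[\Alemma:\Alemone]$ and $|\baseone^{(i)}|=\ell\,|\basezero^{(i)}|$, so $\baseone$ and $\stackone$ have finite $\indice$-skeleton and $|\stackone^{(j)}|=\sum_i\ell\,|\basezero^{(i)}|\,|Y_1^{(j-i)}|$ for $j\le\indice$.

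Next I would apply Theorem~\ref{thm-UnipRewiring} to the compact $K(\Alemma,1)$ space $Y$, obtaining a constant $\kappa_Y\ge 1$ such that $Y_1$ carries an $\indice$-rebuilding $(Y_1,Y_1',\Gcellfibre,\Hcellfibre,\Homcellfibre)$ of quality $([\Alemma:\Alemone],\kappa_Y)$. Feeding this rebuilding --- the same one over every cell of the finite complex $\baseone^{(\indice)}$, which carries all of $\stackone^{(\indice)}$ --- into Proposition~\ref{P2} applied to $\stackone\to\baseone$ yields a stack $\stackone'\to\baseone$ with fiber $Y_1'$, cellular maps $\Gcell\colon\stackone\to\stackone'$ and $\Hcell\colon\stackone'\to\stackone$, a homotopy $\Homcell$ from $\mathrm{id}$ to $\Hcell\circ\Gcell$, and the explicit formulas of Proposition~\ref{P2} for the chain maps $\Ghom,\Hhom,\Homhom$ and for the boundary $\p'$ of $\stackone'$. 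Since $Y_1'$ is a $K(\Alemone,1)$ and $\widetilde{\basezero}$ is contractible, $\stackone'$ is again a $K(\Glemone,1)$, so $(\stackone,\stackone',\Gcell,\Hcell,\Homcell)$ is an $\indice$-rebuilding of $\stackone$, and in particular $\stackone$ is a classifying space for $\Glemone$.

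It then remains to check the quality. The cells bound is immediate from the first paragraph together with $|(Y_1')^{(k)}|\le\kappa_Y[\Alemma:\Alemone]^{-1}|Y_1^{(k)}|$, which gives $|(\stackone')^{(j)}|\le\kappa_Y[\Alemma:\Alemone]^{-1}|\stackone^{(j)}|$. For the norms bound, set $T:=[\Alemma:\Alemone]$. The fiber rebuilding contributes $\log\|\Ghomfibre\|,\log\|\Hhomfibre\|,\log\|\Homhomfibre\|,\log\|(\p')^{\rm vert}\|\le\kappa_Y(1+\log T)$, where $(\p')^{\rm vert}$ is the fiberwise boundary on $\stackone'$. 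The crucial extra input is that $\|\p^{\rm hor}\|$ stays bounded by a constant depending only on $\stackzero$ and $Y$: by Lemma~\ref{L23}, $\p^{\rm hor}=\p-\p^{\rm vert}$ on $\stackone$, and both the cellular boundary $\p$ of $\stackone$ and its fiberwise part $\p^{\rm vert}$ have norms bounded along the finite covers $\stackone\to\stackzero$ and $Y_1\to Y$, by the argument of Lemma~\ref{lem:Induced rebuilding to finite cover}. Since $(\p^{\rm hor}\circ\Homhomfibre)^i$ annihilates $C_n$ for $i>n$ (as noted after \eqref{E:k}), the operators $\sum_{i\ge0}(\p^{\rm hor}\circ\Homhomfibre)^i$ appearing in the formulas of Proposition~\ref{P2} reduce, in degrees $\le\indice$, to sums of at most $\indice+1$ terms. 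Hence each of $\Ghom_j,\Hhom_j,\Homhom_{j-1},\p'_j$ with $j\le\indice$ is a sum of boundedly many bounded-length compositions of operators drawn from $\{\Ghomfibre,\Hhomfibre,\Homhomfibre,(\p')^{\rm vert},\p^{\rm hor}\}$, and therefore $\log\|\Ghom_j\|,\log\|\Hhom_j\|,\log\|\Homhom_{j-1}\|,\log\|\p'_j\|\le O_{Y,D}(1)\,(1+\log T)$. This establishes the quality $([\Alemma:\Alemone],O_{Y,D}(1))$.

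The main obstacle is the estimate, not the construction. Two points demand care: first, the use of $\Alemma\triangleleft\Glemma$ to identify \emph{all} fibers of $\stackone\to\baseone$ with the single cover $Y_1$, so that a single application of Theorem~\ref{thm-UnipRewiring} rebuilds every fiber with one and the same constant; and second, controlling $\|\p^{\rm hor}\|$ uniformly along the tower of covers $\stackone\to\stackzero$ while organizing the geometric series in Proposition~\ref{P2} so that only $O(\indice)$ terms survive and the blow-up of constants is linear in $\kappa_Y$. The second point is where a careless argument would break down.
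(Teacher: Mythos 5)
Your proposal is correct and follows essentially the same route as the paper's proof: identify every fiber of $\stackone \to \baseone$ with the single cover $Y_1 = \Alemone \backslash \widetilde Y$, rebuild it once by Theorem~\ref{thm-UnipRewiring} with quality $([\Alemma:\Alemone], O_Y(1))$, and feed this into Proposition~\ref{P2}, bounding the norms via the explicit formulas and the truncation of the geometric series $\sum_i(\p^{\rm hor}\circ\Homhomfibre)^i$ after $O(\indice)$ terms. The only cosmetic slip is the stabilizer computation: $\mathrm{Stab}_{\Glemma}(g\tilde\cellup)$ is $\Alemma$ outright because the $\Glemma$-action on $\widetilde{\basezero}$ factors through the free action of $\Glemma/\Alemma$, so one need not detour through $g\Alemma g^{-1}$ and invoke normality there.
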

\begin{proof} 
Each fiber of $\stackzero \to \basezero$ has the same number of cells $n_i \in \mathbb{N}$ in each dimension $i$ (with $n_i=0$ above the dimension of $Y$). 

The CW-complex $\basezero$ has finitely many cells in each dimension $j \leq \indice$; we denote by $m_j$ this number. The total number of cells of dimension $\ell \leq \indice$ of the total complex $\stackzero$ is then finite and equal to 
$$N_\ell = \sum_{i+j=\ell } n_i m_j.$$  

Now consider the stack $\stackone \to \baseone$. The total space $\stackone$ has $[\Glemma : \Glemone ]N_\ell$ cells in each dimension $\ell \leq \indice$. More precisely, each fiber $Y_1$ of $\stackone \to \baseone$ is a classifying space for $ \Alemone = \Alemma \cap \Glemone$ which contains $[\Alemma : \Alemone] n_i$ cells in each dimension $i$ and $\baseone$ contains $[\Glemma / \Alemma : \Glemone /  \Alemone] m_j$ cells in each dimension $j \leq \indice$. Thus:
$$N_\ell [\Glemma : \Glemone ]=\sum_{i+j=\ell} [\Alemma :  \Alemone ] n_i\ [\Glemma / \Alemma : \Glemone  /  \Alemone] m_j.$$

The group $\Alemma$ being finitely generated, torsion-free and nilpotent, Theorem \ref{thm-UnipRewiring} implies that there exists an $\indice$-rebuilding $(Y_1 , Y_1 ' , \Gcellfibre , \Hcellfibre , \Homcellfibre)$ of quality $([\Alemma :  \Alemone] , O_{Y} (1))$. In particular $Y_1'$ is a classifying space for $A_1$ and letting $\stackone ' \to \baseone$ be the rebuilding of the stack $\stackone \to \baseone$ associated to the rebuilding $Y_1'$ of the fibers we conclude that $\stackone ' $ has
$$O_Y \left( \sum_{i+j=\ell} n_i \ [\Glemma / \Alemma : \Glemone  /  \Alemone] m_j \right) = O_Y \left( \frac{N_\ell [\Glemma : \Glemone ]}{[\Alemma :  \Alemone ]} \right)$$
cells in each dimension $\ell \leq \indice$.  

In fact Proposition \ref{P2} provides $(\stackone , \stackone ' , \Gcell , \Hcell , \Homcell)$ and expresses the boundary map $\partial '$ on  $C_n (\stackone ' )$, the chain maps induced by $\Gcell$ and $\Hcell$, and the chain homotopy induced by $\Homcell$ in terms of $\Ghomfibre$, $\Hhomfibre$, $\Homhomfibre$, the (vertical) boundary map on $C_\bullet (Y )$ and the horizontal boundary map on $C_\bullet (\basezero )$. The norms of the two boundary maps are bounded by $O_{Y,D} (1)$ and, since the rebuilding $(Y_1 , Y_1 ' , \Gcellfibre , \Hcellfibre , \Homcellfibre)$ is of quality $([\Alemma :  \Alemone] , O_{Y} (1))$,  the log of the norms of $\Ghomfibre$, $\Hhomfibre$ and $\Homhomfibre$ are bounded by $O_{Y,D} ( 1 + \log [\Alemma :  \Alemone] )$. The formulas \eqref{E:P2} therefore imply that $(\stackone , \stackone ' , \Gcell , \Hcell , \Homcell)$ is of quality $([\Alemma : \Alemone ] , O_{Y,D} (1) )$. Indeed, consider for example the boundary map $\partial ' $ on $C_j (\stackone ' )$ for some $j \leq \indice$. We have: 
\begin{eqnarray*}
\Vert \partial ' \Vert &\leq & \Vert(\partial ' )^{\rm vert} \Vert+ \Vert \Ghomfibre \Vert \ \ \left\Vert \left( \sum_{i=0}^{j} \left(  \partial^{\rm hor} \circ \Homhomfibre  \right)^i \right) \right\Vert\ \  \Vert \partial^{\rm hor} \Vert\ \Vert \Hhomfibre \Vert
\\
&\leq & \Vert(\partial ' )^{\rm vert} \Vert+
P_j([\Alemma :  \Alemone])
\end{eqnarray*}
where $P_j$ is a polynomial of degree $O_{Y,D} (1)$ whose coefficients do not depend on the subgroup $\Alemone$. It follows that 
$$\log \Vert \partial ' \Vert = O_{Y,D} (1 + \log [\Alemma :  \Alemone] ) .$$
The bounds for $\Ghom$, $\Hhom$ and $\Homhom$ are obtained similarly. 
\end{proof}

\section{A general quantitative rebuilding theorem}
\label{sect: proof of main th}

Let $\indice$ be a positive integer and let $\Gamma \acting \Baseup$ be a CW-complex action of a countable group $\Gamma$ that satisfies the following assumptions. 
\begin{description}
\item[(Cond 1)]   The CW-complex $\Baseup$ is $(\indice-1)$-connected.
\item[(Cond 2)]   For every cell $\cellup\subseteq \Baseup$ the stabilizer $\Gamma_\cellup$ acts trivially on $\cellup$.
\label{item: H2}
\item[(Cond 3)]  The quotient CW-complex $\Basedown:=\Gamma \backslash \Baseup$ has finite $\indice$-skeleton $\Basedown^{(\indice)}$.
\item[(Cond 4)]  The group $\Gamma$ is of type $F_{\indice+1}$. 
\item[(Cond 5)]  Each stabilizer group $\Gamma_\cellup$ is of type $F_{\indice}$.
\end{description}
Recall from Proposition \ref{prop: existence of a stack} that, as long as $\Baseup$ is assumed to be simply connected, the Borel construction associates to this action a stack of CW-complexes $\Totaldown \to \Basedown :=\Gamma \backslash \Baseup$ such that $\Totaldown$ has an $(\indice-1)$-connected universal cover and the fundamental group $\pi_1 (\Totaldown)$ isomorphic to $\Gamma$. 

\begin{theorem}[Quantitative rebuilding] \label{Tmain}
Suppose that $\Gamma$ is finitely presented and that for each cell $e$ of $\Basedown^{(\indice)}$ the group $\Gamma_e$ is finitely generated and contains a finitely generated, torsion-free, nilpotent, normal subgroup $Z_e$. 
There exists a constant ${\kappa}$ such that for every finite index normal subgroup $\Gammaprime \leq \Gamma$, there exists a CW-complex $\Totaldown_1^+$ with finite $(\indice+1)$-skeleton such that the following hold.
\begin{enumerate}
\item The CW-complex $\Totaldown_1^+$ has an $\indice$-connected universal cover.
\item The fundamental group $\pi_1 (\Totaldown _1 ^+)$ is isomorphic to $\Gammaprime$. 
\item In each dimension $\leq \indice$, the total number of cells of $\Totaldown_1^+$ is 
\begin{equation}
\leq {\kappa} \sum_{e \in \Basedown^{(\indice)}} \frac{[\Gamma : \Gammaprime]}{[Z_e : \Gammaprime \cap Z_e ]}.\end{equation}
\item In each degree $\leq \indice +1$, the norm of the boundary operator on the chain complex $C_\bullet (\Totaldown_1 ^+ )$ is 
\begin{equation}\leq {\kappa} [\Gamma : \Gammaprime]^{\kappa}.\end{equation}
\end{enumerate}
\end{theorem}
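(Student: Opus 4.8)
The plan is to pass to the finite cover, rebuild the fibers of the Borel stack cell by cell using the unipotent rebuilding of Section~\ref{S:unipext}, glue the result together with the Effective Rebuilding Lemma, and finally cap off by $(\indice+1)$-cells, exploiting that $\Gamma$ — hence $\Gammaprime$ — is of type $F_{\indice+1}$. Fix a finite index normal subgroup $\Gammaprime\trianglelefteq\Gamma$ and set $\Basedown_1:=\Gammaprime\backslash\Baseup$, a degree $[\Gamma:\Gammaprime]$ cover of $\Basedown$; since $\Basedown^{(\indice)}$ is finite, so is $\Basedown_1^{(\indice)}$. Pulling back the Borel stack $\Totaldown\to\Basedown$ gives a stack $\Totaldown_1\to\Basedown_1$ with $\pi_1(\Totaldown_1)\cong\Gammaprime$, finite $\indice$-skeleton, and $(\indice-1)$-connected universal cover, by (Cond~1)--(Cond~5) and Proposition~\ref{prop: existence of a stack}. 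For a cell $e$ of $\Basedown^{(\indice)}$ there are $[\Gamma:\Gammaprime\Gamma_e]$ cells of $\Basedown_1$ above $e$, and over each the fiber of $\Totaldown_1$ is a classifying space for a conjugate of $\Gamma_e\cap\Gammaprime$; since $\Gammaprime$ is normal this subgroup has index exactly $[\Gamma_e:\Gamma_e\cap\Gammaprime]$ in the corresponding conjugate of $\Gamma_e$ and meets the corresponding conjugate of $Z_e$ in a subgroup of index exactly $[Z_e:Z_e\cap\Gammaprime]$. Note also that $\Gamma_e/Z_e$ is of type $F_\indice$: indeed $\Gamma_e$ is of type $F_\indice$ by (Cond~5) and $Z_e$, being polycyclic, is of type $F_\infty$, so the quotient is of type $F_\indice$.

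For each cell $e_1$ of $\Basedown_1^{(\indice)}$ lying over a cell $e$, apply Proposition~\ref{L3} with $\Glemma$ the relevant conjugate of $\Gamma_e$, $\Alemma$ the corresponding conjugate of $Z_e$, and $\Glemone$ its intersection with $\Gammaprime$; this produces an $\indice$-rebuilding of the fiber $F_{e_1}$ of quality $([Z_e:Z_e\cap\Gammaprime],\,O(1))$, the implied constant depending only on $e$ (finitely many choices). Feeding these fiber rebuildings into Proposition~\ref{P2} yields a rebuilt stack $\Totaldown_1'\to\Basedown_1$ together with cellular maps $\Gcell\colon\Totaldown_1\to\Totaldown_1'$, $\Hcell\colon\Totaldown_1'\to\Totaldown_1$, a homotopy $\Homcell$ realizing $\Hcell\circ\Gcell\simeq\mathrm{id}$ through dimension $\indice-1$, and the boundary formula \eqref{E:P2} for $\partial'$ on $C_\bullet(\Totaldown_1')$ in terms of $\Ghomfibre,\Hhomfibre,\Homhomfibre$ and the horizontal boundary $\partial^{\rm hor}$. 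Counting $\ell$-cells for $\ell\leq\indice$: each of the $[\Gamma:\Gammaprime\Gamma_e]$ fibers above an $\ell_0$-cell $e$ contributes at most $O(1)\,[Z_e:Z_e\cap\Gammaprime]^{-1}[\Gamma_e:\Gamma_e\cap\Gammaprime]\,|F_e^{(\ell-\ell_0)}|$ cells in dimension $\ell$, and $[\Gamma:\Gammaprime\Gamma_e]\,[\Gamma_e:\Gamma_e\cap\Gammaprime]=[\Gamma:\Gammaprime]$, so summing over the finitely many cells of $\Basedown^{(\indice)}$ gives the bound $\leq\kappa\sum_{e\in\Basedown^{(\indice)}}[\Gamma:\Gammaprime][Z_e:\Gammaprime\cap Z_e]^{-1}$ of item~(3).

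For the boundary norms in degrees $\leq\indice$, the quality bounds of the fiber rebuildings give $\log\|\Ghomfibre\|,\log\|\Hhomfibre\|,\log\|\Homhomfibre\|,\log\|(\partial')^{\rm vert}\|\leq O(1+\log[Z_e:\Gammaprime\cap Z_e])\leq O(1+\log[\Gamma:\Gammaprime])$, while $\|\partial^{\rm hor}\|=O(1)$ because $\partial^{\rm hor}$ is governed by the attaching maps of cells of $\Basedown_1$, whose degrees stay bounded along the cover. Since the series $\sum_i(\partial^{\rm hor}\circ\Homhomfibre)^i$ in \eqref{E:P2} truncates after $\indice+1$ terms on $C_\indice$, formula \eqref{E:P2} gives $\log\|\partial'_j\|=O(1+\log[\Gamma:\Gammaprime])$ for $j\leq\indice$, hence $\|\partial'_j\|\leq\kappa[\Gamma:\Gammaprime]^\kappa$; the same finite-series estimate bounds $\|\Ghom_j\|$ and $\|\Hhom_j\|$ by $\kappa[\Gamma:\Gammaprime]^\kappa$ as well (this repeats the arguments already carried out in Propositions~\ref{L3} and \ref{thm-UnipRewiring}).

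It remains to pass from $X:=(\Totaldown_1')^{(\indice)}$ to a complex $\Totaldown_1^+$ with $\indice$-connected universal cover by attaching $(\indice+1)$-cells. Since $\Gamma$ is of type $F_{\indice+1}$ and $C_\bullet(\widetilde\Totaldown)$ is a partial free $\Z[\Gamma]$-resolution of $\Z$ exact through degree $\indice-1$, the $\indice$-th syzygy $S:=\ker\bigl(\partial_\indice\colon C_\indice(\widetilde\Totaldown)\to C_{\indice-1}(\widetilde\Totaldown)\bigr)$ is finitely generated over $\Z[\Gamma]$, say by $s_1,\dots,s_r$; each $s_l$ is a cycle in the $(\indice-1)$-connected complex $(\widetilde\Totaldown)^{(\indice)}$, hence realized by a cellular map $\mathbb S^\indice\to(\widetilde\Totaldown)^{(\indice)}$. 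Taking all $\Gamma/\Gammaprime$-translates of the $s_l$ and pushing them through the universal-cover lift of $\Gcell$, attach $r[\Gamma:\Gammaprime]$ cells of dimension $\indice+1$ to $X$ to form $\Totaldown_1^+$; its top boundary sends each new cell to $\Ghom_\indice(\gamma s_l)$, of norm $\leq\|\Ghom_\indice\|\,\|s_l\|\leq\kappa[\Gamma:\Gammaprime]^\kappa$, and its lower boundaries are those of $X$, so items~(2)--(4) hold. Because the translates $\gamma s_l$ generate $S$ over $\Z[\Gammaprime]$, the universal cover of $\Totaldown_1^+$ is $\indice$-connected \emph{provided} $\Ghom_\indice$ carries $S$ onto the $\indice$-cycles $\ker\partial'_\indice$ of $\widetilde X$. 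This surjectivity — reconciling the fact that an $\indice$-rebuilding only controls homotopy through dimension $\indice-1$ with the genuinely $\indice$-dimensional requirement that finitely many chosen $(\indice+1)$-cells kill $\pi_\indice$ of the universal cover — is the step I expect to be the main obstacle; the secondary delicate point, already faced in Propositions~\ref{L3} and \ref{thm-UnipRewiring}, is keeping the geometric series in \eqref{E:P2} under polynomial control uniformly in the cover.
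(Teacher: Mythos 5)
The body of your argument — pass to the cover, rebuild each fiber via Proposition~\ref{L3}, glue with Proposition~\ref{P2}, and match the cell count $[\Gamma:\Gammaprime\Gamma_e]\,[\Gamma_e:\Gamma_e\cap\Gammaprime]=[\Gamma:\Gammaprime]$ against the target bound — is exactly what the paper does, and your norm estimates in degrees $\leq\indice$ are also correct. The difficulty is entirely in the final step, attaching $(\indice+1)$-cells to enforce $\indice$-connectedness of the universal cover, and you have correctly located, but not closed, the gap. Your construction attaches cells along $\Ghom_\indice(\gamma s_l)$ and then needs $\Ghom_\indice\colon\ker\partial_\indice\to\ker\partial'_\indice$ to be surjective as a map of $\Z[\Gammaprime]$-modules. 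That surjectivity is not a formal consequence of the rebuilding: starting from a cycle $z'\in\ker\partial'_\indice$, the chain homotopy relations give only $z'=\Ghom\Hhom(z')-\partial'\Homhom'(z')$, where $\Homhom'$ is the homotopy for $\Gcell\circ\Hcell\simeq\mathrm{id}$ and lands in the $(\indice+1)$-chains of the \emph{full} rebuilt stack $\Totaldown_1'$ — cells you have discarded when truncating at $X=(\Totaldown_1')^{(\indice)}$. Thus $\partial'\Homhom'(z')$ is a cycle that need not lie in the $\Z[\Gammaprime]$-span of the $\Ghom_\indice(\gamma s_l)$, and so your $\Totaldown_1^+$ may have nontrivial $\pi_\indice$.

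The paper sidesteps this entirely by a change of order of operations. It first uses \cite[Theorem~8.2.1]{Geoghegan} (precisely the fact that $\Gamma$ is of type $F_{\indice+1}$) to attach finitely many $\Gamma$-orbits of $(\indice+1)$-cells to $\widetilde{\Totaldown}$, obtaining an $\indice$-connected $\Gamma$-CW complex and hence a compact $\Totaldown^+=\Totaldown\sqcup_f\Basedown$ whose universal cover is $\indice$-connected. The cover $\Gammaprime\bs\widetilde{\Totaldown^+}=\Totaldown_1\sqcup_{f_1}\Basedown_1$ shares the same $\indice$-connected universal cover. Then the Explicit Gluing Lemma (Proposition~\ref{lem-QGluing}) is applied to the diagram with $\varphi=\Gcell\circ f_1$ to produce $\Totaldown_1^+:=\Totaldown_1'\sqcup_\varphi\Basedown_1$ together with a \emph{homotopy equivalence} to $\Totaldown_1\sqcup_{f_1}\Basedown_1$; since homotopy equivalence of CW-complexes induces homotopy equivalence of universal covers, $\widetilde{\Totaldown_1^+}$ is automatically $\indice$-connected. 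No surjectivity of $\Ghom_\indice$ on cycles is required. Everything else — the cell count for $\Basedown_1$ (which is $O([\Gamma:\Gammaprime])$, but lives only in dimension $\indice+1$, so does not affect item~(3)), and the boundary norm estimate $\|\varphi\|\leq\|f_1\|\,\|\Ghom\|$ — then goes through as you wrote. So the gap you flagged is exactly the point where your route diverges from the paper's, and the remedy is to build $\Totaldown^+$ upstairs before rebuilding, not to try to reconstruct the attaching data after rebuilding.
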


To prove the theorem we will consider the stack of CW-complexes $\Totaldown \to \Basedown :=\Gamma \backslash \Baseup$. However for the fundamental group of the latter to be isomorphic to $\Gamma$, we need $\Omega$ to be simply connected. This follows from the hypothesis for $\indice \geq 2$ but in certain interesting cases, e.g., for $\Gamma=\SL_3(\mathbb Z)$, the natural candidate for $\Omega$ is the Tits complex which fails to be simply connected. We need a way to ``fix" these spaces before we start using them. This is the content of the following:

\begin{lemma}\label{lem-SimplyConn} Let $\Gamma$ be a finitely generated group. Consider a $1$-dimensional co-compact $\Gamma$-CW-complex $\Baseup$ whose cells are fixed pointwise by their stabilizers.
Assume that all vertex stabilizers are finitely generated. Then there exists a simply connected $\Gamma$-CW-complex $\Baseup^+$ 
whose $1$-skeleton is $\Baseup$. Moreover, if $\Gamma$ is finitely presented then $\Baseup^+$ is $\Gamma$-cocompact.
\end{lemma}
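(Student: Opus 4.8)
\emph{Plan.} I would build $\Baseup^+$ from $\Baseup$ by attaching $2$-cells only, $\Gamma$-equivariantly, so that the $1$-skeleton stays equal to $\Baseup$. We may and do assume $\Baseup$ connected (otherwise there is no simply connected complex with $1$-skeleton $\Baseup$). The basic move is: given an edge-loop $r$ based at a vertex $v_0$, attach the $\Gamma$-orbit of a $2$-cell along $r$, i.e.\ for every $\gamma\in\Gamma$ a $2$-cell glued along $\gamma r$ at $\gamma v_0$. One takes the $2$-cell carrying $r$ to be a disc whose boundary circle carries the cell structure of the closed edge-path $r$ — one boundary arc per edge-traversal of $r$, each mapped homeomorphically onto the corresponding edge of $\Baseup$, so that the attaching map is cellular — and lets $\mathrm{Stab}_\Gamma(r)$ (which acts on $r$ through a finite cyclic group of rotations) act on the disc by the corresponding rotations, permuting the boundary arcs and carrying the open $2$-cell to itself. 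This is an honest $\Gamma$-CW-complex, it introduces no new $1$-cells, and by van Kampen its effect on $\pi_1(\Baseup,v_0)$ is to pass to the quotient by the normal closure of the loops obtained by transporting each $\gamma r$ back to $v_0$ along an edge-path. Performing this once for every $\Gamma$-orbit of edge-loops of $\Baseup$ kills all of $\pi_1(\Baseup)$ and yields a simply connected $\Gamma$-CW-complex $\Baseup^+$ with $1$-skeleton $\Baseup$, proving the first assertion.

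For the cocompactness statement, when $\Gamma$ is finitely presented it suffices to show that $\pi_1(\Baseup)$ is normally generated, as a set of loops, by finitely many $\Gamma$-orbits. Pick a CW model for $E\Gamma$ with free cellular $\Gamma$-action and form the Borel construction $\Baseup\times_\Gamma E\Gamma$. On the one hand it is the total space of a fibre bundle over $B\Gamma$ with fibre $\Baseup$ and structure group $\Gamma\acting\Baseup$, so since $\Baseup$ is connected and $B\Gamma$ aspherical its fundamental group $\Pi$ sits in a short exact sequence $1\to\pi_1(\Baseup)\to\Pi\to\Gamma\to1$ in which $\pi_1(\Baseup)$ is the image of the fibre group and conjugation by a lift of $\gamma\in\Gamma$ acts on it by the monodromy, sending an edge-loop $r$ at $v_0$ to $c_\gamma(\gamma\cdot r)c_\gamma^{-1}$ for any edge-path $c_\gamma$ from $v_0$ to $\gamma v_0$. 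On the other hand, grouping the $\Gamma$-orbits of cells of $\Baseup$ realises $\Baseup\times_\Gamma E\Gamma$ as a graph of aspherical spaces over the \emph{finite} graph $\Gamma\backslash\Baseup$, whose vertex and edge spaces have fundamental groups the cell stabilisers $\Gamma_\cellup$; hence $\Pi$ is the fundamental group of a graph of groups over a finite graph with \emph{finitely generated} vertex groups, and is in particular finitely generated. Now invoke the elementary fact that a finitely generated group $\Pi$ with a finitely presented quotient $\Pi/N$ has $N$ equal to the normal closure in $\Pi$ of a finite set $\{s_1,\dots,s_m\}$ — lift a finite presentation of $\Pi/N$ over the given finite generating set of $\Pi$; the lifted relators normally generate $N$. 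Apply this with $N=\pi_1(\Baseup)$, $\Pi/N=\Gamma$ and $s_1,\dots,s_m\in\pi_1(\Baseup)$, and represent each $s_j$ by an edge-loop $r_j$ at $v_0$. Since $\pi_1(\Baseup)$ is the normal closure of $\{s_1,\dots,s_m\}$ in $\Pi$, and the $\Pi$-conjugates of the $s_j$ are exactly the $\pi_1(\Baseup)$-conjugates of the loops $c_\gamma(\gamma\cdot r_j)c_\gamma^{-1}$ ($\gamma\in\Gamma$, $1\le j\le m$), attaching, for each $j$, the single $\Gamma$-orbit of $2$-cells along $r_j$ kills $\pi_1(\Baseup)$. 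The resulting $\Baseup^+$ then has $1$-skeleton $\Baseup$, trivial fundamental group, and finitely many $\Gamma$-orbits of cells, so it is $\Gamma$-cocompact.

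\emph{Main obstacle.} The crux is the middle step, and the point that makes it work under the hypothesis ``vertex stabilisers finitely generated'' (rather than finitely presented) is that the auxiliary group $\Pi$ need only be known to be \emph{finitely generated}: this follows from the finiteness of $\Gamma\backslash\Baseup$ together with the finite generation of the $\Gamma_\cellup$ alone, with no finiteness hypothesis on edge stabilisers, after which the naive ``lift the relators of the finitely presented quotient $\Gamma$'' argument supplies the finitely many loops. The remaining verifications are routine: identifying the Borel construction with the asserted bundle and with the asserted graph of aspherical spaces so that van Kampen applies, the dictionary between conjugation in $\Pi$ and translation of fibre loops, and checking that the polygonal $2$-cells with their rotational $\Gamma$-action assemble into a $\Gamma$-CW-complex without altering the $1$-skeleton.
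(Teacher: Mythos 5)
Your proof is correct and follows essentially the same route as the paper's. The paper obtains the short exact sequence $1 \to \pi_1(\Baseup) \to G \to \Gamma \to 1$ directly from the Dicks--Dunwoody theorem, identifies $G$ as the fundamental group of the finite quotient graph of groups (hence finitely generated, using only finite generation of vertex stabilizers), applies the same elementary fact about a finitely generated group with a finitely presented quotient, and then uses the Bass--Serre tree of $G$ to turn the finitely many normal generators into $\Gamma$-orbits of loops in $\Baseup$; you obtain the same extension $\Pi \cong G$ from the fibration $\Baseup \to \Baseup \times_\Gamma E\Gamma \to B\Gamma$ and use the monodromy identification in place of the explicit tree paths, but these are two descriptions of the same object, and the algebraic reduction and the equivariant $2$-cell attachment are identical.
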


\begin{proof}

Up to refining the structure of the CW-complex on $\Baseup$, we may assume that it is a graph.
The point is to kill the fundamental group of $\Baseup$ by gluing some $2$-cells along $\Gamma$-orbits of loops in $\Baseup$.

By \cite[Theorem 9.2, p. 39]{Dicks-Dunwoody-book} the group $\Gamma$ sits in a short exact sequence
\begin{equation}\label{eq: short exact sequ Dicks-Dunwoody}
1\to \pi_1(\Baseup)\to G \overset{\zeta }{\to} \Gamma\to 1
\end{equation}
where $G$ is the fundamental group of the natural quotient graph of groups associated with the action $\Gamma\acting \Baseup$.
The assumption that the quotient space is compact and the vertex stabilizers are finitely generated imply that $G$ is finitely generated.
Thus $G$ is the quotient of a free group $\FF(\tilde{S})$ (where $\tilde{S}$ is finite).
By \eqref{eq: short exact sequ Dicks-Dunwoody}, $\Gamma$ is also such a quotient. 
Thus we have surjective morphisms:
\[\FF(\tilde{S})\overset{\eta }{\to} G\overset{\zeta }{\to} \Gamma.\]
The group $\Gamma$ is the quotient of $\FF(\tilde{S})$ by a normal subgroup whose image by $\eta$ is exactly the kernel of $\zeta $, i.e., it is $\pi_1(\Baseup)$. Pick $(\rho_j)$ a collection of elements of $G$ such that the kernel of $\zeta$ is the normal subgroup generated by the $\rho_j$'s. If $\Gamma$ is finitely presented, it admits a finite presentation 
\[\Gamma=\langle \tilde{S} | r_1, r_2, \cdots, r_p\rangle.\]
It follows that the collection $(\rho_j)$ can be chosen to be the finite collection $\eta (r_1), \eta (r_2), \ldots, \eta (r_p)$.

Consider now the action of $G$ on its Bass-Serre tree $T$, a base vertex $v$ in $T$ and the paths $c_j$ in $T$ from $v$ to $\rho_j (v)$.
They project to loops $\sigma_j$ in $\Baseup=\pi_1(\Baseup)\bs T$. Since these loops normally generate $\pi_1(\Baseup)$, gluing a $2$-cell along the $\sigma_j$'s and extending $\Gamma$-equivariantly produces the required simply connected complex $\Baseup^+$. If $\Gamma$ is finitely presented, this complex is $\Gamma$-cocompact.
\end{proof}

\begin{proof} [Proof of the Quantitative Rebuilding Theorem~ \ref{Tmain}]
We need to work with an $\Baseup$ that is simply connected. If $\indice \geq 2$ this follows from the assumptions. For $\indice=1$ we can use Lemma \ref{lem-SimplyConn} to replace $\Baseup$ by (its $1$-skeleton and then by) a simply connected $2$-dimensional complex $\Baseup^+$ satisfying all the assumptions of the theorem. In any case, the Borel construction followed by the rebuilding Lemma~\ref{prop: rebuilding} 
then associates to the action of $\Gamma$ on $\Omega$ a stack of CW-complexes $\Totaldown \to \Basedown :=\Gamma \backslash \Baseup$ such that the fundamental group of $\Totaldown$ is isomorphic to $\Gamma$ and such that the fiber over each cell $e=\Gamma \cellup$ is a classifying space $F_e$ of $\Gamma_{\cellup}$ with finite $\indice$-skeleton.

Now consider the stack of CW-complexes
\begin{equation} \label{E:stackprime}
\Totaldown_1  \to \Basedownprime
\end{equation}
associated to the finite index subgroup $\Gammaprime \leq \Gamma$, so that 
$$\Totaldown_1 = \Gammaprime \backslash \widetilde{\Totaldown} \quad \mbox{and} \quad \Basedownprime = \Gammaprime \backslash \Baseup.$$
Each cell $e \in \Basedown$ is covered by $\# ( \Gamma_e \backslash \Gamma / \Gammaprime)$ cells in $\Gammaprime \backslash \Baseup$ and, since $\Gammaprime$ is a normal subgroup of $\Gamma$, the fibers of \eqref{E:stackprime} over each of these cells are all isomorphic to the finite cover $D_e$ of $F_e$ associated to $\Gammaprime \cap \Gamma_e \leq \Gamma_e$. 

By hypothesis, as long as $\dim e \leq \indice$, the group $\Gamma_e$ contains a finitely generated, torsion-free, nilpotent, normal subgroup $Z_e$. We may therefore apply Proposition \ref{L3} with $G = \Gamma_e$ and $G_1 = \Gamma_e \cap \Gammaprime$. It follows that there exists a rebuilding $(D_e , D_e ' , \Gcellfibre_e , \Hcellfibre_e , \Homcellfibre_e )$ of quality $( [Z_e : \Gammaprime \cap Z_e] , O_{\Gamma , \Baseup} (1))$ of each fiber $D_e$ of \eqref{E:stackprime}. 

Applying the rebuilding lemma (Proposition \ref{P2}) to \eqref{E:stackprime} we get a stack of CW-complexes 
$$\Pi ' : \Totaldown_1 ' \to \Basedownprime,$$ 
cellular homotopy equivalences $\Gcell_1 : \Totaldown_1 \to \Totaldown_1 '$, $\Hcell_1 :  \Totaldown_1' \to \Totaldown_1 $ and a homotopy $\Homcell_1$ between the identity and $\Hcell_1 \circ \Gcell_1$. Note that over each cell of $\Basedownprime$ covering a cell $e$ of $\Basedown$, the fiber of $\Pi'$ is $D_e '$.

The total space $\Totaldown_1'$ is homotopy equivalent to $\Totaldown_1$ 
and therefore has an $(\indice-1)$-connected universal cover and a fundamental group isomorphic to $\Gammaprime$. Moreover, in each dimension $n \leq \indice$, the total number of cells of $\Totaldown_1 '$ is
$$O \left( \sum_{e \in \Basedown^{(n)}} \# ( \Gamma_e \backslash \Gamma / \Gammaprime) \frac{[\Gamma_e : \Gammaprime \cap \Gamma_e]}{[Z_e : \Gammaprime \cap Z_e]} \right) = O \left(  \sum_{e \in \Basedown^{(\indice)}} \frac{[\Gamma : \Gammaprime]}{[Z_e : \Gammaprime \cap Z_e ]} \right).$$

To control the norm of the boundary operator on the chain complex $C_\bullet (\Totaldown_1 ')$ we apply Proposition \ref{P2}  as in the proof of Proposition \ref{L3}. Equation \eqref{E:P2} implies that the boundary operator $\partial' $ of the (rebuilt) chain complex $C_\bullet (\Totaldown_1 ')$ can be written in terms of the (vertical) boundary operator in the fibers, the (vertical) maps ($\Ghomfibre$, $\Hhomfibre$ and $\Homhomfibre$) and the boundary operators $\partial$ and $\partial^{\rm vert}$ acting on the chain complex $C_\bullet ( \Totaldown_1 ) = C_\bullet ( \Gammaprime \backslash \widetilde{\Totaldown} )$ (before rebuilding).

The norm of the boundary operators $\partial$ and $\partial^{\rm vert}$ are bounded by constants that depend only on the local combinatorial structure of $\Totaldown$, see Lemma \ref{lem:Induced rebuilding to finite cover}.

Now over each cell $e$ of $\Basedown$ of dimension $\leq \indice$, the rebuilding $(D_e , D_e ' , \Gcellfibre_e , \Hcellfibre_e , \Homcellfibre_e )$ is of quality $( [Z_e : \Gammaprime \cap Z_e] , O_{\Gamma , \Baseup} (1))$ and it follows that the norm of the vertical boundary operator $(\partial ')^{\rm vert}$ is  
$O([\Gamma_e : \Gamma_e \cap \Gammaprime ]^{O(1)} )$ and therefore $O([\Gamma : \Gammaprime ]^{O(1)} )$. For the same reason, the norms of the vertical maps $\Ghomfibre_e$, $\Hhomfibre_e$ and $\Homhomfibre_e$ are bounded by $O([\Gamma_e : \Gamma_e \cap \Gammaprime ]^{O(1)} )$, and formula 
\eqref{E:P2} finally implies that the norm of the boundary operator on the chain complex $C_{\leq \indice} (\Totaldown_1 ')$ is a $O([\Gamma : \Gammaprime ]^{O(1)} )$.

The universal cover of $\Totaldown_1'$ is `only' $(\indice -1)$-connected. However, since $\Gamma$ is of type $F_{\indice+1}$, it follows from \cite[Theorem 8.2.1]{Geoghegan} that it is possible to attach \emph{finitely many} $\Gamma$-orbits of $(\indice+1)$-cells to $\widetilde{\Totaldown}$ to make an $\indice$-connected $\Gamma$-CW complex. 
Write the quotient as $\Totaldown^+ = \Totaldown \sqcup_f \Basedown$,
with $\Basedown=\sqcup_{I} \mathbb B^{\indice+1}$ a finite collection of $(\indice+1)$-cells and $f\colon \partial \Basedown=\sqcup_{I} \mathbb S^{\indice}\to \Totaldown^{(\indice)}$ the map that attaches these $(\indice+1)$-cells to $\Totaldown$. Then write 
$$\Gamma_1 \bs \widetilde{\Totaldown^+} = \Totaldown_1 \sqcup_{f_1} \Basedown_1$$ 
where $\Basedown_1 = \sqcup_{I_1} \mathbb B^{\indice+1}$ is the preimage of $\Basedown$ in $\Totaldown_1^+$ and $f_1 \colon \partial \Basedown_1 =\sqcup_{I_1} \mathbb S^{\indice}\to \Totaldown_1^{(\indice)}$ is the lift of $f$. We have a diagram 
\begin{equation*}
\begin{tikzcd}[sep=large]
\Basedown_1 \arrow[d,"\mathrm{id}",swap, shift right] &\arrow[l,hook] \partial \Basedown_1 \arrow[d,swap,"\mathrm{id}",shift right] \arrow[r,"f_1"] & \Totaldown_1 \arrow[d,swap,"g", shift right]\\
\Basedown_1 \arrow[u,"\mathrm{id}",swap, shift right]&\arrow[l,hook] \partial \Basedown_1 \arrow[r,"\varphi"] \arrow[u,"\mathrm{id}",swap, shift right]& \Totaldown_1' \arrow[u,"h",swap, shift right],
\end{tikzcd}
\end{equation*} 
with $\varphi =g\circ f_1.$ It then follows from Proposition \ref{lem-QGluing} that the space 
$$\Totaldown_1^+ = \Totaldown_1 ' \sqcup_{\varphi} \Basedown_1$$ 
is homotopy equivalent to $\Totaldown_1 \sqcup_{f_1} \Basedown_1  .$ 
The map $f_1$, being a lift of $f$, is of norm $\|f_1 \|\leq \|f\|$. Finally the norm of the boundary map on $\Totaldown_1^+$ in degree $\indice+1$ is bounded by $\|\varphi\|\leq \|f_1\|\|g\|.$  
The latter being of norm $O([\Gamma : \Gammaprime ]^{O(1)} )$ we conclude that the norm of the degree $\indice+1$ boundary operator on the chain complex $C_\bullet (\Totaldown_1^+ )$ is $O([\Gamma : \Gammaprime ]^{O(1)} )$. 
\end{proof}

\section{Bounding torsion - A proof of Gabber's proposition~\ref{Prop: Gabber}}
\label{sect: proof of Gabber's prop}

In this section we prove the following useful proposition attributed to Gabber (see \cite[Prop. 3, p. 214]{Soule-99}). Our proof here follows the viewpoint of \cite[Section 2]{BV}.

Let $(C_{\bullet} , \p)$ be the cellular chain complex associated to a finite CW-complex $\Totaldown$. Each $C_{j}$ is a free $\Z$-module of finite rank with a canonical basis associated to the $j$-cells of $\Totaldown$. We equip each finite dimensional vector space $C_j \otimes \R$ with the associated Euclidean norm. For any coefficient field $K$ and for any integer $j \geq 0$, it follows from the definition of the homology groups that
\begin{equation} \label{E:bettiK}
\dim_K H_j (C_\bullet \otimes K) \leq \mathrm{rank} \ C_j .
\end{equation}
To bound the torsion homology we will use the following analogous observation. 

\begin{proposition}[Gabber]
 \label{Prop: Gabber}
For any $j \geq 0$, 
\begin{equation}
\log |H_j (C_\bullet )_{\rm tors} | \leq (\mathrm{rank} \ C_j ) \times \sup ( \log ||\p_{j+1}|| , 0).
\label{eq: Gabber ineq}\end{equation}
Here $|| \p_{j+1} ||$ denotes the operator norm, associated with the euclidian norm on the $C_\bullet$. 
\end{proposition}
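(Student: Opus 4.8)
The plan is to compare the integral chain complex with its real version and to use the factorization of the boundary map through its image, keeping track of covolumes of lattices. Write $\partial = \partial_{j+1} \colon C_{j+1} \to C_j$ and let $B_j = \operatorname{im} \partial_{j+1} \subseteq Z_j = \ker \partial_j \subseteq C_j$. Since $C_j$ is a free $\Z$-module with its canonical basis, each of $B_j$ and $Z_j$ is a finitely generated free abelian group, and they sit inside the Euclidean space $C_j \otimes \R$ as lattices in the subspaces they span; moreover $B_j$ has finite index in $Z_j \cap (B_j \otimes \R)$, so $H_j(C_\bullet)_{\rm tors}$ is precisely the finite group $\bigl(Z_j \cap (B_j\otimes\R)\bigr)/B_j$. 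Hence $|H_j(C_\bullet)_{\rm tors}|$ equals the ratio of covolumes $\operatorname{covol}(B_j)/\operatorname{covol}\bigl(Z_j \cap (B_j\otimes\R)\bigr)$, where covolumes are taken in the Euclidean subspace $B_j\otimes\R$ with respect to the induced inner product.

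First I would bound $\operatorname{covol}(B_j)$ from above. Pick a $\Z$-basis $e_1,\dots,e_r$ of a complement lattice $L \subseteq C_{j+1}$ mapping isomorphically onto $B_j$ (i.e. a set of representatives for a basis of $C_{j+1}/\ker\partial_{j+1}$); then $B_j$ is spanned by $\partial e_1,\dots,\partial e_r$, so by Hadamard's inequality
\[
\operatorname{covol}(B_j) \le \prod_{i=1}^r \|\partial e_i\| \le \|\partial_{j+1}\|^{\,r},
\]
provided the $e_i$ can be taken to have norm $\le 1$ — which they can, since they are among the standard basis vectors of $C_{j+1}$ (choose $L$ to be spanned by a subset of the canonical basis, possible because $C_{j+1}/\ker\partial_{j+1}$ is free and the canonical basis of $C_{j+1}$ generates it). Note $r = \operatorname{rank} B_j \le \operatorname{rank} C_j$. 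Second, I would bound $\operatorname{covol}\bigl(Z_j \cap (B_j\otimes\R)\bigr)$ from \emph{below} by $1$: this lattice is a \emph{primitive} (saturated) sublattice of the unimodular lattice $C_j$ — it equals $C_j \cap (B_j\otimes\R)$ — and a primitive sublattice of a unimodular lattice has covolume $\ge 1$ (one can see this by extending a basis to a basis of the ambient saturated lattice and using that the Gram determinant of a sublattice of a lattice of covolume $1$ is at least $1$; alternatively, the dual lattice argument gives covolume exactly... no, at least $1$). Actually the cleanest route is: $C_j$ is unimodular, so for any saturated sublattice $M$, $\operatorname{covol}(M)\cdot \operatorname{covol}(M^{\perp}\cap C_j)$ is an integer (the index of $M \oplus (M^\perp\cap C_j)$ in $C_j$ times... ) — I would instead simply invoke that the covolume of any sublattice of $\Z^n$ that is the intersection of $\Z^n$ with a rational subspace is $\ge 1$, which follows from the fact that its Gram matrix has entries in $\Z$ after an orthonormal change... the safest elementary statement is: such a lattice has a basis of vectors each of whose Gram determinant of any subset is a positive integer, hence $\operatorname{covol} \ge 1$.

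Combining the two bounds,
\[
|H_j(C_\bullet)_{\rm tors}| \;=\; \frac{\operatorname{covol}(B_j)}{\operatorname{covol}\bigl(C_j\cap(B_j\otimes\R)\bigr)} \;\le\; \|\partial_{j+1}\|^{\,r} \;\le\; \max\bigl(\|\partial_{j+1}\|,1\bigr)^{\operatorname{rank} C_j},
\]
and taking logarithms gives exactly \eqref{eq: Gabber ineq}, since $\log\max(\|\partial_{j+1}\|,1) = \sup(\log\|\partial_{j+1}\|,0)$. The main obstacle — and the only genuinely non-formal point — is the lower bound $\operatorname{covol}\bigl(C_j\cap(B_j\otimes\R)\bigr)\ge 1$; I would isolate this as a small lemma about saturated sublattices of $\Z^n$, proving it either via the dual lattice (the dual of a saturated sublattice of $\Z^n$ is a quotient of $\Z^n$, hence has covolume $\le 1$, so the lattice itself has covolume $\ge 1$) or via completing to an orthogonal-up-to-finite-index decomposition. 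Everything else is Hadamard's inequality and the standard identification of torsion homology with a quotient of lattices, following \cite[Section 2]{BV}.
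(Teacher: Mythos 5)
Your proposal is correct in spirit and lands on essentially the same comparison of covolumes that the paper uses (following \cite[Section~2]{BV}), but the technical route is slightly different and slightly more elementary: the paper routes everything through the ``metric rank formula'' $\det'(f)=\vol(\mathrm{image}\,f)\,\vol(\ker f)$ (singular values), applied once to $\partial_{j+1}$ and once to the orthogonal projection onto $(\mathrm{image}\,\partial_{j+1})^\perp$, producing $|Q_{\tors}|=\det'(\partial_{j+1})\frac{\vol(Q_{\rm free})}{\vol(\ker\partial_{j+1})}\le\det'(\partial_{j+1})$; you instead write $|H_j(C_\bullet)_{\tors}|$ directly as $\mathrm{covol}(B_j)/\mathrm{covol}(C_j\cap(B_j\otimes\R))$ and bound numerator and denominator separately, with Hadamard's inequality replacing the singular-value computation. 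Both yield the same exponent $r=\mathrm{rank}\,B_j\le\mathrm{rank}\,C_j$.

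Two local repairs are needed. First, the sentence ``choose $L$ to be spanned by a subset of the canonical basis, possible because $C_{j+1}/\ker\partial_{j+1}$ is free and the canonical basis of $C_{j+1}$ generates it'' is not correct as stated: a subset of a generating set of a free abelian group need not be a basis (e.g.\ $\{2,3\}$ generates $\Z$ but no singleton does), so you cannot in general find a complement lattice $L$ spanned by standard basis vectors. But you do not need a complement: you only need $r$ standard basis vectors $e_{i_1},\dots,e_{i_r}$ of $C_{j+1}$ whose images $\partial e_{i_1},\dots,\partial e_{i_r}$ are $\R$-linearly independent, which exist because the $\partial e_i$ span $B_j\otimes\R$. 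These generate a full-rank sublattice $B'\subseteq B_j$, so $\mathrm{covol}(B_j)\le\mathrm{covol}(B')\le\prod_k\|\partial e_{i_k}\|\le\sup(\|\partial_{j+1}\|,1)^r$, which is all you need. Second, your lower bound on $\mathrm{covol}(C_j\cap(B_j\otimes\R))$ is over-engineered: you do not need saturation or duality at all, since for \emph{any} sublattice $M\subseteq\Z^n$ the Gram matrix of a $\Z$-basis of $M$ is an integer matrix with positive determinant, whence $\mathrm{covol}(M)=\sqrt{\det(\text{Gram})}\ge 1$. (This is exactly the observation the paper uses for $\vol(\ker f)\ge 1$.) With these two adjustments your proof is complete.
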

\begin{proof}
Given a finite rank free $\Z$-module $A$, such that $A_{\R} = A\otimes\R$ is endowed with a positive definite inner product $(\cdot , \cdot )$ (a \defin{metric} for short), we define $\vol (A)$ to be the volume of $A_{\R} / A$. When considered without further precision, the free $\Z$-module $\Z^a$ ($a \in \mathbb{N}^*$) will denote the standard one where $\R^a= \Z^a \otimes \R$ is endowed with the canonical metric. 

Let $a>0$, $b>0$ be integers and 
\begin{equation}f : \Z^a \to \Z^b\end{equation} 
be a $\Z$-linear map. We set $\det ' (f)$ to be the product of all nonzero singular values of $f$. Recall that the nonzero singular values of $f$ are --- with multiplicity --- the positive square roots of the nonzero eigenvalues of $ff^*$. Note that we have 
$$\det {}' (f) \leq \sup (||f|| , 1)^{\min (a,b)}$$
where $||f||$ denotes the operator norm of $f_\R : \R^a \to \R^b$.

Now recall from \cite[(2.1.1)]{BV} the ``metric rank formula''
\begin{equation} \label{BT1}
\det {}' (f) = \vol (\mathrm{image} \ f ) \ \vol (\mathrm{ker} \ f).
\end{equation}
Here we understand the metrics on $(\mathrm{ker} \ f) \otimes \R$ and $(\mathrm{image} \ f ) \otimes \R$ as those induced from $\R^a$ and $\R^b$. Let $Q = \mathrm{coker} \ f$ be the cokernel of $f$. It is a finitely generated $\Z$-module. Write $Q= Q_{\rm tors} \oplus Q_{\rm free}$ its decomposition into a torsion and a free part.

Writing 
\begin{equation}\R^b = (\mathrm{image} \ f ) \otimes \R \oplus (\mathrm{image} \ f )^{\perp}\end{equation}
and applying \cite[(2.1.1)]{BV} to the orthogonal projection $\Z^b \to (\mathrm{image} \ f )^{\perp}$ we conclude that the quotient $1 / {\vol (\mathrm{image} \ f )}$ is the product of $|Q_{\tors}|^{-1}$ with the `regulator' $\vol (Q_{\rm free})$, where the metric on $Q_{\rm free} \otimes \R$ is obtained by identifying it as a subspace of $(\mathrm{image} \ f )^{\perp}$. In summary,
\begin{equation} \label{BT2}
\frac{1}{\vol (\mathrm{image} \ f )} = \frac{\vol (Q_{\rm free})}{|Q_{\tors}|^{-1}}.
\end{equation}
It follows from \eqref{BT1} and \eqref{BT2} that 
\begin{equation} \label{BT3}
|Q_{\rm tors} | = \det {}' (f) \frac{\vol (Q_{\rm free})}{\vol (\mathrm{ker} \ f)} \leq \det {}' (f) .
\end{equation}
The last inequality follows from two facts:
(1)
$\mathrm{ker} \ f $ being a sub-lattice of $\Z^a$
 we   have $\vol (\mathrm{ker} \ f) \geq 1$, and
(2) $\vol (Q_{\rm free}) \leq 1$ since $Q_{\rm free}$ is spanned by vectors of length at most one.

Proposition \ref{Prop: Gabber} follows from the equation \eqref{BT3} applied to 
$$Q = \mathrm{coker} \left( \p_{j+1} : C_{j+1} \to C_j \right).$$
Indeed, the homology group $H_j (C_\bullet )$ is contained in $Q$ and it follows from \eqref{BT3} that the size of the torsion part of $Q$ is smaller than 
$$\det {}' (\p_{j+1}) \leq \sup(||\p_{j+1} || , 1)^{\mathrm{rank} C_j} .$$
\end{proof}

\section{Farber sequences and cheap rebuilding property} \label{S:Farber}
\subsection{Farber neighborhoods}
\label{Farber neighborhoods}

Let $\Gamma$ be a countable group.
Let $\Sub_\Gamma$ denote the space of subgroups of $\Gamma$ equipped with the topology induced from the topology of pointwise convergence on $\{0,1\}^\Gamma$. 
The subset $\Sub^{\findex}_\Gamma\subseteq \Sub_\Gamma$ of finite index subgroups is equipped with the induced topology. It is countable when $\Gamma$ is finitely generated.
The group $\Gamma$ continuously acts by conjugation on both $\Sub_\Gamma$ and $\Sub^{\findex}_\Gamma$.

We consider the fixed point ratio function defined for finite index subgroups $\Gamma'\leq \Gamma$:
$${\rm fx}_{\Gamma,\gamma}\colon \Sub^{\findex}_\Gamma\to  [0,1], \quad \Gamma' \mapsto \frac{|\{g\Gamma'| \gamma g\Gamma'=g\Gamma'\}|}{[\Gamma:\Gamma']}.$$
Thus ${\rm fx}_{\Gamma,\gamma}(\Gamma')$ is just the proportion of fixed points of $\gamma\in \Gamma$ in the action $\Gamma\acting \Gamma/\Gamma'$.

\begin{definition}
\label{def: Farber sequence}
 A sequence $(\Gamma_n)_{n\in\mathbb N}$  of subgroups of $\Gamma$ is called a \defin{Farber sequence} if it consists of finite index subgroups and for every $\gamma\in \Gamma\setminus \{1\}$ we have 
$\lim_{n\to\infty} {\rm fx}_{\Gamma,\gamma}(\Gamma_n)=0$.
\end{definition}
Though we won't use it, note that if $\Gamma$ is finitely generated and $S\subseteq \Gamma$ is a finite symmetric generating set, the sequence $(\Gamma_n)_{n\in\mathbb N}$ is Farber if and only if the sequence of Schreier graphs $\Sch(\Gamma_n\bs \Gamma, S)$ converges to the Cayley graph $\Cay(\Gamma,S)$ in the Benjamini--Schramm topology \cite{BS}; if and only if the sequence of actions $\Gamma\acting\Gamma/ \Gamma_n$ defines a sofic approximation of $\Gamma$.

Observe that $\Gamma$ admits Farber sequences if and only if it is residually finite.
The notion is designed to encompass non-normal finite index subgroups. 

\begin{definition}[Farber neighborhood]
Let $\Gamma$ be a residually finite group.
An open subset $U$ of $\Sub^{\findex}_\Gamma$ is a \defin{$\Gamma$-Farber neighborhood} if it is $\Gamma$-invariant and every Farber sequence in $\Sub^{\findex}_\Gamma$ eventually belongs to $U$. 
\end{definition}
We can think of these $\Gamma$-Farber neighborhoods as neighborhoods of $\{\mathrm{id}\}$ in $\Sub^{\findex}_\Gamma$, except that $\{\mathrm{id}\}\not\in \Sub^{\findex}_\Gamma$.
\begin{example}
\label{ex: basis of Farber neighborhoods}
Assume $\Gamma$ is residually finite. The sets \[U_{\Gamma,S,\delta}=\left\{\Gamma'\in \Sub^{\findex}_\Gamma \; \left| \; {\rm fx}_{\Gamma,\gamma}(\Gamma')< \delta \textrm{ for }\gamma\in S\right.\right\},\]
where  $S\subseteq \Gamma\setminus\{1\}$ is a finite subset and $\delta>0$, are non-empty and form a basis of $\Gamma$-Farber neighborhoods.
Let  $(\gamma_j)_{j\in \N}$ be an enumeration of $\Gamma$ and 
$S_n=\{\gamma_0, \gamma_1, \cdots, \gamma_n\}$, then the same holds for $U_{\Gamma,S_n,\frac{1}{n}}$.
If $\Gamma_n\in U_{\Gamma,S_n,\frac{1}{n}}$,
 then $(\Gamma_n)_{n\in \N}$ is a Farber sequence.
It follows that if $V\subseteq  \Sub^{\findex}_{\Gamma}$ does not contain any $U_{\Gamma,S,\delta}$, then we can construct a Farber sequence $(\Gamma_n)_{n\in \N}$ as above that does
 not meet $V$. 
\end{example}

\begin{lemma}\label{lem-FarberIntersection}
Let $\Gamma$ be a residually finite  group and let $\Lambda\leq \Gamma$ be an infinite subgroup. For every $\Lambda$-Farber neighborhood  $U\subseteq  \Sub^{\findex}_{\Lambda}$ and $\delta>0$,  there exists a $\Gamma$-Farber neighborhood $V\subseteq  \Sub^{\findex}_{\Gamma}$ such that for any $\Gamma'\in V$ we have 
\[\frac{\left\vert\left\{\gamma\in \Gamma / \Gamma' \colon \gamma \Gamma'\gamma^{-1}\cap \Lambda\in U\right\}\right\vert}{[\Gamma:\Gamma']}\geq 1- \delta.\]
\end{lemma}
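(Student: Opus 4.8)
\textbf{Proof plan for Lemma~\ref{lem-FarberIntersection}.}
The plan is to produce the Farber neighborhood $V$ explicitly from a basic one of the form $U_{\Lambda,S,\varepsilon}$ contained in $U$ (which exists by Example~\ref{ex: basis of Farber neighborhoods}, up to shrinking $U$), and then to estimate the proportion of cosets $\gamma\Gamma'$ for which the conjugate $\gamma\Gamma'\gamma^{-1}$ fails to lie in $U_{\Lambda,S,\varepsilon}$, i.e. for which ${\rm fx}_{\Lambda,s}(\gamma\Gamma'\gamma^{-1}\cap\Lambda)\geq\varepsilon$ for some $s\in S$. First I would rewrite everything in terms of the action $\Gamma\acting\Gamma/\Gamma'$. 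For a coset $x=\gamma\Gamma'$, the stabilizer is $\Gamma_x=\gamma\Gamma'\gamma^{-1}$, and $\Lambda$ acts on the orbit $\Lambda x\subseteq\Gamma/\Gamma'$ with point stabilizer $\Gamma_x\cap\Lambda=(\gamma\Gamma'\gamma^{-1})\cap\Lambda$. The key identity is that the fixed point ratio ${\rm fx}_{\Lambda,s}(\Gamma_x\cap\Lambda)$ equals the proportion of points of the $\Lambda$-orbit $\Lambda x$ fixed by $s$. Hence the ``bad'' set $\{x : \gamma\Gamma'\gamma^{-1}\cap\Lambda\notin U\}$ consists of those $x$ lying in a $\Lambda$-orbit $O$ with $|{\rm Fix}_O(s)|/|O|\geq\varepsilon$ for some $s\in S$.

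The main computation is then a double-counting / averaging argument over $\Gamma/\Gamma'$. Fix $s\in S$. Summing the local fixed-point proportions over all $\Lambda$-orbits gives
\begin{equation*}
\sum_{O}\frac{|{\rm Fix}_O(s)|}{|O|}\cdot\frac{|O|}{[\Gamma:\Gamma']}=\frac{|\{x\in\Gamma/\Gamma' : sx=x\}|}{[\Gamma:\Gamma']}={\rm fx}_{\Gamma,s}(\Gamma'),
\end{equation*}
so the total measure (for the uniform probability measure on $\Gamma/\Gamma'$) of the union of those orbits $O$ with $|{\rm Fix}_O(s)|/|O|\geq\varepsilon$ is at most $\varepsilon^{-1}{\rm fx}_{\Gamma,s}(\Gamma')$ by Markov's inequality. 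Taking the union over $s\in S$, the bad set has measure at most $\varepsilon^{-1}\sum_{s\in S}{\rm fx}_{\Gamma,s}(\Gamma')$. Therefore it suffices to take
\begin{equation*}
V=U_{\Gamma,S,\,\delta\varepsilon/|S|}=\bigl\{\Gamma'\in\Sub^{\findex}_\Gamma : {\rm fx}_{\Gamma,s}(\Gamma')<\delta\varepsilon/|S|\ \text{for all }s\in S\bigr\},
\end{equation*}
which is a nonempty $\Gamma$-Farber neighborhood by Example~\ref{ex: basis of Farber neighborhoods} (note $S\subseteq\Lambda\setminus\{1\}\subseteq\Gamma\setminus\{1\}$, using that $\Lambda$ is infinite so $S$ can be taken nontrivial). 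For $\Gamma'\in V$ the bad set has measure $<\delta$, which is exactly the claimed bound.

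The one genuine subtlety — the step I'd be most careful about — is the passage from ``$\gamma\Gamma'\gamma^{-1}\cap\Lambda\in U$'' to a condition expressible via finitely many fixed-point ratios. This is where one uses that $U$ contains some $U_{\Lambda,S,\varepsilon}$; replacing $U$ by this smaller neighborhood only strengthens the conclusion. One must also check that the finite set $S\subseteq\Lambda\setminus\{1\}$ furnished by Example~\ref{ex: basis of Farber neighborhoods} is indeed nonempty, which needs $\Lambda$ infinite (an infinite group is nontrivial) — this is the only place the hypothesis ``$\Lambda$ infinite'' is used, and it is used just to guarantee $S$ can be chosen with $S\neq\emptyset$ so that $U_{\Lambda,S,\varepsilon}\subsetneq\Sub^{\findex}_\Lambda$ is a meaningful constraint; if one allowed $S=\emptyset$ the statement would be vacuous anyway. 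Everything else is bookkeeping: the orbit decomposition of $\Gamma/\Gamma'$ under $\Lambda$, the identification of stabilizers, and Markov's inequality.
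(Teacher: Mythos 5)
Your proof is correct and follows essentially the same route as the paper's: both hinge on the identity that $\mathrm{fx}_{\Gamma,s}(\Gamma')$ is the $[\Gamma:\Gamma']$-average of $\mathrm{fx}_{\Lambda,s}(\gamma\Gamma'\gamma^{-1}\cap\Lambda)$ over $\gamma\Gamma'\in\Gamma/\Gamma'$ (the paper writes this via the double-coset decomposition $\Lambda\backslash\Gamma/\Gamma'$, which is exactly your $\Lambda$-orbit decomposition), followed by Markov and a union bound over $s\in S$; the only cosmetic difference is that you keep the two thresholds $\varepsilon$ and $\delta$ separate, taking $V=U_{\Gamma,S,\delta\varepsilon/|S|}$, whereas the paper merges them to get $V=U_{\Gamma,S,\delta^2/|S|}$. (Your side remark on the role of ``$\Lambda$ infinite'' is slightly off: the argument also goes through for $\Lambda$ finite nontrivial, and is trivial for $\Lambda$ trivial since then $U=\Sub^{\findex}_\Lambda$; the hypothesis is contextual rather than load-bearing here.)
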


In words, the lemma says that the finite index subgroups $(\Gamma')^\gamma\cap \Lambda$ of $\Lambda$ belong to a prescribed $\Lambda$-Farber neighborhood  for a large proportion of the conjugates $(\Gamma')^\gamma$, as soon as $\Gamma'$ belongs to a small enough $\Gamma$-Farber neighborhood.

\begin{proof}
It is enough to prove the statement for $U=U_{\Lambda,S,\delta}$ (from Example~\ref{ex: basis of Farber neighborhoods}) for any finite $S\Subset \Lambda$ and $1>\delta>0$.

Let $V = U_{\Gamma, S , \frac{\delta^2}{\vert S\vert}}$. Then, for $\Gamma'\in V$ and each $\gamma \in S$ simple combinatorics arguments give 
\begin{equation*}
\begin{split}
 \frac{\delta^2}{\vert S\vert}  \geq  \textrm{fx}_{\Gamma, \gamma }(\Gamma') & =\frac{1}{[\Gamma:\Gamma']} \sum_{g \in \Gamma / \Gamma '} 1_{(\Gamma')^g} (\gamma  ) =
\frac{1}{[\Gamma:\Gamma']} \sum_{g \in \Lambda \bs \Gamma / \Gamma' } \sum_{\lambda \in \Lambda / ((\Gamma')^g\cap \Lambda )} 1_{((\Gamma')^g\cap \Lambda)^\lambda} (\gamma) \\
& = \frac{1}{[\Gamma:\Gamma']} \sum_{g \in \Gamma / \Gamma '} \left(\frac{1}{[\Lambda:(\Gamma')^g\cap \Lambda]}\sum_{\lambda \in \Lambda / ((\Gamma')^g\cap \Lambda )} 1_{((\Gamma')^g\cap \Lambda)^\lambda} (\gamma) \right) \\
& = \frac{1}{[\Gamma:\Gamma']}\sum_{g \in \Gamma / \Gamma '}  {\rm fx}_{\Lambda,\gamma } ((\Gamma')^g \cap \Lambda).
\end{split}
\end{equation*}
Hence, 
$$|\{g \in \Gamma / \Gamma' \colon  {\rm fx}_{\Lambda,\gamma }((\Gamma')^g \cap \Lambda)\geq \delta\}|\leq [\Gamma:\Gamma']  \frac{\delta}{\vert S\vert}$$ 
and 
$$|\{g \in \Gamma / \Gamma' \colon \exists \gamma\in S \text{ s. t. } {\rm fx}_{\Lambda,\gamma }((\Gamma')^g \cap \Lambda)\geq \delta\}|\leq [\Gamma:\Gamma'] \delta .$$ 
Thus 
$$\left\vert\{g \in \Gamma / \Gamma' \colon {\rm fx}_{\Lambda,\gamma }((\Gamma')^g \cap \Lambda)\leq \delta \text{ for all } \gamma\in S\}\right\vert\geq [\Gamma:\Gamma'] (1-\delta).$$  
We conclude that 
\[\frac{1}{[\Gamma : \Gamma']}\sum_{g \in \Gamma / \Gamma' }1_{U_{\Lambda,S,\delta}}((\Gamma')^g \cap \Lambda)\geq 1-\delta,\] 
as desired.
\end{proof}

\subsection{The cheap rebuilding property}

\begin{definition}[Cheap $\indice$-Rebuilding Property, Farber sequences]
\label{def-CheapReb -Farb-sequ} 
Let $\Gamma$ be a residually finite group and let $\indice$ be a non-negative integer.
A Farber sequence  $(\Gamma_n)_n$ of $\Gamma$ has the \defin{Cheap $\indice$-Rebuilding Property}  if
there exists a $K(\Gamma , 1)$ space $X$ with finite $\indice$-skeleton and a constant ${\kappa}_X\geq 1$ such that for every real number $T\geq 1$, there is $n_0$ such that when $n\geq n_0$
the finite covers $Y_n\to X$ with $\pi_1(Y_n)=\Gamma_n$ admit an $\indice$-rebuilding $(Y_n,Y_n')$ of quality $(T,{\kappa}_X).$ 
\end{definition}

The group $\Gamma$ itself has the \defin{Cheap $\indice$-Rebuilding Property} if the existence of the complex $X$ and of the constant ${\kappa}_X$ holds in a ``uniform way'' for all Farber sequences. More precisely:
\begin{definition}[Cheap $\indice$-Rebuilding Property, groups]
\label{def-CheapReb} 
Let $\Gamma$ be a countable group and let $\indice$ be a non-negative integer.
The group $\Gamma$ has the \defin{Cheap $\indice$-Rebuilding Property} if it is residually finite and
there exists a $K(\Gamma , 1)$ space $X$ with finite $\indice$-skeleton and a constant ${\kappa}_X\geq 1$ such that for every real number $T\geq 1$, 
there exists a $\Gamma$-Farber neighborhood $U=U(X,T)\subseteq \Sub^{\findex}_\Gamma$ such that
  every finite cover $Y\to X$ with $\pi_1(Y)\in U$ admits an $\indice$-rebuilding $(Y,Y')$ of quality $(T,{\kappa}_X).$ 
\end{definition} 

The simplest group with cheap $\indice$-rebuilding property for every $\indice$ is the infinite cyclic group $\Z$ as we show in Lemma~\ref{lem-ZRebuilding}.
Many other groups have this cheap $\indice$-rebuilding property. See Corollary~\ref{cor-CheapRebGroups} for some examples.

\begin{remark}
\label{rem: cheap 0-reb prop}
It is important to note that finite groups do not have the cheap $\indice$-rebuilding property, for any $\indice$: for each finite group there is a bound on the qualities of its rebuildings.
In fact, a residually finite group has the cheap $0$-rebuilding property if and only if it is infinite.
\end{remark}

\begin{lemma}
\label{lem: exist-> forall in def CPR}
One can replace in both Definitions  ``there exists a $K(\Gamma , 1)$ space with finite $\indice$-skeleton and there is a constant ${\kappa}_X$''
by ``for every $K(\Gamma , 1)$ space with finite $\indice$-skeleton,  there is a constant ${\kappa}_X$''.
\end{lemma}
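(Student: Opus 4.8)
The plan is to prove the only nontrivial implication: if \emph{some} $K(\Gamma,1)$ CW-complex $X_0$ with finite $\indice$-skeleton, together with a constant $\kappa_0$, witnesses the Cheap $\indice$-Rebuilding Property of $\Gamma$ (Definition~\ref{def-CheapReb}, and likewise Definition~\ref{def-CheapReb -Farb-sequ} relative to a fixed Farber sequence), then so does \emph{every} such complex $X$, for an appropriate constant $\kappa_X$. When $\indice=0$ there is nothing to prove, since by Remark~\ref{rem: cheap 0-reb prop} the property is equivalent to $\Gamma$ being infinite, a condition not mentioning $X$; so assume $\indice\geq 1$. The idea is the one already present in the proof of Corollary~\ref{cor: homot equiv. complex}: a cellular homotopy equivalence between two aspherical classifying spaces, together with the composition lemma~\ref{lem-composition}, transports a rebuilding of a finite cover of one into a rebuilding of the corresponding finite cover of the other, at the price of multiplying the quality constants. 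I first record that every piece of data in a rebuilding and its quality (Definitions~\ref{def: Rebuilding} and~\ref{def: Rebuilding and quality}) is concentrated in the $\indice$-skeleton, which is finite for both $X$ and $X_0$; hence Lemmas~\ref{lem:Induced rebuilding to finite cover} and~\ref{lem-composition}, although phrased for finite complexes, apply verbatim in our situation.

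Step one is to build a comparison rebuilding $(X,X_0,\Gcell_0,\Hcell_0,\Homcell_0)$ of finite quality. Because $X$ and $X_0$ are aspherical with fundamental group $\Gamma$, the homotopy classification of maps into aspherical spaces (see e.g.\ \cite[\S 7.1]{Geoghegan}) provides cellular maps $\phi\colon X\to X_0$ and $\psi\colon X_0\to X$ inducing the identity on $\Gamma$, together with cellular homotopies from $\psi\circ\phi$ to $\mathrm{id}_X$ and from $\phi\circ\psi$ to $\mathrm{id}_{X_0}$. Since a cellular map carries the $j$-skeleton into the $j$-skeleton and $[0,1]\times X^{(\indice-1)}$ consists of cells of dimension $\leq\indice$, the restrictions of $\phi$, $\psi$ and of these homotopies to the appropriate skeleta are exactly the data of an $\indice$-rebuilding $(X,X_0,\Gcell_0,\Hcell_0,\Homcell_0)$. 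As $X$ and $X_0$ have finitely many cells in each dimension $\leq\indice$, the resulting chain maps, chain homotopy and boundary operators are fixed finite-dimensional linear maps, so this rebuilding has some finite quality $(1,\kappa')$ with $\kappa'$ depending only on $X$, $X_0$ and $\indice$.

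Step two transfers this to finite covers. Fix $T\geq 1$ and a finite-index subgroup $\Gamma_1\leq\Gamma$; it yields compatible finite covers $X_1=\Gamma_1\backslash\tilde X\to X$ and $X_{0,1}=\Gamma_1\backslash\tilde X_0\to X_0$, both with fundamental group $\Gamma_1$. By the lifting property recalled before Lemma~\ref{lem:Induced rebuilding to finite cover}, the comparison rebuilding induces an $\indice$-rebuilding $(X_1,X_{0,1})$, which by that lemma has quality $(1,\kappa'\delta_X)$ for a constant $\delta_X$ independent of $\Gamma_1$. Let $U_0=U(X_0,T)$ be the $\Gamma$-Farber neighborhood supplied by the hypothesis on $X_0$ (in the Farber-sequence setting, the corresponding index $n_0$). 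If $\Gamma_1\in U_0$, then $X_{0,1}\to X_0$ admits an $\indice$-rebuilding $(X_{0,1},Z)$ of quality $(T,\kappa_0)$, and composing $(X_1,X_{0,1})$ with $(X_{0,1},Z)$ via Lemma~\ref{lem-composition} produces an $\indice$-rebuilding $(X_1,Z)$ of quality $(1\cdot T,\,4\kappa'\delta_X\kappa_0)$. Taking $\kappa_X:=4\kappa'\delta_X\kappa_0$ and keeping $U_0$ as the Farber neighborhood attached to $(X,T)$, this says exactly that $(X,\kappa_X)$ witnesses the Cheap $\indice$-Rebuilding Property; the Farber-sequence version is word-for-word the same with ``$n\geq n_0$'' replacing membership in $U_0$.

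The step I expect to be the real obstacle is the degenerate case where $X$ has no cells in some dimension $j\leq\indice$ although $X_0$ does: then the ``cells bound'' $|X_0^{(j)}|\leq\kappa'|X^{(j)}|$ of the comparison rebuilding fails literally, and, worse, the complex $Z$ obtained from $X_0$ may carry $j$-cells that the cover $X_1$ does not allow. I would handle this by an additional argument reducing to the situation where all complexes involved have the same set of cell dimensions $\leq\indice$: on the $X_0$-side one may add an elementary-expansion pair of cells realizing the missing dimension (this changes neither the homotopy type nor the finiteness of the $\indice$-skeleton, and by the same transfer mechanism it still yields a witness), while on the $Z$-side one removes the superfluous $j$-cells by an elementary collapse, available because $Z$ is $(\indice-1)$-equivalent to a cover of $X$ and hence has trivial homology in the offending degree. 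I regard making this last reduction completely rigorous as the main technical point; everything else is a formal consequence of the homotopy invariance of rebuildings and of Lemma~\ref{lem-composition}.
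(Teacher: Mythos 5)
Your argument is exactly the paper's proof: obstruction theory (see the paper's appeal to \cite[\S 7.1]{Geoghegan}) produces a comparison $\indice$-rebuilding $(X, X_0, \Gcell_1, \Hcell_1, \Homcell_1)$ of some finite quality $(1,\kappa')$, which Lemma~\ref{lem:Induced rebuilding to finite cover} lifts to compatible finite covers, and which Lemma~\ref{lem-composition} then composes with the rebuilding furnished by the hypothesis on $X_0$, giving the witness constant $\kappa_X = 4\kappa'\delta_X\kappa_0$. This is word for word the paper's $4\kappa_1\delta_X\kappa_{X'}$, and the reduction of the Farber-sequence version to the $\Gamma$-Farber-neighborhood version is also as in the paper.

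The degenerate case you flag is a legitimate concern that, incidentally, the paper's own proof does not treat: it declares a constant $\kappa_1$ with $|X_0^{(j)}| \leq \kappa_1 |X^{(j)}|$ for all $j\leq\indice$, which is impossible whenever $|X^{(j)}|=0$ but $|X_0^{(j)}|>0$. Your sketch of a fix, however, does not work as written. You propose adding an elementary-expansion pair ``on the $X_0$-side,'' but the obstruction lives on the $X$-side (the vanishing of $|X^{(j)}|$), which expanding $X_0$ does not change; expanding $X$ instead would be circular, since the task is to produce a rebuilding $W$ of a cover of the \emph{original} $X$ satisfying the cells bound, which for the missing dimension $j$ forces $|W^{(j)}|=0$, a constraint the expanded $X$ no longer imposes. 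The claimed elementary collapse of the ``superfluous $j$-cells'' of $Z$ is also unsupported: collapses need free faces, and an $(\indice-1)$-equivalence with trivial homology in one degree does not provide them. In short, you correctly located a weak spot shared with the published proof, but your patch does not yet repair it; as it stands your writeup is at parity with the paper's, neither more nor less complete.
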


\begin{proof}
Let $X$ and $X'$ be $k$-aspherical CW-complexes with finite $\indice$-skeleta and $\pi_1(X) \simeq \pi_1(X') \simeq \Gamma$.
Assume that  $X'$ satisfies the  Definition~\ref{def-CheapReb -Farb-sequ}, then by basic obstruction theory,  there exist cellular maps 
$$\Gcell_1 \colon X^{(\indice)}\to X'{}^{(\indice)} \quad \mbox{and} \quad \Hcell_1 \colon X'{}^{(\indice)}\to X^{(\indice)}$$
that are homotopy inverse to each other up to dimension $\indice-1$, 
and a cellular homotopy $\Homcell_1 \colon [0,1] \times X^{(\indice-1)}\to X^{(\indice)}$  between the identity of $X^{(\indice-1)}$ and the restriction of $\Hcell_1 \circ \Gcell_1$ to $X^{(\indice-1)}$.
Observe for the case $\indice=1$ that the isomorphism between the fundamental groups allows us to define $\Gcell_1$ and $\Hcell_1$ up to the $2$-skeleta.

Let $\kappa_1$ be an upper bound for all the norms $ \|(\Hhom_1)_{j}\|$, $\|(\Ghom_1)_{j}\|$ for $j\in \{ 0, \ldots , \indice\}$ and $\|(\Homhom_1)_{j}\|$ for $j=0, 1, \cdots, \indice-1$
 that moreover satisfies
$|X'{}^{(j)}| \leq \kappa_1|X^{(j)}| $ for all  $j\in \{ 0, \ldots , \indice\}$.
Thus $(X,X')$ is an $\indice$-rebuilding of quality $(1,\kappa_1)$.

For every real number $T\geq 1$, 
there exists a $\Gamma$-Farber neighborhood $U=U(X',T)\subseteq \Sub^{\findex}_\Gamma$ such that
  every finite cover $Y'\to X'$ with $\pi_1(Y')\in U$ admits an $\indice$-rebuilding $(Y',Y'')$ of quality $(T,{\kappa}_{X'}).$ 
Let $Y\to X$ be the finite cover associated with $(\Hcell_1)_*(\pi_1(Y'))\leq \pi_1(X)\simeq \Gamma$. 
By Lemma~\ref{lem:Induced rebuilding to finite cover} (Rebuilding induced to finite cover) the $\indice$-rebuilding $(X,X')$ induces a rebuilding $(Y,Y')$ of quality $(1,\kappa_1\delta_X)$. Applying Lemma~\ref{lem-composition} (Composition of rebuildings) to the rebuildings $(Y,Y')$ and $(Y', Y'')$ we get an $\indice$-rebuilding $(Y, Y'')$ of quality $(T, 4\kappa_1\delta_X\kappa_{X'})$.
\end{proof}

\begin{theorem}\label{thm-AdjRebStack}
Let $\Gamma$ be a residually finite  group acting on a CW-complex $\Baseup$ in such a way that any element stabilizing a cell fixes it pointwise. Let $\indice$ be a non-negative integer and assume that the following conditions hold:
\begin{enumerate}
\item $\Gamma\bs \Baseup$ has finite $\indice$-skeleton; 
\item $\Baseup$ is $(\indice-1)$-connected;
\item For each cell $\cellup\in \Baseup$ of dimension $j\leq \indice$ the stabilizer $\Gamma_\cellup$ has the cheap $(\indice-j)$-rebuilding property.
\end{enumerate}
Then $\Gamma$ itself has the cheap $\indice$-rebuilding property. 
\end{theorem}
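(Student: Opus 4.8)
The plan is to realize the required $K(\Gamma,1)$ as the total space of a Borel construction stack over $\Gamma\bs\Baseup$, to rebuild the fibres of its finite covers using the cheap rebuilding property of the cell stabilizers, and to glue the fibrewise data together with the Effective Rebuilding Lemma (Proposition~\ref{P2}). For $\indice\ge 2$ condition (2) already makes $\Baseup$ simply connected; for $\indice=1$ the hypotheses force $\Gamma$ to be finitely generated (it acts with finite quotient $1$-skeleton on the connected complex $\Baseup$, and every vertex stabilizer, having the cheap $1$-rebuilding property, is of type $F_1$), so Lemma~\ref{lem-SimplyConn} lets me replace the $1$-skeleton of $\Baseup$ by a simply connected $\Gamma$-complex with the same $1$-skeleton, the added $2$-cells being immaterial since $\indice=1$. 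In either case Proposition~\ref{prop: existence of a stack} followed by Proposition~\ref{prop: rebuilding} yields a stack $\Pi\colon\Totaldown\to\Basedown:=\Gamma\bs\Baseup$ whose fibre $F_e$ over a cell $e$ of dimension $j\le\indice$ is a classifying space for $\Gamma_e$; using the cheap $(\indice-j)$-rebuilding property of $\Gamma_e$ together with Lemma~\ref{lem: exist-> forall in def CPR}, I choose $F_e$ with finite $(\indice-j)$-skeleton. Then $\Totaldown$ has $(\indice-1)$-connected universal cover, fundamental group $\Gamma$, and finite $\indice$-skeleton; attaching cells of dimension $>\indice$ makes it an honest $K(\Gamma,1)$ space $X$ with unchanged $\indice$-skeleton, and by Lemma~\ref{lem: exist-> forall in def CPR} it suffices to verify Definition~\ref{def-CheapReb} for this $X$.

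Fix $T\ge 1$. For each of the finitely many $\Gamma$-orbits of cells of dimension $j\le\indice$, with representative stabilizer $\Lambda:=\Gamma_e$ (which is infinite by Remark~\ref{rem: cheap 0-reb prop}), the cheap $(\indice-j)$-rebuilding property provides a $\Lambda$-Farber neighbourhood $U_e=U(F_e,T)$ and a constant $\kappa_{F_e}$; applying Lemma~\ref{lem-FarberIntersection} with this $U_e$ and with $\delta:=T^{-1}$ gives a $\Gamma$-Farber neighbourhood $V_e$, and I set $U=U(X,T):=\bigcap_e V_e$. Now let $\Gamma'\le\Gamma$ have finite index with $\Gamma'\in U$; the cover $Y:=\Gamma'\bs\widetilde{\Totaldown}$ carries the induced stack over $\Gamma'\bs\Baseup$, whose fibre over a cell above $e$ is the finite cover of $F_e$ classified by a subgroup of $\Gamma_e$ of the form $\gamma\Gamma'\gamma^{-1}\cap\Gamma_e$ (with $\gamma\in\Gamma$). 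Lemma~\ref{lem-FarberIntersection} ensures that, counted with covering degree (of total degree $[\Gamma:\Gamma']$ over each orbit), all but at most a proportion $T^{-1}$ of these subgroups lie in $U_e$. For the corresponding good fibres I take the $(\indice-j)$-rebuilding of quality $(T,\kappa_{F_e})$ afforded by the cheap rebuilding property; for the bad ones I take the trivial rebuilding, whose norms are bounded by a constant depending only on $F_e$ (Lemma~\ref{lem:Induced rebuilding to finite cover}).

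Feeding these fibrewise rebuildings into Proposition~\ref{P2} produces a rebuilding $(Y,Y',\Gcell,\Hcell,\Homcell)$. For the cells bound, in each dimension $\ell\le\indice$ the good fibres contribute at most $\kappa_{\max}T^{-1}$ times, and the bad ones exactly, their former cell count; since the bad fibres carry total degree $\le T^{-1}[\Gamma:\Gamma']$, this gives $|(Y')^{(\ell)}|\le(\kappa_{\max}+1)T^{-1}|Y^{(\ell)}|$, where $\kappa_{\max}=\max_e\kappa_{F_e}$. For the norms bound, formulas \eqref{E: effective k} and \eqref{E:P2} express the chain maps, the chain homotopy and the boundary operator of $Y'$ as sums of at most $\indice+1$ bounded-length products of the fibrewise maps $\Ghomfibre,\Hhomfibre,\Homhomfibre$, the rebuilt vertical boundary, and $\partial^{\rm hor}$; the norm of $\partial^{\rm hor}$ is a constant depending only on the local combinatorics of $\Totaldown$, and each fibrewise map has $\log$-norm $\le\kappa_{\max}(1+\log T)$ (by the quality estimate on good fibres, by a constant on bad ones), so using $\log(1+\log T)\le 1+\log T$ all the relevant norms on $Y'$ satisfy $\log\|\cdot\|\le C(1+\log T)$ for a constant $C=C(\Totaldown,\indice,\kappa_{\max})$. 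After composing with the rebuilding induced from $X$ to $Y$ (Lemmas~\ref{lem:Induced rebuilding to finite cover} and~\ref{lem-composition}, which only inflate $\kappa$ by a constant factor), one checks that $(Y,Y')$ has quality $(T,\kappa_X)$ with $\kappa_X:=\max(\kappa_{\max}+1,C)$, which is exactly what Definition~\ref{def-CheapReb} requires.

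The main obstacle is the presence of fibres of the induced stack whose fundamental group falls outside the prescribed Farber neighbourhood of its stabilizer: these cannot be shrunk, and the trivial rebuilding leaves their cell count unreduced. This is precisely why Definition~\ref{def-CheapReb} allows $U(X,T)$ to depend on $T$: running Lemma~\ref{lem-FarberIntersection} with $\delta\sim T^{-1}$ makes the unshrunk fibres so rare that their contribution is absorbed into the factor $T^{-1}$ already gained on the good fibres, while at the same time the norm bounds coming from Proposition~\ref{P2}, although they grow polynomially in $1+\log T$, still fit inside a single constant $\kappa_X$. Checking that these two competing demands are compatible, and organising the double-coset bookkeeping behind Lemma~\ref{lem-FarberIntersection} so that ``rare in the Farber sense'' translates into ``few cells'', is the technical heart of the proof.
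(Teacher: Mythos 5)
Your proposal follows essentially the same route as the paper: Borel construction to build the stack, cheap rebuildings of the good fibres chosen inside a $\Gamma_{\cellup}$-Farber neighbourhood, trivial rebuildings of the bad fibres, Lemma~\ref{lem-FarberIntersection} with $\delta\sim T^{-1}$ to control the weighted proportion of bad fibres, and Proposition~\ref{P2} to glue these into a rebuilding of the total space with the cell and norm bounds you describe. Two small remarks that do not affect correctness: you omit the base case $\indice=0$, which the paper disposes of in one line (the stabilizer of any $0$-cell being infinite forces $\Gamma$ to be infinite, which is exactly the cheap $0$-rebuilding property); and the final composition step via Lemmas~\ref{lem:Induced rebuilding to finite cover} and~\ref{lem-composition} is superfluous, since you already made $X$ a $K(\Gamma,1)$ by attaching cells only in dimensions $>\indice$, so the $\indice$-skeleta of $\Gamma'\bs\widetilde{X}$ and $\Gamma'\bs\widetilde\Totaldown$ coincide and the rebuilding of the latter is already a rebuilding of the former.
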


\begin{proof}

If $\indice = 0$, this follows from Remark~\ref{rem: cheap 0-reb prop} and (3): 
the stabilizer of any $0$-cell is infinite, thus so is $\Gamma$.
\\
 From now on we assume $\indice \geq 1$. 
As in the proof of Theorem \ref{Tmain} we need to work with a simply connected $\Baseup$. For $\indice \geq 2$ this follows from the assumptions.  For $\indice=1$, 
we note that the vertex stabilizers ($j=0$) have the $1$-rebuilding property and thus are finitely generated, so we can use Lemma \ref{lem-SimplyConn}
 to replace $\Baseup$ by (its $1$-skeleton and then by) a $1$-connected $2$-dimensional complex $\Baseup^+$ satisfying all the assumptions of the theorem.

Since all the information we need is located in the $\indice$-skeleton (or $2$-skeleton for $\indice=1$), we may assume that $\Baseup$ has dimension at most $\indice$ (or $2$ for $\indice=1$).

Given a contractible CW-complex $E\Gamma$  with a free action of $\Gamma$ (i.e., the universal cover of some classifying space for $\Gamma$), the Borel construction (Section~\ref{Sect: Borel construction}) 
considers the product $\Baseup \times E\Gamma$ with the diagonal action. It is $(\indice-1)$-connected, the fundamental group of the quotient $\Gamma\bs (\Baseup \times E\Gamma)$ is isomorphic to $\Gamma$ and 
 the projection map $\Gamma\bs (\Baseup \times E\Gamma)\to \Gamma \bs \Baseup$ is interpreted (by Proposition~\ref{prop: existence of a stack}) as a stack of CW-complexes  
 with fiber $\simeq \cellup\times (\Gamma_\cellup \bs E\Gamma)\simeq \Gamma_\cellup \bs E\Gamma$ 
over each cell $\Gamma \cellup$ of $\Gamma\bs \Baseup$.

Let $[\Baseup]:=\{\cellup_1,\ldots, \cellup_N\}\subseteq \Baseup$ be a list of distinct representatives 
of the $\Gamma$-orbits of cells of dimension $\leq \indice$; there are finitely many of them, by hypothesis~(1).
Let $\cellup\in [\Baseup]$ be a cell of dimension $j \leq \indice$. 
By hypothesis the stabilizer $\Gamma_\cellup$ has the cheap $(\indice -j)$-rebuilding property and is therefore of type $F_{\indice-j}$. 
Let $X_\cellup$ be a classifying space for $\Gamma_\cellup$ with finite $(\indice-j)$-skeleton. Since $X_\cellup$ and $\Gamma_\cellup \bs E\Gamma$ are homotopy equivalent, the Geogeghan's rebuilding Lemma (Proposition \ref{prop: rebuilding}) yields a stack $\Totaldown\to \Gamma\bs \Baseup$ with fiber $X_\cellup$ over each cell $\Gamma\cellup$ of $ \Gamma\bs \Baseup$. 
Per condition (1), the quotient $\Gamma\bs \Baseup$ has finite $\indice$-skeleton so the whole stack $\Totaldown$ has finite $\indice$-skeleton. At this point we might as well forget how we constructed $\Totaldown$, the only important properties to keep in mind are that it fits into a stack  $\Pi: \Totaldown\to\Gamma\bs\Baseup$, it has an $(\indice-1)$-connected universal cover, it has finite $\indice$-skeleton, that $\pi_1(\Totaldown)$ is isomorphic to $\Gamma$
and each fiber over $\Gamma\cellup$ is a CW-complex $X_\cellup$ with $(\indice-1)$-connected universal cover, finite $\indice$-skeleton,  and $\pi_1(X_\cellup) \simeq \Gamma_\cellup$.

Now let $\Gamma_1 \leq \Gamma$ be a finite index subgroup of $\Gamma$ and let $\Totaldown_1$ be the finite cover of $\Totaldown$ corresponding to $\Gamma_1$. 
Any $\Gamma$-orbit of cells $\Gamma\cellup\subseteq \Baseup$ splits into a family of $\Gamma_1$-orbits indexed by the double cosets:
\[\Gamma\cellup=\bigsqcup_{\gamma\in \Gamma_1\bs\Gamma/\Gamma_{\cellup}}  \Gamma_1\gamma \cellup .\]

The complex $\Totaldown_1$ is naturally the total space of a stack over $\Gamma_1 \bs \Baseup$.
If $\gamma\cellup$ is any cell of $\Baseup$ $(\gamma\in \Gamma$) and $\Gamma_{\gamma\cellup} \cap \Gamma_1=\gamma\Gamma_{\cellup}\gamma^{-1}\cap \Gamma_1 $ is its stabilizer for the $\Gamma_1$-action, then the fiber of this stack over the cell $\Gamma_1\gamma\cellup$ of $\Gamma_1\bs\Baseup$ takes the form $X_{1,\gamma\cellup}=(\gamma\Gamma_{\cellup}\gamma^{-1}\cap \Gamma_1 ) \bs \widetilde X_{\gamma\cellup}
 \simeq (\Gamma_{\cellup}\cap \gamma^{-1}\Gamma_1\gamma )\bs \widetilde X_\cellup$. 
\begin{center}
\begin{tikzcd}[column sep=1.2em]
\widetilde{\Totaldown} \arrow[rrr]
 \arrow[d]& &&\Totaldown_1 =\Gamma_1\bs \widetilde{\Totaldown}\arrow[r] \arrow[d]& \Totaldown=\Gamma\bs \widetilde{\Totaldown} \arrow[d]\\
\Baseup \arrow[rrr]& && \Gamma_1\bs\Baseup \arrow[r]&   \Gamma\bs\Baseup
\end{tikzcd}
\end{center}

We remark that since $\Totaldown_1$ is a cover of $\Totaldown$, we know that the norm (induced from the $\ell^2$-norm) of the boundary map $\partial\colon C_{\bullet}(\Totaldown_1 )\to C_{\bullet-1}(\Totaldown_1 )$ is bounded in degrees $\bullet\leq \indice$
by a constant depending only on $\Totaldown$, not on $\Gamma_1$. 

We will perform some rebuilding of $\Totaldown_1$ and  determine conditions on $\Gamma_1$ under which its quality is good enough.

\subsection*{Step 1. Rebuilding the stack} 

 The cheap-rebuilding property (assumption (3)) of the stabilizers $\Gamma_{\cellup_i}$ applied to $X_{\cellup_i}$ (by Lemma~\ref{lem: exist-> forall in def CPR}, we have the freedom of the space in Definition~\ref{def-CheapReb}) gives constants $c_{X_{\cellup_i}}\geq 1$ for which the following choice is possible:
 \\
Let $T\geq 1$ be a real number.
 We
 choose for each $i=1,\ldots, N$ a $\Gamma_{\cellup_i}$-Farber neighborhood $U_i\subseteq \Sub^{\findex}_{\Gamma_{\cellup_i}}$ and for every finite index subgroup $\Lambda\leq \Gamma_{\cellup_i}$ with $\Lambda\in U_i$ we choose an $(\indice-\dim(\cellup_i))$-rebuilding 
 $R(\cellup_i,T,\Lambda)$ of $\Lambda \bs \widetilde{X}_{\cellup_i}$ 
 of quality $(T, \kappa_{X_{\cellup_i}})$.  

We want to use the effective rebuilding Lemma (Proposition \ref{P2}) to rebuild the stack $\Totaldown_1=\Gamma_1\bs \widetilde{\Totaldown}$. To do that, we need to specify a rebuilding of each fiber $X_{1, \cellup}$, with $\Gamma \cellup \subseteq \Gamma \bs \Baseup.$  The full set of fibers of $\Totaldown_1\to \Gamma_1\bs\Baseup$
consists of the CW-complexes $X_{1 , \gamma\cellup_i}$, with  $i=1,\ldots,N$ and $\gamma\in \Gamma_1 \bs \Gamma/\Gamma_{\cellup_i}$. Note that 
$$X_{1,\gamma{\cellup_i}}=(\Gamma_{\gamma{\cellup_i}} \cap \Gamma_1) \bs \widetilde X_{\gamma{\cellup_i}}
 \simeq (\Gamma_{{\cellup_i}}\cap \gamma^{-1}\Gamma_1\gamma )\bs \widetilde X_{\cellup_i}.$$

We rebuild $X_{1,\gamma{\cellup_i}}$ according to whether $\Gamma_{1,\cellup_i, \gamma}:=(\Gamma_{\cellup_i}\cap \gamma^{-1}\Gamma_1\gamma )\in U_i\subseteq \Sub^{\findex}_{\Gamma_{\cellup_i}}$ or not by using the rebuilding:
\begin{equation}\label{eq: X'1 gamma omega i}
 (X_{1, \gamma\cellup_i},X'_{1, \gamma\cellup_i}, \Gcellfibre_{\gamma\cellup_i} , \Hcellfibre_{\gamma\cellup_i},\Homcellfibre_{\gamma\cellup_i}):=
\begin{cases}
R(\cellup_i,T,\Gamma_{1,\cellup_i, \gamma}) &\text{ if } \Gamma_{1,\cellup_i, \gamma}\in U_i\subseteq \Sub^{\findex}_{\Gamma_{\cellup_i}}\\
\text{ $X'_{1, \gamma\cellup_i}=X_{1, \gamma\cellup_i}$ }
 &\text { if } \Gamma_{1,\cellup_i, \gamma}\not\in U_i
\end{cases}
\end{equation}
i.e., we simply use the trivial rebuilding $(X_{1, \gamma\cellup_i},X_{1, \gamma\cellup_i}, id ,id,0)$ when 
$\Gamma_{1,\cellup_i, \gamma}\not\in U_i$.

By virtue of Proposition \ref{P2} we obtain a global rebuilding $(\Totaldown_1, \Totaldown_1 ' , \Gcell ,  \Hcell , \Homcell )$.
Our goal is now to estimate its quality. 
Recall that the tension in Definition~\ref{def: Rebuilding and quality} of quality (in the introduction) 
 is between ``having few cells'' and ``maintaining tame norms''.
Observe that for the fibers associated with $\Gamma_{1,\cellup_i, \gamma}\in U_i$, 
both are 
 controlled by definition of the $\Gamma_{\cellup_i}$-Farber neighborhood. In particular, the norms of the vertical maps in these fibers are bounded by a polynomial in $T$. 
 As for the fibers associated with $\Gamma_{1,\cellup_i, \gamma}\not\in U_i$, 
the quality is ``very good'' as far as the ``norms bound" is concerned.
The number of cells will be controlled later on in Step 2.

Let us simply denote by $\Ghomfibre \colon C_\bullet(X_{1,\cellup} )\to C_\bullet(X'_{1,\cellup})$ and $\Hhomfibre \colon C_\bullet( X'_{1,\cellup})\to C_\bullet(X_{1,\cellup})$ the maps respectively induced by $\Gcellfibre_\cellup$ and $\Hcellfibre_\cellup$. Similarly, let $\Homhomfibre \colon C_\bullet(X_{1,\cellup} )\to C_{\bullet+1}(X_{1,\cellup})$ be the chain homotopy map induced by $\Homcellfibre_\cellup$. Finally let $\Ghom \colon C_\bullet(\Totaldown_1 ) \to C_\bullet(\Totaldown_1 ' )$ and $\Hhom \colon C_\bullet(\Totaldown_1 ' )\to C_\bullet(\Totaldown_1)$ be the chain maps induced by $\Gcell$ and $\Hcell$ and let $\Homhom \colon C_\bullet (\Totaldown_1 ) \to C_{\bullet +1} (\Totaldown_1  )$ be the chain homotopy map induced by $\Homcell$.

As in the paragraphs preceding and preparing to Proposition \ref{P2}, we consider the decomposition   
\[ C_\bullet(\Totaldown_1 )=\bigoplus_{\Gamma_1 \cellup \in \Gamma_1 \bs \Baseup}  [\cellup] \otimes C_\bullet(X_{1,\cellup} ) \quad \mbox{and} \quad   C_\bullet(\Totaldown_1 ')=\bigoplus_{\Gamma_1 \cellup \in \Gamma_1 \bs \Baseup}  [\cellup] \otimes C_\bullet(X'_{1,\cellup}).\] 
By a slight abuse of notation we will simply write $\heartsuit$ instead of the $1\otimes \heartsuit$ for $\heartsuit=\Ghomfibre$, $\Ghomfibre_\cellup$, $\Hhomfibre$, $\Hhomfibre_\cellup$, $\Homhomfibre$ or $\Homhomfibre_\cellup.$  We write $\partial$, $\partial^{\rm hor}$, $\partial^{\rm vert}$ for the boundary, horizontal boundary and vertical boundary maps in the stack $C_\bullet(\Totaldown_1)$ and $\partial '$, $(\partial ')^{\rm hor}$, $(\partial ')^{\rm vert}$ for the boundary, horizontal boundary and vertical boundary maps on $C_\bullet(\Totaldown_1 ')$. By Proposition \ref{P2} formulae \eqref{E: effective k}--\eqref{E:P2} 
we have 
\begin{equation*}
\begin{split}
\Ghom & = \Ghomfibre \circ \left( \sum_{i=0}^{\infty} \left( \partial^{\rm hor} \circ \Homhomfibre \right)^i \right), \\
\Hhom & = \left( \sum_{i=0}^{\infty} \left(  \Homhomfibre \circ \partial^{\rm hor}   \right)^i \right) \circ \Hhomfibre, \\
\Homhom & = \Homhomfibre \circ \left( \sum_{i=0}^{\infty} \left( \partial^{\rm hor} \circ \Homhomfibre \right)^i \right), \\
\partial ' & = (\partial ' )^{\rm vert} + \Ghomfibre \circ \left( \sum_{i=0}^{\infty} \left( \partial^{\rm hor} \circ \Homhomfibre \right)^i \right) \circ  \partial^{\rm hor} \circ \Hhomfibre.
\end{split}
\end{equation*}
Recall that in each of these expressions the internal sum is in fact finite since the summands vanish for $i$ large enough. 
The norms of $ (\partial ')^{\rm vert}$, $\Ghomfibre$, $\Hhomfibre$, $\Homhomfibre$ and $\partial^{\rm hor}$ are bounded by polynomials in $T$ with coefficients depending on $\Totaldown$ and on the constants $\kappa_{X_{\cellup_i}}$, but independent of $\Gamma_1$. This is thus also the case for the norms of $\Ghom$, $\Hhom$, $\Homhom$ and $\partial'$ the chain maps appearing in the definition of the quality of a rebuilding. Their $\log$ is thus bounded by a constant (independent of $\Gamma_1$) times $(1+\log T)$. This ensures the norms bound condition for every $\Gamma_1$.

\subsection*{Step 2. Counting cells} In remains to ensure a control on the number of cells of $\Totaldown_1'$.
Let  
$\card{d}(X)$ denote the number of $d$-cells of $X$.
We need to exhibit a $\Gamma$-Farber neighborhood $V\subseteq \Sub^{\findex}_\Gamma$ such that if $\Gamma_1$ belongs to $V$ then 
$$\card{j}(\Totaldown'_1) \leq O( \card{j}(\Totaldown_1)/T )\quad \mbox{for each } j \in \{ 0,\ldots , \indice \}.$$ 

Given an arbitrary finite index  $\Gamma_1\leq \Gamma$, we intend to count the number of cells in $\Totaldown_1 '$. For that purpose we introduce $N$ functions $F_i : \Gamma \to \R$, with $i \in \{ 1 , \ldots , N\}$, corresponding to 
the chosen distinct representatives $[\Baseup]:=\{\cellup_1,\ldots, \cellup_N\}\subseteq \Baseup$ of the $\Gamma$-orbits of cells in $\Baseup$, 
 by the following formulas
$$F_i (\gamma) = 
\left\{ 
\begin{array}{ll}
T^{-1} & \text{ if } 
(\Gamma_{\cellup_i}\cap \gamma^{-1}\Gamma_1\gamma )\in U_i\\
1 &  \text{ otherwise},
\end{array} \right.$$
where $U_i$ is the $\Gamma_{\cellup_i}$-Farber neighborhood 
chosen at the beginning of Step 1.
Denoting $q:=\dim(\omega_i)$, recall that to each $\Lambda\in U_i\subseteq \Sub^{\findex}_{\Gamma_{\cellup_i}}$
we have associated a cheap $(\indice-q)$-rebuilding $(X_{1, \gamma\cellup_i},X'_{1, \gamma\cellup_i})$ in equation~\eqref{eq: X'1 gamma omega i} of quality $(T, \kappa_{X_{\cellup_i}})$.  
In particular, the cells bound gives
$\card{\ell}(X'_{1,\gamma\cellup_i})\leq \kappa_{X_{\cellup_i}}\times  F_i (\gamma)\times \card{\ell}(X_{1,\gamma\cellup_i})$, for every $\ell\in \{0, 1, \cdots, \indice -q\}$.

By our choices of fibers rebuildings we thus get:
\begin{equation*} 
\begin{split}
\card{j}(\Totaldown_1 ') & = \sum_{q=0}^{j}\sum_{\substack{\cellup\in \Gamma_1 \bs \Baseup\\ \dim(\cellup)=q}} \card{j-q}(X'_{1,\cellup})= \sum_{\begin{smallmatrix}\cellup\in \Gamma_1 \bs \Baseup\\ \dim(\cellup)\leq j\end{smallmatrix}} \card{j-\dim(\cellup)}(X'_{1,\cellup})\\
&= \sum_{q=0}^{j} \sum_{\substack{\cellup_i\in [\Baseup]\\ \dim(\cellup_i)=q}}
\sum_{\gamma\in \Gamma_1 \bs \Gamma/\Gamma_{\cellup_i}}  \card{j-q}(X'_{1,\gamma\cellup_i})\\
&\leq \sum_{q=0}^{j} \sum_{\substack{\cellup_i\in [\Baseup]\\ \dim(\cellup_i)=q}}
\sum_{\gamma\in \Gamma_1 \bs \Gamma/\Gamma_{\cellup_i}} \kappa_{X_{\cellup_i}}\times F_i (\gamma) \times \card{j-q}(X_{1,\gamma\cellup_i})\\
&\leq {\kappa}^{\Baseup} \sum_{q=0}^{j}  \sum_{\begin{smallmatrix}\cellup_i\in [\Baseup]\\ \dim(\cellup_i)=q\end{smallmatrix}}
 \left(
\underbrace{\sum_
{\substack{\gamma\in \Gamma_1 \bs \Gamma/\Gamma_{\cellup_i}\\{\phantom{F_i(\gamma)=1}}}}
  \frac{\card{j-q}(X_{1,\gamma\cellup_i})}{T}}_{(A)}
+
(1-\frac{1}{T})
\underbrace{\sum_{\substack{\gamma\in \Gamma_1 \bs \Gamma/\Gamma_{\cellup_i}\\ 
F_i(\gamma)=1}
} \card{j-q}(X_{1,\gamma\cellup_i})}_{(B)}
\right)
\end{split}
\end{equation*} 
where ${\kappa}^{\Baseup}:=\max\{ \kappa_{X_{\cellup_i}}: i\in \{1, \cdots, N\}\} $.

In order to bound the part associated with (A), observe that
\[\sum_{q=0}^{j}  \sum_{\begin{smallmatrix}\cellup_i\in [\Baseup]\\ \dim(\cellup_i)=q\end{smallmatrix}}
\sum_{\gamma\in \Gamma_1 \bs \Gamma/\Gamma_{\cellup_i}} \card{j-q}(X_{1,\gamma\cellup_i})
=\card{j}(\Totaldown_1).\]

In order to bound the part associated with (B), observe that\\
\begin{itemize}
\item the covering $\Totaldown_1\to \Totaldown$ induces the covering $X_{1,\gamma\cellup_i}\to X_{\cellup_i}$; and thus 
\[\card{j-q}(X_{1,\gamma\cellup_i})=[\Gamma_{\gamma \cellup_i} : \Gamma_{\gamma \cellup_i}\cap \Gamma_1]
\card{j-q}(X_{\cellup_i});\]
\item $[\Gamma_{\gamma \cellup_i} : \Gamma_{\gamma \cellup_i}\cap \Gamma_1]=[\Gamma_{\cellup_i}:\gamma^{-1}\Gamma_1\gamma\cap \Gamma_{\cellup_i}]$ is exactly the number of $\Gamma_1\bs\Gamma$-classes that are gathered together to form the $\Gamma_1\bs\Gamma/\Gamma_{\cellup_i}$-class of $\gamma$; 
\item the invariance under conjugation of the $\Gamma_{\cellup_i}$-Farber neighborhoods ensures that $F_i(\gamma)=F_i(\gamma \lambda)$ for every $\lambda\in \Gamma_{\cellup_i}$. It follows that
\[
\sum_{\begin{smallmatrix}\gamma\in \Gamma_1 \bs \Gamma/\Gamma_{\cellup_i}\\ F_i(\gamma)=1
\end{smallmatrix}} 
\card{j-q}(X_{1,\gamma\cellup_i})=\sum_{\begin{smallmatrix}\gamma\in \Gamma_1 \bs \Gamma/\Gamma_{\cellup_i}\\ F_i(\gamma)=1
\end{smallmatrix}} 
[\Gamma_{\gamma \cellup_i} : \Gamma_{\gamma \cellup_i}\cap \Gamma_1]
\card{j-q}(X_{\cellup_i})=
\sum_{\begin{smallmatrix}\gamma\in \Gamma_1 \bs \Gamma\\ F_i(\gamma)=1
\end{smallmatrix}} 
\card{j-q}(X_{\cellup_i}).
\]
\end{itemize}

By Lemma \ref{lem-FarberIntersection}, there exists a $\Gamma$-Farber neighborhood $V\subseteq \Sub^{\findex}_\Gamma$ such that for $\Gamma_1 \in V$ and $i \in \{ 1,\ldots, N \}$ we have 
\[\left\vert\left\{\gamma\in \Gamma_1\bs \Gamma \colon F_i(\gamma)=1\right\} 
\right\vert=
\left\vert\left\{\gamma\in \Gamma / \Gamma_1\colon \gamma \Gamma_1\gamma^{-1}\cap \Gamma_{\cellup_i} \not\in U_i\right\}\right\vert\leq T^{-1}[\Gamma:\Gamma_1].\]
Thus
\begin{eqnarray*}
\sum_{\begin{smallmatrix}\gamma\in \Gamma_1 \bs \Gamma/\Gamma_{\cellup_i}\\ F_i(\gamma)=1
\end{smallmatrix}} 
\card{j-q}(X_{1,\gamma\cellup_i})&\leq& T^{-1}[\Gamma:\Gamma_1]\card{j-q}(X_{\cellup_i})
\\
\sum_{q=0}^{j}  \sum_{\begin{smallmatrix}\cellup_i\in [\Baseup]\\ \dim(\cellup_i)=q\end{smallmatrix}}
\sum_{\begin{smallmatrix}\gamma\in \Gamma_1 \bs \Gamma/\Gamma_{\cellup_i}\\ F_i(\gamma)=1
\end{smallmatrix}} 
\card{j-q}(X_{1,\gamma\cellup_i})
&\leq&
\underbrace{\sum_{q=0}^{j}  \sum_{\begin{smallmatrix}\cellup_i\in [\Baseup]\\ \dim(\cellup_i)=q\end{smallmatrix}} T^{-1}[\Gamma:\Gamma_1]\card{j-q}(X_{\cellup_i})
}_{=T^{-1}\card{j}(\Totaldown_1)}.
\end{eqnarray*}
This finishes the proof of Theorem~\ref{thm-AdjRebStack}.
\end{proof}

\subsection{First examples}

\begin{lemma}\label{lem-ZRebuilding}
$\mathbb Z$ has the cheap $\indice$-rebuilding property for any $\indice$. 
\end{lemma}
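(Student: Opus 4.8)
The plan is to exhibit an explicit $K(\Z,1)$ space, namely the circle $S^1$ with its standard CW structure (one $0$-cell and one $1$-cell), and to rebuild its finite covers by hand. A finite index subgroup of $\Z$ is $m\Z$ for some $m\geq 1$, and the associated cover $Y_m\to S^1$ is the circle triangulated with $m$ vertices $v_0,\dots,v_{m-1}$ and $m$ edges $e_0,\dots,e_{m-1}$, where $e_i$ runs from $v_i$ to $v_{i+1}$ (indices mod $m$). The target $Y_m'$ will simply be the standard circle $S^1$ again (one $0$-cell $w$, one $1$-cell $f$), which is a $K(\Z,1)$ with the minimal number of cells.

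First I would write down the cellular maps. Take $\Gcell\colon Y_m\to Y_m'$ to be the quotient map collapsing $Y_m$ onto $S^1$ in the obvious degree-one way: it sends every $v_i$ to $w$ and sends $e_i$ to $f$ (so on chains $\Ghom_0(v_i)=w$, $\Ghom_1(e_i)=f$). Take $\Hcell\colon Y_m'\to Y_m$ to send $w$ to $v_0$ and $f$ to the edge-path $e_0e_1\cdots e_{m-1}$; on chains $\Hhom_0(w)=v_0$ and $\Hhom_1(f)=\sum_{i=0}^{m-1}e_i$. Then $\Ghom\circ\Hhom=\mathrm{id}$ on $C_\bullet(Y_m')$, and $\Hhom\circ\Ghom$ is chain-homotopic to the identity on $C_\bullet(Y_m)$ via the map $\Homhom_0\colon C_0(Y_m)\to C_1(Y_m)$ defined by $\Homhom_0(v_i)=-\sum_{k=0}^{i-1}e_k$ (an edge-path from $v_i$ back to $v_0$), which one checks satisfies $\Hhom\Ghom-\mathrm{id}=\partial\Homhom_0+\Homhom_0\partial$ in degree $0$; geometrically $\Homcell$ is the homotopy sliding $Y_m$ along itself so that each vertex travels back to $v_0$. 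Since $Y_m$ is $1$-dimensional, there is nothing to check in degrees $\geq 2$, and for $\indice>1$ the $\indice$-skeleton of $Y_m$ is still just the $1$-skeleton so the same data works verbatim.

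Next I would estimate the quality. The cells bound: $|Y_m'^{(0)}|=|Y_m'^{(1)}|=1$ while $|Y_m^{(0)}|=|Y_m^{(1)}|=m=[\Z:m\Z]$, so with $T=m$ we get $|Y_m'^{(j)}|\leq T^{-1}|Y_m^{(j)}|$, i.e.\ the cells bound holds with $\kappa=1$. The norms bound: with respect to the $\ell^2$-norms, $\|\Ghom_0\|,\|\Ghom_1\|\leq\sqrt m$ (each is the $1\times m$ all-ones matrix or its transpose, of operator norm $\sqrt m$), $\|\Hhom_0\|=1$, $\|\Hhom_1\|=\sqrt m$, $\|\Homhom_0\|\leq\sqrt m$ (its matrix has at most $m$ nonzero entries each $\pm1$), and $\partial'=0$ on $S^1$ so $\|\partial'\|=0$ and $\log\|\partial'\|$ may be taken $\leq 0$. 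Hence $\log$ of each of these is $\leq\tfrac12\log m\leq 1+\log T$, so the norms bound holds with $\kappa=1$ as well. Therefore $(Y_m,Y_m',\Gcell,\Hcell,\Homcell)$ is an $\indice$-rebuilding of quality $(m,1)=([\Z:m\Z],1)$ for every $\indice$. Since $\Z$ is residually finite and this works uniformly — for any real $T\geq 1$, the Farber neighborhood $U=\{m\Z : m\geq T\}$ (which is genuinely a $\Gamma$-Farber neighborhood: every Farber sequence in $\Sub^{\findex}_\Z$ must have $m\to\infty$) — the conditions of Definition~\ref{def-CheapReb} are met with $X=S^1$ and $\kappa_X=1$.

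There is no serious obstacle here; the one point deserving a line of care is the verification of the chain-homotopy identity for $\Homhom_0$ and the matching of $\Homcell$ with an honest cellular homotopy $[0,1]\times Y_m^{(0)}\to Y_m^{(1)}$ (note $\indice-1=0$ when $\indice=1$, and higher $\indice$ adds no constraints), together with noting that $T$ need only be matched by taking covers of index at least $T$, which is exactly what a Farber neighborhood of $\Z$ guarantees. I would also remark that this lemma is the base case of the inductive scheme and is subsumed by Theorem~\ref{thm-UnipRewiring} (with $\Hirsch=1$), but the direct argument above is included because it is elementary and self-contained.
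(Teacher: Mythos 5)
Your choice of $Y_m' = S^1$ (collapsing all the way to the minimal model) does give a homotopy equivalence, and you have the right $K(\Z,1)$ and Farber neighborhood; but the norm estimates are wrong in a way that cannot be repaired without changing the construction. The chain homotopy $\Homhom_0$ is the $m\times m$ strictly triangular matrix sending $v_i\mapsto -\sum_{k<i}e_k$; that matrix has roughly $m^2/2$ nonzero entries, not $m$, and testing against the normalized all-ones vector gives
\[
\|\Homhom_0\|^2 \;\geq\; \frac1m\sum_{k=0}^{m-1}(m-1-k)^2 \;\approx\; \frac{m^2}{3},
\]
so $\|\Homhom_0\|=\Theta(m)$, not $O(\sqrt m)$. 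The bounds $\|\Ghom_j\|,\|\Hhom_1\|\leq\sqrt m$ that you write are correct, but they are also already fatal: the norms bound requires $\log\|\cdot\|\leq\kappa(1+\log T)$ with $T$ and $\kappa$ fixed \emph{before} the cover is chosen, while the Farber neighborhood $\{m\Z:m\geq T\}$ contains $m$ arbitrarily large compared to $T$. Your inequality $\tfrac12\log m\leq 1+\log T$ fails once $m>e^2T^2$, and the stronger bound $\log m\leq\kappa(1+\log T)$ that $\Homhom_0$ would need is hopeless for fixed $\kappa,T$ as $m\to\infty$.

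This is exactly the ``few cells versus tame norms'' tension flagged after Definition~\ref{def: Rebuilding and quality}: collapsing by the full factor $m$ makes the cell count minimal but forces the homotopy to slide each vertex a distance of order $m$, which costs $\log m$ in norm and blows the budget $\kappa(1+\log T)$. The paper's proof therefore does not collapse all the way; it rebuilds $X_N$ to an intermediate circle $X_m$ with $m\approx N/T$ cells, chosen so that each cell of $X_m$ absorbs roughly $T$ consecutive cells of $X_N$. The homotopy then only moves points a distance $O(T)$, all of the chain maps have norm $O(T+1)$, and both bounds hold with a universal $\kappa$. Your rebuilding is essentially what Theorem~\ref{thm-UnipRewiring} produces for $\Z$ (quality $([\Z:m\Z],\kappa)$, tied to the index), but the cheap $\indice$-rebuilding property asks for quality $(T,\kappa)$ with $T$ a free parameter decoupled from the index, and the two notions are genuinely different: quality $(m,\kappa)$ does not imply quality $(T,\kappa)$ when $m\gg T$.
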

\begin{proof}
We will verify the property for $X=[0,1]/\{0\sim 1\}$ being the circle with one $0$-cell and one $1$-cell. Given a positive integer $m$ we denote by $X_m=[0,m]/\{0\sim m\}$ the $m$-fold cover of $X$ associated to the subgroup $m \Z$ of $\Z$ and equipped with the cell structure lifted from $X$.

Let $T$ be a positive real number. The subset $U \subseteq \mathrm{Sub}^{\findex}_{\Z}$ that consists of all the finite index subgroups $N \Z$ in $\Z$ with $N \geq 4T$ is a $\Z$-Farber neighborhood. Pick some subgroup $N\Z$ in $U$. We now explain how to construct a rebuilding of quality $(T,O (1))$ of $X_N$.
To do so, first pick some sequence of integers 
$$0=a_0<a_1\ldots< a_{m} = N \quad \mbox{with} \quad T/2\leq a_{i+1}-a_i\leq T.$$ 
For each $t\in [0,N)$ let $\iota(t)$ be the integer in $[0 , m-1]$ determined by $t\in [a_{\iota(t)},a_{\iota(t)+1}).$ 
The cellular maps $\Gcell \colon X_N\to X_m$ and $\Hcell \colon X_m\to X_N$ defined by 
\[ \Gcell (t) = \iota(t) + \min\{ t-a_{\iota(t)},1\} \quad \mbox{and} \quad \Hcell (t)=a_{\lfloor t\rfloor}+(a_{\lfloor t\rfloor+1}-a_{\lfloor t\rfloor})(t-\lfloor t\rfloor)\]
are homotopy inverses to each other. The explicit homotopy $\Homcell \colon X_N\times [0,1]\to X_N$ between $1$ and $\Hcell \circ \Gcell$ is given by
\[ \Homcell (t,s)=\min\{a_{\iota (t)}+(t-a_{\iota (t)})(1+s(a_{\iota(t)+1}-a_{\iota(t)}-1)) , a_{\iota(t)+1}\}.\]
One easily verifies that the $\ell^2$-operator norms of the induced chain maps satisfy 
$$\|\Ghom \|, \ \|\Hhom \|, \ \|\Homhom \|=O( T+1) \text{ and } \|\p\|, \|\p'\|\leq 2.$$
The number of cells in $X_m$ is 
$m\in [N/T,2N/T]$
in dimension $0$ and $1$. Hence, $\mathbb Z$ has the cheap $\indice$-rebuilding property for any $\indice \in\mathbb N$.
\end{proof}

As particular cases of Theorem~\ref{thm-AdjRebStack} we obtain:
\begin{example}[Graphs of groups]\label{ex: graphs of groups}
Let $\Gamma$ be a residually finite group that splits as a finite graph of groups with edge and vertex stabilizers that satisfy the cheap $(\indice-1)$-rebuilding property (and cheap $\indice$-rebuilding property respectively) for every $\indice$. 
Considering its Bass-Serre tree $\Baseup$, we obtain that $\Gamma$ itself has the cheap $\indice$-rebuilding property.
This applies for instance to the residually finite Baumslag-Solitar groups $\mathrm{BS}(1,n)$ and $\mathrm{BS}(n,n)$, for any non-zero integer $n$.
\end{example}

\begin{example}[Groups acting on graphs] \label{Ex:T10.10}
If a residually finite group $\Gamma$ acts co-compactly on a \emph{connected} graph $\mathcal{G}$ such that 
\begin{itemize}
\item vertex stabilizers have the cheap $1$-rebuilding property; 
\item edge stabilizers are infinite, 
\end{itemize}
then $\Gamma$ has the cheap $1$-rebuilding property.
\end{example}

\subsection{Further applications}

Recall that a group $\Gamma$ is called \defin{polycyclic} if there is a sequence of subgroups 
$$\Gamma=A_0\triangleright A_1\triangleright \cdots \triangleright A_n=\{1\}$$ 
such that each quotient group $A_i/A_{i+1}$ is cyclic. As a corollary of Theorem~\ref{thm-AdjRebStack} and Lemma \ref{lem-ZRebuilding} we obtain

\begin{corollary}\label{cor-CheapRebGroups} Let $\Gamma$ be a residually finite countable group. The following holds:
\begin{enumerate}

\item \label{cor-stab under commens}  Let $\Gamma'\leq \Gamma$ be a finite index subgroup. Then $\Gamma$ has the cheap $\indice$-rebuilding property if and only if $\Gamma'$ does. 

\item \label{it: cheap k-reb. normal subgroup and F k+1 quotient} If $\Gamma$ has an infinite normal subgroup $N$ such that $\Gamma/N$ is of type $F_{\indice}$ and $N$ has the cheap $\indice$-rebuilding property, then $\Gamma$ has the cheap $\indice$-rebuilding property.

\item $\Z^m$ has the cheap $\indice$-rebuilding property for every $\indice$.

\item Infinite polycyclic groups have the cheap $\indice$-rebuilding property for every $\indice$.
\end{enumerate}
\end{corollary}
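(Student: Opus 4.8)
The plan is to prove Corollary~\ref{cor-CheapRebGroups} by repeated application of Theorem~\ref{thm-AdjRebStack} together with the base case Lemma~\ref{lem-ZRebuilding}, using suitable free actions on contractible or $(\indice-1)$-connected complexes.

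\medskip
\noindent\textbf{Part (1): commensurability invariance.}
First I would treat the ``if'' direction. Suppose $\Gamma'\leq\Gamma$ has finite index and has the cheap $\indice$-rebuilding property. Let $\Gamma$ act on the discrete set $\Baseup=\Gamma/\Gamma'$, viewed as a $0$-dimensional CW-complex. The quotient $\Gamma\bs\Baseup$ is a single point, hence has finite $\indice$-skeleton; $\Baseup$ is $(\indice-1)$-connected only when $\indice=0$, so for $\indice\geq 1$ we must instead take $\Baseup$ to be a finite connected graph on which $\Gamma$ acts with finite stabilizers --- but stabilizers must be infinite for Theorem~\ref{thm-AdjRebStack} to apply (cheap $\indice$-rebuilding for $\indice\geq 0$ forces infiniteness, Remark~\ref{rem: cheap 0-reb prop}). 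The clean route is the other way: the cover $X_n$ of a $K(\Gamma,1)$ restricts to a cover of a $K(\Gamma',1)$ and vice versa, and a rebuilding of one induces a rebuilding of the other via Lemma~\ref{lem:Induced rebuilding to finite cover} and Lemma~\ref{lem-composition}, exactly as packaged in Corollary~\ref{cor: homot equiv. complex}; the Farber neighborhoods match up because a Farber sequence in $\Gamma$ intersects $\Gamma'$ in a Farber sequence and conversely (this uses Lemma~\ref{lem-FarberIntersection} in one direction and an elementary averaging argument in the other). So (1) follows by transporting rebuildings along the finite covers $X_{\Gamma}\leftrightarrow X_{\Gamma'}$ and tracking qualities through Lemmas~\ref{lem:Induced rebuilding to finite cover}, \ref{lem-composition}.

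\medskip
\noindent\textbf{Part (2): normal subgroup with good quotient.}
Here I would take $Q=\Gamma/N$ of type $F_\indice$, let $\basezero$ be a classifying space for $Q$ with finite $\indice$-skeleton, and let $\Baseup=\widetilde{\basezero}$ be its universal cover. Then $\Gamma$ acts on $\Baseup$ through the surjection $\Gamma\to Q$; the action is free on $Q$ but through $\Gamma$ the cell stabilizers are all conjugates of $N$. Thus $\Gamma\bs\Baseup=\basezero$ has finite $\indice$-skeleton (condition (1)), $\Baseup$ is contractible hence $(\indice-1)$-connected (condition (2)), and every cell stabilizer equals $N$, which has the cheap $\indice$-rebuilding property, hence in particular the cheap $(\indice-j)$-rebuilding property for every $j\leq\indice$ (this monotonicity follows from Definition~\ref{def-CheapReb} since a $K(N,1)$ with finite $\indice$-skeleton has finite $(\indice-j)$-skeleton and the norm/cell bounds are inherited). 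The hypothesis that the stabilizer fixes each cell pointwise holds after passing to the barycentric subdivision of $\basezero$, or simply because $Q$ acts freely so $N$ acts trivially on each cell. Theorem~\ref{thm-AdjRebStack} then yields the cheap $\indice$-rebuilding property for $\Gamma$.

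\medskip
\noindent\textbf{Parts (3) and (4): $\Z^m$ and polycyclic groups.}
For $\Z^m$ I would induct on $m$: $\Z^1=\Z$ is Lemma~\ref{lem-ZRebuilding}, and $\Z^m$ has the infinite normal subgroup $\Z^{m-1}$ with quotient $\Z$ of type $F_\indice$ (indeed $F_\infty$), so part (2) gives the inductive step. For infinite polycyclic $\Gamma$ with series $\Gamma=A_0\triangleright A_1\triangleright\cdots\triangleright A_n=\{1\}$, I would induct on the length $n$ of the series. If $A_1$ is infinite it is polycyclic of shorter series length, hence by induction has the cheap $\indice$-rebuilding property, and $\Gamma/A_1$ is cyclic, hence of type $F_\infty$; part (2) finishes. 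If $A_1$ is finite, then $\Gamma/A_1$ is infinite polycyclic of shorter series length and commensurable considerations via part (1) --- more precisely, $A_1$ being finite and normal, $\Gamma$ is virtually torsion-free and a finite-index torsion-free subgroup embeds in $\Gamma/A_1$ up to finite index --- reduce to the shorter case; one must be slightly careful here, replacing $\Gamma$ by a finite-index subgroup intersecting $A_1$ trivially and applying part (1).

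\medskip
\noindent The main obstacle I anticipate is the bookkeeping in part (1) when $A_1$ (or the finite-index subgroup in the polycyclic reduction) forces one to move between a group and its quotient by a \emph{finite} normal subgroup rather than a torsion-free one: cheap rebuilding is only defined for residually finite groups and the classifying spaces in play must be genuine $K(\pi,1)$'s with finite skeleta, so one has to arrange torsion-freeness before invoking Theorem~\ref{thm-AdjRebStack} or Corollary~\ref{cor: homot equiv. complex}. Once the reductions are set up so that every group appearing is residually finite and admits the required finite-skeleton classifying space, each step is a direct citation of Theorem~\ref{thm-AdjRebStack}, Lemma~\ref{lem-ZRebuilding}, and the quality-tracking Lemmas~\ref{lem:Induced rebuilding to finite cover}--\ref{lem-composition}.
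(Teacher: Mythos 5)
Your treatment of parts (2), (3), and (4) follows essentially the same route as the paper: for (2), take $\basezero$ a $K(\Gamma/N,1)$ with finite $\indice$-skeleton, let $\Gamma$ act on $\widetilde\basezero$ through $\Gamma\to\Gamma/N$ so that every cell stabilizer is $N$, and apply Theorem~\ref{thm-AdjRebStack}; for (3) and (4), induct using (2) and Lemma~\ref{lem-ZRebuilding}. Your parenthetical observation that the cheap $\indice$-rebuilding property implies the cheap $(\indice-j)$-rebuilding property for $j\leq\indice$ (by cellularity of $\Gcell$, $\Hcell$, $\Homcell$ the restrictions to lower skeleta land in lower skeleta, and the cells and norms bounds are inherited) is a point the paper uses implicitly and is worth stating. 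The slight fussiness in (4) about a finite $A_1$ is handled in the right spirit: pass to a finite-index subgroup intersecting $A_1$ trivially and invoke (1).

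However, your proof of the converse direction of part (1) has a genuine gap. You abandon the Theorem~\ref{thm-AdjRebStack} route too early: the issue is not that ``stabilizers must be infinite'' but that you chose the wrong complex. After replacing $\Gamma'$ by a smaller finite-index subgroup that is normal in $\Gamma$ (which still has the cheap $\indice$-rebuilding property, by the easy direction), one takes any $(\indice-1)$-connected $\Gamma/\Gamma'$-CW-complex $\Baseup$ with a free $\Gamma/\Gamma'$-action and finitely many orbits of cells in each dimension $\leq\indice$ (e.g. the universal cover of a $K(\Gamma/\Gamma',1)$ with finite $\indice$-skeleton, which exists since finite groups are of type $F_\infty$), and lets $\Gamma$ act on $\Baseup$ \emph{through} the quotient map $\Gamma\to\Gamma/\Gamma'$. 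Then every cell stabilizer in the $\Gamma$-action equals $\Gamma'$, which is infinite and has the cheap $(\indice-j)$-rebuilding property for all $j\leq\indice$, and all hypotheses of Theorem~\ref{thm-AdjRebStack} hold. Your alternative cover-transport argument does not close the gap: Lemma~\ref{lem:Induced rebuilding to finite cover} and Corollary~\ref{cor: homot equiv. complex} push rebuildings \emph{down} a covering map (from a base to a cover), whereas for an arbitrary finite-index $\Gamma_n\leq\Gamma$, the cover of $X'$ most naturally attached to $\Gamma_n$ corresponds to $\Gamma_n\cap\Gamma'$ and lies \emph{above} $X_n$, not below it, so a rebuilding of that cover does not yield a rebuilding of $X_n$. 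Moreover $\pi_1(X)\not\cong\pi_1(X')$, so Corollary~\ref{cor: homot equiv. complex} does not apply to the pair $(X,X')$ directly.

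One more minor point on the easy direction of (1): you correctly identify that one needs a Farber sequence of subgroups of $\Gamma'$ to remain Farber when viewed as a sequence of subgroups of $\Gamma$. This is indeed an elementary averaging argument of the same flavor as Lemma~\ref{lem-FarberIntersection} (decompose $\Gamma/\Gamma''_n$ into $\Gamma'$-orbits and bound ${\rm fx}_{\Gamma,\gamma}$ by a finite average of ${\rm fx}_{\Gamma',\gamma'}$ for finitely many conjugates $\gamma'$ of $\gamma$ lying in $\Gamma'$), but Lemma~\ref{lem-FarberIntersection} itself is a statement in the opposite direction and is not what is being invoked; the paper skips this detail altogether.
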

\begin{proof} 1. If $\Gamma$ has the cheap $\indice$-rebuilding property, then the witnesses $(X, {\kappa}, U(T))$ for $\Gamma$ yield witnesses for $\Gamma'$.
Conversely, if $\Gamma'$ has the cheap $\indice$-rebuilding property one can assume, up to passing to a further finite index subgroup, that it is normal in $\Gamma$. Then pick any $(\indice-1)$-connected CW-complex $\Baseup$ with a free action of $\Gamma/\Gamma'$ that is finite in every dimension $\leq \indice$ and apply Theorem~\ref{thm-AdjRebStack}
to the $\Gamma$-action on $\Baseup$ defined through the quotient map $\Gamma\to \Gamma/\Gamma'$.

2. Let $\basezero$ be a classifying space (CW-complex) $\basezero$ for $\Gamma / N$ with finite $\indice$-skeleton.
The group $\Gamma$ acts on its universal cover $\widetilde{\basezero}$ with cell-stabilizers all equal to $N$ and one can apply Theorem~\ref{thm-AdjRebStack} to this action. 

3. and 4. finally follow from Lemma \ref{lem-ZRebuilding} and 2. by induction. 
\end{proof}

\begin{example}
Corollary~\ref{cor-CheapRebGroups}, item~\eqref{it: cheap k-reb. normal subgroup and F k+1 quotient} implies for instance that $\SL_d(\Z)\ltimes \Z^d$ has the cheap $\indice$-rebuilding property for every $\indice$. The same is true for the standard braid groups since they have an infinite cyclic center.
\end{example}

\subsection{Chain-commuting groups}
The following proposition connects the cheap $1$-rebuilding property to the rewirings of chain-commuting (a.k.a. right-angled) groups considered in \cite{AGN}.

\begin{proposition} \label{P:rag}
Let $\Gamma$ be a residually finite group that is \defin{chain-commuting}, i.e., that admits a finite generating list $\{ \gamma_1 , \ldots , \gamma_m \}$ of elements of infinite order such that $[\gamma_i , \gamma_{i+1} ] = 1$ for all $i \in \{1 , \ldots , m-1 \}$. Then $\Gamma$ has the cheap $1$-rebuilding property. 
\end{proposition}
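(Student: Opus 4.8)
The goal is to produce, for a chain-commuting group $\Gamma = \langle \gamma_1, \ldots, \gamma_m\rangle$ with $[\gamma_i, \gamma_{i+1}] = 1$, a $K(\Gamma,1)$ complex $X$ with finite $1$-skeleton and a constant $\kappa_X$ so that covers $Y \to X$ whose fundamental group lies in a suitable Farber neighborhood admit $1$-rebuildings of quality $(T, \kappa_X)$. Since the cheap $1$-rebuilding property only concerns the $2$-skeleton (rebuildings of $1$-complexes up to homotopy in dimension $\le 2$; see Lemma~\ref{lem: exist-> forall in def CPR} and Definition~\ref{def-CheapReb}), I only need to control a presentation $2$-complex. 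The natural model is the presentation complex $X$ on generators $\gamma_1, \ldots, \gamma_m$ with the commutator relators $[\gamma_i, \gamma_{i+1}]$ for $i = 1, \ldots, m-1$ together with whatever further relators are needed to make $\pi_1(X) \simeq \Gamma$ (these extra relators cost only finitely many $2$-cells, which do not affect the quality bounds in degree $\le 1$, and are dealt with exactly as the $(\indice+1)$-cells in the proof of Theorem~\ref{Tmain}).

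**Main step: a graph-of-groups decomposition.** The key structural observation is that a chain-commuting group is built up by iterated amalgamated products along infinite cyclic (or larger) subgroups: set $\Gamma_1 = \langle \gamma_1\rangle \cong \Z$, and inductively $\Gamma_{k+1} = \Gamma_k *_{\langle \gamma_k \rangle} \langle \gamma_k, \gamma_{k+1}\rangle$, where $\langle \gamma_k, \gamma_{k+1}\rangle$ is a quotient of $\Z^2$ (abelian, hence has the cheap $\indice$-rebuilding property for all $\indice$ by Corollary~\ref{cor-CheapRebGroups}), and $\langle \gamma_k\rangle \cong \Z$ is infinite. Then $\Gamma$ is a quotient of $\Gamma_m$, and $\Gamma_m$ acts on its Bass--Serre tree $\Baseup$ with infinite cyclic edge stabilizers and vertex stabilizers that are either $\Z$ or two-generator abelian groups — all of which have the cheap $1$-rebuilding property. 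Applying Example~\ref{Ex:T10.10} (groups acting on connected graphs with infinite edge stabilizers and vertex stabilizers having cheap $1$-rebuilding) — equivalently Theorem~\ref{thm-AdjRebStack} with $\indice = 1$ — gives that $\Gamma_m$ has the cheap $1$-rebuilding property. Since $\Gamma$ is a quotient of $\Gamma_m$ by a normal subgroup, I would instead run the argument directly: realize $\Gamma$ itself (not $\Gamma_m$) as acting on a connected graph $\mathcal{G}$ built from the chain structure, with the same infinite edge stabilizers and abelian vertex stabilizers, so that Example~\ref{Ex:T10.10} applies verbatim.

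**The obstacle.** The delicate point is that the chain $\langle \gamma_i, \gamma_{i+1}\rangle$ subgroups overlap (the edge group $\langle \gamma_{k}\rangle$ is shared between consecutive vertex groups), and one must check that the amalgam built from the chain genuinely surjects onto $\Gamma$ with the extra relators only adding finitely many $2$-cells, and that the vertex/edge subgroups are as claimed (in particular that the $\gamma_i$ really have infinite order and that $\langle \gamma_k \rangle$ embeds in $\langle \gamma_k, \gamma_{k+1}\rangle$). This is exactly the kind of gap flagged in the acknowledgements regarding an earlier version; the correct bookkeeping is carried out in Uschold's paper \cite{Usc22}, and I would follow that treatment. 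Given the graph-of-groups / graph action in hand, the rest is an immediate invocation of Theorem~\ref{thm-AdjRebStack} (via Example~\ref{Ex:T10.10}): conditions (1)--(3) there hold because $\mathcal{G}$ is connected and cocompact under $\Gamma$, it is $0$-connected (hence $(\indice-1)$-connected for $\indice = 1$), and every cell stabilizer is an infinite abelian group with the cheap $1$-rebuilding property. Hence $\Gamma$ has the cheap $1$-rebuilding property, completing the proof.
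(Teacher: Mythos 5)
Your construction is genuinely different from the paper's, and it does not quite close. You decompose $\Gamma$ using an iterated amalgamated product $\Gamma_1 = \langle\gamma_1\rangle$, $\Gamma_{k+1} = \Gamma_k *_{\langle\gamma_k\rangle} \langle\gamma_k,\gamma_{k+1}\rangle$, and invoke the Bass--Serre tree of $\Gamma_m$. But as you yourself note, $\Gamma$ is only a \emph{quotient} of $\Gamma_m$ (typically a proper quotient, since a chain-commuting group can have many relations besides the consecutive commutators), and the Bass--Serre tree of $\Gamma_m$ carries no natural $\Gamma$-action: the kernel of $\Gamma_m \twoheadrightarrow \Gamma$ acts nontrivially on that tree. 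Your fallback --- ``realize $\Gamma$ itself as acting on a connected graph $\mathcal{G}$ built from the chain structure'' --- is precisely the missing step, not a step you have carried out. The plan you sketch needs an actual $\Gamma$-graph, and you never write one down.

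The paper's proof supplies exactly that ingredient, and it is a different object from a Bass--Serre tree: the vertex set of $\mathcal{G}$ is the collection of \emph{all} $\Gamma$-conjugates of the cyclic subgroups $H_i = \langle\gamma_i\rangle$, and two distinct conjugates $gH_ig^{-1}$, $hH_jh^{-1}$ span an edge iff they generate an abelian subgroup. The action of $\Gamma$ is by conjugation, connectedness follows from the chain hypothesis together with the identity $gH_jg^{-1} = (g\gamma_j^{-1})H_j(g\gamma_j^{-1})^{-1}$, and the cell stabilizers are the normalizers $N_\Gamma(gH_ig^{-1})$, which contain an infinite cyclic normal subgroup. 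In particular the stabilizers need not be abelian (contrary to what your sketch implicitly assumes); they get the cheap $1$-rebuilding property from Corollary~\ref{cor-CheapRebGroups}~\eqref{it: cheap k-reb. normal subgroup and F k+1 quotient}, \emph{provided} the normalizers are finitely generated, and that finite-generation hypothesis is precisely the gap the paper flags in the ``Nota'' and that Uschold closes in \cite{Usc22}. You correctly anticipate that there is a gap here and that it is resolved in Uschold's paper, which is to your credit, but your route through $\Gamma_m$ does not reach the statement; the actual construction is the commutation graph on conjugates, and that part cannot be replaced by hand-waving.
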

Nota: Our proof below is incomplete and necessitate the additional assumption that the normalizers of the subgroups $H_j$ are finitely generated, which is not granted in general.
A complete proof of this proposition can be found in an article of Matthias Uschold \cite{Usc22}.
\begin{proof}[Proof of Proposition~\ref{P:rag}] According to Example~\ref{Ex:T10.10} it suffices to produce a cocompact action $\Gamma \acting \mathcal{G}$ on a \emph{connected} graph such that 
\begin{itemize}
\item vertex stabilizers have the $1$-rebuilding property, and
\item edge stabilizers are infinite. 
\end{itemize} 

We build the graph $\mathcal{G}$ as follows: for each $i \in \{1 , \ldots , m \}$, let $H_i$ be the (free abelian) infinite subgroup of $\Gamma$ generated by $\gamma_i$. The vertex set of $\mathcal{G}$ consists of all the conjugates of the $H_i$ in $\Gamma$, and two distinct conjugates $gH_ig^{-1}$ and $hH_jh^{-1}$ represent an edge if and only if the subgroup they generate is abelian. 

The conjugation action of $\Gamma$ on the set of its subgroups induces an action of $\Gamma$ on $\mathcal{G}$. Observe that, by hypothesis, the vertices corresponding to $H_1, \ldots , H_m$ belong to the same connected component of $\mathcal{G}$. It follows that for every $g \in \Gamma$ the vertices $gH_1g^{-1}, \ldots , gH_m g^{-1}$ also belong to one connected component of $\mathcal{G}$. Now for any $j \in \{ 1 , \ldots , m \}$ we have 
$$g H_j g^{-1} = (g\gamma_j^{-1}) H_j (g \gamma_j^{-1})^{-1},$$
and, since $\{ \gamma_1 , \ldots , \gamma_m \}$ is a generating set of $\Gamma$, we conclude that the graph $\mathcal{G}$ is connected. 

The stabilizer of each vertex, or each edge, contains a normal subgroup that is a free infinite abelian subgroup (a conjugate of some $H_j$). These stabilizers therefore have the cheap $1$-rebuilding property (by Corollary \ref{cor-CheapRebGroups}(2) and Lemma \ref{lem-ZRebuilding}). 

Finally, the quotient $\Gamma \backslash \mathcal{G}$ is a finite graph (on $m$ vertices).Theorem~\ref{thm-AdjRebStack} applies to the action of $\Gamma$ on $\mathcal{G}$ and we conclude that $\Gamma$ itself has the cheap $1$-rebuilding property.
\end{proof}

The cheap $\indice$-rebuilding property goes beyond chain-commuting groups, even for $\indice =1$. We now give an example.

\begin{example}[Non chain-commuting examples $\Lambda\ltimes \Z^2$]
\label{ex: non rag semidirect F x Z2}
If a free group $\Lambda\leq \SL_2(\Z)$ does not contain any non-trivial unipotent element, 
then any embedding of $\Z^2$ in the canonical semi-direct product $\Lambda\ltimes \Z^2$ lies inside the obvious normal $\Z^2$:
If $(\lambda_1,a_2), (\lambda_2, a_2)\in \Lambda\ltimes \Z^2$ commute (and generate $\Z^2$), then up to considering some power of them, one can assume 
$\lambda_1=\lambda_2$. It follows that $\lambda_1^{-1} a_2^{-1} a_1 \lambda_1=a_2^{-1} a_1$, so that $\lambda_1$ is unipotent, thus trivial.

In order to find such a subgroup $\Lambda$ of $\SL(2,\Z)$, it's enough to consider a finite cover of the modular orbifold $\SL_2(\Z) \backslash \HH^2$  that is a surface $S$ of genus $g\geq 2$ (with cusps). The fundamental group of the compact surface $S'$ obtained by adding one point to each cusp of $S$ surjects onto the free group $\FF_2$. Finally pick a  pull-back  of this free group along the maps $\SL_2(\Z)\geq \pi_1(S) \twoheadrightarrow \pi_1(S')\twoheadrightarrow \FF_2$. It does not contain any non-trivial unipotent element.

Such a semi-direct product $\Gamma=\Lambda\ltimes \Z^2$ is thus not chain-commuting while it has the cheap $\indice$-rebuilding property for all~$\indice$ by
Corollary~\ref{cor-CheapRebGroups}~\eqref{it: cheap k-reb. normal subgroup and F k+1 quotient}.
\end{example}

\subsection{Artin groups}

Let $I$ be a finite set. A \defin{Coxeter matrix} $M=(m_{ij})$ on $I$ is an $I$-by-$I$ symmetric matrix $M=(m_{ij})$ with entries in $\mathbb{N} \cup \{ \infty \}$ such that $m_{ii}=1$ for all $i$ and such that whenever $i \neq j$, $m_{ij}\geq 2$. Associated to $M$ are 
\begin{itemize}
\item a \defin{Coxeter group} $W_I$, or simply $W$, given by the presentation 
$$W = \langle s_i, \  i \in I \; | \; (s_i s_j )^{m_{ij}} = 1, \ (i,j) \in I^2 \rangle$$
with the convention that the relation is ignored if $m_{ij} = \infty$, 
\item a \defin{Artin group} $A_I$, or simply $A$, given by the presentation 
$$A = \langle a_i, \ i \in I \; | \; \underbrace{a_i a_j a_i a_j a_i \cdots}_{m_{ij}} = \underbrace{a_j a_i a_j a_i a_j \cdots}_{m_{ij}}, \ (i,j) \in I^2 \rangle, \quad \mbox{and}$$
\item a simplicial complex, called the \defin{nerve} of $M$, that we define below.
\end{itemize} 
If the corresponding Coxeter group $W$ is finite we say that $A$ is \defin{spherical}. These Artin groups are sometimes also referred to as `Artin groups of finite type' in the literature.

If $J \subseteq I$, let $M_J$ denote the minor of $M$ whose rows and columns are indexed by $J$, and let $W_J$, resp. $A_J$, be the corresponding Coxeter group, resp. Artin group. It is known \cite{Bourbaki} that the natural map 
$W_J \to W_I$, resp. $A_J \to A_I$, is injective and hence $W_J$, resp. $A_J$, can be identified with the subgroup of $W_I$, resp. $A_I$, generated by $\{ s_i \; : \; i \in J \}$, resp. $\{ a_i \; : \; i \in J \}$.

The \defin{nerve} of $M$ is the simplicial complex $L$ whose vertex set is $I$ and a subset $\sigma \subseteq I$ spans a simplex if and only if $W_\sigma$ is finite. 

Charney and Davis \cite[Section 3]{CharneyDavis} have associated to $M$ a simple complex of groups $\mathcal{C}$, in the sense of \cite[Definition 12.11]{BridsonHaefliger}. The construction goes as follows: first consider the partially ordered set (poset) 
$$\mathcal{P} = \{ J \subseteq I \; : \; W_J \mbox{ is finite} \}$$
ordered by inclusion. To any $\sigma \in \mathcal{P}$ one associates the Artin group $A_\sigma$, and for each $\tau \leq \sigma$ the associate homomorphism $A_\tau \to A_\sigma$ is the natural inclusion. The geometric realization of $\mathcal{C}$ is simply connected (in fact contractible), since the empty set is an initial object for the poset $\mathcal{P}$. In particular, the fundamental group of the complex of groups $\mathcal{C}$ is 
$A$ (the direct limit of the $A_\sigma$).
The complex of groups $\mathcal{C}$ is developable, that is, it arises from the action of a group on a simplicial complex. More precisely, let 
$$A \mathcal{P} = \{ g A_J \; : \; g \in A , \ J \in \mathcal{P} \},$$
ordered by inclusion. The stabilizer of $gA_j \in A \mathcal{P}$ is the subgroup $gA_J g^{-1}$; hence, $A$ acts without inversion on $A \mathcal{P}$. Let 
$$X = | A \mathcal{P} '|$$ 
be the geometric realization of the derived poset of $A \mathcal{P}$; it is the simplicial complex, referred to as the  `modified Deligne complex' in \cite{CharneyDavis}, where $k$-simplices correspond to totally ordered chains of elements in $A\mathcal{P}$. The complex of groups $\mathcal{C}$ explicitely arises from the action of $A$ on $X$.

 It is conjectured that $X$ is contractible and this conjecture is equivalent to the celebrated \defin{$K(\pi,1)$ conjecture} for Artin groups. The main theorem of 
\cite{CharneyDavis} is that this conjecture holds when the nerve $L$ is a \defin{flag complex},  i.e., when the Coxeter group $W_J$ associated to every clique $J$ is finite.
This holds for instance if $A$ is either spherical or a right-angled Artin group. The conjecture also holds whenever  $L$ has dimension  at most $1$.  Recently, Paolini and Salvetti proved that the $K(\pi,1)$ conjecture holds for all affine Artin Groups \cite{Paolini-Salvetti-2021}.

\begin{theorem}[Artin groups with $(\indice-1)$-connected nerve]
\label{Ex:Artin groups}
Let $M$ be a Coxeter matrix whose nerve $L$ is $(\indice-1)$-connected. Suppose that the $K(\pi,1)$-conjecture holds for the associated Artin group $A$ and that $A$ is residually finite. Then $A$ has the cheap $\indice$-rebuilding property.
\end{theorem}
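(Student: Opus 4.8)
The plan is to verify the three hypotheses of Theorem~\ref{thm-AdjRebStack} for the action of the Artin group $A$ on the modified Deligne complex $X$ described just above, with the stopping degree $\indice$. By the $K(\pi,1)$ conjecture hypothesis, $X$ is contractible, hence in particular $(\indice-1)$-connected, which is condition (2). The action of $A$ on $X$ is without inversion (it arises from a developable complex of groups), and the complex $X$ is the geometric realization of the derived poset $A\mathcal{P}'$, so after the standard barycentric subdivision any element stabilizing a simplex fixes it pointwise --- this is the hypothesis on the action required in Theorem~\ref{thm-AdjRebStack}. The quotient $A\backslash X$ is the geometric realization of the derived poset $\mathcal{P}'$ of the finite poset $\mathcal{P}=\{J\subseteq I : W_J \text{ finite}\}$, hence a finite simplicial complex, so $A\backslash X$ has finite $\indice$-skeleton (in fact finite $d$-skeleton for all $d$): this is condition (1).

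The substantive point is condition (3): the stabilizer of a $j$-cell of $X$ must have the cheap $(\indice-j)$-rebuilding property. The stabilizer of a simplex of $X$ corresponding to a chain $g_0 A_{J_0}\subsetneq \cdots \subsetneq g_j A_{J_j}$ is conjugate to $A_{J_0}$ for the minimal spherical subset $J_0\in\mathcal{P}$ appearing in the chain; in particular it is a spherical Artin group $A_{J_0}$ (up to conjugacy, and up to a finite-index refinement if needed to handle the pointwise-fixing condition, which does not affect the cheap rebuilding property by Corollary~\ref{cor-CheapRebGroups}\eqref{cor-stab under commens}). So it suffices to show: \emph{every spherical Artin group $A_J$ with $A$ residually finite has the cheap $m$-rebuilding property for every $m\geq 0$}. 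Spherical Artin groups have infinite cyclic center (this is the classical fact that the ``Garside element'' $\Delta$ generates, up to finite index, the center of an irreducible spherical Artin group, and for a general spherical Artin group one takes the product over the irreducible factors), so $A_J$ has an infinite normal cyclic subgroup $N\cong\Z$ with $A_J/N$ of type $F_\infty$ --- indeed $A_J$ is of type $F_\infty$ since it has a finite $K(\pi,1)$ (the Salvetti complex), and $\Z$ has type $F_\infty$, so the quotient does too. By Lemma~\ref{lem-ZRebuilding} the group $\Z$ has the cheap $m$-rebuilding property for all $m$, and by Corollary~\ref{cor-CheapRebGroups}\eqref{it: cheap k-reb. normal subgroup and F k+1 quotient} this property lifts from the infinite normal subgroup $N$ to $A_J$ provided the quotient $A_J/N$ is of type $F_m$, which holds. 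Residual finiteness of $A_J$ follows from residual finiteness of $A$, since $A_J$ is a subgroup of $A$.

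With all three hypotheses of Theorem~\ref{thm-AdjRebStack} verified for $A\acting X$ at level $\indice$, we conclude that $A$ itself has the cheap $\indice$-rebuilding property, which is the assertion of Theorem~\ref{Ex:Artin groups}. One small bookkeeping issue to handle carefully is the relationship between the degree of a simplex in (the barycentric subdivision of) $X$ and which spherical subset $J$ governs its stabilizer: a $j$-simplex of $X'$ corresponds to a chain of $j+1$ cosets, and its stabilizer is conjugate to $A_{J_0}$ where $J_0$ is the smallest of the corresponding subsets; one must check that requiring the cheap $(\indice-j)$-rebuilding property of such a stabilizer is automatic because \emph{spherical Artin groups have the cheap $m$-rebuilding property for all $m$}, so the degree constraint is never a real constraint. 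The main obstacle is thus not the cell-counting or connectivity bookkeeping but rather making sure the center-of-spherical-Artin-group input and the type-$F_\infty$ input are correctly invoked; both are standard (the center computation is due to Brieskorn--Saito and Deligne, and the finiteness of the Salvetti complex gives type $F_\infty$), so no genuine difficulty is expected. The connectedness of the nerve $L$ enters only through the $(\indice-1)$-connectedness needed in condition (2), which here is subsumed by the stronger contractibility of $X$ coming from the $K(\pi,1)$ conjecture; the hypothesis ``$L$ is $(\indice-1)$-connected'' is what one would use in the \emph{absence} of the $K(\pi,1)$ conjecture to still get $(\indice-1)$-connectedness of $X$, and it is harmless to keep it in the statement.
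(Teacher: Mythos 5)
Your proposal has a genuine gap, and it is precisely the one the paper is designed to avoid. You propose to apply Theorem~\ref{thm-AdjRebStack} directly to the action $A \acting X$ on the full modified Deligne complex. As you correctly identify, the stabilizer of a simplex corresponding to the chain $g_0 A_{J_0} \subsetneq \cdots \subsetneq g_j A_{J_j}$ is conjugate to $A_{J_0}$ where $J_0$ is the \emph{smallest} member of the chain. But $\emptyset \in \mathcal{P}$, and $A_\emptyset = \{1\}$. Thus $X$ contains cells (in particular vertices corresponding to cosets $gA_\emptyset$) whose stabilizer is the \emph{trivial} group. By Remark~\ref{rem: cheap 0-reb prop}, finite groups --- in particular the trivial group --- fail the cheap $m$-rebuilding property for every $m \geq 0$, so condition~(3) of Theorem~\ref{thm-AdjRebStack} is violated for $\Baseup = X$. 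Your blanket claim that ``every spherical Artin group $A_J$ has the cheap $m$-rebuilding property'' is false for $J = \emptyset$: you need $J$ \emph{non-empty} so that $A_J$ has the infinite cyclic Garside center you invoke.

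This is exactly why the paper does \emph{not} work with $X$: it passes to the $A$-invariant sub-complex $\Baseup \subseteq X$ associated to the sub-poset $\mathcal{P}_0$ of \emph{non-empty} spherical subsets, so that all cell stabilizers become non-trivial spherical Artin groups. The price paid is that $\Baseup$ is no longer contractible. The complement $X \setminus \Baseup$ is a disjoint union of open stars of the deleted vertices, each star being a cone over a copy of the nerve $L$; so by Mayer--Vietoris, Hurewicz, and van Kampen, the $(\indice-1)$-connectedness of $\Baseup$ is equivalent to (and deduced from) the $(\indice-1)$-connectedness of $L$. This shows that your concluding remark --- that the nerve hypothesis is ``subsumed by the stronger contractibility of $X$'' and ``harmless to keep'' --- is wrong: it is an essential ingredient, not a redundant one. (This is also consistent with the Avramidi--Okun--Schreve computation $b_j^{(2)}(\Gamma;K) = \overline{b}_{j-1}(L;K)$ for RAAGs quoted in the introduction: the conclusion is sharp in the connectivity of $L$, so it cannot be derived from the $K(\pi,1)$ conjecture alone.) Finally, a secondary point: the spherical case ($L$ a simplex) must be treated separately, as the paper does via Bestvina's theorem and Corollary~\ref{cor-CheapRebGroups}, since in that case the vertex of $\Baseup$ corresponding to $A_I$ has stabilizer equal to all of $A$, making a direct application of Theorem~\ref{thm-AdjRebStack} circular. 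Your internal argument for non-trivial spherical Artin groups (Garside center $\cong\Z$ plus the finiteness-properties exact-sequence argument for the quotient) is fine and is a mild variant of the paper's use of Bestvina; but it must be restricted to $J\neq\emptyset$.
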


\begin{proof} We first consider the case where $A$ is spherical, or equivalently where $L$ is a simplex. In \cite[Main Theorem]{Bestvina}, Bestvina proves that $A$ is then commensurable with $G \times \Z$, where $G=A/\langle \Delta^2 \rangle$ is the quotient of $A$ by some central element $\Delta^2$, and moreover $G$ acts cocompactly on a contractible simplicial complex $X$ and the action is transitive on the vertices with stabilizers $\Z/2\Z$. It follows that $G$ is of type $F_\indice$ for every $\indice$ (see e.g., \cite[Theorem 7.3.1]{Geoghegan}). Applying Corollary~\ref{cor-CheapRebGroups}~\eqref{it: cheap k-reb. normal subgroup and F k+1 quotient} we get that 
 $G \times \Z$ has the cheap $\indice$-rebuilding property for every $\indice$. By Corollary~\ref{cor-CheapRebGroups}~\eqref{cor-stab under commens}, so has $A$. 
 
Consider now the case where $A$ is of general type. We would like to apply our Theorem~\ref{thm-AdjRebStack} to the action of $A$ on the modified Deligne complex $X$; but the stabilizers of the vertices of the form $A_J$ for $J=\emptyset$ are trivial.
We thus modify $X$ as follows:
Let $\Baseup \subseteq X$ be the sub-complex associated to the sub-poset $\mathcal{P}_0$ of $\mathcal{P}$ consisting of the non-empty subsets $J \subseteq I$. The $A$-action on $X$ preserves $\Baseup$ and the cell stabilizers are now isomorphic to the non trivial spherical Artin groups $A_J$, $J\not=\emptyset$. The complement $X \setminus\Baseup$ is the disjoint union of the open stars $St(v)$ of the vertices $v$ of $X$  corresponding to the elements $g A_\emptyset \in A \mathcal{P}$ ($g \in A$). 
Each such star $St(v)$  is the cone over the link of $v$ in $X$ and this link is isomorphic to the simplicial complex $|\mathcal{P}_0'|$ associated to the poset of non-empty spherical subsets of $I$, i.e., the nerve $L$ of $M$. By hypothesis the latter is $(\indice-1)$-connected and $X$ is contractible, it therefore follows from Mayer-Vietoris and Hurewicz Theorems (and van Kampen Theorem in degree $1$) that $\Baseup$ is $(\indice-1)$-connected.   Then a direct application of Theorem~\ref{thm-AdjRebStack} shows that the Artin group $A$ has the cheap $\indice$-rebuilding property as soon as it is residually finite.
\end{proof}

\begin{example}
The direct product $\Gamma=\FF_2\times \FF_2\times \cdots \times \FF_2$ of $k$ copies of the free group on $2$ generators has the cheap $(k-1)$-rebuilding property.
This follows from Theorem~\ref{Ex:Artin groups} when $k\geq 2$.
The nerve of these right-angled Artin groups has the homotopy type of a $(k-1)$-dimensional sphere.

\end{example}

\begin{remark}
In fact, to obtain Theorem \ref{Ex:Artin groups}, instead of the $K(\pi,1)$ conjecture, it's enough to assume that the modified Deligne complex $X$ is $(\indice-1)$-connected: this is the condition in the proof for $\Baseup$ to be $(\indice-1)$-connected.

\end{remark}

\subsection{Homology growth}

\begin{theorem}\label{prop-CheapRebTorsion}
Let $\Gamma$ be a finitely presented residually finite group of type $F_{\indice+1}$ that has the cheap $\indice$-rebuilding property for some non-negative integer $\indice$. Then, for every Farber sequence $(\Gamma_n)_{n \in \mathbb N}$, coefficient field $K$ and $0\leq j \leq \indice$ we have 
\[ \lim_{n\to \infty} \frac{\dim_K H_j (\Gamma_n , K)}{[\Gamma : \Gamma_n]} =0 \quad \mbox{and} \quad \lim_{n\to \infty} \frac{\log | H_j (\Gamma_n,\mathbb Z)_{\rm tors}|}{[\Gamma:\Gamma_n]}=0.\]
\end{theorem}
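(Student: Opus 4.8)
The plan is to combine the cheap $\indice$-rebuilding property with Gabber's inequality (Proposition~\ref{Prop: Gabber}) and the trivial bound \eqref{E:bettiK}. First I would fix a Farber sequence $(\Gamma_n)$, a coefficient field $K$, and a degree $j \leq \indice$. Since $\Gamma$ is of type $F_{\indice+1}$ and has the cheap $\indice$-rebuilding property, there is a $K(\Gamma,1)$ space $X$ with finite $\indice$-skeleton and a constant ${\kappa}_X$ witnessing the property; by attaching finitely many $(\indice+1)$-cells (using type $F_{\indice+1}$) I may assume $X$ has finite $(\indice+1)$-skeleton and is $\indice$-aspherical, so that for every finite index subgroup $\Gamma' \leq \Gamma$ the cover $X'$ computes $H_*(\Gamma', -)$ up to degree $\indice$. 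Note that a rebuilding $(X_n, Y_n)$ of the $\indice$-skeleton does not a priori control $\partial_{j+1}$ for $j = \indice$; but for $j \leq \indice - 1$ the boundary $\partial'_{j+1}$ is part of the rebuilding data, and for $j = \indice$ one needs the companion statement — I would invoke the Quantitative Rebuilding Theorem~\ref{Tmain} (or re-run its last paragraph) to attach the $(\indice+1)$-cells to $Y_n$ with boundary norm polynomial in the relevant index, so that Gabber's inequality applies in degree $\indice$ as well. Actually, cleaner: since $\Gamma$ is of type $F_{\indice+1}$, I can treat the top degree separately by the observation in the proof of Theorem~\ref{Tmain} that the $(\indice+1)$-cells can be reattached to the rebuilt complex with boundary norm bounded by $\|f_1\|\|g\| = O(T^{O(1)})$ where $T$ is the quality parameter.

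Next, the core estimate. Fix $\varepsilon > 0$. Given $T \geq 1$ (to be chosen), the cheap $\indice$-rebuilding property provides $n_0$ such that for $n \geq n_0$ the cover $X_n = \Gamma_n \backslash \tilde X$ admits an $\indice$-rebuilding $(X_n, Y_n)$ of quality $(T, {\kappa}_X)$. Write $d_n = [\Gamma:\Gamma_n]$, so $|X_n^{(j)}| = d_n |X^{(j)}|$. The cells bound gives $|Y_n^{(j)}| \leq {\kappa}_X T^{-1} d_n |X^{(j)}|$ for $j \leq \indice$. Since $Y_n$ is homotopy equivalent to $X_n$ up to dimension $\indice$ (hence has the same homology through degree $\indice$ once we reattach top cells), \eqref{E:bettiK} yields
\[
\frac{\dim_K H_j(\Gamma_n, K)}{d_n} \leq \frac{|Y_n^{(j)}|}{d_n} \leq {\kappa}_X T^{-1} |X^{(j)}|.
\]
Letting $n \to \infty$ then $T \to \infty$ gives the first limit. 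For the torsion, the norms bound gives $\log \|\partial'_{j+1}\| \leq {\kappa}_X(1 + \log T)$ for $j \leq \indice - 1$, and the reattachment argument gives the analogous polynomial-in-$T$ bound in degree $\indice$; in all cases $\log\|\partial'_{j+1}\| \leq C_X(1 + \log T)$ for a constant $C_X$ independent of $n$. Gabber's inequality \eqref{eq: Gabber ineq} then gives
\[
\log|H_j(\Gamma_n, \mathbb Z)_{\rm tors}| \leq |Y_n^{(j)}| \cdot \sup(\log\|\partial'_{j+1}\|, 0) \leq {\kappa}_X T^{-1} d_n |X^{(j)}| \cdot C_X(1 + \log T),
\]
so
\[
\frac{\log|H_j(\Gamma_n, \mathbb Z)_{\rm tors}|}{d_n} \leq {\kappa}_X C_X |X^{(j)}| \cdot \frac{1 + \log T}{T}.
\]
Since $(1 + \log T)/T \to 0$ as $T \to \infty$, choosing $T$ large first and then $n$ large forces the right-hand side below $\varepsilon$; this proves the second limit.

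The only genuinely delicate point is the top degree $j = \indice$: the rebuilding $(X_n, Y_n)$ is a priori only a rebuilding of $\indice$-skeleta, so Gabber's bound in degree $\indice$ requires knowing the norm of $\partial_{\indice+1}$ on the rebuilt complex, which is not among the quality data. I would handle this exactly as in the last paragraph of the proof of Theorem~\ref{Tmain}: use type $F_{\indice+1}$ to attach finitely many $\Gamma$-orbits of $(\indice+1)$-cells to $\tilde X$ making it $\indice$-connected, so the $(\indice+1)$-cells of $X_n$ are the lifts of a fixed finite family with attaching maps of bounded norm; then push them across the rebuilding homotopy equivalence $\Gcell_n \colon X_n \to Y_n$ via Proposition~\ref{lem-QGluing}, obtaining $Y_n^+$ with $\|\partial^{Y_n^+}_{\indice+1}\| \leq \|f_1\| \cdot \|\Gcell_n\| = O(T^{O(1)})$, which is still of the form $\exp(C_X(1 + \log T))$ after enlarging $C_X$. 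With that in hand the estimates above go through uniformly in $j \leq \indice$, and the argument for passing to an honest $K(\Gamma_n,1)$ (the quotient of this $\indice$-connected complex) is the standard one, noting that adding cells of dimension $\geq \indice + 2$ changes neither $H_j$ for $j \leq \indice$ nor the number of cells in those degrees.
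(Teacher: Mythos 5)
Your argument is correct and follows essentially the same strategy as the paper's proof: use the cheap $\indice$-rebuilding property together with the cells and norms bounds, feed these into \eqref{E:bettiK} and Gabber's inequality, and handle the boundary degree $j=\indice$ separately by the reattachment argument from the end of the proof of Theorem~\ref{Tmain} (attach the finitely many $(\indice+1)$-cells coming from type $F_{\indice+1}$ and push them across $\Gcell_n$ via Proposition~\ref{lem-QGluing} to control $\|\partial_{\indice+1}\|$). You correctly identified that this top-degree case is the only delicate point and resolved it exactly as the paper does.
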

\begin{proof}
Let $X$ be a $K(\Gamma , 1)$ CW-complex with finite $\indice$-skeleton. Let $T\geq 1$. Then there exists a $\Gamma$-Farber neighborhood $U\subseteq \Sub^{\findex}_\Gamma$ such that for any finite index subgroup $\Gamma_1 \in U$ there exists an $\indice$-rebuilding $(X_1 = \Gamma_1 \bs \tilde X, X_1 ')$ of quality $(T,O(1)).$ In degree $\leq \indice$, the complex 
$X'_1$ has $O([\Gamma:\Gamma_1 ]/T)$ cells of this dimension and the norms of the boundary maps are $O(T+1).$ It follows from \eqref{E:bettiK} that for all $j \leq \indice$ we have 
$$\dim_K H_j (\Gamma_n , K) = O ( [\Gamma: \Gamma_1 ]T^{-1} ).$$
Letting $T\to\infty$ this proves the first part of the theorem. By Proposition \ref{Prop: Gabber} we similarly get 
\[ \log | H_j (\Gamma_1,\mathbb Z)_{\rm tors}| \leq O( [\Gamma:
\Gamma_1 ]T^{-1}\log(T+1)), \] 
for $j=0,\ldots, \indice-1$. 
Letting $T\to\infty$ this proves the second part of the theorem for all $0\leq j \leq \indice-1$. 

To handle the remaining case $j=\indice$, we take advantage of the fact that in Proposition \ref{Prop: Gabber} the only information needed from dimension $\indice +1$ is the norm of the boundary maps $\p_{\indice +1}$ (but not the number of $(\indice+1)$-cells) and that the norm of the boundary maps is bounded when taking coverings. We proceed exactly as in the end of the proof of Theorem \ref{Tmain}: by adding finitely many  $(\indice +1)$-cells to $X$, we can make it $\indice$-aspherical (see 
\cite[Theorem 8.2.1]{Geoghegan}). Write $\Basedown=\sqcup_{I} \mathbb B^{\indice +1}$ for this collection of $(\indice +1)$-cells and let $f\colon \partial \Basedown=\sqcup_{I} \mathbb S^{\indice}\to X^{(\indice)}$ be the map that attaches the $(\indice +1)$-cells to $X$. Then $X^+:=X\sqcup_f \Basedown$ is a finite $\indice$-aspherical 
CW-complex with fundamental group $\Gamma$.

Let $T,U$ be as above and let $\Gamma_1\in U$ be a finite index subgroup. Let $X_1 =\Gamma_1 \bs \tilde X$ and let $(X_1 ,X_1' ,\Gcell , \Hcell ,\Homcell )$ be an $\indice$-rebuilding of quality $(T,O(1))$ given by the rebuilding property of $\Gamma$.
We also consider  $X_1^+ =\Gamma_1 \bs \widetilde{X^+}$. The CW-complex $X_1^+$ can be written as  $X_1^+ =X_1 \sqcup_{f_1} \Basedown_1$ where $\Basedown_1 = \sqcup_{I_1} \mathbb B^{\indice+1}$ is the preimage of $\Basedown$ in $X_1^+$ and $f_1 \colon \partial \Basedown_1 =\sqcup_{I_1} \mathbb S^{\indice}\to X_1^{(\indice)}$ is the lift of $f$.  We have a diagram 
\begin{equation*}
\begin{tikzcd}[sep=large]
\Basedown_1 \arrow[d,"\mathrm{id}",swap, shift right] &\arrow[l,hook] \partial \Basedown_1 \arrow[d,swap,"\mathrm{id}",shift right] \arrow[r,"f_1"] & X_1 \arrow[d,swap,"\Gcell", shift right]\\
\Basedown_1 \arrow[u,"\mathrm{id}",swap, shift right]&\arrow[l,hook] \partial \Basedown_1 \arrow[r,"\varphi"] \arrow[u,"\mathrm{id}",swap, shift right] & X_1' \arrow[u,"\Hcell",swap, shift right],
\end{tikzcd}
\end{equation*} 
with $\varphi =\Gcell \circ f_1$. By Proposition \ref{lem-QGluing} the space $X_1 ' \sqcup_{\varphi} \Basedown_1$ is homotopy equivalent to 
$$X_1 \sqcup_{f_1} \Basedown_1 = X_1^+ .$$ 
Therefore, $H_{\indice}(\Gamma_1 ,\mathbb Z)=H_{\indice}(X_1' \sqcup_{\varphi} \Basedown_1,\mathbb Z ).$  Since $f_1$ is a lift of $f$, its norm satisfies $\|f_1 \|\leq \|f\|=O(1)$. The norm of the boundary map on $X_1' \sqcup_{\varphi} \Basedown_1$ in degree $\indice+1$ is bounded by $\|\varphi\|\leq \|f_1 \|\|\Ghom \|=O(T+1)$.  By Proposition \ref{Prop: Gabber} we have 
\[ \log | H_{\indice}(X_1' \sqcup_{\varphi} \Basedown_1 ,\mathbb Z)_{\rm tors}|\leq \|\partial_{\indice+1}\|_{(X_1' \sqcup_{\varphi} \Basedown_1 )}
\times \card{\indice}(X_1' \sqcup_{\varphi} \Basedown_1 )\leq O( \log(T+1) [\Gamma:\Gamma_1 ]T^{-1}).\]
Since any Farber sequence eventually falls into $U$, we get the theorem by letting $T\to \infty$. 
\end{proof}

We end this section with a question.
\begin{question}\label{quest: F k+1 amenable have cheap k-reb. prop.?}
Does every residually finite amenable group of type $F_{\indice}$ have the cheap $\indice$-rebuilding property? 
\end{question}
In this context, Kar, Kropholler and Nikolov \cite[Th. 1]{KKN-2017} prove the vanishing of the $j$-th torsion growth for $j\leq \indice$.
The arguments of \cite{Sauer-Vol-Hom-growth-2016}, while stated in a restricted framework also give this vanishing.

\section{Lattices in semi-simple Lie groups (Proof of Theorem \ref{T0})} \label{S10}
\label{sect: princ. cong. in semi-simple}

Let $\mathbf G$ be an affine algebraic group defined over $\Q$. The \defin{radical} of $\mathbf G$ is the greatest connected normal solvable subgroup. The group $\mathbf G$ is called \defin{semi-simple} if its radical is trivial. In the following, we assume that $\mathbf G$ is connected and semi-simple. 

An algebraic group $\mathbf T$ over $\Q$ is an \defin{algebraic torus} if the group of its complex points $\mathbf{T} (\C)$ is isomorphic to a product of $\GL_1 (\C )$. If $\mathbf T$ is isomorphic to a product of $\GL_1$ themselves over $\Q$, then $\mathbf T$ is said to be \defin{split} over $\Q$. 
The maximal dimension of a torus in $\mathbf G$ that is split over $\Q$ is called the \defin{rational rank} of $\mathbf G$. 

A Zariski-closed subgroup $\mathbf P$ of $\mathbf G$ is called \defin{parabolic} if $\mathbf P$ contains a connected solvable subgroup, i.e., a \defin{Borel subgroup} of $\mathbf G$. If $\mathbf P$ is defined over $\Q$, then $\mathbf P$ is called a rational parabolic subgroup.  The \defin{unipotent radical} of $\mathbf P$ is defined to be its largest normal subgroup consisting entirely of unipotent elements.

A $\Z$-structure of $\mathbf G$ is given by a faithful $\Q$-embedding of $\mathbf G$ in some $\GL_n$. We define $\Gamma = \mathbf{G}_\Z$ to be the intersection of $\mathbf G$ with $\GL_n (\Z)$ and the level $N$ \defin{principal congruence subgroup} $\Gamma (N)$ to be the intersection of $\mathbf G$ with the kernel of $\GL_n (\Z) \to \GL_n (\Z / N \Z)$. Here $N$ is some positive integer.

\begin{theorem}\label{TorsionSL}
Let $\mathbf G$ be a connected semi-simple affine algebraic group defined over $\Q$ equipped with a $\Z$-structure. Let $r\geq 2$ be the rational rank of $\mathbf G$. Then there exists a constant ${\kappa}={\kappa}(\mathbf{G})$ such that for every \defin{principal} congruence subgroup $\Gamma(N) \leq \mathbf{G}_\Z$, every $j\leq r-1$ and every coefficient field $K$, we have 
\begin{equation}
\dim_K  H_j (\Gamma (N), K) \leq  {\kappa}  N^{(1- \delta) \dim \mathbf{G}} \quad \mbox{and} \quad \log |H_j (\Gamma(N),\Z)_{\rm tors}| \leq {\kappa} N^{(1-\delta ) \dim \mathbf{G}} \log N ,
\end{equation}
where 
$$\delta = \mathrm{min} \left\{ \frac{\dim \mathbf{U}}{\dim \mathbf{G}} \; : \; \mathbf{U} \mbox{ is the unipotent radical of a parabolic subgroup of } \mathbf{G} \right\} .$$    
\end{theorem}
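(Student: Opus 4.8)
The plan is to deduce Theorem~\ref{TorsionSL} from the general Quantitative Rebuilding Theorem~\ref{Tmain} applied to the action of $\Gamma=\mathbf{G}_\Z$ on its rational Tits building (or more precisely the associated rational Borel--Serre-type complex $\Baseup$ built from rational parabolic subgroups). First I would recall that, by the Solomon--Tits theorem, this building is homotopy equivalent to a wedge of spheres of dimension $r-1$, hence $\Baseup$ is $(r-2)$-connected; this is exactly condition \textbf{(Cond 1)} with $\indice = r-1$. The quotient $\Gamma\bs\Baseup$ has finite $\indice$-skeleton since $\Gamma$ acts on $\Baseup$ with finitely many orbits of cells (reduction theory), giving \textbf{(Cond 3)}; and $\Gamma$ is of type $F_{\infty}$ (Borel--Serre), in particular of type $F_{\indice+1}$, which is \textbf{(Cond 4)}. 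After a barycentric subdivision we may assume cell stabilizers fix their cells pointwise, \textbf{(Cond 2)}. The crucial point is that the stabilizer $\Gamma_\cellup$ of a cell corresponding to a flag of rational parabolics $\mathbf{P}_1\subsetneq\cdots$ contains, as a normal subgroup, the arithmetic subgroup $Z_\cellup := \mathbf{U}_\Z$ of the unipotent radical $\mathbf{U}$ of the smallest parabolic in the flag, which is a finitely generated, torsion-free, nilpotent group; and $\Gamma_\cellup$ is finitely generated (again type $F_\infty$). This furnishes the hypotheses of Theorem~\ref{Tmain} for every finite index \emph{normal} subgroup $\Gammaprime\leq\Gamma$.

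Next I would feed $\Gammaprime = \Gamma(N)$ into Theorem~\ref{Tmain}. The output is a CW-complex $\Totaldown_N^+$ with finite $(\indice+1)$-skeleton, $\indice$-connected universal cover, $\pi_1 = \Gamma(N)$, whose number of $j$-cells for $j\leq\indice$ is bounded by
\begin{equation*}
{\kappa}\sum_{e\in\Basedown^{(\indice)}}\frac{[\Gamma:\Gamma(N)]}{[Z_e:\Gamma(N)\cap Z_e]},
\end{equation*}
and whose boundary operators in degrees $\leq\indice+1$ have norm $\leq{\kappa}[\Gamma:\Gamma(N)]^{\kappa}$. The point of \emph{principal} congruence subgroups (as emphasized in the introduction after Theorem~\ref{thm-UnipRewiring}) is the index estimate: since $Z_e = \mathbf{U}_\Z$ for some unipotent radical $\mathbf{U}$ of dimension $\dim\mathbf{U}\geq\delta\dim\mathbf{G}$, and $\Gamma(N)\cap\mathbf{U}_\Z$ is the congruence subgroup of $\mathbf{U}_\Z$ of level $N$, one has $[Z_e:\Gamma(N)\cap Z_e] \asymp N^{\dim\mathbf{U}} \gg N^{\delta\dim\mathbf{G}}$, while $[\Gamma:\Gamma(N)]\asymp N^{\dim\mathbf{G}}$. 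Therefore each summand is $O(N^{(1-\delta)\dim\mathbf{G}})$, and since the number of summands is bounded independently of $N$, the number of $j$-cells is $O(N^{(1-\delta)\dim\mathbf{G}})$ for all $j\leq\indice = r-1$.

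Then I would conclude as in the proof of Theorem~\ref{prop-CheapRebTorsion}. For the Betti-number bound, inequality \eqref{E:bettiK} gives $\dim_K H_j(\Gamma(N),K)\leq \mathrm{rank}\,C_j(\Totaldown_N^+) = O(N^{(1-\delta)\dim\mathbf{G}})$ for $j\leq r-1$, since $H_j(\Gamma(N),K) = H_j(\Totaldown_N^+,K)$ (the space is a $K(\Gamma(N),1)$ in degrees $\leq\indice$). For the torsion bound, Gabber's Proposition~\ref{Prop: Gabber} gives
\begin{equation*}
\log|H_j(\Gamma(N),\Z)_{\mathrm{tors}}| \leq (\mathrm{rank}\,C_j)\cdot\sup(\log\|\partial_{j+1}\|,0) = O\big(N^{(1-\delta)\dim\mathbf{G}}\cdot\log N\big)
\end{equation*}
for $j\leq r-1$, using that $\log\|\partial_{j+1}\| = O(\log N)$ and that the $(\indice+1)$-skeleton of $\Totaldown_N^+$, while not of controlled size, only enters Gabber's inequality through the \emph{norm} of $\partial_{\indice+1}$, which is bounded polynomially. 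This is precisely the trick used for the top degree $j=\indice$ in Theorem~\ref{Tmain} and Theorem~\ref{prop-CheapRebTorsion}.

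The main obstacle I anticipate is the verification of the structural input rather than the arithmetic estimate: namely, checking that there is a suitable $\Gamma$-CW-complex $\Baseup$ (the rational Tits building, suitably thickened/subdivided) which is simultaneously $(r-2)$-connected, $\Gamma$-cocompact in each skeleton, and has cell stabilizers that are finitely generated and contain $\mathbf{U}_\Z$ as a nilpotent normal subgroup. For $r=2$, i.e. $\indice=1$, the building is $0$-connected but not simply connected, so as in the proof of Theorem~\ref{Tmain} one must first invoke Lemma~\ref{lem-SimplyConn} to replace $\Baseup$ by a simply connected $2$-complex $\Baseup^+$ with the same $1$-skeleton; one must check the stabilizer and cocompactness hypotheses survive this modification (they do, because $\Gamma$ is finitely presented and vertex stabilizers are finitely generated). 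A secondary, more bookkeeping-type obstacle is tracking that all implied constants ${\kappa}$ depend only on $\mathbf{G}$ (and the $\Z$-structure) and not on $N$ — but this is automatic from Theorem~\ref{Tmain}, whose constant is uniform over finite index normal subgroups, once the complex $\Baseup$ and the finite list of stabilizers $\Gamma_{\cellup_i}$ with their nilpotent normal subgroups $Z_{\cellup_i}$ have been fixed.
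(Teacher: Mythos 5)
Your proposal is correct and follows essentially the same route as the paper: apply the Quantitative Rebuilding Theorem~\ref{Tmain} to the $\Gamma$-action on the rational Tits building, use Solomon--Tits for $(r-2)$-connectedness, reduction theory for cocompactness, $\mathbf{U}_\Z$ as the nilpotent normal subgroup $Z_e$, then the index estimates $[Z_e:\Gamma(N)\cap Z_e]\geq N^{\delta\dim\mathbf{G}}$ and $[\Gamma:\Gamma(N)]\ll N^{\dim\mathbf{G}}$, finishing with inequality~\eqref{E:bettiK} and Gabber's Proposition~\ref{Prop: Gabber}. The one small point you flag as an anticipated obstacle --- invoking Lemma~\ref{lem-SimplyConn} when $r=2$ so that $\Baseup$ becomes simply connected --- is already handled internally in the proof of Theorem~\ref{Tmain}, so there is nothing extra to check at the level of Theorem~\ref{TorsionSL}.
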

 The lattices $\Gamma(N)$ are non-uniform if and only if the rational rank $r$ of $\mathbf G$ is positive.  
 In other words, Theorem~\ref{TorsionSL} is empty for uniform lattices.
Our theorem vacuously holds for $r=0$ and $r=1$.

\begin{remark} 
\label{rem: case SL(d), computation of delta}
In case $\mathbf{G} = \SL_d$ we have $r=d-1$ and $\dim \mathbf{G} = d^2-1$. Any parabolic subgroup of $\mathbf{G}$ is conjugated to a subgroup of block upper triangular matrices, and the unipotent radical of such is the subgroup where the block diagonal elements are the identity. It follows that
$$\delta = \frac{d-1}{d^2-1} = \frac{1}{d+1}.$$
Since there exists a universal constant $C$ such that \[1>\frac{[\SL_d (\Z):\Gamma(N)]}{N^{d^2-1}} >C>0,\]
Theorem \ref{TorsionSL} implies Theorem \ref{T0}, as well as the congruence case of Theorem \ref{T000}, of the introduction.
\end{remark}

\begin{proof}[Proof of Theorem~\ref{TorsionSL}]
We first recall the spherical Tits building $\Delta_\Q (\mathbf{G})$ associated with $\mathbf G$ over $\Q$ (see \cite{Tits1,Tits2}); it is a simplicial set whose non-degenerate simplices are in bijection with proper rational parabolic subgroups of $\mathbf G$. Each proper maximal rational parabolic subgroup corresponds to a vertex of $\Delta_\Q (\mathbf{G})$ and $k+1$ proper parabolic subgroups $\mathbf{P}_0 , \ldots , \mathbf{P}_k$ are the vertices of a $k$-simplex if and only if their intersection $\mathbf{P}_0 \cap \ldots \cap \mathbf{P}_k$ is a rational parabolic subgroup, and this simplex corresponds to the parabolic subgroup $\mathbf{P}_0 \cap \ldots \cap \mathbf{P}_k$.

If the rational rank $r$ of $\mathbf G$ is equal to $1$, then $\Delta_\Q (\mathbf{G})$ is a countable collection of points. Otherwise $\Delta_\Q (\mathbf{G})$ is a spherical building (see \cite[Theorem 5.2]{Tits2}). For any maximal $\Q$-split torus $\mathbf T$, all the rational parabolic subgroups containing $\mathbf T$ form an apartment in this building and each of these sub-complexes give a simplicial triangulation of the sphere of dimension $\indice=r-1$ so that $\Delta_\Q (\mathbf{G})$ has the homotopy type of a bouquet of $\indice$-spheres (Solomon--Tits theorem \cite{Solomon-1969}); in particular it is $(\indice-1)$-connected. We refer to \cite[\S V.5]{BrownBuildings} for this and more about spherical buildings. 

The rational points $\mathbf{G} (\Q)$ of $\mathbf G$ act on the set of rational parabolic subgroups by conjugation and hence on $\Delta_\Q (\mathbf{G})$: for any $g \in \mathbf{G} (\Q)$ and any rational parabolic $\mathbf{P}$, the simplex associated to $\mathbf P$ is mapped to the simplex associated to $g\mathbf{P} g^{-1}$. Let $\Gamma \subseteq \mathbf{G} (\Q)$ be an arithmetic subgroup of $\mathbf{G} (\Q)$. Then by reduction theory (see e.g. \cite[Theorem 4.15]{PlatonovRapinchuk}) there are only finitely many $\Gamma$-conjugacy classes or rational parabolic subgroups. Therefore, the quotient $\Gamma \backslash \Delta_\Q (\mathbf{G})$ is a finite simplicial complex. 

From now on we let $\Gamma = \mathbf{G}_\Z$, we fix some principal congruence subgroup $\Gamma (N)$ and let $\Baseup$ denote the rational Tits building $\Delta_\Q (\mathbf{G})$. Up to replacing $\Baseup$ by its barycentric subdivision we may furthermore assume that for every cell $\sigma \subseteq \Baseup$ the stabilizer $\Gamma_\sigma$ acts trivially on $\sigma$. 

The stabilizer $\Gamma_\sigma$ of a simplex $\sigma \subseteq \Baseup$ associated to a rational parabolic subgroup $\mathbf P$ contains the intersection $\mathbf{P}_\Z$ of $\mathbf P$ with $\GL_d (\Z)$. In particular $\Gamma_\sigma$ contains the $\Z$-points $\mathbf{U} (\Z)$ of the unipotent radical  $\mathbf{U}$ of $\mathbf P$ as a normal subgroup. The group $\mathbf{U} (\Z)$ is finitely generated, torsion-free, nilpotent and its intersection with the stabilizer $\Gamma(N)_\sigma$ is equal to 
$$\mathrm{ker} (\mathbf{U} (\Z) \to \mathbf{U} (\Z /N \Z))$$
which is of index at least $N^{ \delta \dim \mathbf{G}}$ in $\mathbf{U} (\Z)$. On the other hand it is well known that
$$[\Gamma : \Gamma(N) ] \leq O( N^{\dim \mathbf{G}}),$$
see e.g. \cite[bottom of page 3134]{BelolipetskyLubotsky}. 
It therefore follows from Theorem \ref{Tmain} (with $\Gamma_1=\Gamma(N)$)
 that there exists 
 a constant $c=c(\mathbf{G})$ that depends on $\mathbf{G}$ but not on $\Gamma(N)$ and a finite CW-complex $\Totaldown_N^+$
 such that the following properties hold. 
\begin{enumerate}
\item The CW-complex $\Totaldown_N^+$ has an $\indice$-connected universal cover.
\item In each dimension $\leq \indice$, the total number of cells of $\Totaldown_N^+$ is bounded by 
\begin{equation}\label{eq-NbOfCelles}
c  N^{(1- \delta) \dim \mathbf{G}} .
\end{equation}
\item In each degree $\leq \indice +1$ the norm of the boundary operator on the chain complex $C_\bullet (\Totaldown_N^+ )$ is bounded by 
\begin{equation}
c N^{c} .
\end{equation}
\end{enumerate}
We conclude by applying \eqref{E:bettiK} and Proposition \ref{Prop: Gabber}. Let ${\kappa}= c^2 \log c$. Then, in each degree $j \leq \indice$ we have both
$$\dim_K H_j (\Gamma (N), K)  \leq {\kappa} N^{(1- \delta) \dim \mathbf{G}} \quad \mbox{and} \quad \log |H_j (\Gamma(N) )_{\rm tors} | \leq {\kappa} N^{(1- \delta) \dim \mathbf{G}} \log N .$$
\end{proof}

We used the fact that the stabilizers of cells in the action of $\Gamma$ on $\Baseup$ have infinite normal unipotent subgroups. Hence, by Lemma \ref{cor-CheapRebGroups} they all have the cheap $\indice$-rebuilding property, for every $\indice$. Upon applying Theorem~\ref{thm-AdjRebStack} to the action of $\Gamma$ on the rational Tits building $\Baseup$ we get
\begin{theorem}\label{thm-LatticesRebuilding}
Let $\mathbf G$ be a connected semi-simple affine algebraic group defined over $\Q$ equipped with a $\Z$-structure. Let $r$ be the rational rank of $\mathbf G$. Then $\mathbf{G}_\Z$ has the cheap $(r-1)$-rebuilding property. 
\end{theorem}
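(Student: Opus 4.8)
The plan is to deduce Theorem~\ref{thm-LatticesRebuilding} directly from the bootstrapping criterion of Theorem~\ref{thm-AdjRebStack}, applied to the action of $\Gamma=\mathbf{G}_\Z$ on the rational spherical Tits building $\Baseup=\Delta_\Q(\mathbf{G})$ already set up in the proof of Theorem~\ref{TorsionSL}. Assuming $r\ge 1$ (the rank-zero case being vacuous), I would set $\indice=r-1\ge 0$ and first record the inputs needed to run the criterion. By reduction theory there are only finitely many $\Gamma$-conjugacy classes of proper rational parabolic subgroups, so $\Gamma\bs\Baseup$ is a finite simplicial complex; after passing to the barycentric subdivision we obtain a cocompact simplicial action in which every cell is fixed pointwise by its stabilizer, which supplies the pointwise-fixing hypothesis and condition~(1). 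By the Solomon--Tits theorem $\Baseup$ has the homotopy type of a wedge of $\indice$-spheres, hence is $(\indice-1)$-connected, which is condition~(2). It then remains to verify condition~(3): every stabilizer of a cell of dimension $j\le\indice$ must have the cheap $(\indice-j)$-rebuilding property.

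For condition~(3) I would argue as in the paragraph preceding the theorem: the stabilizer $\Gamma_\sigma=\Gamma\cap\mathbf{P}(\Q)$ of a simplex $\sigma$ attached to a proper rational parabolic $\mathbf{P}$ contains the $\Z$-points $\mathbf{U}(\Z)$ of the unipotent radical $\mathbf{U}$ of $\mathbf{P}$ as a normal subgroup, and $\mathbf{U}(\Z)$ is finitely generated, torsion-free and nilpotent, hence infinite polycyclic; by Corollary~\ref{cor-CheapRebGroups}(4) it has the cheap $\indice'$-rebuilding property for \emph{every} $\indice'$. The quotient $\Gamma_\sigma/\mathbf{U}(\Z)$ is (commensurable with) an arithmetic subgroup of the reductive Levi quotient $\mathbf{L}=\mathbf{P}/\mathbf{U}$, and arithmetic subgroups of reductive $\Q$-groups are of type $F_\infty$. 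Corollary~\ref{cor-CheapRebGroups}(2) then upgrades this to the statement that $\Gamma_\sigma$ itself has the cheap $\indice'$-rebuilding property for every $\indice'$, in particular the cheap $(\indice-j)$-rebuilding property. With all three hypotheses in hand, Theorem~\ref{thm-AdjRebStack} yields that $\Gamma=\mathbf{G}_\Z$ has the cheap $(r-1)$-rebuilding property. (When $r=2$, i.e.\ $\indice=1$, the building is merely a graph and is not simply connected; the required repair is carried out internally in the proof of Theorem~\ref{thm-AdjRebStack} via Lemma~\ref{lem-SimplyConn}, which applies because the vertex stabilizers $\Gamma_\sigma$, being virtually arithmetic, are finitely generated.)

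The argument is essentially a bookkeeping transfer of the inductive structure onto the Tits building, so the main obstacle is not structural but one of invoking the classical inputs in the needed generality: reduction theory for the finiteness of $\Gamma\bs\Baseup$, the Solomon--Tits connectivity statement, and especially the finiteness property $F_\infty$ for arithmetic subgroups of the Levi quotients $\mathbf{L}$ --- which are reductive but not semisimple and carry a central torus contributing a free abelian part, so one should be slightly careful that the standard $F_\infty$ results (Borel--Serre) cover this case. A secondary point to spell out is that $\mathbf{U}$ is genuinely nontrivial for every proper rational parabolic, so that $\mathbf{U}(\Z)$ is really infinite; this is where one uses that $\Baseup$ is the building attached to \emph{proper} parabolics together with the semisimplicity of $\mathbf{G}$, and it is what makes the application of Corollary~\ref{cor-CheapRebGroups}(2) legitimate.
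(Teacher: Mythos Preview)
Your proposal is correct and follows essentially the same route as the paper: apply Theorem~\ref{thm-AdjRebStack} to the action of $\mathbf{G}_\Z$ on the rational Tits building, with conditions~(1) and~(2) coming from reduction theory and the Solomon--Tits theorem, and condition~(3) from Corollary~\ref{cor-CheapRebGroups} applied to the stabilizers via their normal unipotent subgroups. You are in fact slightly more careful than the paper in spelling out that Corollary~\ref{cor-CheapRebGroups}(2) requires the Levi quotient $\Gamma_\sigma/\mathbf{U}(\Z)$ to be of type $F_\indice$, a point the paper leaves implicit.
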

Theorem \ref{prop-CheapRebTorsion} now yields.
\begin{theorem}\label{thm-LatticesFarberTors}
Let $\mathbf G$ be a connected semi-simple affine algebraic group defined over $\Q$ equipped with a $\Z$-structure. Let $r$ be the rational rank of $\mathbf G$. Then for any Farber sequence $(\Gamma_n)_{n\in \mathbb N}$ in $\mathbf{G}_\Z $ and for any coefficient field $K$ we have 
\[ \lim_{n\to\infty} \frac{\dim_K H_j (\Gamma_n , K)}{[\Gamma : \Gamma_n]} = 0 \quad \mbox{and} \quad \lim_{n\to\infty} \frac{\log | H_j(\Gamma_n,\mathbb Z)_{\rm tors}|}{[\mathbf{G}_\Z :\Gamma_n]}=0,\] 
for $j=0,\ldots, r-1$.
\end{theorem}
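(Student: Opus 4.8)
The statement to prove is Theorem~\ref{thm-LatticesFarberTors}, which asserts vanishing of homology growth (both over fields and in torsion) along Farber sequences in $\mathbf{G}_\Z$ up to degree $r-1$, where $r$ is the rational rank. Actually, by this point in the paper the proof is essentially a one-line deduction from the two named theorems that immediately precede it, so let me lay out that deduction and then indicate what would need checking if the machinery were not already in place.

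\medskip

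The plan is to simply combine Theorem~\ref{thm-LatticesRebuilding} with Theorem~\ref{prop-CheapRebTorsion} (= Theorem~\ref{Tintrotors}). First I would invoke Theorem~\ref{thm-LatticesRebuilding}, which says that $\mathbf{G}_\Z$ has the cheap $(r-1)$-rebuilding property; its proof is itself an application of Theorem~\ref{thm-AdjRebStack} to the action of $\Gamma = \mathbf{G}_\Z$ on the rational Tits building $\Baseup = \Delta_\Q(\mathbf{G})$, using that $\Baseup$ is $(r-2)$-connected by Solomon--Tits, that $\Gamma\bs\Baseup$ is a finite complex by reduction theory, and that every cell stabilizer contains an infinite normal unipotent subgroup $\mathbf{U}(\Z)$, hence by Corollary~\ref{cor-CheapRebGroups}\eqref{it: cheap k-reb. normal subgroup and F k+1 quotient} has the cheap $\indice$-rebuilding property for every $\indice$. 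Second, I would note that $\mathbf{G}_\Z$ is finitely presented (it is an arithmetic group, in particular of type $F_\infty$), so it is of type $F_r$, and therefore Theorem~\ref{prop-CheapRebTorsion} applies with $\indice = r-1$.

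\medskip

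Concretely: fix a Farber sequence $(\Gamma_n)_{n\in\mathbb N}$ in $\mathbf{G}_\Z$ and a coefficient field $K$. Since $\mathbf{G}_\Z$ has the cheap $(r-1)$-rebuilding property and is of type $F_r$, Theorem~\ref{prop-CheapRebTorsion} (applied with $\indice = r-1$) gives, for every $j$ with $0\le j\le r-1$,
\[
\lim_{n\to\infty}\frac{\dim_K H_j(\Gamma_n,K)}{[\mathbf{G}_\Z:\Gamma_n]}=0
\quad\text{and}\quad
\lim_{n\to\infty}\frac{\log|H_j(\Gamma_n,\Z)_{\rm tors}|}{[\mathbf{G}_\Z:\Gamma_n]}=0,
\]
which is exactly the assertion of Theorem~\ref{thm-LatticesFarberTors} (here I write $\Gamma = \mathbf{G}_\Z$ as in the statement). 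The cases $r=0$ and $r=1$ are vacuous or trivial: for $r\le 1$ the only degree in range is $j=0$, and $H_0$ is $K$ (resp.\ $\Z$, torsion-free) for each connected $\Gamma_n\bs\tilde X$, so the normalized quantities are $[\Gamma:\Gamma_n]^{-1}\to 0$ and $0$ respectively — and in any case the cheap $0$-rebuilding property holds as soon as the group is infinite (Remark~\ref{rem: cheap 0-reb prop}).

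\medskip

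There is essentially no obstacle here, since all the work has been front-loaded into Theorems~\ref{thm-AdjRebStack}, \ref{prop-CheapRebTorsion}, and \ref{thm-LatticesRebuilding}. The only point that deserves a word is the hypothesis "$\Gamma$ of type $F_{\indice+1}$" in Theorem~\ref{prop-CheapRebTorsion}: one must know $\mathbf{G}_\Z$ is of type $F_r$. For $r\ge 2$ this needs $\mathbf{G}_\Z$ finitely presented, which holds because arithmetic groups are of type $F_\infty$ (Borel--Serre); for $r=1$ it is the statement that $\mathbf{G}_\Z$ is finitely generated, again standard. So the genuine content of Theorem~\ref{thm-LatticesFarberTors} lies upstream — in the finiteness of $\Gamma\bs\Baseup$, the connectivity of the Tits building, and the cheap-rebuilding bootstrap through unipotent cell stabilizers — and the present statement is the clean packaging of those ingredients for arbitrary Farber sequences, in parallel with the quantitative congruence-subgroup bounds of Theorem~\ref{TorsionSL}.
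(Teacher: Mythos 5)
Your argument matches the paper's own proof exactly: the paper likewise deduces Theorem~\ref{thm-LatticesFarberTors} directly from Theorem~\ref{thm-LatticesRebuilding} (cheap $(r-1)$-rebuilding property of $\mathbf{G}_\Z$, obtained via the Tits building and Theorem~\ref{thm-AdjRebStack}) together with Theorem~\ref{prop-CheapRebTorsion}. Your remarks on the type-$F_r$ hypothesis and the degenerate cases $r\le 1$ are correct and only make explicit what the paper leaves implicit.
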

This implies Theorem \ref{T00} and the first part of Theorem \ref{T000} of the introduction. 

\section{Application to mapping class groups (Proof of Theorem \ref{T0000})} \label{Sect: Application to mapping class groups}

Let ${S}$ be a closed orientable surface of genus $g$ with $b$ connected boundary components. We assume that 
$$\chi ({S} ) = 2-2g - b <0  \quad \mbox{and} \quad b \geq 4 \mbox{ if } g=0.$$  
We write $\MCG({S})$ for the mapping class group of ${S}$. Let us recall the construction of the curve complex $\mathcal C({S})$. 

The curve complex $\mathcal C({S})$ is a combinatorial cell complex whose $k$-cells consist of collections of 
$k+1$ simple closed curves on ${S}$ which are disjoint, essential, and pairwise non-homotopic, 
considered up to homotopy. In particular, a $0$-dimensional cell corresponds to the homotopy class of a simple closed curve and top dimensional cells correspond to maximal families of pairwise non-homotopic essential closed curves in ${S}$. 
It is equipped with a co-compact $\MCG({S})$-action.

Under our hypotheses ${S}$ admits a hyperbolic structure and the size of any maximal family of pairwise non-homotopic essential closed curves in ${S}$ is finite, equal to $3g-3+b$. So the dimension of $\mathcal C({S})$ is $3g-4+b$.

Now let $\mathcal F$ be a finite collection of $m$ closed, disjoint, essential, pairwise non-homotopic simple closed curves on ${S}$. The stabilizer of the homotopy class of $\mathcal F$ in $\MCG({S})$, denoted $\Stab \mathcal F$, fits into a short exact sequence: 
\[\begin{tikzcd}
1 \arrow[r]& \mathbb Z^m \arrow[r]& \Stab \mathcal F\arrow[r]& \MCG'({S} \setminus \mathcal F)\arrow[r]& 1, 
\end{tikzcd}\] where $\MCG'({S}\setminus \mathcal F)$ is the subgroup of those mapping classes in $\MCG({S}\setminus \mathcal F)$ that
permute the boundary in such a way that it can still be glued back to obtain $\mathcal{F}$ in ${S}$
 ; see e.g. \cite[Equation (1)]{Minsky}. 
 The group $\MCG'({S}\setminus \mathcal F)$ has finite index in $\MCG({S}\setminus \mathcal F)$, which is of class $F_k$ for every $k\geq 0$
 by \cite[Theorem 5.4.A]{Ivanov}. 
Hence the stabilizers $ \Stab \mathcal F$ are of class $F_k$  for every $k\geq 0$.

In order to ensure the assumption that the cell-stabilizers act freely on them, we consider instead the action of $\Gamma= \MCG({S})$ on the barycentric subdivision $\Baseup$ of $\mathcal C({S})$. Now the stabilizer of any cell $\sigma$ of $\Baseup$ is a finite index subgroup of some $\Stab \mathcal F$ as above.
Therefore $\Gamma_\sigma$ is of class $F_\indice$  for every $\indice \geq 0$ and it has a finite rank normal free abelian subgroup,
  so by Corollary~\ref{cor-CheapRebGroups} it has the cheap $\indice$-rebuilding property for every $\indice$. 
  
 The homotopy type of $\mathcal C({S})$ (and therefore that of $\Baseup$) was identified by Harer \cite[Thm 3.5]{Harer}; it is homotopy equivalent to a bouquet of spheres of dimension $\indice (g,b)$ where 
$$\indice (g,b):=\left\{ \begin{array}{ll}
2g-2 & \mbox{ if } g\geq 2 \mbox{ and } b=0; \\
2g-3+b & \mbox{ if } g \geq 1 \mbox{ and } b \geq 1; \\ 
b-4 &  \mbox{ if } g=0  \mbox{ and } b\geq 4.
\end{array} \right.$$
Thus, the space $\Baseup$ is $(\indice(g,b)-1)$-connected.
By Theorem~\ref{thm-AdjRebStack}, we obtain:
\begin{theorem}\label{th: MCG has CRP k(g,b)}
The mapping class group $\MCG({S})$ has the cheap $\indice(g,b)$-rebuilding property. 
\end{theorem}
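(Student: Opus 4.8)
The plan is to apply the bootstrapping criterion Theorem~\ref{thm-AdjRebStack} directly to the action of $\Gamma = \MCG(S)$ on the barycentric subdivision $\Baseup$ of the curve complex $\mathcal C(S)$. First I would check that the action satisfies the pointwise-stabilizer hypothesis: passing to the barycentric subdivision is precisely what guarantees that any element of $\MCG(S)$ stabilizing a simplex of $\Baseup$ fixes it pointwise, since such an element must fix each vertex of the original simplex (a vertex of $\Baseup$ being an unordered collection of curves, whereas an order-reversing symmetry would not fix the barycentric refinement). So condition (0) of Theorem~\ref{thm-AdjRebStack} holds.

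Next I would verify the three numbered hypotheses with $\indice = \indice(g,b)$. Condition (1): the quotient $\Gamma\bs\Baseup$ has finite $\indice$-skeleton — this follows from the well-known cocompactness of the $\MCG(S)$-action on $\mathcal C(S)$ (there are only finitely many topological types of curve systems up to the mapping class group), which passes to the barycentric subdivision. Condition (2): $\Baseup$ is $(\indice-1)$-connected — this is exactly Harer's computation \cite[Thm 3.5]{Harer}, which identifies $\mathcal C(S)$ (hence $\Baseup$, being homeomorphic to it) as homotopy equivalent to a wedge of $\indice(g,b)$-spheres, and a wedge of $\indice$-spheres is $(\indice-1)$-connected. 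Condition (3): for each cell $\sigma$ of dimension $j\le\indice$, the stabilizer $\Gamma_\sigma$ must have the cheap $(\indice-j)$-rebuilding property; a fortiori it suffices that $\Gamma_\sigma$ have the cheap $\indice$-rebuilding property for every $\indice$. Here I use the short exact sequence $1\to\Z^m\to\Stab\mathcal F\to\MCG'(S\setminus\mathcal F)\to 1$ recalled in the text: the stabilizer of a cell $\sigma$ of $\Baseup$ is a finite-index subgroup of some $\Stab\mathcal F$, its infinite normal subgroup $\Z^m$ has the cheap $\indice$-rebuilding property for all $\indice$ by Corollary~\ref{cor-CheapRebGroups}(3), and the quotient $\MCG'(S\setminus\mathcal F)$ is of type $F_\indice$ for all $\indice$ by \cite[Theorem 5.4.A]{Ivanov} (being finite index in $\MCG(S\setminus\mathcal F)$). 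Applying Corollary~\ref{cor-CheapRebGroups}(2) gives that $\Stab\mathcal F$ has the cheap $\indice$-rebuilding property for all $\indice$, and then Corollary~\ref{cor-CheapRebGroups}(1) (invariance under commensurability) transfers this to the finite-index subgroup $\Gamma_\sigma$.

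With all hypotheses of Theorem~\ref{thm-AdjRebStack} verified, I conclude that $\Gamma = \MCG(S)$ has the cheap $\indice(g,b)$-rebuilding property, which is the assertion of Theorem~\ref{th: MCG has CRP k(g,b)}. The only subtle point — and the step I would expect to need the most care — is the verification that cell stabilizers in $\Baseup$ are honestly finite-index subgroups of the groups $\Stab\mathcal F$ and carry the right normal abelian subgroup; one must make sure that the finite index involved is uniform enough and that passing to the barycentric subdivision does not destroy the group-theoretic structure (it does not, since a simplex of $\Baseup$ corresponds to a flag of curve systems $\mathcal F_0\subsetneq\cdots\subsetneq\mathcal F_j$ and its stabilizer is the intersection of the $\Stab\mathcal F_i$, which still contains a finite-rank free abelian normal subgroup coming from Dehn twists about the curves of the largest system $\mathcal F_j$). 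Everything else is a bookkeeping check that the cited results assemble correctly, since the heavy lifting — the effective rebuilding machinery — is entirely packaged inside Theorem~\ref{thm-AdjRebStack}.
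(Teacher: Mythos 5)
Your proposal is correct and follows essentially the same route as the paper: apply Theorem~\ref{thm-AdjRebStack} to the $\MCG(S)$-action on the barycentric subdivision of the curve complex, use Harer's theorem for the connectivity hypothesis, and verify the cheap rebuilding property of cell stabilizers via the short exact sequence $1\to\Z^m\to\Stab\mathcal F\to\MCG'(S\setminus\mathcal F)\to 1$, Ivanov's $F_\infty$ result, and Corollary~\ref{cor-CheapRebGroups}. Your additional remarks on why barycentric subdivision ensures pointwise stabilization and on identifying the flag stabilizer as a finite-index subgroup of $\Stab\mathcal F_j$ are accurate elaborations of steps the paper states more tersely.
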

By \cite[Theorem 5.4.A]{Ivanov}, $\MCG({S})$ is of class $F_\indice$ for every $\indice \geq 0$.
Applying Proposition \ref{prop-CheapRebTorsion} finishes the proof of Theorem \ref{T0000}. 
\hfill $\square$

\medskip
Curiously we had a hard time trying to apply our method to $Out(\mathbf{F}_n)$, thus our question:
\begin{question}
What is the range of $\indice$ for which $Out(\mathbf{F}_n)$ has the cheap $\indice$-rebuilding property?
\end{question}

\bibliographystyle{alpha}

\newcommand{\etalchar}[1]{$^{#1}$}

\end{document}